\documentclass[11pt]{article}
\usepackage{geometry}                % See geometry.pdf to learn the layout options. There are lots.
\usepackage{graphicx}
\usepackage{amssymb}    
\usepackage{epstopdf}
\usepackage{hyperref}
\usepackage[T1]{fontenc}
\usepackage[ngerman, english]{babel}
\usepackage{mathtools}
\usepackage{mathabx}
\usepackage{bbm}
\usepackage{mathrsfs}
\usepackage{amsmath,amscd}
\usepackage{amsthm}
\usepackage{tikz}
\usepackage{tikz-cd}
\usepackage{babel}
\usetikzlibrary{babel}
\usetikzlibrary{arrows, matrix}
\usetikzlibrary{cd}
\usepackage{stmaryrd}
\usepackage[arrow, matrix, curve]{xy}

\newtheorem{innercustomthm}{Theorem}
\newenvironment{customthm}[1]
  {\renewcommand\theinnercustomthm{#1}\innercustomthm}
  {\endinnercustomthm}

 \usepackage{enumerate}
\newtheorem{theorem}{Theorem}[section]
\newtheorem{lemma}[theorem]{Lemma}
\newtheorem{proposition}[theorem]{Proposition}
\newtheorem{corollary}[theorem]{Corollary}

\newtheorem{definition}[theorem]{Definition}

\DeclareMathOperator{\Gal}{\operatorname{Gal}}
\DeclareMathOperator{\Q}{\mathbf{Q}}
\DeclareMathOperator{\R}{\mathrm{R}}
\DeclareMathOperator{\Z}{\mathbf{Z}}
\DeclareMathOperator{\F}{\mathbf{F}}

\DeclareMathOperator{\N}{\mathbf{N}}

\DeclareMathOperator{\Spec}{\operatorname{Spec}}

\DeclareMathOperator{\Hom}{\operatorname{Hom}}

\DeclareMathOperator{\ord}{\operatorname{ord}}
\DeclareMathOperator{\Aut}{\operatorname{Aut}}

\DeclareMathOperator{\Lie}{\mathrm{Lie}}

\DeclareMathOperator{\Ext}{\operatorname{Ext}}

\DeclareMathOperator{\Id}{\mathrm{Id}}
\DeclareMathOperator{\Og}{\mathcal{O}}
\DeclareMathOperator{\pr}{\mathrm{pr}}

\DeclareMathOperator{\Pic}{\mathrm{Pic}}

\DeclareMathOperator{\Br}{\mathrm{Br}}
\DeclareMathOperator{\et}{\acute{\mathrm{e}}{\mathrm{t}}}
\DeclareMathOperator{\proet}{\mathrm{pro}\acute{\mathrm{e}}{\mathrm{t}}}
\DeclareMathOperator{\Gm}{\mathbf{G}_m}
\DeclareMathOperator{\Ga}{\mathbf{G}_a}

\DeclareMathOperator{\fppf}{\mathrm{fppf}}
\DeclareMathOperator{\sHom}{\mathscr{H}\!\mathit{om}}
\DeclareMathOperator{\sExt}{\mathscr{E}\!\mathit{xt}}
\DeclareMathOperator{\sEnd}{\mathscr{E}\!\mathit{nd}}

\DeclareMathOperator{\Res}{\mathrm{Res}}

\DeclareMathOperator{\sep}{{^\mathrm{sep}}}

\DeclareMathOperator{\RP}{{\mathrm{RP}}}

\DeclareMathOperator{\CH}{{\mathrm{CH}}}

\theoremstyle{remark}

\newtheorem{remark}[theorem]{Remark}

\newtheorem{example}[theorem]{Example}
\DeclareMathOperator{\ffm}{\mathfrak{m}}
\newcommand{\indtorus}[2]{\Res_{{#1/#2}}{\mathbf{G}_{\mathrm{m}}}}

\newcommand{\powser}[2]{{#1}[\![{#2}]\!]}

\title{Unirational algebraic groups and tame ramification}
\author{Otto Overkamp and Ismaele Vanni}
\date{}
\begin{document}
\maketitle
{\abstract{Let $\Og_K$ be a complete discrete valuation ring with field of fractions $K$ and algebraically closed residue field $k.$ Let $G$ be a smooth connected commutative algebraic group over $K$ which does not contain a copy of $\Ga.$ For each $d$ prime to $p:=\mathrm{char}\, k,$ let $K(d)$ be the unique extension of $K$ of degree $d.$ We investigate how the Néron lft-model of $G$ behaves under base change to the ring of integers $\Og_{K(d)}.$ Information about this behaviour is encoded in the \it jumps \rm of Edixhoven's filtration on the special fibre of the Néron lft-model of $G,$ as well as in Halle-Nicaise's motivic zeta function of $G.$ If $G$ is unirational (e. g. an algebraic torus), we show that the jumps of $G$ are rational numbers and that the motivic zeta function of $G$ is a rational function. We also deduce analogous results for Abelian varieties with potentially totally multiplicative reduction. This answers a question of Halle-Nicaise and partially one of Edixhoven. Along the way, we answer a question of Oesterlé about the structure of unipotent algebraic groups over function fields in positive characteristic. Under stronger conditions on $G,$ we obtain rationality of jumps even for separably closed but imperfect $k.$}}
\tableofcontents
\section{Introduction}
Let $\Og_K$ be a complete discrete valuation ring with field of fractions $K$ and separably closed residue field $k$ of characteristic $p>0.$ A smooth algebraic group $G$ over $K$ will be called \it wound \rm if $\Hom_K(\Ga, G)=0$ (all group schemes appearing on this article will be commutative). All semiabelian varieties over $K$ are wound, but there also exist unipotent examples. A fundamental result about these groups is that they admit \it Néron lft-models \rm over $\Og_K,$ i. e. for each smooth connected wound algebraic group $G$ over $K,$ there exists a smooth separated group scheme $\mathscr{G} \to \Spec \Og_K$ with generic fibre $G$ such that, for each smooth morphism $S\to \Spec \Og_K$ and each map $\phi_K\colon S\times _{\Og_K} \Spec K \to G,$ there exists a unique $\Og_K$-morphism $\phi\colon S\to  \mathscr{G}$ extending $\phi_K$ \cite[Chapter 10.1, Theorem 2 (b')]{BLR}. Such a model of $G$ is unique up to unique isomorphism. 
Now let $F$ be a finite separable extension of $K$ and let $\Og_F$ be the integral closure of $\Og_K$ in $F;$ this is another complete discrete valuation ring which is finite and flat over $\Og_K.$ Suppose $G$ is a smooth connected wound algebraic group over $K$ with Néron lft-model $\mathscr{G} \to \Spec \Og_K.$  It is well-known that the base change $G_F$ is wound as well \cite[Corollary B.3.5]{CGP}, so it admits a Néron lft-model $\mathscr{G}_F$ over $\Og_F.$ However, the canonical morphism $\mathscr{G}\times_{\Og_K}\Spec\Og_F \to \mathscr{G}_F$ is usually not an isomorphism, and it is an interesting (and rather delicate) problem to predict this morphism's behaviour. One approach to this problem in the case of \it tame \rm extensions, which we shall now briefly describe informally, is due to Edixhoven \cite{Edixhoven}; it will be recalled more precisely in the next section.

For each $d$ prime to $p,$ there exists a tamely ramified extension $K\subseteq K(d)$ of degree $d,$ which is unique up to non-unique isomorphism. All finite tamely ramified extensions of $K$ are of this form. Edixhoven constructs a filtration $\mathscr{F}^\bullet\mathscr{G}_k$ on the special fibre of $\mathscr{G}$ indexed by the set $\Z_{\langle p \rangle}\cap [0,1] = \{\frac{i}{d}\colon 0\leq i \leq d, \gcd(d,p)=1\}$ which measures the failure of the canonical maps 
$$\mathscr{G}\times_{\Og_K}\Og_{K(d)} \to \mathscr{G}_{K(d)}$$ to induce isomorphisms on identity components. For example, $\mathscr{F}^{\frac{1}{d}}\mathscr{G}_k$ is the kernel of the canonical map $\mathscr{G}_k \to \mathscr{G}_{K(d),k}.$ It turns out that this filtration \it jumps \rm at finitely many elements $j_1\leq ... \leq j_n \in [0,1]$ (with $n=\dim G$). The jumps of a smooth connected wound algebraic group $G$ capture much information about the behaviour of the Néron lft-model of $G$ under tame base change. Because the index set is dense in $[0,1],$ it is not at all clear whether these are \it rational \rm numbers, which is the main open question about these invariants (\cite[p. 301]{Edixhoven}, \cite[Question 10.1.2]{HN}). In general, this question is wide open. It is known to have an affirmative answer in the following two cases: 
\begin{enumerate}
\item $G$ is a semiabelian variety which acquires semiabelian reduction over a finite \it tamely ramified \rm extension of $K$ \cite{HNII}, and
\item the residue field $k$ is algebraically closed and $G = \Pic^0_{C/K}$ for a smooth, projective, and geometrically connected curve $C$ over $K$ of index 1 \cite[Theorem 6.3.1.3]{HN} (see also \cite[Theorem 4.4.5]{EHN} and \cite[Corollary 1.8]{KMW} for different approaches using logarithmic geometry), or (more generally) a geometrically  a geometrically reduced curve with smooth geometrically integral normalization of index 1 and a \it push-out singularity \rm \cite{OvI}.\footnote{Several of the (rather strong) technical assumptions on $C$ imposed in \cite{OvI} were necessary because no general framework for constructing "good" models of singular curves was known at the time. Such a framework has since been developed by the first-named author \cite{OvIII, OvII}, so it seems likely that these results hold much more generally. This will be addressed in future work.} In particular, the answer is affirmative for induced tori, i.e. tori of the form  $\indtorus MK$ for a finite separable extension $M/K$. 
\end{enumerate}
Beyond that (for example, for wound unipotent groups), only individual examples are known. 

The main aim of the present article is to give an affirmative answer to this question for \it unirational\rm\footnote{An algebraic group $G$ is \it unirational \rm if its underlying scheme is, i. e. if it is (scheme-theoretically) dominated by an open subset of some affine space. Note that unirational algebraic groups are always smooth, connected, and affine.} algebraic groups if the residue field $k$ is algebraically closed. Using rigid uniformisation, we deduce the same result for Abelian varieties with potentially totally multiplicative reduction. To state the result more precisely, recall that, for any semiabelian variety $G$ over $K,$ there exists a finite Galois extension $K\subseteq L$ such that $G\times_KL$ has semiabelian reduction over $\Og_L.$

\begin{customthm}{A} \rm (Theorems \ref{jumpsratthm}, \ref{Abelianjumpsratthm}, and \ref{jumpsratthmGEN}) \it
Assume that $k$ is algebraically closed. Let $G$ be a smooth connected wound algebraic group over $K,$ and suppose moreover that either
\begin{enumerate}[(i)]
\item $G$ is unirational, or
\item $G$ is an Abelian variety which acquires \rm totally multiplicative \it semiabelian reduction over a finite extension $K\subseteq L.$ 
\end{enumerate}
Let $j$ be a jump of $G.$ Then $j$ is rational. In fact, we have $[L:K]j\in \Z$ for a finite Galois extension $K\subseteq L$ which splits $G$ in a suitable sense. If $G$ is an algebraic torus, $L$ can be taken to be the classical splitting field of $G.$
\end{customthm}
Note in particular that there is no tameness assumption (which would, in any case, only make sense for semiabelian $G$ as there is no notion of semiabelian reduction for more general algebraic groups), and that case (i) above incudes all algebraic tori. Remarkably, this result is already new for norm tori, i. e. kernels of the norm maps $N_{L/K} \colon \Res_{L/K}\Gm \to \Gm$ for finite separable extensions $K\subseteq L.$ 

Moreover, we shall investigate a finer invariant which captures \it recurring patterns \rm in the special fibre of the Néron lft-models of $G\times_K{K(d)}$ over $\Og_{K(d)}$ for $d$ prime to $p$: Following Halle-Nicaise \cite{HNII}, we shall consider the \it motivic zeta function \rm 
$$Z_G(x) := \sum_{p\nmid d} [\mathscr{G}_{K(d),k}^{\mathrm{qc}}]\mathbf{L}^{\mathrm{ord}_G(d)} x^d \in K_0(\mathrm{Var}_k)[\![x]\!].$$
Here, $K_0(\mathrm{Var}_k)$ denotes the Grothendieck ring of varieties over $k$ \cite{NS}, $[X]$ denotes the class of a smooth separated $k$-scheme $X$ of finite type over $k$ in $K_0(\mathrm{Var}_k),$ we put $\mathbf{L}:=[\mathbf{A}^1_k],$ and define $$\mathrm{ord}_G(d):=\ell_{\Og_{K(d)}}(\mathrm{coker}(\Lie \mathscr{G}\otimes_{\Og_K}\Og_{K(d)} \to \Lie\mathscr{G}_{K(d)})).$$ Moreover, if $\Delta$ is a smooth group scheme over $k$ with finitely generated group of connected components, $\Delta^{\mathrm{qc}}$ refers to the maximal quasi-compact\footnote{If $\Phi$ denotes the group of connected components of $\Delta,$ $\Delta^{\mathrm{qc}}$ is defined to be the preimage of $\Phi_{\mathrm{tors}}$ under the canonical map $\Delta\to \Phi.$ The condition imposed on $\Delta$ is always satisfied for special fibres of Néron lft-models by \cite[Proposition 3.5]{HNII}. The technicality of considering $\mathscr{G}(d)^{\mathrm{qc}}_k$ instead of $\mathscr{G}(d)_k$ in the definition of the motivic zeta function is necessary because $K_0(\mathrm{Var}_k)$ is defined as the set of equivalence classes of smooth separated \it quasi-compact \rm $k$-schemes with certain ring operations. Were one to allow all smooth $k$-schemes instead, $K_0(\mathrm{Var}_k)$ would be the trivial ring due to the existence of arbitrary disjoint unions in the category of smooth $k$-schemes; hence one would obtain an empty theory.} open subgroup scheme of $\Delta.$ The promised result concerning recurring patterns is then 
\begin{customthm}{B}\rm(Theorem \ref{Zetaratthm}) \it 
Assume that $k$ is algebraically closed. Suppose $G$ is a unirational algebraic group over $K$ (e. g. an algebraic torus) or an Abelian variety with potentially totally multiplicative reduction. Then $Z_G(x)$ is a rational function.
\end{customthm}
It is precisely the rationality of the jumps which lets one control the contribution of $\mathbf{L}^{\mathrm{ord}_G(d)}.$ The difficulty in analysing the contribution from $[\mathscr{G}_{K(d),k}^{\mathrm{qc}}]$ comes mainly from the need to control the number of irreducible components of $\mathscr{G}_{K(d),k}^{\mathrm{qc}},$ for which one needs a rather powerful duality theory for $G.$ Such a theory is well-known for algebraic tori (due to Bégueri, Suzuki, Xarles and others), as well as for unipotent groups (due to Suzuki \cite{SuzIII}), but some additional work is required in order to treat \it extensions \rm of such groups. Additionally, we use some of these methods to answer a question of Oesterlé about the structure of unipotent algebraic groups over function fields in positive characteristic, and use some of the methods from \cite{OS} to deduce an analogue of Chai's conjecture for tame base change conductors (see Definition \ref{ordZctamedef}).

Throughout Sections 1-7, we shall assume that $k$ is algebraically closed. In Section 8, we shall show how one can obtain rationality of jumps of an algebraic torus $T$ even for imperfect residue fields $k$ under stronger assumptions on $T$; this is the first (non-trivial) such result for wildly ramified tori in the case of imperfect residue field.\\
\\
\noindent$\mathbf{Acknowledgement}.$ The authors would like to express their gratitude to Professor A. Bertapelle, Professor L. H. Halle, Professor J. Nicaise, Professor S. Schröer, Dr. A. Bode, Dr. Z. Rosengarten, Dr. T. Suzuki, and Dr. A. Waetershoot for very helpful conversations. This research was conducted in the framework of the research training group \it GRK 2240: Algebro-geometric methods in Algebra, Arithmetic and Topology; \rm the first-named author was partially supported by the Deutsche Forschungsgemeinschaft through the grant \it SCHR 671/10-1 Varieties with Free Tangent Sheaf. \rm 
The second-named author's interest in Edixhoven's jumps started during the writing of his PhD thesis \cite{Vannithesis}, where some of the results of the last section first appeared. He would like to heartily thank his supervisor Professor A. Bertapelle for her great mentorship and help. His research has benefited from stays, partially funded by Sapienza University through the grants \textit{Mobilità Internazionale PhD 2022 II edizione}, \textit{Avvio alla Ricerca 2022/2023} and \textit{Avvio alla Ricerca 2023/2024}, at KU Leuven and Università di Padova, which provided excellent working conditions.
\section{Background}
\subsection{Edixhoven's filtration}
Let $d$ be a positive integer prime to $p$ and let $K(d)$ be the extension of $K$ of degree $d$ (this extension is unique up to non-canonical isomorphism). There is a canonical isomorphism $\Gal(K(d)/K)=\mu_d:=\mu_d(k).$ Let $G$ be a smooth connected wound algebraic group over $K.$ For each $d$ prime to $p,$ we shall denote by $\mathscr{G}(d)$ the Néron lft-model of $G(d):=G\times_K K(d)$ over $\Og_{K(d)}.$ Edixhoven constructs a filtration 
$$\mathscr{G}_k = F^0_d \mathscr{G}_k \supseteq ... \supseteq F^d_d \mathscr{G}_k = 0$$
on $\mathscr{G}_k$ by smooth closed subgroup schemes. On $k$-points, the $k$-group schemes $F^i_d\mathscr{G}_k$ can be described as follows: Note that $\Res_{\Og_{K(d)}/\Og_K}\mathscr{G}(d)$ is canonically isomorphic to the Néron lft-model of $\Res_{K(d)/K} (G\times_K K(d)).$ Let $\mathfrak{m}_d\subseteq \Og_{K(d)}$ be the maximal ideal. Then $F^i_d\mathscr{G}_k(k)$ is the kernel of the composition 
$$\mathscr{G}_k(k) \subseteq (\Res_{\Og_{K(d)}/\Og_{K}}\mathscr{G}(d))(k) = \mathscr{G}(d)(\Og_{K(d)}/\mathfrak{m}_d^d) \to \mathscr{G}(d)(\Og_{K(d)}/\mathfrak{m}_d^i)$$ for $1\leq i\leq d,$ where the first inclusion follows from \cite[Theorem 4.2]{Edixhoven}. The construction of the scheme structure requires the use of Greenberg functors; see \cite[Section 4.1]{HNII} for the details\footnote{In fact, $G$ is assumed to be an Abelian variety in \cite{Edixhoven}. However, the arguments carry over \it verbatim \rm to the more general situation \cite{HNII}. We shall later mostly restrict to unirational algebraic groups and some particular semiabelian varieties.}. This filtration can be seen as measuring how badly the Néron lft-model of $G$ fails to commute with \it tame \rm base change; for example, $F^1_d\mathscr{G}_k$ is precisely the kernel of the canonical map $\mathscr{G}_k \to \mathscr{G}(d)_k$ for each $d$ prime to $p.$  One checks that, for each $d, n$ prime to $p$ and each $0\leq i \leq d,$ we have a canonical identification 
$$F^{in}_{dn} \mathscr{G}_k = F^i_d\mathscr{G}_k$$ \cite[Lemma 4.11]{HNII}; in particular, for all $\alpha = i/d \in \Z_{\langle p \rangle}\cap [0,1],$ the $k$-subgroup scheme 
$$\mathscr{F}^{\alpha}\mathscr{G}_k:= F ^ i_d\mathscr{G}_k$$ is well-defined. This means that we obtain a decreasing separated and exhaustive filtration $\mathscr{F}^{\alpha}\mathscr{G}_k$ by smooth closed algebraic subgroups on $\mathscr{G}_k$ indexed by the topological space $\Z_{\langle p \rangle}\cap [0,1].$ The invariants of interest to us will be the \it jumps \rm of this filtration, which are \it a priori \rm real numbers. We recall the following 
\begin{definition}\label{multiplicitydef} \rm (Cf. \cite[Section 6.1.2]{HN}) \it 
\begin{enumerate}[(i)]
\item For $0\leq i \leq d-1,$ we put $$\mathrm{Gr}^i_d\mathscr{G}_k:=F^i_d\mathscr{G}_k / F^{i+1}_d \mathscr{G}_k.$$ We shall call an element $j\in \{0,..., d-1\}$ a \rm $d$-jump \it of $G$ if $\dim \mathrm{Gr}_d^j\mathscr{G}_k>0;$ this dimension will be called the \rm multiplicity \it of the $d$-jump $j.$ 
\item Let $\rho\in(0,1)$ and $1 \gg \epsilon>0.$ Then, for $j\in (\rho-\epsilon, \rho)\cap \Z_{\langle p\rangle}$ (resp. $j\in (\rho, \rho+\epsilon)\cap \Z_{\langle p\rangle}$), the algebraic groups $\mathscr{F}^{j}\mathscr{G}_k$ stabilise as $\epsilon\to 0;$ we denote this stable value by $\mathscr{F}^{<\rho}\mathscr{G}_k$ (resp. $\mathscr{F}^{>\rho}\mathscr{G}_k$). Moreover, we put $\mathscr{F}^{>1}\mathscr{G}_k:=0$ and $\mathscr{F}^{<0}\mathscr{G}_k:=\mathscr{G}_k.$ Finally, put
$$\mathrm{Gr}^{\rho}\mathscr{G}_k:=\mathscr{F}^{<\rho}\mathscr{G}_k/\mathscr{F}^{>\rho}\mathscr{G}_k.$$ We say that $j\in [0,1]$ is a \rm jump \it of $G$ if $\dim \mathrm{Gr}^j\mathscr{G}_k>0$ and call this dimension the \rm multiplicity \it of $j.$ 
\end{enumerate}
\end{definition} 
\begin{remark}\label{leftconremark}
    Consider the function $\phi_G\colon \Z_{\langle p \rangle} \cap [0,1] \to \mathbf{R}$ given by $\alpha \mapsto \dim \mathscr{F}^{\alpha}\mathscr{G}_k.$ This function takes only finitely many values, is monotonically decreasing, and satisfies $\phi_G(0) = \dim  G$ as well as $\phi_G(1)=0.$ It is easy to see that $\phi_G$ is discontinuous precisely at the elements of $\Z_{\langle p \rangle} \cap [0,1]$ which are also jumps of $G.$ The behaviour of $\phi_G$ at the points of discontinuity is not well-understood. Some of the arguments in \cite{HN} seem implicitly to rely on the assumption that $\phi_G$ is left-continuous (which would, in particular, imply that 1 is never a jump, as is stated without proof in \cite[p. 94]{HN}). This is true in all cases where $\phi_G$ is explicitly known; in particular, it can be easily checked for tamely ramified semiabelian varieties \cite[Proposition 4.13]{HNII} and for induced tori. However, it is not clear to the authors whether this left-continuity always holds; we shall give new proofs of the relevant results from \cite{HN} which avoid this assumption.
\end{remark}
We shall make essential use of the following entirely different way of describing the $d$-jumps of a smooth connected wound algebraic group $G$ over $K.$ For each $d$ prime to $p,$ the group $\mu_d=\Gal(K(d)/K)$ acts (from the right) on the Néron lft-model $\mathscr{G}(d)$ of $G\times_KK(d)$ in such a way that the action commutes with the group operation and the structure morphism $\mathscr{G}(d)\to \Spec \Og_{K(d)}$ is equivariant. In particular, if $\mathfrak{m}_{G(d)}\subseteq \widehat{\Og}_{\mathscr{G}(d)_k,0}$ denotes the maximal ideal of the completed local ring of $\mathscr{G}(d)_k$ at the origin, then $\mu_d$ acts (from the left) on the $k$-vector space $\mathfrak{m}_{G(d)}/\mathfrak{m}_{G(d)}^2.$ Recall that the category of $k[\mu_d]$-modules is semisimple; if $\chi_d$ denotes the one-dimensional representation on which a generator $\zeta\in \mu_d$ acts with weight 1, its simple objects are precisely the tensor powers $\chi_d^{\otimes j}$ for $0 \leq j \leq d-1.$ Then we have 
\begin{proposition} \label{jumpspropV}
Let $n:=\dim G$ and let $d$ be prime to $p.$ Let $j_{d,1} \leq ... \leq j_{d,n}$ be the $d$-jumps of $G,$ each listed as many times as indicated by its multiplicity. 
\begin{enumerate}[(i)] 
\item There exists an isomorphism 
$$\mathfrak{m}_{G(d)}/\mathfrak{m}_{G(d)}^2 \cong \chi_d^{\otimes j_{d,1}} \oplus ... \oplus \chi_d^{\otimes j_{d,n}}$$
of $k[\mu_d]$-modules. 
\item Let $\mathscr{G}$ and $\mathscr{G}(d)$ be the Néron lft-models of $G$ and $G\times_KK(d)$ over $\Og_K$ and $\Og_{K(d)},$ respectively. Then there exists an exact sequence
$$0 \to \Lie \mathscr{G}\otimes_{\Og_K}\Og_{K(d)}\to \Lie \mathscr{G}(d) \to \bigoplus_{i=1}^n \Og_{K(d)}/\langle \pi_{K(d)}^{j_{d,i}}\rangle \to 0$$ of $\Og_{K(d)}$-modules, where $\pi_{K(d)}\in \Og_{K(d)}$ is a uniformiser. 
\end{enumerate}
\end{proposition}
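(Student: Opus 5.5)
Both parts will be derived from Edixhoven's description of the filtration through the Weil restriction $\mathscr{R}:=\Res_{\Og_{K(d)}/\Og_K}\mathscr{G}(d)$ and its $\mu_d$-fixed points. By \cite[Theorem 4.2]{Edixhoven}, carried over to wound groups as in \cite{HNII}, the canonical map $\mathscr{G}\to\mathscr{R}$ identifies $\mathscr{G}$ with the fixed-point scheme $\mathscr{R}^{\mu_d}$; here $\mu_d$ acts through the semilinear action on $\mathscr{G}(d)$. On special fibres, $\mathscr{R}_k$ is the Greenberg realisation $\mathrm{Gr}_d(\mathscr{G}(d)_{\Og_{K(d)}/\mathfrak{m}_d^d})$. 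It carries the Greenberg filtration by the kernels $\mathscr{R}_k^{(i)}$ of reduction to level $i$. Its graded pieces $\mathscr{R}^{(i)}_k/\mathscr{R}^{(i+1)}_k$ are vector groups canonically isomorphic to $\Lie(\mathscr{G}(d)_k)\otimes_k \mathfrak{m}_d^i/\mathfrak{m}_d^{i+1}$ for $1\le i\le d-1$, and the zeroth piece is $\mathscr{G}(d)_k$ itself. By definition, $F^i_d\mathscr{G}_k=\mathscr{G}_k\cap\mathscr{R}_k^{(i)}$.

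First I will describe the $\mu_d$-action on these pieces. Choose the uniformiser $\pi_{K(d)}$ with $\pi_{K(d)}^d=\pi_K$, so $\zeta\in\mu_d$ acts on $\mathfrak{m}_d^i/\mathfrak{m}_d^{i+1}$ by $\zeta^i$ (up to the sign convention). Hence $\mu_d$ acts on the $i$-th graded piece through $\Lie(\mathscr{G}(d)_k)\otimes\chi_d^{\pm i}$. Because $p\nmid d$, the group $\mu_d$ is linearly reductive, so taking fixed points is exact on the graded pieces; the one subtlety is that the filtration is only one by smooth groups, and I will handle it by a smoothness argument as in \cite{Edixhoven}. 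This gives
$$\dim \mathrm{Gr}^i_d\mathscr{G}_k=\dim\bigl(\Lie(\mathscr{G}(d)_k)\otimes\chi_d^{\pm i}\bigr)^{\mu_d},$$
i.e. the multiplicity of $\chi_d^{\mp i}$ in $\Lie\mathscr{G}(d)_k$. Dualising and identifying $\mathfrak{m}_{G(d)}/\mathfrak{m}_{G(d)}^2$ with the cotangent space $\Lie(\mathscr{G}(d)_k)^\vee$ gives (i), once the sign conventions (the right action on $\mathscr{G}(d)$ versus the left action on the completed local ring) are tracked carefully. This bookkeeping of signs is one place where I expect errors.

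For (ii), $\mu_d$ acts semilinearly on the free $\Og_{K(d)}$-module $M:=\Lie\mathscr{G}(d)$, and Galois descent along the tame extension gives $M^{\mu_d}=\Lie\mathscr{G}$; this uses $\mathscr{G}=\mathscr{R}^{\mu_d}$ and the fact that Lie algebras commute with Weil restriction and with fixed points. A semilinear $\mu_d$-lattice over $\Og_{K(d)}$ with $p\nmid d$ decomposes as $M=\bigoplus_{i=1}^n \Og_{K(d)}e_i$, where $\zeta$ fixes $\pi_{K(d)}^{-a_i}e_i$ up to twist, with $0\le a_i<d$ determined by the characters of $M\otimes k$. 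To obtain this, I will lift eigenvectors of $M/\pi_{K(d)} M$ using the idempotents in $k[\mu_d]\subset\Og_{K(d)}[\mu_d]$ and apply Nakayama. Then $M^{\mu_d}=\bigoplus\Og_K\pi_{K(d)}^{a_i}e_i$, so
$$M/(M^{\mu_d}\otimes\Og_{K(d)})\cong\bigoplus_i\Og_{K(d)}/\pi_{K(d)}^{a_i}.$$
By the computation above, the $a_i$ coincide with the $d$-jumps. Injectivity of the first map is clear because both modules are lattices in $\Lie G(d)$.

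The main obstacle is the first step: the identification $\mathscr{G}=\mathscr{R}^{\mu_d}$ for wound, possibly non-semiabelian $G$, and the exactness of fixed points on the Greenberg filtration at the level of dimensions of smooth groups. I will follow \cite[Sections 4--5]{Edixhoven} and \cite[Section 4]{HNII}, checking that only the Néron lft-property and $p\nmid d$ are used there.
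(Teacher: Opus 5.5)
Your proposal is correct, but it takes a different route from the paper, whose proof is a two-line citation. The paper observes that the left $\mu_d$-representation $\mathfrak{m}_{G(d)}/\mathfrak{m}_{G(d)}^2$ is canonically dual to the right representation $\Lie\mathscr{G}(d)_k$. It then takes (i) verbatim from \cite[Corollary 4.8]{HNII} and (ii) from \cite[Theorem 10.4]{HNII}, relying on its footnote's claim that the semiabelian arguments carry over to wound groups.

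You instead reconstruct the argument underlying those citations:
\begin{enumerate}
\item Edixhoven's identification $\mathscr{G}=\mathscr{R}^{\mu_d}$. This rests on smoothness of fixed points of a tame action, Galois descent on the generic fibre, and equivariance forced by uniqueness in the N\'eron mapping property.
\item The level-$d$ Greenberg filtration on $\mathscr{R}_k$.
\item The isomorphism $\mathrm{Gr}^i_d\mathscr{G}_k\cong(\mathrm{gr}^i\mathscr{R}_k)^{\mu_d}$. Its surjectivity amounts to the vanishing of $H^1(\mu_d,\mathscr{R}^{(i+1)}_k)$ for the unipotent kernels. For $i=0$ you also need smoothness of $(\mathscr{G}(d)_k)^{\mu_d}$ to pass to tangent spaces.
\end{enumerate}
What your route gives you is an explicit check that only the N\'eron lft mapping property and $p\nmid d$ are used, which the paper merely asserts. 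Your eigenbasis lifting for the semilinear lattice $M=\Lie\mathscr{G}(d)$ also links (ii) to the jumps cleanly: $(\mathfrak{m}_d^iM/\mathfrak{m}_d^{i+1}M)^{\mu_d}$ has dimension $\#\{j\colon a_j=i\}$, independently of any sign convention.

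The citation route is shorter and inherits Halle--Nicaise's conventions for the identification $\Gal(K(d)/K)=\mu_d$ and for left versus right actions. Those conventions are exactly what fix the direction of $\chi_d$ in (i), the point you flag as delicate.

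One small slip: you say that $\zeta$ fixes $\pi_{K(d)}^{-a_i}e_i$, but your formula $M^{\mu_d}=\bigoplus\Og_K\pi_{K(d)}^{a_i}e_i$ requires $\pi_{K(d)}^{a_i}e_i$ to be fixed, i.e.\ $e_i$ has weight $-a_i$. With the former reading, the cokernel would be $\bigoplus\Og_{K(d)}/\langle\pi_{K(d)}^{d-a_i}\rangle$ for $a_i\neq 0$.
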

\begin{proof}
Note that the left-representation $\mathfrak{m}_{G(d)}/\mathfrak{m}_{G(d)}^2$ of $\mu_d$ is canonically dual to the right-representation $\Lie \mathscr{G}(d)_k.$ Therefore, (i) is precisely the statement of \cite[Corollary 4.8]{HNII}. Claim (ii) is \cite[Theorem 10.4]{HNII}. 
\end{proof}

Hence we can recover the $d$-jumps of $G$ together with their multiplicities from the $k[\mu_d]$-module $\mathfrak{m}_{G(d)}/\mathfrak{m}_{G(d)}^2,$ or from the cokernel of $\Lie \mathscr{G}\otimes_{\Og_K}\Og_{K(d)}\to \Lie \mathscr{G}(d)$ as an $\Og_{K(d)}$-module.

In general, if $0 \to G_1 \to G_2 \to G_3 \to 0$ is an exact sequence of smooth connected wound algebraic groups over $K,$ the induced sequence $0\to \mathscr{G}_1 \to \mathscr{G}_2 \to \mathscr{G}_3 \to 0$ of Néron lft-models over $\Og_K$ need not be exact. In particular, it is difficult to extract information about the jumps of $G_2$ from those of $G_1$ and $G_3.$ The following well-known argument by Chai (see e.g. \cite[Satz 4.2.2]{Brahm}, \cite[Remark 4.8]{Chai}, \cite[Lemma 4.3]{LorLiu}) measures this failure cohomologically. We recall the proof for the reader's convenience. Let $j\colon\Spec K\to\Spec\Og_K$ be the inclusion of the generic point into the spectrum of the discrete valuation ring $\Og_K$, and denote by $j^{\mathrm{sm}}\colon (\Spec K)_{\mathrm{sm}} \to (\Spec \Og_K)_{\mathrm{sm}}$ the induced morphism from the small smooth site of $\Spec K$ (i. e. schemes smooth over $K$ endowed with the étale topology) to the small smooth site of $\Spec \Og_K$ given by base change. If $G$ is a smooth connected wound algebraic group over $K$ with Néron lft-model $\mathscr{G},$ then $j_\ast^{\mathrm{sm}} G$ is represented by $\mathscr{G}.$

\begin{proposition}\label{Chaiexactn}
    Let $0\to G_1\to G_2\to G_3\to 0$ be a short exact sequence of smooth $K$-group schemes admitting Néron lft-models. If $R^1j^{\mathrm{sm}}_*G_1=0$, then the complex of Néron lft-models \[0\to \mathscr{G}_1\to \mathscr{G}_2\to \mathscr{G}_3\to 0\] is exact as a sequence of $\Og_K$-group schemes. The condition is satisfied if $G_1 = \Res_{F/K} \mathbf{G}_{\mathrm{m}}^s$ for a finite extension $K\subseteq F$ and $s\in \N.$ \end{proposition}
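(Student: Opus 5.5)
Apply the left exact functor $j^{\mathrm{sm}}_\ast$ to the short exact sequence of étale sheaves $0\to G_1\to G_2\to G_3\to 0$ on $(\Spec K)_{\mathrm{sm}}$. Here we use that a short exact sequence of smooth group schemes, with $G_2\to G_3$ smooth and surjective, gives an exact sequence of sheaves on the smooth-étale site. Since $j^{\mathrm{sm}}_\ast G_i$ is represented by $\mathscr{G}_i$, the long exact sequence begins
$$0\to \mathscr{G}_1\to\mathscr{G}_2\to\mathscr{G}_3\to R^1j^{\mathrm{sm}}_\ast G_1\to\cdots$$
as sheaves on $(\Spec\Og_K)_{\mathrm{sm}}$.

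If $R^1j^{\mathrm{sm}}_\ast G_1=0$, then $\mathscr{G}_2\to\mathscr{G}_3$ is an epimorphism of sheaves on the smooth site. In particular, the identity section of $\mathscr{G}_3$ (or indeed any $T$-point with $T$ smooth over $\Og_K$) lifts étale-locally to $\mathscr{G}_2$. To upgrade this to exactness as group schemes, I would argue as follows.

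First, $\mathscr{G}_1\to\mathscr{G}_2$ is a closed immersion identified with the kernel. The kernel $\ker(\mathscr{G}_2\to\mathscr{G}_3)$ is a group scheme whose $T$-points for smooth $T$ coincide with those of $\mathscr{G}_1$, by sheaf exactness. One shows this kernel is smooth, or at least that its schematic closure of $G_1$ inside it has the Néron property, and hence agrees with $\mathscr{G}_1$.

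Second, for surjectivity, apply the sheaf epimorphism to $T=\mathscr{G}_3$ and the identity point. This gives an étale cover $T'\to\mathscr{G}_3$ through which $\mathscr{G}_2\to\mathscr{G}_3$ admits a section. Hence $\mathscr{G}_2\to\mathscr{G}_3$ is faithfully flat, indeed smooth, as it is a $\mathscr{G}_1$-torsor étale-locally. This gives exactness in the fppf sense. I expect this bookkeeping, in particular that the kernel is exactly $\mathscr{G}_1$, to be routine.

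For the final claim, it suffices by additivity to treat $s=1$. Write $R^1j^{\mathrm{sm}}_\ast \Res_{F/K}\Gm$ as the sheafification of $T\mapsto H^1(T_K,\Res_{F/K}\Gm)$. Since $\Res_{F/K}$ is exact for the étale topology (finite pushforward), we have
$$H^1(T_K,\Res_{F/K}\Gm)=H^1(T_K\otimes_KF,\Gm)=\Pic(T_F).$$
Let $T$ be smooth over $\Og_K$ and strictly henselian local, with $T_F$ its generic fibre base changed to $\Og_F$. Then $T\otimes_{\Og_K}\Og_F$ is a finite disjoint union of local schemes, regular because $T$ is smooth over $\Og_K$ and $\Og_F$ is regular. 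Indeed, $T\times_{\Og_K}\Og_F$ is smooth over $\Og_F$, hence regular.

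Thus $T_F$ is a localisation of a regular scheme that is a product of local rings, so it is a regular scheme that is a localization of a UFD-type ring. Local regular rings are UFDs, so $\Pic$ of any open subscheme (here the complement of the special fibre divisor) vanishes. Therefore the stalks of $R^1j^{\mathrm{sm}}_\ast$ vanish.

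The main obstacle is this last step, namely making precise that $T\otimes_{\Og_K}\Og_F$ is semilocal regular with trivial Picard groups on open complements. This follows from factoriality of regular local rings: every line bundle on the open set extends to a reflexive rank one sheaf on the regular semilocal ring, and such a sheaf is trivial.
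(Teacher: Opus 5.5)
\paragraph{Gap in the first assertion.} The problem is exactly at the step you set aside as routine bookkeeping. You never prove that $\ker(f\colon\mathscr{G}_2\to\mathscr{G}_3)$ is smooth, i.e.\ that it coincides with $\mathscr{G}_1$ as a scheme. Yet your deduction that $f$ is smooth (``a $\mathscr{G}_1$-torsor étale-locally'') presupposes exactly this, so the key input is missing.

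Sheaf exactness on the smooth site cannot supply it. It only says that $\ker f$ and $\mathscr{G}_1$ have the same $T$-points for $T$ smooth, hence flat, over $\Og_K$, and this is blind to non-flat structure in the special fibre. For instance, $\Spec \Og_K[x]/(\pi x)\subseteq \mathbf{G}_{\mathrm{a},\Og_K}$ (with $\pi$ a uniformiser) is a closed subgroup scheme with trivial generic fibre and special fibre $\mathbf{G}_{\mathrm{a},k}$. Nevertheless it has no non-zero points over any flat $\Og_K$-scheme.

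Your fallback does not work either. The schematic closure of $G_1$ in $\ker f$ is not known to be smooth, and even if it were $\mathscr{G}_1$, exactness needs $\ker f=\mathscr{G}_1$ scheme-theoretically. Likewise, that $\mathscr{G}_1\to\mathscr{G}_2$ is a closed immersion is not automatic for Néron lft-models; it is an output of the argument, not an input. The smoothness of $\ker f$ is precisely what the paper imports from \cite[Lemma 4.3]{LorLiu}, after which it only checks separatedness and the Néron mapping property.

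\paragraph{How to close it.} You already have the tool: run your torsor argument in the opposite direction. Let $T'\to\mathscr{G}_3$ be the étale cover and $s\colon T'\to\mathscr{G}_2$ the lift of $\mathrm{id}_{\mathscr{G}_3}$. Then $(h,t)\mapsto (h+s(t),t)$ is an isomorphism
\[
\ker f\times_{\Og_K}T'\cong \mathscr{G}_2\times_{\mathscr{G}_3}T'.
\]
The right-hand side is étale over $\mathscr{G}_2$, hence smooth over $\Og_K$. The projection $\ker f\times_{\Og_K}T'\to\ker f$ is smooth and surjective, because $T'\to\Spec\Og_K$ is. Since smoothness is local on the source for the smooth topology, $\ker f$ is smooth over $\Og_K$.

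Now $\ker f$ is closed in $\mathscr{G}_2$, hence separated, and has generic fibre $G_1$. It also has the Néron mapping property. For smooth $S$ and a map $S_K\to G_1$, take the extension $S\to\mathscr{G}_2$. Its composite with $f$ vanishes on the schematically dense open $S_K$, hence vanishes because $\mathscr{G}_3$ is separated. Thus $\ker f=\mathscr{G}_1$, and only at this point does your torsor argument show that $f$ is smooth and surjective.

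\paragraph{The final claim.} Your argument via $H^1(V,\Res_{F/K}\Gm)=\Pic(V\times_KF)$ and factoriality of localisations of the regular local factors of $T\otimes_{\Og_K}\Og_F$ is correct. Here $T$ should be the spectrum of a strict henselisation of a smooth $\Og_K$-scheme, not a smooth scheme itself. This is a self-contained version of what the paper cites from \cite[Remark 4.6]{Chai}.
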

 \begin{proof}
     Let $f_K\colon G_2\to G_3$ be the morphism in the statement and $f\colon\mathscr{G}_2\to\mathscr{G}_3$ its extension to the Néron lft-models. By \cite[Lemma 4.3]{LorLiu}, $\ker f$ is a smooth $\Og_K$-group scheme, whereby it suffices to show that it is separated and satisfies the Néronian property. Separatedness follows from \cite[Tag 01KV]{Stacks}. The Néronian property follows from the fact that the generic fibre of a smooth $\Og_K$-scheme $S$ is scheme-theoretically dense in $S.$ The last claim follows from \cite[Remark 4.6]{Chai}.
 \end{proof}

\subsection{The jump group}
For some $d$ prime to $p,$ the $d$-jumps of $G$ are usually regarded as elements of the set $\{0,...,d-1\}\subseteq \Z.$ However, it will be more convenient for us to view them as elements of the \it group \rm $\Z/d\Z,$ for reasons that will become apparent later. We shall order $\Z/d\Z$ by declaring that $a\leq b$ if and only if the unique preimages of $a$ and $b$ in $\{0,...,d-1\}$ under the canonical map $\Z\to \Z/d\Z$ satisfy the corresponding relation. Similarly, we order the group $\mathbf{R}/\Z$ by descending the order on $[0,1)$ inherited from $\mathbf{R}.$ One should always bear in mind that the orderings we consider do \it not \rm respect the group structures. For any $d$ prime to $p,$ we have a canonical order-preserving map $\Z/d\Z \to\mathbf{R}/\Z$ given by $[n] \mapsto n/d;$ these can be patched together to obtain a canonical map 
$$\varinjlim \Z/d\Z \to \mathbf{R}/\Z$$ (where the index set is ordered by divisibility), the image of which is precisely $\Z_{\langle p\rangle}/\Z.$ From now on, the $d$-jumps of a smooth connected algebraic group $G,$ as well as the \it weights \rm of a representation of $\mu_d$ on a $k$-vector space, will be viewed as elements of $\Z/d\Z.$ The jumps of $G$ will be seen as elements of $\mathbf{R}/\Z;$ this allows us to consider $\Z$-linear combinations of such weights (or jumps). We note in particular that a jump of $G$ is rational if and only if it is a torsion element of $\mathbf{R}/\Z.$  Once we know the $d$-jumps of $G$ for sufficiently many $d,$ we automatically also know the jumps (cf. \cite[(6.1.3.7)]{HN}). To make this precise, we need the following definition (writing $\N=\{1,2,3,...\}$ and $\N_0:=\N\cup\{0\})$:
\begin{definition}
    A \rm grid sequence \it is a sequence $(d_{\ell})_{\ell\in \N}$ of positive integers prime to $p$ such that $d_{\ell}\mid d_{\ell+1}$ for all $\ell$ and such that $d_{\ell}\to\infty$ as $\ell\to \infty.$
\end{definition}

\begin{proposition} \label{jumpslimitprop}
Let $(d_{\ell})_{\ell\in \N}$ be a grid sequence. Let $j_{1, \ell}\leq ... \leq j_{n, \ell} \in \Z/d_{\ell}\Z$ be the $d_{\ell}$-jumps of $G,$ and let $j_1 \leq ... \leq j_n \in \mathbf{R}/\Z$ be the jumps of $G.$ Then, for each $i=1,...,n,$ we have
$$j_i = \lim_{\ell\to \infty} \frac{j_{i, \ell}}{d_{\ell}}$$ in $\mathbf{R}/\Z.$ 
\end{proposition}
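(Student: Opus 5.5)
The plan is to translate both kinds of jumps into statements about the single monotone function $\phi_G(\alpha)=\dim\mathscr{F}^{\alpha}\mathscr{G}_k$ of Remark \ref{leftconremark}, and then compare a grid approximation of $\phi_G$ with $\phi_G$ itself. Put $n=\dim G$ and, for $0\le m\le n$, put $S_m:=\{\alpha\in\Z_{\langle p\rangle}\cap[0,1]\colon \phi_G(\alpha)\le m\}$. Since $\phi_G$ is decreasing, $S_m$ is an up-set of $\Z_{\langle p\rangle}\cap[0,1]$. Since $\phi_G(1)=0$, each $S_m$ is non-empty. Let $t_m:=\inf S_m\in[0,1]$.

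\textbf{Step 1: the jumps are the $t_m$.} I will show that the jumps of $G$, listed with multiplicity in increasing order, are $j_i=t_{n-i}$ for $i=1,\dots,n$. Because $\phi_G$ takes finitely many values and is monotone, it is locally constant away from finitely many real points. At each real $\rho$ the one-sided stable values $\phi_G(<\rho)$ and $\phi_G(>\rho)$ exist. By Definition \ref{multiplicitydef}(ii) the multiplicity of $\rho$ is $\dim\mathrm{Gr}^{\rho}\mathscr{G}_k=\phi_G(<\rho)-\phi_G(>\rho)$.

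Hence $\#\{i\colon j_i\le\rho\}=n-\phi_G(>\rho)$, using the conventions $\mathscr{F}^{<0}=\mathscr{G}_k$ and $\mathscr{F}^{>1}=0$ at the endpoints. The $i$-th smallest jump is therefore the least $\rho$ with $\phi_G(>\rho)\le n-i$. By density of $\Z_{\langle p\rangle}$ and monotonicity of $\phi_G$, this least $\rho$ is exactly $\inf S_{n-i}=t_{n-i}$. This step involves a little bookkeeping at the endpoints $0$ and $1$, which is harmless in $\mathbf{R}/\Z$. It deliberately makes no use of left-continuity, in line with Remark \ref{leftconremark}.

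\textbf{Step 2: the $d$-jumps via the same sets.} Fix $d=d_\ell$. By Definition \ref{multiplicitydef}(i), $\#\{i\colon j_{i,\ell}\le m\}=n-\dim F^{m+1}_d\mathscr{G}_k=n-\phi_G((m+1)/d)$. Therefore $j_{i,\ell}+1$, with $j_{i,\ell}$ viewed in $\{0,\dots,d-1\}$, is the least integer $m'\ge1$ such that $m'/d\in S_{n-i}$. Equivalently, $(j_{i,\ell}+1)/d_\ell$ is the smallest element of $\tfrac{1}{d_\ell}\Z\cap(0,1]$ lying in $S_{n-i}$.

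\textbf{Step 3: comparison.} Because $S_{n-i}$ is an up-set with infimum $t_{n-i}$, it contains every point of $\Z_{\langle p\rangle}\cap(t_{n-i},1]$ and no point below $t_{n-i}$. Hence
$$t_{n-i}\le \frac{j_{i,\ell}+1}{d_\ell}\le t_{n-i}+\frac{1}{d_\ell}.$$
The only special case is $t_{n-i}=0$, where the smallest admissible grid point is $1/d_\ell$, which still satisfies the bound. It follows that $\left|j_{i,\ell}/d_\ell-t_{n-i}\right|\le 1/d_\ell$. Since $d_\ell\to\infty$ for a grid sequence, $j_{i,\ell}/d_\ell\to t_{n-i}=j_i$ in $\mathbf{R}$, and hence in $\mathbf{R}/\Z$. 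If $t_{n-i}=1$, the limit is $1\equiv0$, which is consistent with reading $j_i$ in $\mathbf{R}/\Z$.

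\textbf{Main obstacle.} I expect the main difficulty to be Step 1. One has to check carefully that the multiplicity count of Definition \ref{multiplicitydef}(ii) really yields $\#\{j_i\le\rho\}=n-\phi_G(>\rho)$, including at the endpoints and at discontinuity points where the value of $\phi_G$ is neither the left nor the right limit. The divisibility condition $d_\ell\mid d_{\ell+1}$ is not actually needed beyond guaranteeing that $d_\ell\to\infty$ through integers prime to $p$. The identification $F^{in}_{dn}=F^i_d$ is what makes $\phi_G$ well defined in the first place.
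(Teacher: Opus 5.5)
Your proof is correct, but it takes a different route from the paper. The paper imports Halle--Nicaise's argument \cite[(6.1.3.7)]{HN}, which has two ingredients. First, along a grid sequence the numbers $j_{i,\ell}/d_\ell$ increase monotonically; in your language, $j_{i,\ell}/d_\ell$ is the largest point of $\frac{1}{d_\ell}\Z\cap[0,1]$ outside $S_{n-i}$, and this can only move right when the grid is refined, which is exactly where $d_\ell\mid d_{\ell+1}$ is used. Second, density of $\bigcup_\ell\frac{1}{d_\ell}\Z$ in $\Z_{\langle p\rangle}\cap[0,1]$ identifies the limit.

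You instead describe both the jumps and the $d$-jumps through the same up-sets $S_m=\{\phi_G\le m\}$. This gives the two-sided estimate $j_i-\frac{1}{d}\le \frac{j_{i,d}}{d}\le j_i$ for every single $d$ prime to $p$. That estimate buys three things:
\begin{enumerate}
\item an explicit rate of convergence;
\item no divisibility hypothesis, so you obtain directly the generalisation to arbitrary sequences $d_\ell\to\infty$, which the paper derives only later in Remark \ref{rmk:anysequence} from the description of $J_d$ for $d>N$ in the proof of Proposition \ref{jumpsrecurrenceprop};
\item the one-sided bound $j_{i,d}/d\le j_i$, which is precisely what the paper extracts from monotonicity when it shows that $0$ is not a jump as soon as it fails to be a $d$-jump for one $d$.
\end{enumerate}
Your Step 1 also makes explicit that no left-continuity of $\phi_G$ is needed, in line with Remark \ref{leftconremark}.

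One small caveat, which the paper's formulation shares: your index-wise identity $j_i=t_{n-i}$ refers to the ordering of $[0,1]$. If $1$ were a jump, its image $0\in\mathbf{R}/\Z$ would come first in the ordering of $\mathbf{R}/\Z$, not last. So the statement should be read either with the $[0,1]$-ordering or as an equality of multisets. Your phrase ``harmless in $\mathbf{R}/\Z$'' is therefore right for the values but not for the indexing.
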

The proof carries over without change from \cite[(6.1.3.7)]{HN}; the point is that the sequences $(j_{i,\ell}/d_\ell)_{\ell}$ are monotonically increasing and that the set $\{\nu/d_\ell\colon \ell \in \N, 0\leq \nu \leq d_{\ell}\}$ is dense in $\Z_{\langle p \rangle}\cap[0,1].$ As a matter of fact, with a bit more effort one can show that the conclusion of the previous proposition remains true if $(d_\ell)_\ell$ is \textit{any} sequence of integers coprime to $p$ tending to infinity; see Remark \ref{rmk:anysequence}.
%We note in particular that a jump of $G$ is rational if and only if it is a torsion element of $\mathbf{R}/\Z.$ 

The jumps of a smooth connected wound algebraic group $G$ over $K$ naturally form a finite multiset (i. e. an object similar to a finite set but which can contain the same element multiple times). By definition, the \it sum \rm of two finite multisets $\mathcal{M}$ and $\mathcal{N}$ contains precisely the elements of $\mathcal{M}\cup \mathcal{N};$ the multiplicity of an element in the sum is the sum of the multiplicities of that element in $\mathcal{M}$ and $\mathcal{N}.$ The sum of $\mathcal{M}$ and $\mathcal{N}$ will be denoted by $\mathcal{M}\uplus\mathcal{N}$. 
Moreover, for a smooth connected wound algebraic group $G$ over $K$ and a positive integer $d$ prime to $p$, we shall denote by $J(G)$ (resp. $J_d(G)$) the {multiset} of jumps (resp. $d$-jumps) of $G$. 

If the conclusion of Proposition \ref{Chaiexactn} is satisfied, there is a very close relationship between the ($d$-)jumps of the algebraic groups in a short exact sequence:

\begin{lemma} \label{universallyexactlem}
Suppose $\widetilde{G}$ is an algebraic group over $K$ which sits inside an exact sequence $0\to T \to \widetilde{G} \to H \to 0$ of smooth connected wound algebraic groups over $K.$ Suppose that, for some $d$ prime to $p,$ the induced sequence $0 \to \mathscr{T}(d) \to \widetilde{\mathscr{G}}(d) \to \mathscr{H}(d) \to 0$ of Néron lft-models over $\Og_{K(d)}$ is exact. Then $J_d(\widetilde{G}) = J_d(T) \uplus J_d(H).$ The same holds for the multisets of jumps if there exists a grid sequence $(d_{\ell})_\ell$ such that the exactness holds for all $d_\ell.$
\end{lemma}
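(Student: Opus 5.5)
The plan is to use Proposition \ref{jumpspropV}(i), which reads off the $d$-jumps as the weights of the $\mu_d$-representation on the cotangent space at the origin of the special fibre. Equivalently, by duality, they are the weights of the right representation of $\mu_d$ on $\Lie \mathscr{G}(d)_k$. The exact sequence $0\to\mathscr{T}(d)\to\widetilde{\mathscr{G}}(d)\to\mathscr{H}(d)\to 0$ consists of smooth group schemes over $\Og_{K(d)}$. The $\mu_d$-actions on these Néron lft-models come from the Galois action on the generic fibres via the Néron property, so the maps in the sequence are $\mu_d$-equivariant.

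First, exactness of a sequence of smooth group schemes over $\Og_{K(d)}$ with smooth kernel gives an exact sequence of Lie algebras on special fibres:
$$0\to \Lie \mathscr{T}(d)_k\to \Lie\widetilde{\mathscr{G}}(d)_k\to\Lie\mathscr{H}(d)_k\to 0.$$
Indeed, $\widetilde{\mathscr{G}}(d)\to\mathscr{H}(d)$ is a surjective morphism of smooth group schemes whose kernel $\mathscr{T}(d)$ is smooth, hence it is smooth. Taking the special fibre, $\widetilde{\mathscr{G}}(d)_k\to\mathscr{H}(d)_k$ is smooth with kernel $\mathscr{T}(d)_k$. The tangent map at the identity is then surjective, with kernel the Lie algebra of the kernel. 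Alternatively, one can count dimensions, since all three special fibres have dimensions equal to those of the generic fibres.

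Second, this sequence is a sequence of $k[\mu_d]$-modules. The category of $k[\mu_d]$-modules is semisimple since $p\nmid d$, so the sequence splits. The multiset of weights of the middle term is therefore the union of the multisets of weights of the outer terms. Dualising (the dual of a split sequence is again split) and applying Proposition \ref{jumpspropV}(i) to each of $T$, $\widetilde{G}$ and $H$ gives $J_d(\widetilde G)=J_d(T)\uplus J_d(H)$. One should check that the weight conventions match on both sides, but since the same convention is used for all three groups, this is harmless.

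For the statement about jumps, take the grid sequence $(d_\ell)$ with exactness for all $\ell$. Then $J_{d_\ell}(\widetilde G)=J_{d_\ell}(T)\uplus J_{d_\ell}(H)$ for every $\ell$. By Proposition \ref{jumpslimitprop}, the $i$-th smallest $d_\ell$-jump divided by $d_\ell$ converges to the $i$-th jump. The merged sorted list of $J_{d_\ell}(T)\uplus J_{d_\ell}(H)$, viewed in $[0,1)$, is the sorted merge of the two sorted lists. Sorting commutes with limits, since the map sending a tuple to its sorted rearrangement is continuous on $[0,1]^n$. Working with representatives in $[0,1]$ (the sequences are monotone increasing in $[0,1]$, so their limits lie in $[0,1]$) avoids wrap-around issues in $\mathbf{R}/\Z$. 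The limits therefore give $J(\widetilde G)=J(T)\uplus J(H)$.

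The main obstacle is the first step: making sure the Lie algebra sequence on special fibres is exact and equivariant. This follows from the smoothness of the kernel together with the fact that the $\mu_d$-action is functorial in the Néron models.
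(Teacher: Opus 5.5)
Your proof is correct and follows essentially the same route as the paper. The paper works directly with the dual exact sequence of cotangent spaces $\mathfrak{m}/\mathfrak{m}^2$ rather than Lie algebras, together with semisimplicity of $k[\mu_d]$-modules. For the jumps, it uses the pigeonhole principle to pass to a subsequence on which the positions of the $T$- and $H$-jumps inside the sorted list of $\widetilde{G}$-jumps are constant; this is equivalent to your continuity-of-sorting argument on real representatives. Your justification of exactness of the Lie algebra sequence (smoothness of $\widetilde{\mathscr{G}}(d)\to\mathscr{H}(d)$, or the dimension count) also fills in a step the paper simply asserts.
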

\begin{proof}
This (essentially) already appears in \cite{OvI}; we repeat the argument for the reader's convenience. Let $\mathfrak{m}_{T(d)},$ $\mathfrak{m}_{\widetilde{G}(d)},$ and $\mathfrak{m}_{H(d)}$ be the maximal ideals of the local rings of $\mathscr{T}(d)_k,$ $\widetilde{\mathscr{G}}(d)_k,$ and $\mathscr{H}(d)_k$ at the origin, respectively. By the exactness of the sequence of Néron lft-models, the sequence
$$0 \to \mathfrak{m}_{H(d)}/\mathfrak{m}_{H(d)}^2 \to \mathfrak{m}_{\widetilde{G}(d)}/\mathfrak{m}_{\widetilde{G}(d)}^2 \to \mathfrak{m}_{T(d)}/\mathfrak{m}_{T(d)}^2 \to 0$$ of $k[\mu_d]$-modules is exact. Because the category $k[\mu_d]-\mathrm{mod}$ is semisimple, the multiset of weights of the middle representation is the sum of the multisets of weights of the first and last non-trivial ones; this proves the first claim.

Let $j^T_{1,d} \leq...\leq j^T_{m,d}$ be the $d$-jumps of $T,$ $j^H_{1,d} \leq...\leq j^H_{h,d}$ those of $H$ (with $m:=\dim T$ and $h:=\dim H$), and $j^{\widetilde{G}}_{1,d} \leq...\leq j^{\widetilde{G}}_{m+h,d}$ those of $\widetilde{G}.$ Now choose a grid sequence $(d_{\ell})_{\ell\in \N}.$ By the pigeonhole principle, there exist strictly increasing functions $\alpha\colon \{1,...,m\}\to \{1,...,m+h\}$ and $\beta\colon \{1,..,h\}\to \{1,...,m+h\}$ with disjoint images such that, for all $\ell,$ 
$$j^{\widetilde{G}}_{\alpha(i),d_\ell} = j^T_{i,d_\ell} \text{\hspace{.1 in} for $i=1,...,m$ \hspace{.1 in} and \hspace{.1 in}} j^{\widetilde{G}}_{\beta(i),d_\ell} = j^H_{i,d_\ell} \text{\hspace{.1 in} for $i=1,...,h$}$$ after passing to a subsequence if necessary. The claim now follows from Proposition \ref{jumpslimitprop}.
\end{proof}

The following result about the jumps is known in the tamely ramified case (see, for example, \cite[Proposition 4.13]{HNII}), but does not seem to have been observed in general:
\begin{proposition}
Let $G$ be a smooth connected wound algebraic group over $K$ and let $j_1 \leq ... \leq j_n\in \mathbf{R}/\Z$ be the jumps of $G.$ Then, for any $d$ prime to $p,$ the jumps of $G\times_KK(d)$ are the elements $dj_1,..., dj_n\in \mathbf{R}/\Z$ (which may no longer be in ascending order). In particular, the jumps of $G$ are rational if and only if so are the jumps of $G(d):=G\times_KK(d).$ 
\end{proposition}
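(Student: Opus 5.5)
We compare the $e$-jumps of $G(d)$ with the $de$-jumps of $G$, using the description of $d$-jumps via characters in Proposition \ref{jumpspropV}(i), and then pass to the limit with Proposition \ref{jumpslimitprop}.

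\textbf{Step 1: fields and Galois groups.} Fix $e$ prime to $p$. Over $K(d)$, the unique tame extension of degree $e$ is $K(d)(e)=K(de)$. Hence the Néron lft-model of $G(d)\times_{K(d)}K(d)(e)$ over $\Og_{K(de)}$ is the model $\mathscr{G}(de)$ used to compute the $de$-jumps of $G$. The Galois group $\Gal(K(de)/K(d))$ is the subgroup $\mu_e\subseteq\mu_{de}=\Gal(K(de)/K)$. It acts on $\mathscr{G}(de)$ by restricting the $\mu_{de}$-action; this is because the action comes from the Galois action on $G\times_K K(de)=G(d)\times_{K(d)}K(de)$ via the Néron property. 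So the $k[\mu_e]$-module $\mathfrak{m}_{G(d)(e)}/\mathfrak{m}^2_{G(d)(e)}$ is the restriction to $\mu_e$ of the $k[\mu_{de}]$-module $\mathfrak{m}_{G(de)}/\mathfrak{m}^2_{G(de)}$.

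\textbf{Step 2: restricting characters.} The canonical identification $\mu_e\subseteq \mu_{de}$ is the inclusion of $e$-th roots of unity into $de$-th roots of unity. Under it, a generator $\zeta_e$ corresponds to $\zeta_{de}^d$ for a compatible choice of generators. This is where one must check that the canonical isomorphisms $\Gal(K(n)/K)\cong\mu_n$ (via $\pi^{1/n}$, with $\pi_{K(d)}=\pi_K^{1/d}$) are compatible. Granting this, $\chi_{de}^{\otimes j}$ restricts to $\chi_e^{\otimes j}$, with $j$ read modulo $e$. Indeed, $\zeta_{de}^d$ acts by the scalar $\zeta_{de}^{dj}=\zeta_e^{j}$ once $\zeta_e$ is identified with $\zeta_{de}^d$.

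Proposition \ref{jumpspropV}(i) then gives the multiset equality
$$J_e(G(d))=\{\, j \bmod e : j\in J_{de}(G)\,\},$$
where $\Z/de\Z\to\Z/e\Z$ is reduction. In $\mathbf{R}/\Z$ this reads $j/e = d\cdot(j/(de))$. So the normalized $e$-jumps of $G(d)$ are exactly $d$ times the normalized $de$-jumps of $G$.

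\textbf{Step 3: passage to the limit.} Take a grid sequence $(e_\ell)_\ell$; then $(de_\ell)_\ell$ is a grid sequence too. By Proposition \ref{jumpslimitprop}, the sorted normalized $de_\ell$-jumps of $G$ converge to $j_1,\dots,j_n$. Multiplication by $d$ on $\mathbf{R}/\Z$ is continuous, so as multisets the normalized $e_\ell$-jumps of $G(d)$ converge to $\{dj_1,\dots,dj_n\}$.

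The main obstacle is that Proposition \ref{jumpslimitprop} matches limits with jumps index by index in sorted order, and multiplication by $d$ destroys the ordering. I would handle this with the pigeonhole argument from the proof of Lemma \ref{universallyexactlem}. Passing to a subsequence, there is a fixed permutation $\sigma$ such that the $i$-th smallest $e_\ell$-jump of $G(d)$ equals $d$ times the $\sigma(i)$-th smallest $de_\ell$-jump of $G$, for all $\ell$. Taking limits, the $i$-th jump of $G(d)$ equals $d\,j_{\sigma(i)}$.

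A subtle point is the boundary at $0\equiv1$ in $\mathbf{R}/\Z$. A sequence approaching $1$ from below converges to $0$ in $\mathbf{R}/\Z$, so "limit" must be interpreted in $\mathbf{R}/\Z$, exactly as in Proposition \ref{jumpslimitprop}. This is consistent with the paper's convention of treating jumps as elements of $\mathbf{R}/\Z$.

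\textbf{Step 4: rationality.} Finally, $x\in\mathbf{R}/\Z$ is torsion if and only if $dx$ is torsion. Hence the jumps of $G$ are rational if and only if those of $G(d)$ are.
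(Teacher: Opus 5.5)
Your proposal is correct and follows essentially the same route as the paper. You identify the $e$-jumps of $G(d)$ with the reductions modulo $e$ of the $de$-jumps of $G$ via Proposition \ref{jumpspropV}(i), fix the resulting permutation along a grid sequence by pigeonhole, and pass to the limit with Proposition \ref{jumpslimitprop}; your Steps 1--2 only spell out the compatibility of the Galois actions and of the identifications $\Gal(K(n)/K)\cong\mu_n$, which the paper leaves implicit.
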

\begin{proof}
Fix $d$ as in the proposition and let $(d_{\ell})_{\ell\in \N}$ be a grid sequence. Now consider the commutative diagram 
$$\begin{tikzcd}
\Z/dd_{\ell}\Z \arrow[swap]{d}{\mathrm{pr}_{\ell}} \arrow[r] & \mathbf{R}/\Z \arrow{d}{\cdot d} \\
\Z/d_{\ell}\Z \arrow[r] & \mathbf{R}/\Z.
\end{tikzcd}$$ 
The image of an element $x$ of $\Z/dd_{\ell}\Z$ (resp. $\Z/d_{\ell}\Z$) in $\mathbf{R}/\Z$ will be denoted by $x/(dd_\ell)$ (resp. $x/d_{\ell}$) by a slight abuse of notation. Now let $j_1\leq ... \leq j_n\in \Z/dd_\ell\Z$ be the $dd_{\ell}$-jumps of $G.$ Then, by Proposition \ref{jumpspropV}(i),  $\pr_{\ell}(j_{\tau_{\ell}(1)}) \leq ... \leq \pr_{\ell}(j_{\tau_{\ell}(n)})$ are the $d_{\ell}$-jumps of $G(d)$ for some suitable $\tau_{\ell}\in S_n.$ By the pigeonhole principle, we may assume that the sequence $(\tau_{\ell})_\ell$ is constant with stable value $\tau.$ Then $\mathrm{pr}_{\ell}(j_{\tau(i)})/d_\ell = d\cdot j_{\tau(i)}/(dd_\ell)$ by the commutativity of the diagram above. Taking the limit as $\ell\to \infty,$ the claim follows using Proposition \ref{jumpslimitprop}.
\end{proof}

\subsection{Outline of the proof}
We shall now briefly describe the main ideas behind the proof in the case where $G$ is an algebraic torus; the result for general unirational $G$ follows by a very similar argument, and that for Abelian varieties with potentially totally multiplicative reduction will be deduced via well-known arguments using rigid uniformisation. This outline will take up more space than might be usual because the underlying ideas are rather simple, but are somewhat buried underneath a heap of technical details. 
\begin{enumerate}
\item Let $T$ be an algebraic torus of dimension $n$ over $K$ split by the finite Galois extension $K\subseteq L.$ It is well-known that there exists a smooth surjective homomorphism $\Res_{L/K}\mathbf{G}_{\mathrm{m}}^{n} \to T$ with connected kernel. For each $d$ prime to $p,$ we shall let $\mathscr{R}(d)$ and $\mathscr{T}(d)$ be the Néron lft-models of $\Res_{L/K}\mathbf{G}_{\mathrm{m}}^n \times_KK(d)$ and $T\times_KK(d),$ respectively. The induced morphism 
$$g(d)^0_k\colon \mathscr{R}(d)^0_k \to \mathscr{T}(d)^0_k$$ of identity components of special fibres usually fails to be smooth, but it is always faithfully flat. Let $\mathfrak{m}_{R(d)} \subseteq \widehat{\Og}_{\mathscr{R}(d)^0_k,0}$ and $\mathfrak{m}_{T(d)} \subseteq \widehat{\Og}_{\mathscr{T}(d)^0_k,0}$ be the maximal ideals in the completed local rings of $\mathscr{R}(d)^0_k$ and $\mathscr{T}(d)^0_k,$ respectively. We are ultimately interested in the weights of the $k[\mu_d]$-module $\mathfrak{m}_{T(d)}/\mathfrak{m}_{T(d)}^2.$ Since the weights of $\mathfrak{m}_{R(d)}/\mathfrak{m}_{R(d)}^2$ are explicitly known, we shall seek to compare the two multisets of weights. Here we shall use that the homomorphism $\widehat{\Og}_{\mathscr{T}(d)^0_k,0} \to \widehat{\Og}_{\mathscr{R}(d)^0_k,0}$ is injective (it is necessary to consider the local rings themselves because the induced map on cotangent spaces usually fails to be injective due to the lack of smoothness).
\item Put $m:=n[L:K]=\dim \Res_{L/K}\mathbf{G}_{\mathrm{m}}^n.$ It is well-known that we can choose regular systems of parameters $s_1,..., s_n$ of $\widehat{\Og}_{\mathscr{T}(d)^0_k,0}$ (resp. $t_1,..., t_m$ of $\widehat{\Og}_{\mathscr{R}(d)^0_k,0}$) on which $\mu_d$ acts with the same weights as on $\mathfrak{m}_{T(d)}/\mathfrak{m}_{T(d)}^2$ (resp. $\mathfrak{m}_{R(d)}/\mathfrak{m}_{R(d)}^2$). By Cohen's structure theorem, we obtain an isomorphism $k[\![ s_1,..., s_n]\!] = \widehat{\Og}_{\mathscr{T}(d)^0_k,0}$ and similarly for $\widehat{\Og}_{\mathscr{R}(d)^0_k,0}.$ The map of local rings therefore admits a description of the form 
$$s_i \mapsto \sum_{\xi\in \N_0^m} \lambda_{i, \xi} t_1^{\xi_1}\cdot ... \cdot t_m^{\xi_m}$$ for $i=1,...,n$ and suitable $\lambda_{i, \xi}\in k.$ Suppose $\mu_d$ acts with weights $j_1,..., j_m\in \Z/d\Z$ on $t_1,...,t_m.$ For each $i=1,...,n,$ there exists some $\xi\in \N_0^m$ such that $\lambda_{i, \xi}\not=0$ by injectivity. Choosing such a $\xi$ for a given $i,$ an easy computation shows that $\mu_d$ acts with weight $\xi_1 j_1 + ... + \xi_m j_m\in \Z/d\Z$ on $s_i.$ It is precisely here that we need to consider the weights as elements of $\Z/d\Z,$ rather than $\Z.$ In fact, we shall see that there exists a very simple such description of the map of local rings under some restriction on the Frobenius and the Verschiebung of a certain infinitesimal group scheme. In general, we shall directly deduce the comparison between the weights of the action on $\widehat{\Og}_{\mathscr{T}(d)^0_k,0}$ and on $\widehat{\Og}_{\mathscr{R}(d)^0_k,0}$ by induction. The crucial new invariants which make this possible will be called \it scale \rm and \it thickness; \rm the scale (in essence) makes a prediction about the tuples $(\xi_1,..., \xi_k)$ for the various $i,$ whereas the thickness measures the \it sum \rm of the $\xi_j$ for all $i.$
\item Finally, we shall use Proposition \ref{jumpslimitprop} in order to show that the jumps of $T$ can be written as $\Z$-linear combinations of jumps of $\Res_{L/K}\mathbf{G}_{\mathrm{m}}^{n}.$ Since the latter are already known to be $[L:K]$-torsion elements in $\mathbf{R}/\Z,$ the rationality of the jumps of $T$ follows. To achieve this, we choose a sequence $(d_{\ell})_{\ell\in \N}$ as in Proposition \ref{jumpslimitprop}. Suppose that $\mu_{d_{\ell}}$ acts with weights $j_{1, \ell} \leq...\leq j_{m, \ell}$ on the regular system of parameters $t_{1, \ell},..., t_{m, \ell}$ of $\widehat{\Og}_{\mathscr{R}(d)^0_k,0}.$ Now fix an $i\in \{1,...,n\}.$ For each $\ell,$ we choose $\xi_{\ell}\in \N_0^m$ such that $\lambda_{i, \xi_\ell}\not=0.$ Then $\mu_{d_{\ell}}$ acts with weight $\xi_{1,\ell} j_{1,\ell} + ... + \xi_{m, \ell} j_{m, \ell}$ on $s_i.$ The crucial step will now be to give a bound on the thickness (effectively a bound on $\xi_{1, \ell} + ... + \xi_{m, \ell}$) which is independent of $\ell.$ Once we have such a bound, we deduce that there are only finitely many possibilities for $\xi_\ell,$ so we may assume without loss of generality that the sequence $(\xi_{\ell})_{\ell\in \N}$ is constant with stable value $\xi.$ If $j_1,..., j_m$ are the jumps of $\Res_{L/K}\mathbf{G}_{\mathrm{m}}^{n},$ then Proposition \ref{jumpslimitprop} shows that the $i$-th jump of $T$ is $\xi_1 j_1 + ... + \xi_n j_n \in \mathbf{R}/\Z.$ The bound on the thickness will be proven using intersection theory \cite{Fulton}. In fact, all we shall use is a version of the classical fact that the \it degree \rm of a projective variety (relative to an ample line bundle) is constant in a flat family. 
\end{enumerate}

\section{Weights and scales}\label{weightsscalessect}
Let $k$ be an algebraically closed field of characteristic $p>0$ and let $R$ be a regular complete local $k$-algebra of dimension $n\in \N_0$ with residue field $k.$  Let $t_1,..., t_n$ be a regular system of parameters of $R,$ i. e. a sequence of elements of the maximal ideal $\mathfrak{m}_R\subseteq R$ whose images $\overline{t}_1,..., \overline{t}_n$ modulo $\mathfrak{m}_R^2$ form a basis of the $k$-vector space $\mathfrak{m}_R/\mathfrak{m}_R^2.$ By Cohen's structure theorem, the canonical map $k[\![ t_1,..., t_n ]\!]\to R$ is then an isomorphism \cite[Tag 0C0S]{Stacks}. Let $d\in \N$ be a positive integer invertible in $k$ (and hence in $R$), and suppose that the finite cyclic group $\mu_d:=\mu_d(k)$ of $d$-th roots of unity in $k$ acts (from the left) on $R$ by automorphisms of $k$-algebras. 
The following result is well-known; we include a proof for the reader's convenience:
\begin{proposition} \label{weightsexistprop}
\begin{enumerate}[(i)]
\item The system $t_1,...,t_n$ can be chosen in such a way that $$\zeta\ast t_i = \zeta^{j_i} t_i$$ for all $i=1,...,n,$ all $\zeta\in\mu_d,$ and some $j_1,...,j_n\in \Z/d\Z.$
\item If $t_1',..., t_n'$ is another regular system of parameters such that $\zeta\ast t_i'= \zeta^{j_i'} t_i'$ for suitable $j_1',...,j_n'\in \Z/d\Z$ and all $i=1,...,n,$ then there exists a permutation $\sigma\in S_n$ such that $j_i'=j_{\sigma(i)}$ for all $i.$
\end{enumerate}
\end{proposition}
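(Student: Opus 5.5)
The plan is to use the averaging (Reynolds) operator, which exists because $d$ is invertible in $k$. For each $j\in\Z/d\Z$ set
$$e_j:=\frac{1}{d}\sum_{\zeta\in\mu_d}\zeta^{-j}\,(\zeta\ast -)\colon R\to R.$$
Since $\mu_d$ acts by $k$-algebra automorphisms, it fixes $k$ and preserves $\mathfrak{m}_R$: the latter is the unique maximal ideal, or equivalently the kernel of the augmentation $R\to k$, which every $k$-algebra automorphism respects. Hence $\mu_d$ also preserves every power $\mathfrak{m}_R^r$. The $e_j$ are $k$-linear, pairwise orthogonal idempotents with $\sum_j e_j=\mathrm{id}$. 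The image of $e_j$ is the eigenspace $R_j=\{f\colon \zeta\ast f=\zeta^j f\}$. This uses only character orthogonality for the cyclic group $\mu_d$ of order $d$, which is prime to $p$, and the fact that $\mu_d(k)$ has exactly $d$ elements since $k$ is algebraically closed. Moreover each $e_j$ preserves $\mathfrak{m}_R$ and $\mathfrak{m}_R^2$, and it commutes with the projection $\mathfrak{m}_R\to V:=\mathfrak{m}_R/\mathfrak{m}_R^2$.

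For (i), I decompose $V=\bigoplus_j V_j$ into isotypic components; this is semisimplicity of $k[\mu_d]$-modules, or just $V_j=e_j V$. I choose a basis of $V$ consisting of eigenvectors $\overline{u}_1,\dots,\overline{u}_n$, where $\overline{u}_i\in V_{j_i}$. I lift each $\overline{u}_i$ arbitrarily to $u_i\in\mathfrak{m}_R$ and put $t_i:=e_{j_i}(u_i)$. Then $t_i\in\mathfrak{m}_R$ and $\zeta\ast t_i=\zeta^{j_i}t_i$. Since the projection commutes with $e_{j_i}$, the image of $t_i$ in $V$ is $e_{j_i}(\overline{u}_i)=\overline{u}_i$. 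So $t_1,\dots,t_n$ reduce to a basis of $\mathfrak{m}_R/\mathfrak{m}_R^2$ and therefore form a regular system of parameters with the required weights. By Cohen's structure theorem we get $R=k[\![t_1,\dots,t_n]\!]$, although this is not needed for the statement itself.

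For (ii), the key observation is that the weights are determined by the representation $V$ alone. If $t_i'$ satisfies $\zeta\ast t_i'=\zeta^{j_i'}t_i'$, then its image $\overline{t}_i'$ in $V$ is a nonzero vector (it belongs to a basis) lying in $V_{j_i'}$. Thus $\overline{t}_1',\dots,\overline{t}_n'$ is an eigenbasis of $V$. For any eigenbasis, the number of indices $i$ with weight $j$ equals $\dim_k V_j$, because the eigenvectors of weight $j$ span $V_j$ and the sum $\bigoplus_j V_j$ is direct. The same count holds for the system $t_1,\dots,t_n$. So both multisets of weights equal $\{j$ with multiplicity $\dim V_j\}$, and a permutation $\sigma$ with $j_i'=j_{\sigma(i)}$ exists.

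I do not expect a genuine obstacle here. The only point needing care is the first step: checking that the action preserves $\mathfrak{m}_R^2$, so that it descends to $V$, and that the averaged lifts still reduce to the chosen basis vectors. Both follow immediately from $e_j$ being built from ring automorphisms fixing $k$.
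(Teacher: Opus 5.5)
Your proposal is correct and is essentially the paper's argument. The paper takes a $\mu_d$-equivariant splitting $\varsigma$ of $\mathfrak{m}_R\to\mathfrak{m}_R/\mathfrak{m}_R^2$, whose existence (from $d\in k^\times$) is exactly the averaging you carry out with the projectors $e_{j}$, applies it to an eigenbasis, and proves (ii) by the uniqueness of the decomposition of $\mathfrak{m}_R/\mathfrak{m}_R^2$ into the simple modules $\chi_d^{\otimes j}$, which is your count via $\dim_k V_j$.
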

\begin{proof}
The $\mu_d$-equivariant homomorphism $\mathfrak{m}_R \to \mathfrak{m}_R/\mathfrak{m}_R^2$ of $k$-vector spaces admits a $\mu_d$-equivariant splitting $\varsigma$ since $d\in k^\times.$ The category of $k[\mu_d]$-modules is semisimple; if $\chi_d$ denotes the one-dimensional $k$-representation of $\mu_d$ on which $\zeta$ acts with weight one, the simple objects of this category are the representations $\chi_d^{\otimes j}$ for $j\in \Z/d\Z.$ In particular, we can write $\mathfrak{m}_R/\mathfrak{m}_R^2$ as $\chi_d^{\otimes j_1} \oplus ... \oplus \chi_d^{\otimes j_n}$ for suitable $j_1,..., j_n\in \Z/d\Z.$ Choosing a basis for each direct summand yields a basis of $\mathfrak{m}_R/\mathfrak{m}_R^2,$ the image of which under $\varsigma$ then has the property described in (i). Since the choice of elements of $\mathfrak{m}_R$ as in (ii) induces another decomposition of $\mathfrak{m}_R/\mathfrak{m}_R^2$ into simple representations, the uniqueness of this decomposition up to permutation of the direct summands shows (ii).
\end{proof}

\begin{definition}
We shall say that $\mu_d$ \rm acts with weights $j_1, ... , j_n$ on $R$ \it in this situation and call $t_1,..., t_n$ a regular system of \rm eigenparameters. \it
\end{definition}

By Proposition \ref{weightsexistprop} (ii), the finite sequence $j_1,..., j_n$ is an invariant of $(R, \ast)$ up to permutation. Our main task in the present section will be the following: Suppose we have a finite ring extension $R\subseteq S$ with $S$ regular, local, (necessarily complete), and residue field $k$ such that $\mu_d$ acts on $S$ and $R$ in a compatible way. Then we would like to find a way of comparing the weights with which $\mu_d$ acts on $R$ and on $S.$ We shall see that there is a satisfactory description if the map $\Spec S \to \Spec R$ is a torsor for some infinitesimal $k$-group scheme $\boldsymbol{\alpha}$ on which $\mu_d$ also acts. First recall that the \it Frobenius height \rm of $\boldsymbol{\alpha}$ is the smallest integer $m$ such that the relative Frobenius $F^m_{\boldsymbol{\alpha}/k} \colon \boldsymbol{\alpha} \to \boldsymbol{\alpha}^{(p^m)}$ vanishes; such an integer always exists. 

The following is likely well-known (see, e. g., \cite[Proposition 16.2]{Pink} for the unipotent case):

\begin{lemma}\label{ablem}
Let $\boldsymbol{\alpha}$ be an infinitesimal group scheme over an algebraically closed field $k$ of characteristic $p>0.$ Suppose that $\boldsymbol{\alpha}$ has Frobenius height $\leq 1$ and that the image of the relative Verschiebung $V_{\boldsymbol{\alpha}/k} \colon \boldsymbol{\alpha}^{(p)} \to \boldsymbol{\alpha}$ is multiplicative. Then there exist nonnegative integers $a$ and $b$ such that $\boldsymbol{\alpha} \cong \boldsymbol{\alpha}_p ^  a \oplus \boldsymbol{\mu}_p ^ b.$
\end{lemma}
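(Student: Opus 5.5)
The plan is to split $\boldsymbol{\alpha}$ into a multiplicative and a unipotent part, and then identify each part separately using the two hypotheses.

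Since $k$ is perfect and $\boldsymbol{\alpha}$ is a commutative finite $k$-group scheme, there is a canonical decomposition $\boldsymbol{\alpha} = \boldsymbol{\alpha}_{m} \times \boldsymbol{\alpha}_{u}$ (Demazure–Gabriel). Here $\boldsymbol{\alpha}_m$ is multiplicative, so $V$ is an isomorphism on it. The factor $\boldsymbol{\alpha}_u$ is unipotent, so $V$ is nilpotent on it. Because $\boldsymbol{\alpha}$ is infinitesimal, both factors are infinitesimal. Frobenius and Verschiebung are functorial and compatible with products, so $F_{\boldsymbol{\alpha}/k} = F_{\boldsymbol{\alpha}_m/k}\times F_{\boldsymbol{\alpha}_u/k}$, and similarly for $V$. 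It therefore suffices to show $\boldsymbol{\alpha}_m\cong \boldsymbol{\mu}_p^b$ and $\boldsymbol{\alpha}_u\cong\boldsymbol{\alpha}_p^a$.

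\emph{Multiplicative part.} Since $F_{\boldsymbol{\alpha}/k}=0$, we get $[p] = V\circ F = 0$ on $\boldsymbol{\alpha}_m$. So $\boldsymbol{\alpha}_m$ is a multiplicative group killed by $p$. Its Cartier dual is a finite étale group killed by $p$, hence over the algebraically closed field $k$ a constant group $(\Z/p\Z)^b$. Dualising back gives $\boldsymbol{\alpha}_m \cong \boldsymbol{\mu}_p^b$.

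\emph{Unipotent part.} The image of $V_{\boldsymbol{\alpha}/k}$ is the product of the images $V(\boldsymbol{\alpha}_m^{(p)})$ and $V(\boldsymbol{\alpha}_u^{(p)})$. By hypothesis this product is multiplicative. Hence its closed subgroup $V(\boldsymbol{\alpha}_u^{(p)})$ is multiplicative, since subgroups of multiplicative groups are multiplicative. On the other hand, $V(\boldsymbol{\alpha}_u^{(p)})$ is a subgroup of the unipotent group $\boldsymbol{\alpha}_u$, so it is also unipotent. A group that is both multiplicative and unipotent is trivial, so $V_{\boldsymbol{\alpha}_u/k}=0$. Thus $\boldsymbol{\alpha}_u$ is a local-local group with $F=V=0$.

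Its Dieudonné module $M$ is then a finite-dimensional $k$-vector space on which $F$ and $V$ act by zero. Such a module is a direct sum of copies of the one-dimensional module with $F=V=0$, and that module is $D(\boldsymbol{\alpha}_p)$. The Dieudonné equivalence on local-local groups then gives $\boldsymbol{\alpha}_u\cong\boldsymbol{\alpha}_p^a$. Equivalently, one can avoid Dieudonné theory by using the equivalence between height-one groups and restricted $p$-Lie algebras. For commutative $\boldsymbol{\alpha}_u$ the $p$-operation corresponds to $V$, so here it vanishes. A commutative restricted Lie algebra with zero $p$-map is $k^a$ with trivial structure, which is $\Lie(\boldsymbol{\alpha}_p^a)$.

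The main obstacle is this last identification. In particular one must check that the $p$-map, respectively the Dieudonné operator $V$, really matches the relative Verschiebung $V_{\boldsymbol{\alpha}/k}$, so that the hypothesis on the image of $V$ translates into vanishing of the semilinear operator. I would settle this by citing the standard dictionary (Demazure–Gabriel, or \cite[Proposition 16.2]{Pink} for the unipotent case).
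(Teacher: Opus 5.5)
Your proof is correct. Its skeleton matches the paper's: split off a multiplicative and a unipotent part, then use $F=0$ and the Verschiebung hypothesis on each. The difference is that you work with group schemes throughout, whereas the paper works with restricted Lie algebras.

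The paper translates everything at once via the Demazure--Gabriel equivalence between commutative height-one groups and commutative $p$-Lie algebras. It then invokes Jacobson's decomposition $\mathfrak{g}=\mathfrak{g}_{\mathrm{u}}\oplus\mathfrak{g}_{\mathrm{m}}$ into a nilpotent and a semisimple part. It needs Jacobson's theorem that, over an algebraically closed field, a commutative semisimple restricted Lie algebra has a basis of toral elements $x^{[p]}=x$. The Verschiebung hypothesis enters only through identifying the $p$-map with $V$, and the vanishing of the $p$-map on $\mathfrak{g}_{\mathrm{u}}$ is stated in one line.

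You instead use the multiplicative/unipotent splitting of finite commutative groups over a perfect field. You handle the multiplicative factor by Cartier duality: $[p]=V\circ F=0$, and finite \'etale groups over $k=\bar k$ are constant. You also make explicit why the hypothesis kills $V$ on $\boldsymbol{\alpha}_u$: its image is a closed subgroup both of the multiplicative group $\operatorname{im}V$ and of the unipotent group $\boldsymbol{\alpha}_u$, hence trivial. This avoids Jacobson's structure theory entirely. You need Dieudonn\'e theory or the $p$-Lie dictionary only for the standard fact that a commutative group with $F=V=0$ is a power of $\boldsymbol{\alpha}_p$.

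The ``main obstacle'' you flag is not really one in your route. You prove $V_{\boldsymbol{\alpha}_u/k}=0$ directly on the group scheme, so you never have to translate the hypothesis into a statement about a semilinear operator. You only need the standard fact that the group's $F$ and $V$ correspond to the Dieudonn\'e operators.
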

\begin{proof}
Since (commutative) group schemes of Frobenius height $\leq 1$ are classified by (commutative) $p$-Lie algebras over $k$ \cite[Chapitre II, §7, no. 4, Proposition 4.1]{DG}, the claim is equivalent to the following statement: Let $\mathfrak{g}:=\Lie\boldsymbol{\alpha}$ be the finite-dimensional commutative $p$-Lie algebra over $k$ corresponding to $\boldsymbol{\alpha}.$ Then $\mathfrak{g} = \mathfrak{g}_{\mathrm{u}} \oplus \mathfrak{g}_{\mathrm{m}},$ where $\mathfrak{g}_{\mathrm{u}}, \mathfrak{g}_{\mathrm{m}} \subseteq \mathfrak{g}$ are $p$-Lie subalgebras such that the $p$-th power map is trivial on $\mathfrak{g}_{\mathrm{u}}$ and such that $\mathfrak{g}_{\mathrm{m}}$ contains a basis $x_1,..., x_b$ with $x_i^{[p]} = x_i$ for $i=1,...,b.$ It is well-known that there is a decomposition as above with $\mathfrak{g}_{\mathrm{u}}$ nilpotent and $ \mathfrak{g}_{\mathrm{m}}$ semisimple \cite[p. 477]{Jac}; the description of $ \mathfrak{g}_{\mathrm{m}}$ is then \cite[Theorem 3]{Jac}. Now we observe that $\boldsymbol{\alpha}^{(p)}$ corresponds to the Frobenius twist $\mathfrak{g}^{(p)};$ moreover, the $p$-th power map can be seen as a \it $k$-homomorphism \rm $\mathfrak{g}^{(p)} \to \mathfrak{g}$ which corresponds to the Verschiebung. Our assumption therefore implies that the $p$-th power map of $\mathfrak{g}_{\mathrm{u}}$ is indeed trivial, so we may put $a:=\dim_k \mathfrak{g}_{\mathrm{u}}$ and deduce the claim. 
\end{proof}

In order to treat torsors under more general infinitesimal group schemes, we shall need the following 

\begin{lemma} \label{filtrationlem}
Let $\boldsymbol{\alpha}$ be an infinitesimal group scheme over $k.$ Then there exists an $\Aut_k(\boldsymbol{\alpha})$-equivariant filtration 
$$0 = \boldsymbol{\alpha}_{(0)} \subseteq ... \subseteq \boldsymbol{\alpha}_{(h)} = \boldsymbol{\alpha}$$
such that each successive quotient has Frobenius height $\leq 1$ and multiplicative Verschiebung image. 
\end{lemma}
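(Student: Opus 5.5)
We will build the filtration from characteristic subgroup schemes, i.e. subgroups defined functorially in $\boldsymbol{\alpha}$. Such subgroups are automatically stable under every automorphism of $\boldsymbol{\alpha}$, so equivariance will come for free.

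\textbf{Step 1: Frobenius kernels.} For $i\geq 0$ put $\boldsymbol{\alpha}[F^i]:=\ker(F^i_{\boldsymbol{\alpha}/k})$. Since $\boldsymbol{\alpha}$ is infinitesimal, $\boldsymbol{\alpha}[F^m]=\boldsymbol{\alpha}$ for $m$ the Frobenius height. Each quotient $Q_i:=\boldsymbol{\alpha}[F^{i+1}]/\boldsymbol{\alpha}[F^i]$ has Frobenius height $\leq 1$. Indeed, $F^i$ induces a monomorphism
$$Q_i\hookrightarrow \boldsymbol{\alpha}[F^{i+1}]^{(p^i)}\subseteq \boldsymbol{\alpha}^{(p^i)}[F],$$
and Frobenius height $\leq 1$ passes to subgroups. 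So it remains to refine each $Q_i$.

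\textbf{Step 2: refine a height-$\leq 1$ group.} Let $Q$ have Frobenius height $\leq 1$ and consider $V=V_{Q/k}\colon Q^{(p)}\to Q$. Its image $\mathrm{im}\,V$ is a characteristic subgroup, as is each iterated image $\mathrm{im}\,V^r$, where $V^r\colon Q^{(p^r)}\to Q$. Via the $p$-Lie algebra dictionary of \cite{DG} used in Lemma \ref{ablem}, $V$ corresponds to the $p$-map $x\mapsto x^{[p]}$ on $\mathfrak{g}=\Lie Q$, and $\mathrm{im}\,V^r$ corresponds to the sub-$p$-Lie algebra spanned by the $p^r$-th powers. This chain stabilises at $\mathfrak{g}_\infty$, on which the $p$-map is bijective. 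By Jacobson's decomposition \cite[Theorem 3]{Jac} quoted in the proof of Lemma \ref{ablem}, $\mathfrak{g}_\infty$ is the semisimple part and corresponds to a multiplicative group $Q_{\mathrm{m}}$. We filter
$$0\subseteq Q_{\mathrm{m}}=\mathrm{im}\,V^N\subseteq \mathrm{im}\,V^{N-1}\subseteq\cdots\subseteq \mathrm{im}\,V\subseteq Q.$$
The successive quotients then behave as follows. $Q_{\mathrm{m}}$ is multiplicative, so its Verschiebung image is multiplicative. Each $\mathrm{im}\,V^{r}/\mathrm{im}\,V^{r+1}$ has trivial induced $p$-map, because the $p$-map sends $\mathrm{im}\,V^r$ into $\mathrm{im}\,V^{r+1}$. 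Hence its Verschiebung image is $0$, which is multiplicative. All these quotients have Frobenius height $\leq 1$, since they are subquotients of $Q$. The only thing to check is that Verschiebung is compatible with subquotients, which holds by functoriality of $V$.

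\textbf{Step 3: assemble.} Pull each refinement of $Q_i$ back to $\boldsymbol{\alpha}[F^{i+1}]$ and concatenate over $i$; this gives the required filtration. Equivariance holds because every step is functorial. An automorphism $\phi$ of $\boldsymbol{\alpha}$ preserves $\boldsymbol{\alpha}[F^i]$ since $F\circ\phi=\phi^{(p)}\circ F$. It then induces automorphisms of $Q_i$, and these commute with $V$, so they preserve the images $\mathrm{im}\,V^r$.

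\textbf{Main obstacle.} The delicate point is Step 2: identifying $\mathrm{im}\,V^N$ with a multiplicative group, and justifying the passage between images of group-scheme maps and $p$-Lie subalgebras. For the latter, use that on height-$\leq1$ groups the equivalence with $p$-Lie algebras is exact. For the former, if the Lie-algebra argument is awkward, I would instead use the canonical étale-local/multiplicative–unipotent splitting $Q=Q_{\mathrm{m}}\times Q_{\mathrm{u}}$ over the perfect field $k$. Here $Q_{\mathrm{u}}$ is unipotent, so $V$ is nilpotent on it, and we may filter $Q_{\mathrm{u}}$ by the images $\mathrm{im}\,V^r$.
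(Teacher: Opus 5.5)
Your proof is correct and uses the same ingredients as the paper: passing to Frobenius kernels, taking iterated Verschiebung images, and getting equivariance from functoriality. The paper only packages these as an induction on the order of $\boldsymbol{\alpha}$, peeling off the single characteristic subgroup $\mathrm{im}(V^{l-1})\subseteq\ker F_{\boldsymbol{\alpha}/k}$ at each step, whereas you write the whole filtration down at once. There is one small slip in Step 1: $\boldsymbol{\alpha}[F^{i+1}]^{(p^i)}=\boldsymbol{\alpha}^{(p^i)}[F^{i+1}]$ contains $\boldsymbol{\alpha}^{(p^i)}[F]$ rather than being contained in it; what you actually need, namely that $F^i$ maps $\boldsymbol{\alpha}[F^{i+1}]$ into $\boldsymbol{\alpha}^{(p^i)}[F]$, is true.
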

\begin{proof}
By induction on the order of $\boldsymbol{\alpha},$ it suffices to exhibit a canonical characteristic subgroup scheme of $\boldsymbol{\alpha}$ with the claimed properties which is of positive order as soon as so is $\boldsymbol{\alpha}.$ Denote by $F_{\boldsymbol{\alpha}/k} \colon \boldsymbol{\alpha} \to \boldsymbol{\alpha}^{(p)}$ the relative Frobenius and by $V^l_{\boldsymbol{\alpha}/k} \colon \boldsymbol{\alpha}^{(p^l)} \to \boldsymbol{\alpha}$ the $l$-fold iteration of relative Verschiebung on $\boldsymbol{\alpha}.$ Note that both these morphisms are functorial in $\boldsymbol{\alpha}.$ Replacing $\boldsymbol{\alpha}$ by $\ker F_{\boldsymbol{\alpha}/k},$ we may hence assume that the Frobenius height of $\boldsymbol{\alpha}$ is $\leq 1.$ Now let $l$ be the minimal positive integer such that the scheme-theoretic image of $V^l_{\boldsymbol{\alpha}/k}$ is multiplicative. Replacing $\boldsymbol{\alpha}$ by the scheme-theoretic image of $V^{l-1}_{\boldsymbol{\alpha}/k},$ we obtain a closed subgroup scheme of $\boldsymbol{\alpha}$ with trivial Frobenius and multiplicative Verschiebung image which is of positive order if so is $\boldsymbol{\alpha}.$ Equivariance follows from the functoriality of Frobenius and Verschiebung. 
\end{proof}

For the remainder of this paragraph, we shall consider the following situation: let $R\subseteq S$ be a finite extension of regular complete local $k$-algebras of dimension $n$ which is a left-torsor for some infinitesimal $k$-group scheme $\boldsymbol{\alpha}.$ Assume that the group $\mu_d$ acts (from the left) on both $R$ and $S$ by automorphisms of $k$-algebras in a compatible way. Finally, suppose that $\mu_d$ acts (from the right) on $\boldsymbol{\alpha}$ by automorphisms of $k$-group schemes and that for all local sections $s$ of $\Spec S$ and $\alpha$ of $\boldsymbol{\alpha},$ as well as for all $\zeta\in \mu_d,$ we have 
\begin{align}\alpha(s) \ast \zeta = (\alpha \ast \zeta)(s \ast \zeta).\label{actioncondition}\end{align}

\begin{proposition} \label{Height1weightsprop}
Assume that $\boldsymbol{\alpha}$ is of Frobenius height $\leq 1,$ and that the image of Verschiebung on $\boldsymbol{\alpha}$ is multiplicative. Let $r\in \N$ such that $p^r=\dim_k\Gamma(\boldsymbol{\alpha}, \Og_{\boldsymbol{\alpha}}).$ Then there exists a regular system of eigenparameters $t_1,..., t_n$ of $S$ such that $t_1^p,..., t_r^p, t_{r+1},....,t_n$ are contained in $R$ and form a regular system of parameters of $R.$ 
\end{proposition}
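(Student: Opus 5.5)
The approach is to use the closed fibre of the torsor to determine how much of $\mathfrak{m}_S/\mathfrak{m}_S^2$ comes from $R$. We then choose eigenparameters adapted to this subspace and finish with a degree count. Throughout, semisimplicity of $k[\mu_d]$-modules (as in the proof of Proposition \ref{weightsexistprop}) is what makes all choices $\mu_d$-equivariant.

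\emph{Step 1: $S^p\subseteq R$.} Write the torsor structure as a coaction $\rho\colon S\to \Gamma(\boldsymbol{\alpha},\Og_{\boldsymbol{\alpha}})\otimes_k S$ with $R=S^{\boldsymbol{\alpha}}$. Since $\boldsymbol{\alpha}$ has Frobenius height $\leq 1$, every element of the augmentation ideal of $\Gamma(\boldsymbol{\alpha},\Og_{\boldsymbol{\alpha}})$ has vanishing $p$-th power. Hence $\rho(f^p)=\rho(f)^p=1\otimes f^p$ for all $f\in S$, so $f^p\in R$.

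\emph{Step 2: the image of $\mathfrak{m}_R$ has codimension $r$.} Let $V\subseteq \mathfrak{m}_S/\mathfrak{m}_S^2$ be the image of $\mathfrak{m}_R$; it is $\mu_d$-stable because the actions are compatible. Because $k$ is algebraically closed, the torsor is trivial over the closed point, so $S/\mathfrak{m}_RS\cong\Gamma(\boldsymbol{\alpha},\Og_{\boldsymbol{\alpha}})$. Thus
$$\mathfrak{m}_S/(\mathfrak{m}_RS+\mathfrak{m}_S^2)\cong \mathfrak{m}_{\boldsymbol{\alpha}}/\mathfrak{m}_{\boldsymbol{\alpha}}^2=(\Lie\boldsymbol{\alpha})^\vee,$$
and this space has dimension $r$. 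Here I use that $p^r=\dim_k\Gamma(\boldsymbol{\alpha},\Og_{\boldsymbol{\alpha}})$ and that a height-one infinitesimal group of order $p^r$ has $r$-dimensional Lie algebra, which follows from the $p$-Lie algebra dictionary in the proof of Lemma \ref{ablem}. Therefore $\dim_k V=n-r$.

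\emph{Step 3: choice of parameters.} The map $\mathfrak{m}_R\to V$ is $\mu_d$-equivariant and surjective, so it has a $\mu_d$-equivariant section. Using it, I pick eigenvectors $t_{r+1},\dots,t_n\in\mathfrak{m}_R$ whose images form a basis of $V$. I then choose a $\mu_d$-stable complement of $V$ in $\mathfrak{m}_S/\mathfrak{m}_S^2$ and lift an eigenbasis of it equivariantly to $t_1,\dots,t_r\in\mathfrak{m}_S$. Then $t_1,\dots,t_n$ is a regular system of eigenparameters of $S$, so $S=k[\![t_1,\dots,t_n]\!]$ by Cohen's theorem. By Step 1, $t_1^p,\dots,t_r^p\in R$.

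\emph{Step 4: degree count.} Put $A:=k[\![t_1^p,\dots,t_r^p,t_{r+1},\dots,t_n]\!]\subseteq R\subseteq S$. The ring $S$ is free over $A$ of rank $p^r$, with basis the monomials $t_1^{e_1}\cdots t_r^{e_r}$ for $0\le e_i<p$. The torsor property gives that $S$ is finite flat of rank $p^r$ over $R$, so the extension of fraction fields $\mathrm{Frac}(R)\subseteq\mathrm{Frac}(S)$ has degree $p^r$. Comparing this with $[\mathrm{Frac}(S):\mathrm{Frac}(A)]=p^r$ forces $\mathrm{Frac}(A)=\mathrm{Frac}(R)$. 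Since $R$ is integral over $A$ and $A$ is normal (being regular), we get $R=A$. Hence $t_1^p,\dots,t_r^p,t_{r+1},\dots,t_n$ lie in $R$ and form a regular system of parameters of $R$, as claimed.

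The main obstacle I expect is Step 2, namely making precise the identification of the closed fibre with $\boldsymbol{\alpha}$ and the equality $\dim\Lie\boldsymbol{\alpha}=r$. Everything else is bookkeeping. I note that this route seems to use only the Frobenius height condition, not the hypothesis on the Verschiebung image. If Step 2 turns out to need more structure, my fallback is to invoke Lemma \ref{ablem} to write $\boldsymbol{\alpha}\cong\boldsymbol{\alpha}_p^a\oplus\boldsymbol{\mu}_p^b$, which makes $\Gamma(\boldsymbol{\alpha},\Og_{\boldsymbol{\alpha}})$ explicitly a truncated polynomial ring in $r=a+b$ variables.
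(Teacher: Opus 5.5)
Your proof is correct, and it takes a genuinely different route from the paper's.

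\textbf{The paper's route.} It first applies Lemma \ref{ablem}, which is exactly where the Verschiebung hypothesis is used, to write $\boldsymbol{\alpha}\cong\boldsymbol{\alpha}_p^a\oplus\boldsymbol{\mu}_p^b$. It then uses $H^1(\Spec R,\Ga)=0=H^1(\Spec R,\Gm)$, together with Kummer theory and its additive analogue, to get the explicit presentation $S\cong R[X_1,\dots,X_r]/\langle X_1^p-f_1,\dots,X_r^p-f_r\rangle$ with $f_i\in\mathfrak m_R$. From the four-term exact sequence of cotangent spaces and $\dim S=\dim R$ it deduces that $f_1,\dots,f_r$ are part of a regular system of parameters of $R$. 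Finally it computes $t_j^p=\sum_i a_{ij}^pf_i+g_j^p+h_j^p$, with $(a_{ij}^p)\in\mathrm{GL}_r(R)$ and $g_j^p,h_j^p\in\mathfrak m_R^2$. So the conclusion follows directly from the explicit equations, with no field-degree argument.

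\textbf{Your route.} You use only intrinsic consequences of the torsor structure:
\begin{itemize}
\item $S^p\subseteq S^{\boldsymbol{\alpha}}=R$, from Frobenius height $\leq 1$;
\item the identification of the closed fibre with $\boldsymbol{\alpha}$, which gives codimension $r=\dim\Lie\boldsymbol{\alpha}$ for the image of $\mathfrak m_R$;
\item the rank and normality comparison forcing $R=k[\![t_1^p,\dots,t_r^p,t_{r+1},\dots,t_n]\!]$.
\end{itemize}

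\textbf{What each approach buys.} Your remark is correct: nothing in your argument uses the multiplicativity of the Verschiebung image. Your version therefore proves the proposition for every infinitesimal $\boldsymbol{\alpha}$ of Frobenius height $\leq 1$. As a consequence, in Proposition \ref{weightspermprop} it would suffice to filter $\boldsymbol{\alpha}$ by the kernels of the iterated relative Frobenius, which are $\mu_d$-stable by functoriality and have successive quotients of height $\leq 1$. Lemma \ref{ablem} and the Verschiebung part of Lemma \ref{filtrationlem} would then not be needed. In exchange, the paper's route gives an explicit description of $S$ over $R$ and identifies the kernel of $\mathfrak m_R/\mathfrak m_R^2\to\mathfrak m_S/\mathfrak m_S^2$ as the span of the $\overline{f}_i$. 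That extra information is not used later in the paper.
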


\begin{proof}
By our assumption on the Frobenius height and the Verschiebung, Lemma \ref{ablem} tells us that there exist nonnegative integers $a$ and $b$ and an isomorphism 
$$\boldsymbol{\alpha} \cong \boldsymbol{\alpha}_p ^  a \oplus \boldsymbol{\mu}_p ^ b.$$ Put $r:=a+b.$ Since $R$ is local, we have $H^1(\Spec R, \Ga)=0=H^1(\Spec R, \Gm).$ By Kummer theory (see \cite[Chapter III, §4]{Milne}) and its additive analogue, we obtain an isomorphism 
$$S\cong R[X_1,..., X_r] / \langle X_1^p -f_1,..., X_r^p-f_r \rangle$$ for suitable $f_1,..., f_r\in R,$ which we may choose to lie in $\mathfrak{m}_R$ because $k$ is algebraically closed. In particular, $s^p\in R$ for all $s\in S.$ We shall denote the images of the $X_1,..., X_r$ in $S$ by the same symbols. Note that the canonical map 
$$R[X_1,..., X_r] / \langle X_1^p -f_1,..., X_r^p-f_r \rangle \to R[\![X_1,..., X_r]\!] / \langle X_1^p -f_1,..., X_r^p-f_r \rangle$$ is an isomorphism, and that we have an exact sequence
\begin{align}0 \to\langle \overline{f}_1,..., \overline{f}_r \rangle \to \mathfrak{m}_R/\mathfrak{m}_R^2 \to \mathfrak{m}_S/\mathfrak{m}_S^2 \to \langle \overline{X}_1,..., \overline{X}_r\rangle \to 0\label{exseqI}\end{align} of $k[\mu_d]$-modules; here bars denote images modulo the square of the maximal ideal. Because $\dim S = \dim R$ and because $S$ is regular, the elements $f_1,..., f_r$ of $\mathfrak{m}_R$ form part of a regular system of parameters of $R.$ Now choose eigenparameters $t_1,..., t_r\in \mathfrak{m}_S$ whose images in $\mathfrak{m}_S/\mathfrak{m}_S^2$ map to a basis of $\langle \overline{X}_1,..., \overline{X}_r\rangle.$ Then we can find a matrix $(a_{ij})_{ij} \in \mathrm{GL}_r(S)$ as well as $g_j\in \mathfrak{m}_R$, $h_j\in \mathfrak{m}_S^2$ such that 
$$t_j = \sum_{i=1}^r a_{ij} X_i + g_j + h_j$$
for $j=1,...,r.$ This implies that 
$t_j^p = \sum_{i=1}^r a_{ij}^p f_i + g_j^p + h_j^p$ for all $j.$ Because we clearly have $g_j^p, h_j^p \in \mathfrak{m}_R^2$ and $(a_{ij}^p)_{ij} \in \mathrm{GL}_r(R),$ we find that $t_1^p,..., t_r^p$ form part of a regular system of parameters of $R.$ Now we extend $t_1,...,t_r$ to a regular system of eigenparameters $t_1,..., t_n$ of $S$ such that $t_{r+1},..., t_n\in R.$ The exactness of the sequence (\ref{exseqI}) then shows that $t_1^p,...,t_r^p,t_{r+1},...,t_n$ is a regular system of parameters of $R.$
\end{proof}

\begin{proposition}\label{weightspermprop}
Let $\Spec S \to \Spec R$ be a $\mu_d$-equivariant torsor for an infinitesimal group scheme $\boldsymbol{\alpha}$ as described at the beginning of this paragraph. Let $n:=\dim S.$ Suppose that $\mu_d$ acts with weights $j_1,..., j_n\in \Z/d\Z$ on $S.$ Then there exists a finite sequence $e_1,..., e_n$ of nonnegative integers with the following properties: 
\begin{enumerate}[(i)]
\item $\mu_d$ acts with weights $p^{e_1}j_1,..., p^{e_n}j_n$ on $R,$ and 
\item $\dim_k \Gamma(\boldsymbol{\alpha}, \Og_{\boldsymbol{\alpha}})=p^{e_1 + ... + e_n}.$
\end{enumerate}
\end{proposition}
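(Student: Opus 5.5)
We argue by induction on the order of $\boldsymbol{\alpha}$, using Lemma \ref{filtrationlem} to reduce to the case treated in Proposition \ref{Height1weightsprop}. If $\boldsymbol{\alpha}$ is trivial, then $R=S$ and we take all $e_i=0$. Otherwise, let $0=\boldsymbol{\alpha}_{(0)}\subseteq \dots\subseteq\boldsymbol{\alpha}_{(h)}=\boldsymbol{\alpha}$ be the filtration of Lemma \ref{filtrationlem}. Since it is $\Aut_k(\boldsymbol{\alpha})$-equivariant, every step is stable under the right $\mu_d$-action on $\boldsymbol{\alpha}$.

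Set $\boldsymbol{\beta}:=\boldsymbol{\alpha}_{(1)}$, which has Frobenius height $\leq 1$ and multiplicative Verschiebung image. We form the intermediate quotient $\Spec S' := \Spec S/\boldsymbol{\beta}$, i.e.\ $S'=S^{\boldsymbol{\beta}}$, the ring of $\boldsymbol{\beta}$-invariants. The plan then has three parts.

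\textbf{(a) Torsor structure.} Since $\Spec S\to\Spec R$ is an $\boldsymbol{\alpha}$-torsor, $\Spec S\to \Spec S'$ is a $\boldsymbol{\beta}$-torsor and $\Spec S'\to\Spec R$ is an $\boldsymbol{\alpha}/\boldsymbol{\beta}$-torsor. This is standard quotient theory for free actions of finite flat group schemes.

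\textbf{(b) Regularity of $S'$.} We need $S'$ to be a regular complete local $k$-algebra of dimension $n$ with residue field $k$. It is finite over $R$ and local because everything is infinitesimal, so $\dim S'=n$. For regularity, note that $S$ is faithfully flat over $S'$ (torsor) and $S$ is regular, so $S'$ is regular by descent of regularity along faithfully flat local maps.

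\textbf{(c) Compatibility of the actions.} Condition (\ref{actioncondition}) together with the $\mu_d$-stability of $\boldsymbol{\beta}$ shows that $\mu_d$ preserves $S'$, and the induced actions on $\boldsymbol{\alpha}/\boldsymbol{\beta}$ and $\boldsymbol{\beta}$ again satisfy (\ref{actioncondition}).

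With these in place, Proposition \ref{Height1weightsprop} applied to $S'\subseteq S$ gives a regular system of eigenparameters $t_1,\dots,t_n$ of $S$ such that $t_1^p,\dots,t_r^p,t_{r+1},\dots,t_n$ is a regular system of parameters of $S'$, where $p^r=\dim_k\Gamma(\boldsymbol{\beta},\Og_{\boldsymbol{\beta}})$. If $\zeta\ast t_i=\zeta^{j_i}t_i$, then $\zeta\ast t_i^p=\zeta^{pj_i}t_i^p$, so these are eigenparameters of $S'$. Hence $\mu_d$ acts on $S'$ with weights $pj_1,\dots,pj_r,j_{r+1},\dots,j_n$; by Proposition \ref{weightsexistprop}(ii) these weights are independent of the choice of parameters. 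We set $e_i'=1$ for $i\leq r$ and $e_i'=0$ otherwise.

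Since $\boldsymbol{\alpha}/\boldsymbol{\beta}$ has smaller order, the induction hypothesis applied to $R\subseteq S'$, with $S'$ carrying weights $j_i':=p^{e_i'}j_i$, yields integers $e_i''$. One subtlety is that the hypothesis is phrased for weights listed in some order. Because weights form a multiset, we index them compatibly: the statement for $S'$ gives weights $p^{e''_i}j'_i$ on $R$, with the same indexing. Setting $e_i:=e_i'+e_i''$, we get weights $p^{e_i}j_i$ on $R$.

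For the order, we use multiplicativity in the extension $0\to\boldsymbol{\beta}\to\boldsymbol{\alpha}\to\boldsymbol{\alpha}/\boldsymbol{\beta}\to0$:
$$\dim_k\Gamma(\boldsymbol{\alpha},\Og_{\boldsymbol{\alpha}})=\dim_k\Gamma(\boldsymbol{\beta},\Og_{\boldsymbol{\beta}})\cdot\dim_k\Gamma(\boldsymbol{\alpha}/\boldsymbol{\beta},\Og_{\boldsymbol{\alpha}/\boldsymbol{\beta}}) = p^{r}\cdot p^{\sum e_i''}=p^{\sum e_i}.$$

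The main obstacle is step (b) together with (c): verifying that the intermediate quotient $S'$ is again a regular complete local ring carrying a compatible $\mu_d$-action satisfying (\ref{actioncondition}), so that both Proposition \ref{Height1weightsprop} and the induction apply. I would handle regularity via faithfully flat descent from $S$, as above, and equivariance via the $\Aut_k(\boldsymbol{\alpha})$-equivariance in Lemma \ref{filtrationlem}.
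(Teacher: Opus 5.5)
Your proposal is correct and follows essentially the same argument as the paper: you pass to the quotient $S^{\boldsymbol{\beta}}$ by the first step $\boldsymbol{\beta}$ of the filtration from Lemma \ref{filtrationlem}, obtain regularity by faithfully flat descent and the $\mu_d$-action from (\ref{actioncondition}), apply Proposition \ref{Height1weightsprop} to $S^{\boldsymbol{\beta}}\subseteq S$, and conclude by induction together with multiplicativity of orders. The only difference is cosmetic: you induct on the order of $\boldsymbol{\alpha}$ with base case $\boldsymbol{\alpha}=0$, whereas the paper inducts on the length $h$ of the filtration with base case $h=1$; your version has the mild advantage of not needing to check that the induced filtration on $\boldsymbol{\alpha}/\boldsymbol{\beta}$ has length $h-1$.
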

\begin{proof}
We prove the claim by induction on the length $h$ of the filtration from Lemma \ref{filtrationlem}. If $h=1,$ we choose a regular system of eigenparameters $t_1,..., t_n$ of $S$ and $1\leq r \leq n$ such that $p^r=\dim_k\Gamma(\boldsymbol{\alpha}, \Og_{\boldsymbol{\alpha}})$ and such that $t_1^p,..., t_r^p, t_{r+1},..., t_n$ are contained in $R$ and form a regular system of parameters (as in Proposition \ref{Height1weightsprop}). Up to reordering the weights, we may suppose that $\zeta\ast t_i = \zeta^{j_i} t_i$ for $i=1,...,n$ and compute 
$$\zeta\ast t_i^p = (\zeta\ast t_i)^p = \zeta^{p j_i} t_i^p$$ for $i=1,...,r,$ so the claim follows in this case since $\dim_k \Gamma(\boldsymbol{\alpha}, \Og_{\boldsymbol{\alpha}})=p^{r}.$ In general, let $\boldsymbol{\beta}$ be the first non-zero part of the filtration from Lemma \ref{filtrationlem}. Note that the quotient of $\Spec S$ by the action of $\boldsymbol{\beta}$ exists as an affine scheme by a theorem of Grothendieck \cite[Théorème 1 iii) on p. 82]{RayI}; we shall denote its ring of global functions by $S^{\boldsymbol{\beta}}.$ Then $S^{\boldsymbol{\beta}}$ is clearly finite over $R$ and local (hence complete); it is moreover regular because the map $S^{\boldsymbol{\beta}}\subseteq S$ is faithfully flat \cite[Tag 00OF]{Stacks}. Finally, we observe that, by the compatibility (\ref{actioncondition}) of the $\mu_d$-actions on $S$ and $\boldsymbol{\alpha},$ $\mu_d$ acts on $S^{\boldsymbol{\beta}}.$ The morphism $\Spec S^{\boldsymbol{\beta}} \to \Spec R$ is then a $\mu_d$-equivariant torsor for $\boldsymbol{\alpha}/\boldsymbol{\beta},$ whose filtration is of length $h-1.$ Now let $t_1,..., t_n$ be a system of eigenparameters of $S$ such that $t_1^p,..., t_r^p, t_{r+1},..., t_n$ are contained in $S^{\boldsymbol{\beta}}$ and form a regular system of parameters (Proposition \ref{Height1weightsprop}) with $p^r:=\dim_k\Gamma(\boldsymbol{\beta}, \Og_{\boldsymbol{\beta}}),$ and let $j_1,..., j_n$ be the corresponding weights. We already know that $pj_1,..., pj_r, j_{r+1},..., j_n$ are the weights of the $\mu_d$-action on $S^{\boldsymbol{\beta}},$ so by the induction hypothesis, there exists a finite sequence $e_1',..., e_n'$ such that $p^{e'_1+1}j_1,..., p^{e'_r+1}j_r, p^{e'_{r+1}} j_{r+1},..., p^{e'_n}j_n$ are the weights of the $\mu_d$-action on $R$ and such that $\dim_k \Gamma(\boldsymbol{\alpha}/\boldsymbol{\beta}, \Og_{\boldsymbol{\alpha}/\boldsymbol{\beta}}) = p^{e_1'+...+e_n'}.$ Hence we may put $e_i:=e_i'+1$ for $1\leq i \leq r$ and $e_i:=e_i'$ for $r+1\leq i \leq n;$ since $\dim_k \Gamma(\boldsymbol{\beta}, \Og_{\boldsymbol{\beta}}) = p^r,$ we moreover obtain $\dim_k\Gamma(\boldsymbol{\alpha}, \Og_{\boldsymbol{\alpha}})= p^{e_1+...+e_n},$ as claimed.
\end{proof}

\begin{definition}\label{scaledef}
In the situation of Proposition \ref{weightspermprop}, we shall call any sequence $e_1,..., e_n$ of nonnegative integers with the two properties described there a \rm scale \it of the $\mu_d$-equivariant $\boldsymbol{\alpha}$-torsor $\Spec S \to \Spec R$ relative to the weights $j_1,..., j_n.$  
\end{definition}

\begin{remark} This terminology has been chosen since the scale will mainly be used as a device to measure the weights of the $\mu_d$-action on $R,$ provided we already know those of the action on $S.$ Note that the scale is not uniquely determined by the torsor (even after choosing an ordering on the multiset of weights); in general, the larger $d$ is compared to $\dim_k \Gamma(\boldsymbol{\alpha}, \Og_{\boldsymbol{\alpha}}),$ the more information is contained in the scale(s). 
\end{remark}
\section{Multiplicities}
Let $k$ be an algebraically closed field and let $\psi\colon G\to H$ be a surjective morphism of smooth connected algebraic groups over $k.$ We shall need a measure of the failure of $\psi$ to be smooth. To this end, let $\Delta_\psi:=(\ker \psi)^0.$ Then $\Delta_\psi$ is a connected algebraic group over $k,$ and because $k$ is perfect, the maximal reduced closed subscheme $\Delta_{\psi, \mathrm{red}}$ of $\Delta_\psi$ is a subgroup scheme of $\Delta_\psi$ which is, moreover, smooth over $k.$ The quotient $\boldsymbol{\alpha}_\psi:=\Delta_\psi/\Delta_{\psi, \mathrm{red}}$ is then an infinitesimal group scheme over $k.$

\begin{definition}
The \rm thickness \it of $\psi$ is 
$$\Theta(\psi):=\dim_k \Gamma(\boldsymbol{\alpha}_{\psi}, \Og_{\boldsymbol{\alpha}_{\psi}}).$$
\end{definition}
Clearly, $\psi$ is smooth if and only if $\Theta(\psi)=1.$ The main reason why this invariant is useful for us is the fact that $\Theta(\psi)$ can be interpreted as the \it multiplicity \rm of the cycle $[\Delta] \in \mathrm{CH}_{\dim \Delta}(\Delta) \cong \Z$: 

\begin{proposition} \label{thicknessmultprop}
Let $\eta$ be the generic point of $\Delta_\psi.$ Then 
$$\ell_{\Og_{\Delta_\psi,\eta}}(\Og_{\Delta_\psi,\eta}) = \Theta(\psi).$$
\end{proposition}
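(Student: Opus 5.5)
Write $\Delta:=\Delta_\psi$, $\Delta':=\Delta_{\psi,\mathrm{red}}$ and $\boldsymbol{\alpha}:=\boldsymbol{\alpha}_\psi=\Delta/\Delta'$. The plan is to use the quotient morphism $q\colon \Delta\to\boldsymbol{\alpha}$ to show that $\Delta$ is, étale-locally (even fppf-locally) over $\boldsymbol{\alpha}$, a product $\Delta'\times_k\boldsymbol{\alpha}$. The length of the generic local ring then comes out as $\dim_k\Gamma(\boldsymbol{\alpha},\Og_{\boldsymbol{\alpha}})$.

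First, $q$ is faithfully flat: it is a quotient by a flat subgroup scheme. Moreover $\Delta\to\boldsymbol{\alpha}$ is a $\Delta'$-torsor, and so $\Delta\times_{\boldsymbol{\alpha}}\Delta\cong\Delta'\times_k\Delta$. Since $\boldsymbol{\alpha}$ is infinitesimal, $|\Delta|=|\Delta'|$ as topological spaces. Hence $\eta$ is also the generic point of $\Delta'$, and $\Og_{\Delta',\eta}=\Og_{\Delta,\eta,\mathrm{red}}$ is the function field $k(\Delta')$.

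The cleanest route is to trivialise the torsor. Because $\boldsymbol{\alpha}=\Spec A$ with $A$ a finite local $k$-algebra with residue field $k$, and $\Delta'$ is smooth, the torsor $\Delta\to\boldsymbol{\alpha}$ has a section. Indeed, the rational point $e\in\boldsymbol{\alpha}(k)$ lifts to $e\in\Delta(k)$. We then extend this lift successively through the square-zero thickenings of $A$ (or of $A/\mathfrak{m}_A^i$), using the formal smoothness of the smooth morphism $q$, which is smooth as a torsor under a smooth group. Since $A$ is Artinian, this finite induction produces a section $\sigma\colon\boldsymbol{\alpha}\to\Delta$ of $q$. The torsor is therefore trivial, and as schemes we get
$$\Delta\cong\Delta'\times_k\boldsymbol{\alpha}=\Delta'\times_k\Spec A.$$
From this, $\Og_{\Delta,\eta}\cong \Og_{\Delta',\eta}\otimes_k A = k(\Delta')\otimes_k A$, where the localisation is taken at the unique prime lying over $\eta$; this is legitimate because $\Spec A$ is a single point.

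Now $k(\Delta')\otimes_kA$ is a finite local $k(\Delta')$-algebra of $k(\Delta')$-dimension $\dim_kA$ with residue field $k(\Delta')$. Its length as a module over itself equals its length as a $k(\Delta')$-vector space, namely $\dim_kA=\Theta(\psi)$. To check the length claim, filter by powers of the maximal ideal $\mathfrak{m}_A\otimes k(\Delta')$: each graded piece is a $k(\Delta')$-vector space, and the maximal ideal acts trivially on it. This gives $\ell_{\Og_{\Delta,\eta}}(\Og_{\Delta,\eta})=\Theta(\psi)$.

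The main obstacle is making the lifting step rigorous. One must check that $q$ is smooth, since it is fppf-locally isomorphic to the projection $\Delta'\times\boldsymbol{\alpha}\to\boldsymbol{\alpha}$. One must also check that the infinitesimal lifting criterion applies to an affine base that is a nilpotent thickening of $\Spec k$. If this turns out to be awkward, the fallback is to use that length is additive and compatible with flat base change: pull back along the finite flat map $\Delta\to\boldsymbol{\alpha}$, compare lengths at generic points via $\Delta\times_{\boldsymbol{\alpha}}\Delta\cong\Delta'\times\Delta$, and solve for $\ell(\Og_{\Delta,\eta})$ using multiplicativity of lengths under flat local homomorphisms, $\ell(B)=\ell(A)\cdot\ell(B/\mathfrak{m}_AB)$.
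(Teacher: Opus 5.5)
Your argument is correct, but it takes a different route from the paper's.

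\textbf{The paper's route.} The paper never trivialises the torsor. It uses only that $q\colon\Delta\to\boldsymbol{\alpha}$ is flat, so that $A:=\Gamma(\boldsymbol{\alpha},\Og_{\boldsymbol{\alpha}})\to\Og_{\Delta,\eta}$ is a faithfully flat local homomorphism. It then tensors a composition series $0=I_0\subseteq\dots\subseteq I_m=A$ up to $\Og_{\Delta,\eta}$; all quotients of this series are $k$, so $m=\Theta(\psi)$. The successive quotients become $\Og_{\Delta,\eta}\otimes_A k=\Og_{\Delta_{\mathrm{red}},\eta}$, which is a field, so the length is $m$. This is exactly the multiplicativity formula $\ell(B)=\ell(A)\,\ell(B/\mathfrak{m}_AB)$ from your fallback. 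It needs neither smoothness of $\Delta_{\mathrm{red}}$ nor any lifting argument: only flatness of $q$ and the fact that its fibre over the origin is the reduced scheme $\Delta_{\mathrm{red}}$.

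\textbf{Your route.} Your main argument proves more, namely a scheme isomorphism $\Delta\cong\Delta_{\mathrm{red}}\times_k\boldsymbol{\alpha}$, after which the length is a plain dimension count over $k(\Delta_{\mathrm{red}})$. The step you flag as the obstacle does go through. The map $q$ is flat of finite presentation with fibre $\ker q=\Delta_{\mathrm{red}}$, which is smooth, so $q$ is smooth and hence formally smooth. The identity point therefore lifts successively along the first-order thickenings $\Spec A/\mathfrak{m}_A^{i}\subseteq\Spec A/\mathfrak{m}_A^{i+1}$, giving your section.

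\textbf{On the fallback.} The map $q$ is flat but in general not finite, since its fibre has dimension $\dim\Delta$. The detour through $\Delta\times_{\boldsymbol{\alpha}}\Delta$ is also unnecessary: applying the multiplicativity formula directly to $A\to\Og_{\Delta,\eta}$ already finishes the proof, and that is precisely the paper's argument.
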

\begin{proof}
To simplify the notation, we shall write $\Delta:=\Delta_\psi$ and $\boldsymbol{\alpha}:=\boldsymbol{\alpha}_{\psi}.$ The map $\Delta \to \boldsymbol{\alpha}$ is faithfully flat. Since $\boldsymbol{\alpha}$ is infinitesimal, we obtain a faithfully flat homomorphism $\Gamma(\boldsymbol{\alpha}, \Og_{\boldsymbol{\alpha}}) \to \Og_{\Delta, \eta}.$ Choose a maximal strictly increasing chain 
$$0=I_0 \subseteq I_1 \subseteq ... \subseteq I_m = \Gamma(\boldsymbol{\alpha}, \Og_{\boldsymbol{\alpha}})$$ of ideals in $\Gamma(\boldsymbol{\alpha}, \Og_{\boldsymbol{\alpha}}).$ The successive quotients are then all isomorphic to $k;$ in particular, we have $\Theta(\psi)=m.$ By construction, we have 
$$\Og_{\Delta, \eta} \otimes_{\Gamma(\boldsymbol{\alpha}, \Og_{\boldsymbol{\alpha}})} k = \Og_{\Delta_{\mathrm{red}}, \eta},$$ which is a field. By faithful flatness, we obtain a strictly increasing chain 
$$0=\Og_{\Delta, \eta} \otimes_{\Gamma(\boldsymbol{\alpha}, \Og_{\boldsymbol{\alpha}})} I_0 \subseteq \Og_{\Delta, \eta} \otimes_{\Gamma(\boldsymbol{\alpha}, \Og_{\boldsymbol{\alpha}})} I_1 \subseteq ... \subseteq \Og_{\Delta, \eta} \otimes_{\Gamma(\boldsymbol{\alpha}, \Og_{\boldsymbol{\alpha}})} I_m  = \Og_{\Delta, \eta}$$ with successive quotients isomorphic to $ \Og_{\Delta_{\mathrm{red}}, \eta}.$ This implies that $$\ell_{\Og_{\Delta,\eta}}(\Og_{\Delta,\eta}) = m = \Theta(\psi),$$ as claimed. 
\end{proof}

We shall mainly employ the thickness in later paragraphs by bounding it; one useful result in this direction is 
\begin{lemma} \label{Thetaboundlem}
Let $M,$ $G,$ and $H$ be smooth connected algebraic groups over $k$ and let $\gamma\colon M\to G$ and $\phi\colon G\to H$ be faithfully flat homomorphisms of algebraic groups. Then $$\Theta(\phi) \leq \Theta(\phi\circ \gamma).$$
\end{lemma}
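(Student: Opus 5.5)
The idea is to compare the two infinitesimal groups directly: produce a faithfully flat homomorphism $\boldsymbol{\alpha}_{\phi\circ\gamma}\to\boldsymbol{\alpha}_{\phi}$ and then compare orders of finite group schemes. Write $\Delta:=\Delta_\phi=(\ker\phi)^0$ and $\Delta':=\Delta_{\phi\circ\gamma}=(\ker(\phi\circ\gamma))^0$.

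\emph{Step 1: $\gamma$ restricts to a faithfully flat homomorphism $\Delta'\to\Delta$.}
\begin{enumerate}[(a)]
\item Since $\ker(\phi\circ\gamma)=\gamma^{-1}(\ker\phi)=M\times_G\ker\phi$, the induced map $\ker(\phi\circ\gamma)\to\ker\phi$ is a base change of $\gamma$. It is therefore faithfully flat.
\item Put $U:=\gamma^{-1}(\Delta)$. Because $\Delta$ is open and closed in $\ker\phi$, the set $U$ is open and closed in $\ker(\phi\circ\gamma)$. It contains $\Delta'$, since $\Delta'$ is connected and contains the identity.
\item The map $U\to\Delta$ is flat, and $\Delta'$ is open in $U$. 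Hence $\Delta'\to\Delta$ is a flat homomorphism of finite type group schemes, so its image is open.
\item That image is an open subgroup of the connected group $\Delta$, hence all of $\Delta$. So $\gamma|_{\Delta'}\colon\Delta'\to\Delta$ is faithfully flat.
\end{enumerate}

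\emph{Step 2: passage to the infinitesimal quotients.}
\begin{enumerate}[(a)]
\item Since $\Delta'_{\mathrm{red}}$ is reduced, the homomorphism $\gamma|_{\Delta'}$ maps it into $\Delta_{\mathrm{red}}$.
\item We therefore obtain an induced homomorphism $\boldsymbol{\alpha}_{\phi\circ\gamma}=\Delta'/\Delta'_{\mathrm{red}}\to\Delta/\Delta_{\mathrm{red}}=\boldsymbol{\alpha}_\phi$.
\item This homomorphism is surjective as a map of fppf sheaves, because the composite $\Delta'\to\Delta\to\boldsymbol{\alpha}_\phi$ is faithfully flat.
\item A surjective homomorphism of finite $k$-group schemes is faithfully flat, and the order of the target divides the order of the source, by multiplicativity of orders in the exact sequence $0\to\ker\to\boldsymbol{\alpha}_{\phi\circ\gamma}\to\boldsymbol{\alpha}_\phi\to0$. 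Hence $\Theta(\phi)$ divides $\Theta(\phi\circ\gamma)$, which gives the inequality.
\end{enumerate}

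\emph{Alternative route via multiplicities.} One could instead use Proposition~\ref{thicknessmultprop}. Let $\eta$ and $\eta'$ be the generic points of $\Delta$ and $\Delta'$. Flatness of $\Og_{\Delta,\eta}\to\Og_{\Delta',\eta'}$ gives
$$\ell(\Og_{\Delta',\eta'})=\ell(\Og_{\Delta,\eta})\cdot\ell\bigl(\Og_{\Delta',\eta'}/\mathfrak{m}_\eta\Og_{\Delta',\eta'}\bigr)\geq\ell(\Og_{\Delta,\eta}).$$
This is an independent check of the same bound.

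\emph{Main obstacle.} The only delicate point is Step~1: that the identity component of $\ker(\phi\circ\gamma)$ surjects flatly onto the identity component of $\ker\phi$. I handle it, as above, by the open-image argument for flat group homomorphisms together with connectedness of $\Delta$.
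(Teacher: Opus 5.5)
Your proof is correct and follows essentially the same route as the paper: fppf-surjectivity of $\Delta_{\phi\circ\gamma}\to\Delta_\phi$ descends to a surjection $\boldsymbol{\alpha}_{\phi\circ\gamma}\to\boldsymbol{\alpha}_\phi$. The paper obtains this surjection via the snake lemma and then compares sizes through injectivity of coordinate rings, where you use divisibility of orders. Your Step 1 also supplies the open-image and connectedness argument for passing from $\ker(\phi\circ\gamma)\to\ker\phi$ to identity components, which the paper asserts with only a ``hence''.
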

\begin{proof}
The canonical map $\ker(\phi\circ \gamma) \to \ker \phi$ is surjective in the fppf-topology; hence so is the map $ \Delta_{\phi\circ \gamma} \to  \Delta_{\phi}.$ Now consider the induced commutative diagram
$$\begin{tikzcd}
0 \arrow[r] &  \Delta_{\phi\circ \gamma, \mathrm{red}} \arrow[r] \arrow[d] & \Delta_{\phi\circ \gamma} \arrow[r]\arrow[d] & \boldsymbol{\alpha}_{\phi\circ\gamma} \arrow[r]\arrow[d] & 0 \\ 
0 \arrow[r] &  \Delta_{\phi, \mathrm{red}} \arrow[r] &  \Delta_{\phi} \arrow[r] & \boldsymbol{\alpha}_{\phi} \arrow[r] & 0.
\end{tikzcd}$$
The snake lemma now shows that the map $\boldsymbol{\alpha}_{\phi\circ\gamma} \to \boldsymbol{\alpha}_{\phi}$ is surjective in the fppf-topology, which implies that the morphism $\Gamma(\boldsymbol{\alpha}_{\phi}, \Og_{\boldsymbol{\alpha}_{\phi}}) \to \Gamma(\boldsymbol{\alpha}_{\gamma\circ\phi}, \Og_{\boldsymbol{\alpha}_{\gamma\circ\phi}})$ of $k$-vector spaces is injective.
\end{proof}

\section{Bounding the thickness} \label{boundingthicknesspar}
In this paragraph, we shall let $\Og_K$ be a complete discrete valuation ring with field of fractions $K$ and algebraically closed residue field $k.$ Let $T$ be an algebraic torus over $K,$ let $K\subseteq L$ be a finite étale $K$-algebra, and let
$$\xi_{\eta} \colon \Res_{L/K}\mathbf{G}_{\mathrm{m}}^m \to T$$ be a smooth surjective morphism of algebraic groups with connected kernel for some positive integer $m.$ Denote by $\Og_L$ the integral closure of $\Og_K$ in $L.$  Let $\mathscr{R}$ and $\mathscr{T}$ be the the Néron lft-models of $\Res_{L/K}\mathbf{G}_{\mathrm{m}}^m$ and $T,$ respectively.  We begin by observing that $\mathscr{R}^0$ admits a canonical smooth compactification: we have a canonical isomorphism 
$$\mathscr{R}^0 = \Res_{\Og_L/\Og_K}\mathbf{G}_{\mathrm{m}}^m$$ by \cite[Proposition A.5.9]{CGP}. Since Weil restriction commutes with open immersions \cite[Chapter 7.6, Proposition 2 (i)]{BLR}, we have open immersions
$$\mathscr{R}^0 \to \Res_{\Og_L/\Og_K} \mathbf{A}^m_{\Og_L} = \Spec \mathrm{Sym}\, \Og_L^{m, \vee} \to \mathbf{P}(\Og_L^{m, \vee} \oplus \Og_K) =: \overline{\mathscr{R}}.$$ 
We let $\omega$ denote the composition of these open immersions and let $\mathscr{X}$ be the scheme-theoretic closure of $\ker \xi_{\eta}$ in $\overline{\mathscr{R}}\cong\mathbf{P}^{m\dim_KL}_{\Og_K};$ in particular, $\mathscr{X} \to \Spec \Og_K$ is a flat projective morphism. Note that $\overline{\mathscr{R}}$ carries a canonical $\Og_K$-very ample line bundle $\Og_{\overline{\mathscr{R}}}(1).$ We shall denote by $\overline{\mathscr{R}}_\eta$ and $\overline{\mathscr{R}}_k$ the generic and the special fibre of $\overline{\mathscr{R}},$ respectively. 

Before we proceed, we recall the following (well-known)

\begin{lemma} \label{flatlem}
Let $T_1, T_2$ be algebraic tori over $K$ and let $T_1 \to T_2$ be a smooth surjective homomorphism of algebraic groups with connected kernel. Then the induced morphism $\mathscr{T}_1 \to \mathscr{T}_2$ between the Néron lft-models of $T_1$ and $T_2$ is faithfully flat. In particular, the same is true for the induced morphism $\mathscr{T}_1^0 \to \mathscr{T}_2^0$ of identity components. 
\end{lemma}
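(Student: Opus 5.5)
Let $T_3:=\ker(T_1\to T_2)$. By hypothesis $T_3$ is smooth and connected, and as a subgroup of multiplicative type of a torus it is itself a torus. The plan is to show surjectivity of $\mathscr{T}_1\to\mathscr{T}_2$ on special fibres and then obtain flatness from the fibrewise criterion. The case $T_3=\Res_{F/K}\Gm^s$ is handled directly by Proposition \ref{Chaiexactn}. For a general $T_3$ we only need surjectivity and flatness, not exactness.

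\textbf{Step 1: surjectivity on $k$-points.} Since $\Og_K$ is strictly henselian, $k$ is algebraically closed and $\mathscr{T}_2$ is smooth, every point of $\mathscr{T}_{2,k}(k)$ lifts to some $x\in\mathscr{T}_2(\Og_K)=T_2(K)$. Next we need $T_1(K)\to T_2(K)$ to be surjective. Its cokernel injects into $H^1(K,T_3)$, and $K$ has cohomological dimension $\le 1$ because its residue field is algebraically closed (Lang's theorem for complete $K$ with perfect residue field; in characteristic $p$, $\mathrm{cd}_p\le1$ holds anyway). By Steinberg's theorem, $H^1(K,T_3)=0$ for the connected linear group $T_3$. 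Alternatively, Serre's Galois cohomology gives the vanishing for tori over such fields. So there is $y\in T_1(K)=\mathscr{T}_1(\Og_K)$ mapping to $x$, and its reduction maps to the given $k$-point. Hence $\mathscr{T}_{1,k}\to\mathscr{T}_{2,k}$ is surjective on $k$-points, and therefore surjective.

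\textbf{Step 2: flatness.} Both $\mathscr{T}_1$ and $\mathscr{T}_2$ are smooth over $\Og_K$, and the generic-fibre map $T_1\to T_2$ is smooth, hence flat. By the fibrewise flatness criterion (EGA IV 11.3.10), it suffices to prove flatness of $\mathscr{T}_{1,k}\to\mathscr{T}_{2,k}$. This is a homomorphism of smooth $k$-group schemes, locally of finite type, which is surjective on identity components. The image of $\mathscr{T}^0_{1,k}$ is a closed connected subgroup. By Step 1 the full map is surjective, and the component groups are finitely generated, so a dimension count shows the image of $\mathscr{T}^0_{1,k}$ is open and closed of full dimension, hence equal to $\mathscr{T}^0_{2,k}$. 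A surjective homomorphism of algebraic groups over a field onto a reduced target is faithfully flat. This gives flatness on identity components and, by translation, on every component hit by the image. Since the map is surjective, every component is hit. Faithful flatness of $\mathscr{T}_1\to\mathscr{T}_2$ follows. For the identity components, $\mathscr{T}^0_1\to\mathscr{T}^0_2$ is an open restriction, hence flat, and it is surjective because both fibres are surjective: use the identity-component statement above on the special fibre and the smooth connected kernel on the generic fibre.

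\textbf{Main obstacle.} The delicate point is Step 1, specifically the vanishing of $H^1(K,T_3)$ when $K$ has characteristic $p$ and wild inertia. I would first cite $\mathrm{cd}(K)\le1$ together with Steinberg's theorem. If that is not allowed, I would instead embed $T_3$ in an induced torus $\Res_{F/K}\Gm^s$ with torus quotient, reduce via Hilbert 90 to a surjectivity statement, and then use that the norm maps of finite extensions of $K$ are surjective by local class field theory with algebraically closed residue field.
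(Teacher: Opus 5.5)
Your proposal is correct and is essentially the paper's proof. The kernel is a torus with $H^1(K,T_3)=0$ (the paper simply cites Chai's Lemma 4.3 for this), so $T_1(K)\to T_2(K)$ is onto. Henselian lifting then gives surjectivity on special fibres, and the fibrewise flatness criterion together with flatness of surjective homomorphisms of smooth $k$-groups gives faithful flatness; the identity-component statement follows by restricting to opens.

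One correction in Step 1 concerns the justification of $H^1(K,T_3)=0$. In characteristic $p$ the field $K$ is imperfect, and $\mathrm{cd}_p(K)\le 1$ is not the relevant input. What is needed is $\mathrm{Br}(L)=0$ for all finite separable $L/K$, which Lang's $C_1$ theorem supplies. From that, $L^\times$ is cohomologically trivial for a splitting field $L$, hence so is $T_3(L)\cong X_\ast(T_3)\otimes_{\Z}L^\times$.
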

\begin{proof}
Let $T_0$ be the kernel of $T_1\to T_2,$ which is also an algebraic torus over $K.$ It is sufficient to show that the induced morphism $\mathscr{T}_{1,k} \to \mathscr{T}_{2,k}$ is surjective. Indeed, a surjective homomorphism between smooth group schemes over a field is automatically (faithfully) flat, so the first claim then follows from the fibre-wise criterion of flatness \cite[Tag 039E]{Stacks}. The map $\mathscr{T}_1(\Og_K) = T_1(K) \to T_2(K) = \mathscr{T}_2(\Og_K)$ is surjective because $H^1(K, T_0)=0$ \cite[Lemma 4.3]{Chai}; in particular, so is the map $\mathscr{T}_{1,k}(k) \to \mathscr{T}_{2,k}(k)$ because $\Og_K$ is Henselian. Because flatness is local on the source and the target in the Zariski topology, it also follows that the map $\mathscr{T}^0_{1,k} \to \mathscr{T}^0_{2,k}$ is (faithfully) flat. 
\end{proof} 

\begin{proposition} \label{openprop}
Let $$\xi^0\colon \mathscr{R}^0 \to \mathscr{T}^0$$ be the morphism between the identity components of the Néron lft-models of $\Res_{L/K} \mathbf{G}_{\mathrm{m}} ^ m$ and $T$ induced by $\xi_{\eta}.$ Then there is a canonical isomorphism 
$$\ker \xi^0 = \omega^{-1}(\mathscr{X}).$$
\end{proposition}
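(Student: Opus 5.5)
Both $\ker \xi^0$ and $\omega^{-1}(\mathscr{X})$ are closed subschemes of $\mathscr{R}^0$ with generic fibre $\ker\xi_\eta$. The plan is to show that each is the scheme-theoretic closure of $\ker \xi_\eta$ inside $\mathscr{R}^0$, so that they coincide as closed subschemes.

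\textbf{The side $\omega^{-1}(\mathscr{X})$.} Scheme-theoretic closure is compatible with restriction to open subschemes when the morphism is quasi-compact. Here the inclusion of $\ker\xi_\eta$ into $\overline{\mathscr{R}}$ is a quasi-compact immersion, since $\ker\xi_\eta$ is of finite type and $\overline{\mathscr{R}}$ is noetherian. Because $\omega$ is an open immersion, $\omega^{-1}(\mathscr{X})$ is therefore the scheme-theoretic closure of $\ker\xi_\eta$ in $\mathscr{R}^0$. Concretely, it is the unique $\Og_K$-flat closed subscheme of $\mathscr{R}^0$ with generic fibre $\ker \xi_\eta$: over a DVR, flatness means having no $\pi$-torsion, which is exactly scheme-theoretic density of the generic fibre.

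\textbf{The side $\ker\xi^0$.} Its generic fibre is $\ker\xi_\eta$, so it suffices to show that $\ker \xi^0$ is flat over $\Og_K$. For this I use Lemma \ref{flatlem}. Apply it to $T_1=\Res_{L/K}\mathbf{G}_{\mathrm{m}}^m$ and $T_2=T$; note that $\Res_{L/K}\mathbf{G}_{\mathrm{m}}^m$ is a torus for $L$ finite étale, and $\xi_\eta$ is smooth and surjective with connected kernel. The lemma gives that $\xi^0\colon \mathscr{R}^0\to\mathscr{T}^0$ is faithfully flat. Base changing along the unit section $\Spec \Og_K \to \mathscr{T}^0$ then shows that $\ker\xi^0 = \mathscr{R}^0\times_{\mathscr{T}^0}\Spec\Og_K$ is flat over $\Og_K$.

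Consequently $\ker\xi^0$ is flat with generic fibre $\ker\xi_\eta$, so it has no $\pi$-torsion, and by the characterisation above it is the scheme-theoretic closure of its generic fibre in $\mathscr{R}^0$. Both closed subschemes are then the scheme-theoretic closure of $\ker\xi_\eta$ in $\mathscr{R}^0$, which gives the canonical equality $\ker\xi^0=\omega^{-1}(\mathscr{X})$ as closed subschemes of $\mathscr{R}^0$.

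The main obstacle is the flatness of $\ker \xi^0$. Everything else is formal, and that flatness rests entirely on the faithful flatness supplied by Lemma \ref{flatlem} (ultimately the vanishing of $H^1(K,T_0)$ for the connected kernel $T_0$). The remaining care is bookkeeping: checking that the unit section factors through $\mathscr{T}^0$, and that scheme-theoretic closure commutes with the open immersion $\omega$, which holds because all schemes involved are noetherian.
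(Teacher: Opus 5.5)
Your proposal is correct and is essentially the paper's proof. Both arguments show that $\ker\xi^0$ is flat over $\Og_K$ via Lemma \ref{flatlem}, so that it is the scheme-theoretic closure of $\ker\xi_\eta$ in $\mathscr{R}^0$. Both then identify this closure with $\omega^{-1}(\mathscr{X})$, because scheme-theoretic closure of a quasi-compact morphism commutes with restriction along the open immersion $\omega$.
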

\begin{proof}
The scheme $\ker \xi^0$ has generic fibre $\ker \xi_{\eta},$ and is closed in $\mathscr{R}^0$ as well as flat over $\Og_K$ by Lemma \ref{flatlem}. In particular, $\ker \xi^0$ is the scheme-theoretic closure of $\ker \xi_\eta$ in $\mathscr{R}^0.$ Hence the claim follows because scheme-theoretic images of quasi-compact morphisms commute with base change along open immersions \cite[Tag 01R8 (3)]{Stacks}. 
\end{proof}

Let $\xi^0_k \colon \mathscr{R}^0_k \to \mathscr{T}^0_k$ be the morphism on special fibres induced by $\xi_{\eta}.$ We shall now come to the crucial result of this paragraph, which is a bound on $\Theta(\xi^0_k)$ purely in terms of an invariant of $\mathscr{X}_{\eta}:=\mathscr{X}\times_{\Og_K} K \subseteq \overline{\mathscr{R}}_\eta$ stable under finite extensions of $K.$ For this purpose, let $\rho:=\dim \ker \xi_{\eta} = \dim \mathscr{X}_{\eta}$ and consider the \it specialisation homomorphism \rm
$$\CH_{\rho}(\overline{\mathscr{R}}_\eta) \to \CH_{\rho}(\overline{\mathscr{R}}_k)$$ \cite[p. 399]{Fulton}, which maps the class of a purely $\rho$-dimensional closed subscheme of $\overline{\mathscr{R}}_\eta$ to that of the special fibre of its scheme-theoretic closure in $\overline{\mathscr{R}}$ \cite[Section 2.6, Proposition 2.6 (d)]{Fulton}\footnote{Note that the Chow group $\CH_s(X)$ of $s$-cycles on a projective variety $X$ over a field $\kappa$ up to rational equivalence is denoted by $A_s(X)$ in \it op. cit. \rm}. Finally, recall that the diagram
\begin{equation}
\begin{tikzcd} \label{diag}
\CH_{\rho}(\overline{\mathscr{R}}_\eta) \arrow[swap]{d}{c_1(\Og_{\overline{\mathscr{R}}_\eta}(1))^{\rho}\cap -}\arrow[r] & \CH_{\rho}(\overline{\mathscr{R}}_k)  \arrow{d}{c_1(\Og_{\overline{\mathscr{R}}_k}(1))^{\rho}\cap -} \\
\CH_{0}(\overline{\mathscr{R}}_\eta) \arrow[r] & \CH_{0}(\overline{\mathscr{R}}_k),
\end{tikzcd}
\end{equation}
is commutative \cite[Chapter 20.3, Example 20.3.3]{Fulton}, where the horizontal arrows are the specialisation homomorphisms and the vertical ones the iterated Chern class operations \cite[Chapter 2.5, Proposition 2.5 (a)]{Fulton}. For a 0-cycle $\tau$ on a projective variety $X$ over a field $\kappa,$ we shall denote by $\int_X \tau$ the degree of $\tau$ \cite[Chapter 1.4, Definition 1.4]{Fulton}. Moreover, for a closed subscheme $Y\subseteq X$ of pure dimension $s,$ we shall write $[Y]$ for the associated cycle class in $\CH_s(X)$ (cf. \cite[p. 15]{Fulton}).

\begin{proposition} \label{Thetaboundprop}
We have 
$$\Theta(\xi^0_k) \leq \int_{\overline{\mathscr{R}}_\eta} c_1(\Og_{\overline{\mathscr{R}}_\eta}(1))^{\rho} \cap [\mathscr{X}_\eta].$$
\end{proposition}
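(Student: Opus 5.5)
We will use the specialisation of cycles together with the commutative diagram (\ref{diag}). By Proposition \ref{openprop}, $\ker\xi^0 = \omega^{-1}(\mathscr{X})$, so the special fibre $(\ker \xi^0)_k = \ker \xi^0_k$ is an open subscheme of $\mathscr{X}_k$. Since $\mathscr{X}$ is the scheme-theoretic closure of the purely $\rho$-dimensional $\mathscr{X}_\eta$ and is flat over $\Og_K$, its special fibre $\mathscr{X}_k$ is of pure dimension $\rho$. By \cite[Proposition 2.6 (d)]{Fulton}, the specialisation of $[\mathscr{X}_\eta]$ is $[\mathscr{X}_k]=\sum_Z \ell_{\Og_{\mathscr{X}_k,Z}}(\Og_{\mathscr{X}_k,Z})[Z]$, where $Z$ runs over the irreducible components of $\mathscr{X}_k$. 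Specialisation preserves degrees of $0$-cycles, and the diagram (\ref{diag}) commutes, so
$$\int_{\overline{\mathscr{R}}_\eta} c_1(\Og(1))^{\rho}\cap[\mathscr{X}_\eta] = \int_{\overline{\mathscr{R}}_k} c_1(\Og(1))^{\rho}\cap[\mathscr{X}_k] = \sum_Z \ell_{\Og_{\mathscr{X}_k,Z}}(\Og_{\mathscr{X}_k,Z})\,\deg_{\Og(1)}(Z).$$
Each summand is nonnegative: the multiplicities are positive and $\deg_{\Og(1)} Z\ge 1$ because $\Og(1)$ is very ample on $\overline{\mathscr{R}}_k$ and $Z$ is a projective variety.

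Now isolate a single component. Let $Z_0$ be the closure in $\overline{\mathscr{R}}_k$ of the identity component $\Delta:=\Delta_{\xi^0_k}=(\ker\xi^0_k)^0$. Since $\ker\xi^0_k$ is open in $\mathscr{X}_k$ and $\dim \ker\xi^0_k=\dim\mathscr{R}^0_k-\dim\mathscr{T}^0_k=\rho$ (by flatness, Lemma \ref{flatlem}), $Z_0$ is an irreducible component of $\mathscr{X}_k$. Its generic point $\eta$ lies in the open set $\omega^{-1}(\mathscr{X})_k$, so we have $\Og_{\mathscr{X}_k,Z_0}=\Og_{\Delta,\eta}$. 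By Proposition \ref{thicknessmultprop}, the multiplicity of $Z_0$ equals $\ell(\Og_{\Delta,\eta})=\Theta(\xi^0_k)$. Dropping all the other summands and using $\deg Z_0\ge 1$ gives $\Theta(\xi^0_k)\le \Theta(\xi^0_k)\deg Z_0\le$ the right-hand side.

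The main obstacle is verifying the hypotheses of Fulton's specialisation formalism. First, specialisation is defined for the closure of the cycle in the flat family $\overline{\mathscr{R}}\to\Spec\Og_K$; we need that the special-fibre cycle of the flat closure $\mathscr{X}$ is really the cycle of the scheme $\mathscr{X}_k$ with the lengths above. This is the content of \cite[Proposition 2.6 (d)]{Fulton}, but it requires $\mathscr{X}$ to be flat, which holds because $\mathscr{X}$ is a closure over a DVR. Second, we need that $\deg$ is compatible with specialisation, which is \cite[Example 20.3.3]{Fulton} together with the fact that specialisation preserves the degree of $0$-cycles on the proper family. A minor point is that the relevant local ring at $Z_0$ may be computed inside the open subscheme $\ker\xi^0_k$, which follows because localisation commutes with open immersions.
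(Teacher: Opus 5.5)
Your proposal is correct and follows essentially the same argument as the paper. Both write $[\mathscr{X}_k]$ as a sum of its irreducible components with multiplicities, identify the multiplicity of the component containing $(\ker\xi^0_k)^0$ with $\Theta(\xi^0_k)$ via Proposition \ref{thicknessmultprop}, use positivity of the degree of each component, and carry the total degree back to the generic fibre using specialisation and diagram (\ref{diag}); the only difference is that you spell out why that component's local ring can be computed inside the open subscheme $\ker\xi^0_k=\omega^{-1}(\mathscr{X})_k$, which the paper leaves implicit.
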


\begin{proof}
Let $Y_1,..., Y_q$ be the reduced irreducible components of $\mathscr{X}_k;$ after changing the ordering if necessary, we may suppose that $(\ker \xi^0_k)^0$ is open in $Y_1.$ Now write
$$[\mathscr{X}_k] = \sum_{i=1}^q m_i[Y_i]$$ in $\CH_{\rho}(\overline{\mathscr{R}}_k)$ for suitable positive integers $m_1,..., m_q.$ By Proposition \ref{thicknessmultprop} and because $m_j = \ell_{\Og_{Y_j, \eta_j}}(\Og_{Y_j, \eta_j})$ \cite[p. 15]{Fulton}, we have $\Theta(\xi_k^0) = m_1.$ Moreover, for all $j=1,...,q,$ we have 
$$\int_{\overline{\mathscr{R}}_k} c_1(\Og_{\overline{\mathscr{R}}_k}(1))^{\rho} \cap [Y_j] > 0 $$ by \cite[Chapter 12.1, Lemma 12.1]{Fulton}. Hence we obtain
\begin{align*}
m_1 &\leq \sum_{j=1}^q m_j \int_{\overline{\mathscr{R}}_k} c_1(\Og_{\overline{\mathscr{R}}_k}(1))^{\rho} \cap [Y_j] \\
& = \int_{\overline{\mathscr{R}}_k} c_1(\Og_{\overline{\mathscr{R}}_k}(1))^{\rho} \cap [\mathscr{X}_k] \\
& = \int_{\overline{\mathscr{R}}_\eta} c_1(\Og_{\overline{\mathscr{R}}_\eta}(1))^{\rho} \cap [\mathscr{X}_\eta],
\end{align*}
where the final equality comes from the commutativity of the diagram (\ref{diag}) together with the fact that the degree map commutes with the specialisation homomorphism (this follows from \cite[Chapter 20.3, Proposition 20.3 (a)]{Fulton} since the degree map is proper pushforward to the ground field).
\end{proof}

\section{Rationality of jumps}
\subsection{Algebraic tori} \label{torisubsec}
Let $\Og_K$ be a complete discrete valuation ring with field of fractions $K$ and algebraically closed residue field $k$ of characteristic $p>0.$ For a natural number $d$ prime to $p,$ there is  an extension $K\subseteq K(d)$ of degree $d,$ which is unique up to (non-unique) isomorphism. We shall identify $\Gal(K(d)/K)$ with $\mu_d=\mu_d(k).$ For a finite étale $K$-algebra $L,$ we shall denote by $\Og_L$ the integral closure of $\Og_K$ in $L;$ then $\Og_L$ is a finite product of complete discrete valuation rings with residue field $k$ and the extension $\Og_K\subseteq \Og_L$ is finite and free. 

Now let $g_{\eta} \colon T_1 \to T_2$ be a smooth surjective morphism of algebraic tori with connected kernel. The goal of the present paragraph is to establish a connection between the jumps of $T_1$ and those of $T_2$ without making any assumptions on the induced morphism $\mathscr{T}_1 \to \mathscr{T}_2$ of Néron lft-models. Recall that there exists a finite Galois extension $K\subseteq L$ such that $T_1$ is split by $L,$ i. e. such that there exists an isomorphism $T_1 \times_K L \cong \mathbf{G}_{\mathrm{m}}^m$ with $m:=\dim T_1.$ We shall fix such an $L$ and such an isomorphism once and for all. Moreover, we recall the following (well-known)

\begin{lemma}\label{Reslem}
Suppose the Galois extension $K\subseteq L$ splits $T_1.$ Then there exists a smooth surjective morphism $\Res_{L/K} \mathbf{G}_{\mathrm{m}}^m \to T_1$ with connected kernel.
\end{lemma}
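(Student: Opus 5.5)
We work with character lattices, via the anti-equivalence between $K$-tori and finitely generated free $\Z$-modules with a continuous action of $\Gamma:=\Gal(K^{\sep}/K)$. Write $X:=X^*(T_1)$, a free $\Z$-module of rank $m$. Since $L$ splits $T_1$ and $K\subseteq L$ is Galois, $\Gamma$ acts on $X$ through $H:=\Gal(L/K)$. For the torus $\Res_{L/K}\Gm^m$, the character module is $\Z[H]^m$ with $H$ acting by left multiplication on each factor. This is the induced module $\mathrm{Ind}_{\{1\}}^H\Z^m$.

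A homomorphism $T_1\to \Res_{L/K}\Gm^m$ of tori corresponds to an $H$-equivariant map $\phi\colon X\to \Z[H]^m$. We choose a $\Z$-basis $x_1,\dots,x_m$ of $X$ and set
$$\phi(x)=\Big(\sum_{h\in H} c_i(h^{-1}x)\,h\Big)_{i=1}^m,$$
where $c_1,\dots,c_m$ are the coordinate functionals of the chosen basis. One checks directly that $\phi$ is $H$-equivariant.

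Next we show that $\phi$ is injective with torsion-free cokernel. Composing $\phi$ with the augmentation-type projection $\Z[H]^m\to\Z^m$ that takes the coefficient at $h=1$ recovers the coordinate isomorphism $X\cong\Z^m$. Hence $\phi$ is a split injection of $\Z$-modules, so $\Z[H]^m/\phi(X)$ is a free $\Z$-module. (This splitting need not be $H$-equivariant, but that is not needed.)

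We now translate back to tori. The dual morphism $g\colon \Res_{L/K}\Gm^m\to T_1$ is surjective because $\phi$ is injective. Its kernel is a group of multiplicative type whose character module is $\Z[H]^m/\phi(X)$. Since this module is torsion-free, the kernel is a torus, hence smooth and connected.

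It remains to see that $g$ is smooth. Base change to $L$ gives a surjection of split tori with smooth kernel. Over a field, a faithfully flat homomorphism of smooth groups with smooth kernel is smooth. Alternatively, we may quote that a quotient map by a smooth subgroup is smooth.

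The only point needing care is that injectivity of $\phi$ alone is not enough: we need torsion-free cokernel to obtain a connected (and smooth) kernel. This is exactly why we construct $\phi$ with an explicit $\Z$-linear retraction rather than taking an arbitrary embedding into a permutation module.
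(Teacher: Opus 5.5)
Your proof is correct, and it builds the same morphism as the paper. For the splitting $T_{1,L}\cong\mathbf{G}_{\mathrm{m}}^m$ determined by your basis, your $\phi$ is exactly the map on character modules induced by the paper's canonical map $\Res_{L/K}T_{1,L}=f_!f^\ast T_1\to T_1$ (the norm).

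The difference is in how the three properties are verified. The paper never passes to character lattices. It notes that smoothness, surjectivity and connectedness of the kernel can be checked after base change to a finite extension $K'$ of $K$ with $L\otimes_KK'\cong K'^{[L:K]}$. There the map becomes the summation map $T_1^{[L:K]}\to T_1$, for which everything is evident. You instead use the exact anti-equivalence between tori and Galois lattices, which reduces everything to $\phi$ being a $\Z$-split injection; your coefficient-at-$1$ retraction supplies exactly that.

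What each buys: your route gives an explicit description of the kernel, as the torus with character module $\Z[H]^m/\phi(X)$, but it is tied to tori. The paper's sheaf-theoretic argument works verbatim for any smooth commutative $K$-group and any finite étale $K$-algebra $L$. That generality is used later, in the proof of Proposition~\ref{cohomologyvanishingprop}, to produce $0\to G'\to\Res_{L/K}G_L\to G\to 0$ for unirational wound unipotent $G$, where no character lattice is available.

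A minor point: the base change to $L$ in your smoothness step is unnecessary. A surjective homomorphism of smooth $K$-groups with smooth kernel is already smooth over $K$.
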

\begin{proof}
Since the morphism $f\colon \Spec L \to \Spec K$ is finite étale, we have a canonical isomorphism $f_\ast = f_!$ of functors from Abelian sheaves on the big étale site of $\Spec L$ to Abelian sheaves on that of $\Spec K$ (this follows formally from the corresponding result for small étale sites \cite[Tag 03S7]{Stacks}). We shall show that the canonical map
$\Res_{L/K} T_{1,L} = f_! f^\ast T_1 \to T_1$ has the desired properties for \it any \rm finite étale $K$-algebra $L.$ Smoothness, surjectivity, and connectedness can be checked after a finite extension of $K.$ Therefore, by \cite[Tag 03S6]{Stacks}, we may assume that $L$ is the product of $[L:K]$ copies of $K.$ But then the canonical map $T_1^{[L:K]} = f_! f^\ast T_1 \to T_1$ is simply the summation map \cite[Tag 03SH (2)]{Stacks}, which is clearly smooth, surjective, and has connected kernel.
\end{proof}

For each $d$ prime to $p,$ we let $T_i(d):=T_i\times_K K(d)$ and denote by $\mathscr{T}_i(d)$ the Néron lft-model of $T_i(d)$ over $\Og_{K(d)}$ for $i=1,2.$ Moreover, we let
$$g(d) \colon \mathscr{T}_1(d) \to \mathscr{T}_2(d)$$ be the morphism on Néron lft-models induced by $g_{\eta}\times_K \Id_{K(d)},$ and by $\overline{\mathscr{R}}(d)$ the canonical compactification of $\mathscr{R}(d)^0$ constructed in Paragraph \ref{boundingthicknesspar}. Note that $\overline{\mathscr{R}}(d)_\eta = \overline{\mathscr{R}}_\eta\times_KK(d).$ 

\begin{proposition}\label{constantprop}
There exists a constant $B=B(g_{\eta})$ such that, for all $d$ prime to $p,$ we have 
$$\Theta(g(d)_k^0) \leq B.$$ Here, $g(d)_k^0$ denotes the map $\mathscr{T}_{1}(d)^0_k \to \mathscr{T}_{2}(d)^0_k$ on identity components of special fibres. In particular, $B$ does not depend upon $d.$ 
\end{proposition}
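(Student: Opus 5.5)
We reduce to the situation of Section \ref{boundingthicknesspar}. By Lemma \ref{Reslem}, fix a smooth surjective homomorphism $\gamma_\eta\colon \Res_{L/K}\mathbf{G}_{\mathrm{m}}^m \to T_1$ with connected kernel, and put $\xi_\eta:=g_\eta\circ\gamma_\eta\colon \Res_{L/K}\mathbf{G}_{\mathrm{m}}^m\to T_2$. This composite is again smooth and surjective. Its kernel is an extension of $\ker g_\eta$ by $\ker\gamma_\eta$, both connected, so it is connected as well. For each $d$ prime to $p$ we base change to $K(d)$. Then $\Res_{L/K}\mathbf{G}_{\mathrm{m}}^m\times_K K(d)=\Res_{L\otimes_KK(d)/K(d)}\mathbf{G}_{\mathrm{m}}^m$, where $L\otimes_K K(d)$ is a finite étale $K(d)$-algebra. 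Hence we are in the setting of Section \ref{boundingthicknesspar} over $\Og_{K(d)}$, which allows an étale algebra rather than a field.

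Write $\gamma(d)_k^0\colon\mathscr{R}(d)^0_k\to\mathscr{T}_1(d)^0_k$ and $\xi(d)_k^0=g(d)^0_k\circ\gamma(d)^0_k$ for the induced maps. By Lemma \ref{flatlem} (applied over $\Og_{K(d)}$), both $\gamma(d)^0_k$ and $g(d)^0_k$ are faithfully flat homomorphisms between smooth connected groups. Lemma \ref{Thetaboundlem} therefore gives
$$\Theta(g(d)^0_k)\leq\Theta(\xi(d)^0_k).$$
Proposition \ref{Thetaboundprop} applied to $\xi_\eta\times_KK(d)$ then yields
$$\Theta(\xi(d)^0_k)\leq \int_{\overline{\mathscr{R}}(d)_\eta} c_1(\Og(1))^{\rho}\cap[\mathscr{X}(d)_\eta],$$
where $\mathscr{X}(d)_\eta=\ker(\xi_\eta)\times_KK(d)\subseteq \overline{\mathscr{R}}_\eta\times_KK(d)=\overline{\mathscr{R}}(d)_\eta$.

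The key remaining step is to show that this degree does not depend on $d$. The compactification is built functorially from $\mathrm{Sym}\,\Og_L^{m,\vee}$, and its generic fibre is compatible with base change. So $\overline{\mathscr{R}}(d)_\eta$ is the base change of the projective space $\overline{\mathscr{R}}_\eta$, $\Og(1)$ pulls back to $\Og(1)$, and the closed subscheme $\mathscr{X}(d)_\eta$ is the base change of $\mathscr{X}_\eta=\ker\xi_\eta$. Degrees of $0$-cycles and intersections with Chern classes are invariant under field extension: flat pullback commutes with $c_1\cap$, and degree is preserved by flat pullback along $\Spec K(d)\to\Spec K$ for $0$-cycles of projective schemes, using \cite[Example 6.2.9]{Fulton}. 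Concretely, the quantity is the leading coefficient, times $\rho!$, of the Hilbert polynomial of $\ker\xi_\eta\subseteq\mathbf{P}^N_K$, which is unchanged by flat base change. We may therefore take
$$B:=\int_{\overline{\mathscr{R}}_\eta}c_1(\Og(1))^\rho\cap[\ker\xi_\eta].$$

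The main obstacle is checking carefully that the compactification $\overline{\mathscr{R}}(d)$ built over $\Og_{K(d)}$ in Section \ref{boundingthicknesspar} really has generic fibre equal, with its $\Og(1)$, to the base change of $\overline{\mathscr{R}}_\eta$. We also need the embedding of $\ker(\xi_\eta)_{K(d)}$ to be the base-changed one. The paper already records $\overline{\mathscr{R}}(d)_\eta=\overline{\mathscr{R}}_\eta\times_KK(d)$, and the compatibility of $\Og(1)$ follows since $\Og_L^{m,\vee}\otimes_{\Og_K}K(d)$ equals $(L\otimes_KK(d))^{m,\vee}$ as $K(d)$-vector spaces.
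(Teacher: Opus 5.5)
Your proposal is correct and follows the paper's proof essentially step by step: compose $g_\eta$ with the surjection from Lemma \ref{Reslem}, apply Lemma \ref{Thetaboundlem} and Proposition \ref{Thetaboundprop} over $\Og_{K(d)}$, and then observe that the resulting degree does not change under the base change $K\to K(d)$. One small imprecision: $\mathscr{X}_\eta$ is the scheme-theoretic closure of $\ker\xi_\eta$ in $\overline{\mathscr{R}}_\eta$, not $\ker\xi_\eta$ itself, and the identification $\mathscr{X}(d)_\eta=\mathscr{X}_\eta\times_KK(d)$ uses that scheme-theoretic closure commutes with the flat base change $K\to K(d)$, which is exactly the fact the paper invokes.
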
 
\begin{proof}
Put $m:=\dim  T_1$ and choose a finite Galois extension $K\subseteq L$ which splits $T_1.$ Moreover, choose a smooth surjection $\Res_{L/K}\mathbf{G}_{\mathrm{m}}^m \to T_1$ with connected kernel as in Lemma \ref{Reslem} and denote the composition $\Res_{L/K}\mathbf{G}_{\mathrm{m}}^m \to T_2$ by $\xi_{\eta}.$ Let $\mathscr{R}(d)$ denote the Néron lft-model of $(\Res_{L/K}\mathbf{G}_{\mathrm{m}}^m) \times _K K(d)  = \Res_{L\otimes_KK(d)/K(d)}\mathbf{G}_{\mathrm{m}}^m$ and denote by $\xi(d)^0_k$ the composition
$$\mathscr{R}(d)^0_k \to \mathscr{T}_1(d)^0_k \overset{g(d)^0_k}{\to} \mathscr{T}_2(d)^0_k$$ of the induced morphisms on identity components of special fibres. Now let $\mathscr{X}(d)$ be the scheme-theoretic closure of $\ker\xi_{\eta}\times_KK(d)$ in $\overline{\mathscr{R}}(d)$ as in Paragraph \ref{boundingthicknesspar}. Let $f\colon \Spec K(d) \to \Spec K$ be the map induced by the field extension. Using Lemma \ref{Thetaboundlem} and Proposition \ref{Thetaboundprop}, we obtain 
\begin{align*}\Theta(g(d)^0_k) \leq \Theta(\xi(d)^0_k) &\leq \int_{\overline{\mathscr{R}}(d)_\eta} c_1(\Og_{\overline{\mathscr{R}}(d)_\eta}(1))^{\rho} \cap [\mathscr{X}(d)_{\eta}]\\
&=\int_{\overline{\mathscr{R}}(d)_\eta} c_1(f^\ast\Og_{\overline{\mathscr{R}}_\eta}(1))^{\rho} \cap f^\ast[\mathscr{X}_{\eta}], \end{align*}
 where $\rho:=\dim \ker \xi_{\eta} = \dim \mathscr{X}(d)_{\eta}$ and the last equality uses that scheme-theoretic images of quasi-compact morphisms commute with flat base change \cite[Tag 081I]{Stacks}. Finally, we have 
$$\int_{\overline{\mathscr{R}}(d)_\eta} c_1(f^\ast\Og_{\overline{\mathscr{R}}_\eta}(1))^{\rho} \cap f^\ast[\mathscr{X}_{\eta}] = \int_{\overline{\mathscr{R}}_\eta} c_1(\Og_{\overline{\mathscr{R}}_\eta}(1))^{\rho}\cap [\mathscr{X}_{\eta}]=:B,$$
since the degree map $\CH_0(\overline{\mathscr{R}}_\eta) \to \Z$ as well as the Chern class operations from diagram (\ref{diag}) commute with finite extensions of $K$ (see \cite[Chapter 3.2, Theorem 3.2 (d)]{Fulton} for the second claim). 
\end{proof}

\begin{remark} Proposition \ref{constantprop} is the reason why the thickness is more useful in this context than, for example, Néron's measure for the defect of smoothness \cite[Chapter 3.3]{BLR}. Indeed, suppose that $e(d) \colon \Spec \Og_{K(d)} \to \mathscr{K}(d):=\ker g(d)$ is the zero section. Then the number 
$$\delta(e(d)):=\ell_{\Og_{K(d)}}((e(d)^\ast\Omega^1_{\mathscr{K}(d)/\Og_{K(d)}})_{\mathrm{tors}})$$ also measures how badly $g(d)$ fails to be smooth. However, this invariant contains much more information than the thickness, which is why it is usually \it unbounded \rm as $d$ becomes arbitrarily highly divisible. For example, suppose $K\subseteq L$ is a finite Galois extension and $g(d)^0 \colon \Res_{\Og_{L(d)}/\Og_{K(d)}} \Gm \to \Gm$ is induced by the norm map, with $d$ prime to $p[L:K].$ If $Q(d):=\mathrm{coker}\, \mathrm{tr}_{\Og_{L(d)}/\Og_{K(d)}},$ then one sees easily that there is a canonical isomorphism $$(e(d)^\ast\Omega^1_{\mathscr{K}(d)/\Og_{K(d)}})_{\mathrm{tors}} = \Ext^1_{\Og_{K(d)}}(Q(d), \Og_{K(d)}).$$ If $\Og_K = W(\F_2^{\mathrm{alg}})$ and $L:=K(\sqrt{2}),$ we find $Q(d)=\Og_{K(d)}/2\Og_{K(d)},$ whose length is indeed unbounded for $d\to\infty.$ We shall return to this example later. 

Moreover, this example also shows why de Shalit's explicit description of the Lie algebra \cite[Proposition A1.7]{CY} did not feature in the proof: Suppose $T$ is an algebraic torus over $K$ split by the finite Galois extension $K\subseteq L.$
Put $\Gamma:=\Gal(L/K),$ $T_1:=\Res_{L/K} T_L,$ and $T_2:=T_1/T.$ Finally, let $\mathscr{T}(d)$ be the Néron lft-model of $T\times_KK(d)$ for any $d$ prime to $p[L:K].$ In this situation, de Shalit gives a precise description of the image of the canonical map 
$\Lie\mathscr{T}(d)\to \Lie\mathscr{K}(d) = \Hom_\Gamma(X^\ast(T), \Og_{L(d)}).$ Such a description is particularly useful if one can control the invariant $c_{\mathrm{lift}}(T\times_KK(d)):=\ell_{\Og_{K(d)}}(\Lie \mathscr{K}(d)/\Lie \mathscr{T}(d))$ \cite[A1.14]{CY} measuring the discrepancy between these two $\Og_{K(d)}$-modules. This is possible if one works over a fixed base field in some situations \cite[A1]{CY}. However, if one allows $d$ to vary, $c_{\mathrm{lift}}(T\times_KK(d))$ is usually unbounded as well. Indeed, it follows from \cite[Theorem 2.1 (a)]{LLR} together with \cite[Chapter 9.6, Lemma 2]{BLR} and \cite[Lemma 4.3]{Chai} that $\delta(e(d)) = c_{\mathrm{lift}}(T\times_KK(d))$ for all such $d,$ so the same example as above works. This is explained by the dependence of all known non-trivial bounds of $c_{\mathrm{lift}}(T\times_KK(d))$ on the ramification filtration of $\Gamma,$ which is not preserved by the canonical group isomorphism $\Gal(L(d)/K(d))\to\Gamma.$
\end{remark}

\begin{lemma} \label{torsorlem}
Let $k$ be an algebraically closed field and let $\psi\colon G\to H$ be a faithfully flat morphism of smooth connected algebraic groups over $k.$ Let $\boldsymbol{\gamma}:=(\ker \psi)/(\ker\psi)_{\mathrm{red}}$ and $A:=G/(\ker \psi)_{\mathrm{red}},$ so that we have exact sequences
$$0 \to (\ker \psi)_{\mathrm{red}} \to \ker \psi \to \boldsymbol{\gamma} \to 0$$ and $$0 \to (\ker \psi)_{\mathrm{red}} \to G \to A \to 0.$$ Then the induced morphism 
$$\Spec \widehat{\Og}_{A,0} \to \Spec \widehat{\Og}_{H,0}$$ is a torsor for the infinitesimal $k$-group scheme $\boldsymbol{\gamma}.$
\end{lemma}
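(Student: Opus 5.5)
The plan is to exhibit $A \to H$ as a global torsor under $\boldsymbol{\gamma}$ at the level of algebraic groups, and then pass to completions at the origin. Put $N:=(\ker\psi)_{\mathrm{red}}$. This is a smooth closed subgroup of $G$ because $k$ is perfect, and it is normal in $\ker\psi$ since everything is commutative. Since $N\subseteq \ker\psi$, the map $\psi$ factors through the quotient $A=G/N$, giving a homomorphism $\bar\psi\colon A\to H$. Its kernel is $\ker\psi/N=\boldsymbol{\gamma}$: quotients of fppf sheaves commute with taking kernels here, because $0\to N\to \ker\psi\to \ker\bar\psi\to 0$ is exact by the snake lemma applied to $0\to N\to G\to A\to 0$ over $H$. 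The map $\bar\psi$ is also faithfully flat, since $\psi=\bar\psi\circ(G\to A)$ is fppf-surjective, so $\bar\psi$ is an fppf epimorphism of group schemes. Hence $0\to\boldsymbol{\gamma}\to A\to H\to 0$ is exact in the fppf topology. By the standard fact that a faithfully flat homomorphism of group schemes over a field is a torsor under its kernel, the map $A\times\boldsymbol{\gamma}\to A\times_H A$, $(a,\gamma)\mapsto(a,a\gamma)$, is an isomorphism. Thus $A\to H$ is a $\boldsymbol{\gamma}$-torsor, finite and flat because $\boldsymbol{\gamma}$ is finite.

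Next we localise at the origin. Since $\boldsymbol{\gamma}$ is infinitesimal, $\bar\psi$ is a finite, flat, radicial homeomorphism. In particular the fibre of $A\to H$ over $0$ is the one-point scheme $\boldsymbol{\gamma}$, supported at $0\in A$. So $A\times_H\Spec\Og_{H,0}$ is local, equal to $\Spec\Og_{A,0}$. We then base change along $\Spec\widehat{\Og}_{H,0}\to H$. Because $A\to H$ is finite, $\Og_{A,0}\otimes_{\Og_{H,0}}\widehat{\Og}_{H,0}$ is the $\mathfrak{m}_H$-adic completion of the finite $\Og_{H,0}$-module $\Og_{A,0}$. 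This completion coincides with $\widehat{\Og}_{A,0}$, since $\mathfrak{m}_H\Og_{A,0}$ is $\mathfrak{m}_A$-primary: its quotient is $\Gamma(\boldsymbol{\gamma},\Og_{\boldsymbol{\gamma}})$, which is Artinian. Torsors are stable under base change, so $\Spec\widehat{\Og}_{A,0}=A\times_H\Spec\widehat{\Og}_{H,0}\to\Spec\widehat{\Og}_{H,0}$ is a $\boldsymbol{\gamma}$-torsor. The action is the restriction of the translation action of $\boldsymbol{\gamma}\subseteq A$. This action preserves $\Spec\widehat{\Og}_{A,0}$ because $\boldsymbol{\gamma}$ is infinitesimal and fixes the closed point.

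The main obstacle is the identification of the completed fibre product with $\Spec\widehat{\Og}_{A,0}$. This requires care to ensure that the base change yields exactly the completion at the single point $0$, with no extra points and no non-completed part. I would handle it as above, using finiteness of $\bar\psi$ and the fact that $\bar\psi^{-1}(0)$ is set-theoretically $\{0\}$. These follow from $\ker\bar\psi=\boldsymbol{\gamma}$ being infinitesimal, together with the Artin–Rees and completion-of-finite-modules statements.
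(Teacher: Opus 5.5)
Your proof is correct and follows essentially the same route as the paper. You establish $0\to\boldsymbol{\gamma}\to A\to H\to 0$, use that $A\to H$ is radicial (one-point fibre over $0$) to get the $\boldsymbol{\gamma}$-torsor $\Spec \Og_{A,0}\to\Spec\Og_{H,0}$, and then identify $\widehat{\Og}_{H,0}\otimes_{\Og_{H,0}}\Og_{A,0}$ with $\widehat{\Og}_{A,0}$; you spell out this last step more carefully than the paper does, via finiteness and $\mathfrak{m}_H\Og_{A,0}$ being $\mathfrak{m}_A$-primary. The one point you take for granted is that $\boldsymbol{\gamma}$ is infinitesimal, which the paper checks by noting that every $k$-point of $\ker\psi$ already lies in $(\ker\psi)_{\mathrm{red}}$.
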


\begin{proof}
 First note that $\boldsymbol{\gamma}$ is indeed infinitesimal. This can be checked on $k$-points and hence follows immediately from the fact that any $k$-point of $\ker \psi$ factors through $(\ker \psi)_{\mathrm{red}}.$ Note moreover that we have an exact sequence
 $$0 \to \boldsymbol{\gamma} \to A \to H \to 0.$$
 The map $\Spec \Og_{A,0} \to \Spec \Og_{H,0}$ is a torsor for $\boldsymbol{\gamma}$ since $A\to H$ is radicial. In particular, the claim follows once we can show that the canonical map 
$$\widehat{\Og}_{H,0} \otimes_{\Og_{H,0}} \Og_{A,0} \to \widehat{\Og}_{A,0}$$ is an isomorphism. Here the only non-trivial part is that the ring on the left is local, which follows again because $A\to H$ is radicial \cite[Tag 07N9]{Stacks}. 
\end{proof}
Notice that if, for some positive integer $d$, there is a $\mu_d$-action on $G$ that respects the group operation, then by construction the action of $\boldsymbol{\gamma}$ on $\Spec \widehat{\Og}_{A,0}$ satisfies condition (\ref{actioncondition}).

The following is slightly more general than immediately needed (but we shall use the general case later):

\begin{lemma} \label{pseudoreductivejumpslem}\label{inducedjumpsprop}
Suppose $D=\Res_{L/K} \mathbf{G}_{\mathrm{m}}$ for some finite (not necessarily separable) extension $K\subseteq L.$ Let $d\equiv 1 \mod [L:K]$ be prime to $p.$ Then the $d$-jumps of $D$ are $0,(d-1)/[L:K],...,([L:K]-1)(d-1)/[L:K].$ In particular, the jumps of $D$ are the elements
$$0, \frac{1}{[L:K]},..., \frac{[L:K]-1}{[L:K]}$$ of $\mathbf{R}/\Z.$
\end{lemma}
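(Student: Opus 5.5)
The approach is to compute the $\mu_d$-weights on the cotangent space of $\mathscr{D}(d)^0_k$ directly. Then, for the limit statement, I will apply Proposition \ref{jumpslimitprop} along a grid sequence of integers $d\equiv 1 \bmod [L:K]$.

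Put $e:=[L:K]$. Since $k$ is algebraically closed, $L/K$ is totally ramified; in the inseparable case it is at least purely "ramified" in the sense that $\Og_L$ is a complete DVR with residue field $k$ and $\Og_L=\Og_K[\pi_L]$ for a uniformiser $\pi_L$. The Néron lft-model of $D(d)=\Res_{L\otimes_KK(d)/K(d)}\Gm$ has identity component $\Res_{\Og_{L\otimes K(d)}/\Og_{K(d)}}\Gm$ by \cite[Proposition A.5.9]{CGP}.

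The key observation concerns the ring $L\otimes_KK(d)$ when $\gcd(d,e)=1$. This is a field $L(d)$ of degree $de$ over $K$, totally ramified. Write $K(d)=K(\pi_K^{1/d})$. Since $d\equiv 1\bmod e$, there are integers $a,b$ with $ad-be=1$; concretely $a=1$ and $b=(d-1)/e$ work. Then $\varpi:=\pi_K^{1/d}\cdot\pi_L^{-b}$ satisfies $\varpi^{de}\sim\pi_K^{e}\pi_K^{-bd}$ up to units. One checks that $\varpi$ has valuation $1/(de)$ relative to $v(\pi_K)=1$, so it is a uniformiser of $\Og_{L(d)}$, and $\Og_{L(d)}=\Og_{K(d)}[\varpi]$.

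In the inseparable case I would instead just use that $\Og_{L(d)}$ is free over $\Og_{K(d)}$ with basis $1,\varpi,\ldots,\varpi^{e-1}$, where $\varpi$ is any uniformiser of $\Og_{L(d)}$ on which $\mu_d=\Gal(K(d)/K)$ acts by an eigencharacter. To find such a $\varpi$, take $\varpi=\pi_K^{1/d}\pi_L^{-b}$ as above: $\zeta$ acts on $\pi_K^{1/d}$ by $\zeta$ and fixes $\pi_L$, so $\zeta\ast\varpi=\zeta\varpi$. The $\mu_d$-action on $L(d)=L\otimes K(d)$ is through the second factor, so this is legitimate even if $L/K$ is inseparable.

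The cotangent space computation goes as follows. The Lie algebra of $\Res_{\Og_{L(d)}/\Og_{K(d)}}\Gm$ is $\Og_{L(d)}$ as an $\Og_{K(d)}$-module, so its special fibre is $\Og_{L(d)}/\pi_{K(d)}\Og_{L(d)}$ with basis $1,\varpi,\ldots,\varpi^{e-1}$. To get the semilinear $\mu_d$-action on the special fibre, I will use the $\Og_K$-form of the action. The action on $\Og_{K(d)}$ is nontrivial, so I write elements as $\Og_K$-combinations of $\pi_{K(d)}^i\varpi^j$. The induced $k$-linear action on $\Lie\mathscr{D}(d)_k=\Og_{L(d)}\otimes_{\Og_{K(d)}}k$ (via the Greenberg/Edixhoven convention, \cite[Section 4]{HNII}) sends $\varpi^j$ to $\zeta^j\varpi^j$, modulo the twist coming from identifying $\pi_{K(d)}=\pi_K^{1/d}$.

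Now $\pi_{K(d)}$ and $\varpi$ differ by $\pi_L^{b}$, and $\pi_L^{e}\sim\pi_K=\pi_{K(d)}^d$. So the basis element $\varpi^j$ corresponds, up to units in $\Og_{K(d)}$-multiples, to the weight $j$ combined with the shift $-jb \cdot(\text{something})$. More cleanly, I would rewrite the $\Og_{K(d)}$-basis as $\pi_L^{j}$, $j=0,\ldots,e-1$, on which $\zeta$ acts trivially. Then $\varpi^j=\pi_{K(d)}^{j}\pi_L^{-bj}$. Reducing $\pi_L^{-bj}$ into the range $\pi_L^{j'}$ times a power of $\pi_{K(d)}^d$ yields the weight $j\cdot(1-d\cdot\lceil\ldots\rceil)$-type shifts. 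The cleanest formulation is to compute via Proposition \ref{jumpspropV}(ii): determine the cokernel of $\Lie\mathscr{D}\otimes\Og_{K(d)}=\bigoplus_j \Og_{K(d)}\pi_L^j\to\Lie\mathscr{D}(d)=\bigoplus_j\Og_{K(d)}\varpi^j$. Since $\pi_L^j=\varpi^{jd}\cdot(\text{unit})\cdot\pi_{K(d)}^{-?}$, with $\pi_L$ of valuation $d$ in $\Og_{L(d)}$, the element $\pi_L^j$ equals, up to a unit, $\varpi^{jd}=\varpi^{j+ j(d-1)}$. Writing $j(d-1)=e\cdot jb$ gives $\varpi^{jd}\sim\varpi^{j}\pi_{K(d)}^{jb}$ up to units. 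Hence the cokernel is $\bigoplus_{j=0}^{e-1}\Og_{K(d)}/\pi_{K(d)}^{j(d-1)/e}$. The $d$-jumps are therefore $j(d-1)/e$ for $j=0,\ldots,e-1$, all of which are $<d$.

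Finally I take the grid sequence $d_\ell:=p'^{\,\ell}$ adjusted to be $\equiv1\bmod e$ and prime to $p$; for instance $d_\ell=(1+e p^{\ast})^{\ell}$-type choices, or simply $d_\ell=N^{\ell}$ with $N\equiv1\bmod e$ and $\gcd(N,p)=1$. Then $j(d_\ell-1)/(e d_\ell)\to j/e$, and Proposition \ref{jumpslimitprop} gives the jumps. The main obstacle is the unit bookkeeping in the identity $\pi_L^j\sim\varpi^j\pi_{K(d)}^{jb}$. I must also confirm that the lattice $\bigoplus\Og_{K(d)}\pi_L^j$ sits inside $\bigoplus\Og_{K(d)}\varpi^j$ "diagonally". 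This holds because each $\pi_L^j$ has $\varpi$-valuation exactly $jd\equiv j\bmod e$, and these residues are distinct; a triangular change of basis with unit diagonal then gives the elementary divisors.
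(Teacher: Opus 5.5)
Your route is essentially the paper's: the cited argument of \cite[Corollary 2.2.2]{OvI}, which reappears as the totally ramified case of Proposition \ref{prop:coker}, also computes the cokernel of $\Og_L\otimes_{\Og_K}\Og_{K(d)}\to\Og_{L(d)}$ using an explicit uniformiser of $L(d)$, reads off the $d$-jumps via Proposition \ref{jumpspropV}(ii), and passes to the limit along a grid sequence.

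However, your explicit uniformiser is wrong. Normalise $v(\pi_K)=1$ and put $b=(d-1)/e$. Then
$$v(\pi_{K(d)}\pi_L^{-b}) = \frac{1}{d}-\frac{b}{e} = \frac{e-bd}{de},$$
and this is negative as soon as $d>1$. The exponents must be swapped: $\varpi:=\pi_{K(d)}^{-b}\pi_L$ has valuation $(d-be)/(de)=1/(de)$. Its eigencharacter is then $\zeta\mapsto\zeta^{-b}$, not $\zeta\mapsto\zeta$.

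This error does not damage your final step. The cokernel computation works for any uniformiser $\varpi$ with $\Og_{L(d)}=\Og_{K(d)}[\varpi]$, and your valuation argument is sound. Strictly speaking, after dividing the $j$-th column by $\pi_{K(d)}^{jb}$, the transition matrix is only triangular with unit diagonal \emph{modulo} $\pi_{K(d)}$, which is enough for invertibility.

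With the correct $\varpi$, the unit bookkeeping you identified as the main obstacle disappears. One has $\pi_L^j=\pi_{K(d)}^{jb}\varpi^j$ exactly, so the cokernel is visibly $\bigoplus_{j=0}^{e-1}\Og_{K(d)}/\pi_{K(d)}^{j(d-1)/e}$.
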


\begin{proof}
The same proof as that of \cite[Corollary 2.2.2]{OvI} works (the separability hypothesis imposed there is superfluous).
\end{proof}

We are now ready to prove

\begin{theorem} \label{jumpsratthm}
\begin{enumerate}[(i)]
\item Let $T$ be an algebraic torus over $K$ split by the Galois extension $K\subseteq L$ and let $j\in \mathbf{R}/\Z$ be a jump of $T.$ Then $[L:K]j = 0.$ In particular, $j$ is rational. 
\item Let $g_{\eta} \colon T_1 \to T_2$ be a smooth surjective morphism of algebraic tori over $K$ with connected kernel. Let $n:=\dim T_2$ and $m:=\dim T_1.$ Let $j_1\leq ... \leq j_m \in \mathbf{R}/\Z$ be the jumps of $T_1.$ Then there exist an injective function $\beta\colon \{1,..., n\} \to \{1,...,m\}$ and nonnegative integers $f_1,..., f_n$ such that $p^{f_1} j_{\beta(1)},..., p^{f_n} j_{\beta(n)}$ are the jumps of $T_2.$
\end{enumerate}
\end{theorem}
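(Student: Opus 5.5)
The plan is to prove (ii) first and then deduce (i) from it. Fix a grid sequence $(d_\ell)_{\ell}$. For each $\ell$ put $d:=d_\ell$ and $\psi:=g(d)^0_k\colon \mathscr{T}_1(d)^0_k\to\mathscr{T}_2(d)^0_k$. By Lemma \ref{flatlem}, $\psi$ is a faithfully flat homomorphism of smooth connected $k$-groups, and it is $\mu_d$-equivariant because $g(d)$ is defined over $\Og_K$. I then apply Lemma \ref{torsorlem} with $G=\mathscr{T}_1(d)^0_k$, $H=\mathscr{T}_2(d)^0_k$ and $A=G/(\ker\psi)_{\mathrm{red}}$. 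This shows that $\Spec\widehat{\Og}_{A,0}\to\Spec\widehat{\Og}_{H,0}$ is a torsor for the infinitesimal group $\boldsymbol{\gamma}$. By the remark following that lemma, it is $\mu_d$-equivariant in the sense of (\ref{actioncondition}). Here $(\ker\psi)_{\mathrm{red}}$ is $\mu_d$-stable, so $\mu_d$ acts on $A$ as well.

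Next I compare the weights along the tower $G\to A\to H$.
\begin{enumerate}[(a)]
\item The map $G\to A$ is a quotient by a smooth subgroup, hence smooth. Therefore $\mathfrak{m}_A/\mathfrak{m}_A^2\to\mathfrak{m}_G/\mathfrak{m}_G^2$ is an injection of $k[\mu_d]$-modules. By semisimplicity, the weights of $\mu_d$ on $\widehat{\Og}_{A,0}$ form a sub-multiset of size $n$ of the weights on $\widehat{\Og}_{G,0}$. By Proposition \ref{jumpspropV}(i), the latter are the $d$-jumps $j_{1,\ell}\le\dots\le j_{m,\ell}$ of $T_1$; note that the cotangent space at the origin does not change when passing to the identity component. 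This produces an injection $\beta_\ell\colon\{1,\dots,n\}\to\{1,\dots,m\}$ such that the weights on $A$ are $j_{\beta_\ell(1),\ell},\dots,j_{\beta_\ell(n),\ell}$.
\item Proposition \ref{weightspermprop}, applied to the torsor $\Spec\widehat{\Og}_{A,0}\to\Spec\widehat{\Og}_{H,0}$, gives a scale $e_{1,\ell},\dots,e_{n,\ell}$. Hence the $d_\ell$-jumps of $T_2$ are $p^{e_{i,\ell}}j_{\beta_\ell(i),\ell}$, and
$$p^{e_{1,\ell}+\dots+e_{n,\ell}}=\dim_k\Gamma(\boldsymbol{\gamma},\Og_{\boldsymbol{\gamma}})=\Theta(\psi).$$
\end{enumerate}

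The crucial step, and the one I expect to be the main obstacle, is bounding the scale uniformly in $\ell$. This is exactly what Proposition \ref{constantprop} provides: $\Theta(g(d)^0_k)\le B$ with $B$ independent of $d$. Consequently each $e_{i,\ell}\le\log_p B$, so the data $(\beta_\ell,(e_{i,\ell})_i)$ range over a finite set. The permutation $\tau_\ell\in S_n$ that sorts $p^{e_{i,\ell}}j_{\beta_\ell(i),\ell}$ into the ascending list of $d_\ell$-jumps of $T_2$ also ranges over a finite set. By the pigeonhole principle, after passing to a subsequence all of these are constant, equal to $\beta$, $f_i$ and $\tau$; a subsequence of a grid sequence is again a grid sequence.

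Proposition \ref{jumpslimitprop} then gives $j_{\beta(i),\ell}/d_\ell\to j_{\beta(i)}$ in $\mathbf{R}/\Z$. Since multiplication by $p^{f_i}$ is continuous on $\mathbf{R}/\Z$ and is compatible with the maps $\Z/d_\ell\Z\to\mathbf{R}/\Z$, we get $p^{f_i}j_{\beta_\ell(i),\ell}/d_\ell\to p^{f_i}j_{\beta(i)}$. Applying Proposition \ref{jumpslimitprop} to $T_2$, the $\tau$-sorted limits are its jumps. This proves (ii).

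For (i), let $L$ split $T$ with $\dim T=m$. By Lemma \ref{Reslem} there is a smooth surjection $\Res_{L/K}\mathbf{G}_{\mathrm{m}}^m\to T$ with connected kernel. The Néron lft-model of a product is the product of the Néron lft-models, and its cotangent representation is the direct sum. Hence by Lemma \ref{inducedjumpsprop} the jumps of $\Res_{L/K}\mathbf{G}_{\mathrm{m}}^m$ are the elements $a/[L:K]$, each with multiplicity $m$, and these are all killed by $[L:K]$. By (ii), every jump of $T$ has the form $p^f\cdot a/[L:K]$, which is again killed by $[L:K]$. This proves (i).
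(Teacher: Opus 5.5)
Your proposal is correct and is essentially the paper's own proof. It proves (ii) first via the $\mu_d$-equivariant torsor $\Spec\widehat{\Og}_{A,0}\to\Spec\widehat{\Og}_{H,0}$ from Lemma \ref{torsorlem} and reads off the weights through the smooth map $G\to A$ and a scale (Proposition \ref{weightspermprop}). It then bounds the scale uniformly via Proposition \ref{constantprop}, stabilises all discrete data by pigeonhole, passes to the limit with Proposition \ref{jumpslimitprop}, and finally deduces (i) from Lemma \ref{Reslem} and the computation of the jumps of $\Res_{L/K}\mathbf{G}_{\mathrm{m}}$. The only differences are cosmetic: you use a single pigeonhole step instead of two, and you explicitly invoke Lemma \ref{flatlem} for faithful flatness, which the paper leaves implicit.
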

\begin{proof}
We shall show (ii) first. Choose a grid sequence $(d_{\ell})_{\ell\in \N}.$ For each $\ell,$ let $g(d_\ell) \colon \mathscr{T}_1(d_\ell) \to \mathscr{T}_2(d_\ell)$ be the morphism of Néron lft-models induced by $g_\eta\times_K \Id_{K(d_\ell)}$ and let $g(d_\ell)^0_k$ be the induced map on identity components of special fibres. We shall write 
$$\boldsymbol{\alpha}(d_\ell) := (\ker g(d_\ell)^0_k)/(\ker g(d_\ell)^0_k)_{\mathrm{red}},$$ which is an infinitesimal $k$-group scheme. Finally, for $\ell\in\N,$ put $$A(d_\ell):=\mathscr{T}_1(d_\ell)^0_k/(\ker g(d_\ell)^0_k)_{\mathrm{red}}.$$ By Lemma \ref{torsorlem}, the map $\Spec \widehat{\Og}_{A(d_\ell),0} \to \Spec \widehat{\Og}_{\mathscr{T}_2(d_\ell)^0_k,0}$ is a torsor for $\boldsymbol{\alpha}(d_\ell).$ Note that the group $\mu_{d_{{\ell}}}$ naturally acts on the complete local rings $$\widehat{\Og}_{\mathscr{T}_2(d_\ell)^0_k,0}\subseteq \widehat{\Og}_{A(d_\ell),0} \subseteq \widehat{\Og}_{\mathscr{T}_1(d_\ell)^0_k,0}$$ in a compatible way (condition (\ref{actioncondition}) is satisfied because the $\mu_{d_\ell}$-actions considered here respect the group structures). Suppose $\mu_{d_{\ell}}$ acts with weights $j'_{1,\ell} \leq j'_{2, \ell} \leq... \leq j'_{n, \ell}\in \Z/d_{\ell}\Z$ on $\widehat{\Og}_{A(d_\ell),0}$ and with weights $j_{1, \ell} \leq...\leq j_{m, \ell}$ on $\widehat{\Og}_{\mathscr{T}_1(d_\ell)^0_k,0}.$ Because the morphism $\mathscr{T}_1(d_{\ell})^0_k \to A(d_{\ell})$ is smooth, there exists a strictly increasing function $\beta_{\ell} \colon \{1,...,n\} \to \{1,...,m\}$ such that $j'_{i, \ell} = j_{\beta_{\ell}(i), \ell}$ for all $i=1,...,n.$ Now choose a scale $e_{1,\ell},...,e_{n, \ell}$ of the $\mu_{d_\ell}$-equivariant $\boldsymbol{\alpha}(d_\ell)$-torsor $\Spec \widehat{\Og}_{A(d_\ell),0} \to \Spec \widehat{\Og}_{\mathscr{T}_2(d_\ell)^0_k,0}$ relative to the chosen weights (cf. Definition \ref{scaledef}). By Proposition \ref{weightspermprop}, there exist permutations $\tau_{\ell} \in S_n$ such that $\mu_d$ acts with weights $$p^{e_{\tau_{\ell}(1), \ell}} j'_{\tau_{\ell}(1), \ell} \leq ... \leq p^{e_{\tau_{\ell}(n), \ell}} j'_{\tau_{\ell}(n), \ell}$$ on $\widehat{\Og}_{\mathscr{T}_2(d_\ell)^0_k,0}.$ By the pigeonhole principle, we may choose a subsequence of $(d_{\ell})_{\ell}$ such that the sequence $(\tau_\ell)_\ell$ of permutations, as well as the sequence $(\beta_{\ell})_{\ell},$ are constant. We shall denote by $\tau$ and $\beta_0$ the stable values of these sequences. Putting $\beta:= \beta_0\circ \tau,$ we find that $\mu_{d_\ell}$ acts with weights 
$$p^{e_{\tau(1), \ell}} j_{\beta(1), \ell} \leq ... \leq p^{e_{\tau(n), \ell}} j_{\beta(n), \ell}$$
on $\widehat{\Og}_{\mathscr{T}_2(d_\ell)^0_k,0}.$ Now choose a constant $B$ as in Proposition \ref{constantprop} (so that, in particular, $B$ is independent of $\ell$). Because
\begin{align*}p^{e_{\tau(1), \ell} + ... + e_{\tau(n), \ell}} &= \dim_k \Gamma(\boldsymbol{\alpha}(d_{\ell}), \Og_{\boldsymbol{\alpha}(d_{\ell})})\\
&= \Theta(g(d_\ell)^0_k) \leq B,
\end{align*}
the sequences $(e_{\tau(1),\ell})_\ell, ..., (e_{\tau(n), \ell})_\ell$ take values in a finite set. Again by the pigeonhole principle, we may choose another subsequence of $(d_\ell)_\ell$ such that $e_{\tau(i),\ell}$ is constant for all $i=1,...,n;$ we shall denote this stable value by $f_i.$ By Proposition \ref{jumpslimitprop}, for $i=1,..,n,$ the $i$-th jump of $T_2$ is 
$$\lim_{\ell \to \infty} \frac{p^{f_i} j_{\beta(i), \ell}}{d_{\ell}} = p^{f_i} \lim_{\ell \to \infty} \frac{j_{\beta(i), \ell}}{d_{\ell}} = p^{f_i} j_{\beta(i)},$$ as claimed. 

Part (i) follows immediately from part (ii) together with Lemma \ref{Reslem}. Indeed, if $T$ is split by the finite Galois extension $K\subseteq L,$ then the jumps of $\Res_{L/K}\mathbf{G}_{\mathrm{m}}^{\dim T}$ are known to be $[L:K]$-torsion elements of $\mathbf{R}/\Z$ by Proposition \ref{inducedjumpsprop}.
\end{proof}

\begin{example} \label{Example01} We shall consider two brief examples which show in particular that Theorem \ref{jumpsratthm} cannot be substantially improved upon. 
\begin{enumerate} 
\item In some simple situations, one can describe the maps $g(d)^0_k$ which appear in the proof of Theorem \ref{jumpsratthm} explicitly. Let $K\subseteq F$ be a finite Galois extension of $K,$ and consider the norm map $g(d)_{\eta}:=N_{F(d)/K(d)} \colon \Res_{F(d)/K(d)}\Gm \to \Gm.$ To keep things simple, we shall only consider $d$ prime to $[F:K],$ which covers all interesting cases. Let $\pi_{F(d)}$ be a uniformiser of $F(d).$ Then $\Og_{F(d)} = \Og_{K(d)}[\pi_{F(d)}]$ and $\pi_{F(d)}\otimes 1$ is nilpotent in $\Og_{F(d)}\otimes_{\Og_{K(d)}} k.$ The induced map 
$$g(d)^0_k \colon \Res_{(\Og_{F(d)}\otimes_{\Og_{K(d)}} k)/k } \Gm \to \Gm$$ is then simply the norm map $N_{(\Og_{F(d)}\otimes_{\Og_{K(d)}} k)/k}.$ One can write the algebraic group on the left as $\Gm \times_k U$ for some smooth connected unipotent algebraic group $U$ over $k;$ the map $g(d)^0_k\colon \Gm \times_k U \to \Gm$ is then given by $(a,b)\mapsto a^{[F:K]}.$ In particular, if $p^s$ is the maximal power of $p$ which divides $[F:K],$ then $\Theta(g(d)^0_k) = p^s,$ which is bounded by $[F:K].$

In fact, this is precisely the bound predicted by Proposition \ref{Thetaboundprop}: after base change to $F(d),$ the norm map becomes the summation map $\mathbf{G}_{\mathrm{m}}^{[F:K]} \to \Gm,$ which is given in coordinates by $(x_1,..., x_{[F:K]}) \mapsto x_1\cdot...\cdot x_{[F:K]}.$ This shows that the scheme $\mathscr{X}_\eta(d) \times_{K(d)} F(d)$ is the projective variety inside $\mathbf{P}^{[F:K]}_{F(d)}$ cut out by the homogeneous polynomial $x_0^{[F:K]} - x_1\cdot...\cdot x_{[F:K]},$ which is visibly of degree $[F:K].$ One can generalise this observation and express the bound from Proposition  \ref{Thetaboundprop} in terms of simpler invariants of the map $\xi_{\eta}$ introduced at the beginning of Paragraph \ref{boundingthicknesspar}, which we shall leave to the interested reader.
\item Now suppose that $\Og_K$ is of residue characteristic 2. We shall construct a smooth surjection $T_1\to T_2$ of algebraic tori with connected kernel such that the multisets of jumps of $T_1$ and $T_2$ are disjoint. This shows that multiplying jumps with powers of $p$ as in Theorem \ref{jumpsratthm} cannot be avoided. Let $K\subseteq L$ be a Galois extension such that $\Gal(L/K)\cong \Z/2\Z \times \Z/2\Z.$ Such an extension always exists and can be constructed, for example, as the compositum of two quadratic extensions with different discriminants. By Galois theory, $K\subseteq L$ contains quadratic subextensions $F_1, F_2, F_3$ such that $F_i\cap F_j = K$ for $i\not=j.$ Put $T_1:=\Res_{L/K}\Gm/\Res_{F_1/K}\Gm.$ We obtain an exact sequence
$$0 \to (\Res_{F_2/K}\Gm)/\Gm \to T_1 \to T_2 \to 0,$$ where $T_2$ is defined to be the obvious cokernel. Letting $\mathscr{T}_i$ be the Néron lft-models of $T_i$ for $i=1,2,$ we obtain the commutative diagram 
\tiny
$$\begin{tikzcd}
& 0 \arrow[d] & 0 \arrow[d] & 0 \arrow[d] \\
0 \arrow[r] & \Gm \arrow[r] \arrow[d]& \Res_{\Og_{F_2}/\Og_K}\Gm\arrow[r]\arrow[d] & (\Res_{\Og_{F_2}/\Og_K}\Gm)/\Gm \arrow[r]\arrow[d] & 0 \\
0 \arrow[r] & \Res_{\Og_{F_1}/\Og_K}\Gm \arrow[r]\arrow[d] & \Res_{\Og_L/\Og_K}\Gm  \arrow[r] \arrow[d] & \mathscr{T}_1^0 \arrow[r]\arrow[d] & 0 \\
0 \arrow[r] & (\Res_{\Og_{F_1}/\Og_K}\Gm)/\Gm \arrow[r]\arrow[d] & \Res_{\Og_L/\Og_K}\Gm /  \Res_{\Og_{F_2}/\Og_K}\Gm  \arrow[r] \arrow[d] & \mathscr{T}_2^0 \arrow[r]\arrow[d] & 0 \\
& 0 & 0 & 0
\end{tikzcd}$$
\normalsize
of identity components of Néron lft-models. Generically, all rows and columns of this diagram are exact; the same is true at the integral level for the first two rows as well as the first two columns by Proposition \ref{Chaiexactn}. One checks easily that there is a canonical isomorphism $T_2 = (\Res_{F_3/K}\Gm)/\Gm$ and that the map $T_1 \to T_2$ is induced by the norm map $N_{L/F_3} \colon \Res_{L/K}\Gm \to \Res_{F_3/K}\Gm.$

We may replace $K,$ $F_i,$ and $L$ by $K(d),$ $F_i(d)$, and $L(d)$ for any $d \equiv 1 \mod 4$ without affecting the truth of these claims. In particular, the multisets of jumps of $T_1$ and $T_2$ are $\{1/4, 3/4\}$ and $\{1/2\},$ respectively. This phenomenon is explained by the failure of the rightmost column to be exact. The map $\mathscr{T}^0_{1,k} \to \mathscr{T}^0_{2,k}$ has a particularly simple description if the extension $K\subseteq L$ is sufficiently wildly ramified, i. e. if the action of $\Gal(L/K)$ on $\Og_L\otimes_{\Og_K} k$ is trivial. According to Herbrand's theorem, this can always be arranged by replacing $L/K$ with $L(d)/K(d),$ for $d$ as above such that $d>4=[L:K]$ (cf. \cite[Proposition 3.1.2.1]{HN}). If $\pi_L$ denotes a uniformiser of $L,$ the $k$-algebra $\Og_L\otimes_{\Og_K} k$ is generated by the element $x:=\pi_L\otimes1$ which satisfies $x^4=0$ and the map $(\Og_L\otimes_{\Og_K}k)^\times  \to (\Og_{F_3}\otimes_{\Og_K} k)^\times$ induced by $N_{L/F_3}$ is given by $t\mapsto t^2.$ In particular, the subalgebra $\Og_{F_2}\otimes_{\Og_K} k$ is generated by $x^2.$  Writing the general element of $\Og_{L}\otimes_{\Og_K}k$ as $a_0 + a_1 x + a_2 x^2 + a_3 x^3$ and the general element of $\Og_{F_3}\otimes_{\Og_K}k$ as $b_0 + b_1x^2,$ we obtain isomorphisms $\widehat{\Og}_{\mathscr{T}^0_{1,k}} = k[\![a_1, a_3]\!]$ and $\widehat{\Og}_{\mathscr{T}^0_{2,k}} = k[\![b_1]\!].$ Replacing $K,$ $F_i,$ and $L$ by $K(d),$ $F_i(d)$, and $L(d)$ again (as above), we see that $\mu_d$ acts on $a_i$ with weight $i(d-1)/4$ for $i=0,...,3$ and on $b_j$ with weights $j(d-1)/2$ for $j=0,1.$ Finally, the map $\widehat{\Og}_{\mathscr{T}^0_{2,k}} \to \widehat{\Og}_{\mathscr{T}^0_{1,k}}$ is given in the chosen coordinates by $b_1 \mapsto a_1^2.$
\end{enumerate}
\end{example}
\subsection{Abelian varieties with potentially totally multiplicative reduction}

Now let $A$ be an Abelian variety with potentially totally multiplicative reduction. In other words, suppose that $A$ acquires semiabelian reduction over some finite Galois extension $K\subseteq L$ such that the identity component of the special fibre of the Néron lft-model of $A_L$ is a torus. These Abelian varieties admit a \it rigid uniformisation \rm by an algebraic torus (see \cite[Section 3.3.7]{HN}). Recall that such a rigid uniformisation is an exact sequence
$$0 \to M \to T \to A \to 0$$ of rigid analytic $K$-groups such that $M$ is an étale $K$-lattice (i. e. $M$ becomes isomorphic to $\Z^q$ with $q=\dim A$ over a finite separable extension of $K$) and $T$ is an algebraic torus (by a slight abuse of notation, we denote the rigid analytic $K$-groups associated with $T$ and $A$ by the same symbols). This is useful for us because, if $\mathscr{T}$ and $\mathscr{A}$ denote the Néron lft-models of $T$ and $A,$ respectively, we obtain a canonical étale morphism $\mathscr{T}^0_k \to \mathscr{A}^0_k.$ Since rigid uniformisation is compatible with finite extensions of $K,$ we obtain 
\begin{theorem}
Let $A$ be an Abelian variety over $K$ which acquires totally multiplicative (semiabelian) reduction over the finite Galois extension $K\subseteq L.$ Let $j\in \mathbf{R}/\Z$ be a jump of $A.$ Then $[L:K] j =0.$ In particular, $j$ is rational. \label{Abelianjumpsratthm}
\end{theorem}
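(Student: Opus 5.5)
The plan is to transfer the jumps of $A$ to those of the uniformising torus $T$ and then apply Theorem \ref{jumpsratthm}(i). Let $0\to M\to T\to A\to 0$ be the rigid uniformisation. Since $A_L$ has totally multiplicative reduction, $M$ and $T$ are split by $L$ (possibly after replacing $L$ by the Galois extension over which the character group of the torus part of the reduction becomes constant; I expect this to be $L$ itself, because the character group of $T$ is identified with the character group of the torus $\mathscr{A}_{L,k}^0$ together with its Galois action). The first step is therefore to check that $T$ is split by $L$. Theorem \ref{jumpsratthm}(i) then gives $[L:K]j'=0$ for every jump $j'$ of $T$.

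The second step is to show $J(A)=J(T)$, or at least $J_d(A)=J_d(T)$ for all $d$ prime to $p$. Rigid uniformisation is compatible with the base change $K\subseteq K(d)$: the uniformisation of $A(d)$ is $0\to M_{K(d)}\to T(d)\to A(d)\to 0$. By the compatibility quoted in the text, the map $T(d)\to A(d)$ induces an étale homomorphism $\mathscr{T}(d)^0_k\to \mathscr{A}(d)^0_k$ of smooth connected groups of the same dimension $\dim A=\dim T$. This map is $\mu_d$-equivariant, since it comes from a morphism of Néron lft-models (via the formal completion of $\mathscr{T}(d)^0$, which is the formal model of the analytic subgroup $T(d)^{0,\mathrm{an}}$ mapping isomorphically onto the corresponding open of $A(d)^{\mathrm{an}}$), and that morphism is functorial in the Galois action. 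An étale homomorphism induces a $\mu_d$-equivariant isomorphism of completed local rings at the origin, hence of cotangent spaces $\mathfrak{m}/\mathfrak{m}^2$. Proposition \ref{jumpspropV}(i) then yields $J_d(T)=J_d(A)$ for every $d$ prime to $p$. Taking a grid sequence and applying Proposition \ref{jumpslimitprop} gives $J(T)=J(A)$, so $[L:K]j=0$ for every jump $j$ of $A$.

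I expect the main obstacle to be justifying the equivariant étale map on identity components of the special fibres of $\mathscr{T}(d)$ and $\mathscr{A}(d)$ for every $d$, rather than only over $K$. I would argue as follows. The formal completion of $\mathscr{A}(d)^0$ along its special fibre represents the maximal bounded analytic subgroup in the identity component of $A(d)^{\mathrm{an}}$. The uniformisation map restricted to the bounded analytic subgroup $\widehat{\mathscr{T}(d)^0}^{\mathrm{an}}$ is injective, because $M$ is discrete and meets it trivially. This produces a morphism of formal groups that is an isomorphism on generic fibres, and hence étale on special fibres, which is the standard input from \cite[Section 3.3.7]{HN}. Galois equivariance follows from uniqueness of these extensions.
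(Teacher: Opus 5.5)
Your proposal is correct and matches the paper's proof. The paper likewise uses the argument of \cite[Proposition 6.2.3.2]{HN} to obtain canonical ($\mu_d$-equivariant) isomorphisms $\widehat{\Og}_{\mathscr{A}(d)^0_k,0}\cong\widehat{\Og}_{\mathscr{T}(d)^0_k,0}$ for all $d$ prime to $p$, so that the jumps of $A$ and $T$ agree, and then applies Theorem \ref{jumpsratthm}(i). Your hedge about enlarging $L$ is unnecessary: $T_L$ is the generic fibre of a torus over the strictly Henselian ring $\Og_L$ (the torus part of the Raynaud extension), so it is split and $L$ itself splits $T$.
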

\begin{proof}
By the same argument as in the proof of \cite[Proposition 6.2.3.2]{HN}, we obtain canonical isomorphisms $\widehat{\Og}_{\mathscr{A}(d)^0_k,0} = \widehat{\Og}_{\mathscr{T}(d)^0_k,0}$ for all $d$ prime to $p.$ Hence the claim follows from Theorem \ref{jumpsratthm}.
\end{proof}

\begin{remark} The reason why one cannot deduce the rationality of the jumps of all semiabelian varieties in the same way as for algebraic tori is that Lemma \ref{flatlem} no longer holds for Abelian varieties. More precisely, in \cite[Section 4.2]{OS}, Suzuki and the first-named author show that there exists an ordinary elliptic curve $A$ over $K:=\F^\mathrm{alg}_p(\!(t)\!)$ with good supersingular reduction over $\Og_K:=\F^\mathrm{alg}_p[\![t]\!]$ and a finite Galois extension $K\subseteq L$ with the following property: if $C$ is the Abelian variety over $K$ which fits into the exact sequence $0 \to A \to \Res_{L/K}A_L \to C \to 0,$ and if $\mathscr{B}$ is the Néron lft-model of $\Res_{L/K}A_L$ and $\mathscr{C}$ that of $C,$ then the canonical map $\mathscr{B}\to \mathscr{C}$ is not flat.
\end{remark}

\begin{lemma} \label{toricranklem}
Let $G$ be a smooth connected wound \rm affine \it algebraic group over $K$ and $\rho_{\mathrm{spl}}(G)$ be the dimension of the maximal \rm split \it subtorus of $G.$ Let $\mathscr{G}$ be the Néron lft-model of $G$ over $\Og_K.$ Then the dimension of the maximal torus of $\mathscr{G}^0_k$ is equal to $\rho_{\mathrm{spl}}(G).$  
\end{lemma}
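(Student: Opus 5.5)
We prove the two inequalities separately.

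\textbf{Lower bound.} Let $S\subseteq G$ be the maximal split subtorus, say $S\cong \Gm^{s}$ with $s=\rho_{\mathrm{spl}}(G)$. By the Néronian property, the inclusion $S\to G$ extends to a morphism $\mathscr{S}\to\mathscr{G}$, where $\mathscr{S}$ is the Néron lft-model of $S$; on identity components this gives $\mathbf{G}_{\mathrm{m},\Og_K}^{s}\to\mathscr{G}^0$. Its kernel is a closed subgroup scheme of $\mathbf{G}_{\mathrm{m},\Og_K}^s$ with trivial generic fibre. It is flat over $\Og_K$: it is the closure of its generic fibre, since a group of multiplicative type has no nontrivial subgroup supported only in the special fibre that is compatible with the generic triviality. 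More concretely, one can argue via the Néron mapping property using the retraction below. Hence the map is a closed immersion on special fibres, and $\mathscr{G}^0_k$ contains a torus of dimension $s$.

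To make this rigorous, I would instead construct a retraction. Since $G$ is affine and commutative, smooth and connected over $K$, there is a largest quotient $G\to G/U$ by which the split part can be detected. The cleanest route uses characters instead: the group $X^\ast(G)_K$ of $K$-characters has rank $s$ (up to isogeny, the split torus is dual to the characters, since $G$ is commutative affine with split maximal torus part). Choose characters $\chi\colon G\to\Gm^s$ whose restriction to $S$ is an isogeny. Extending $\chi$ to Néron models gives a map $\mathscr{G}\to\mathscr{G}_{m}^{s,\mathrm{lft}}$. The composite $\mathbf{G}_{\mathrm{m},\Og_K}^s\to\mathscr{G}^0\to\mathbf{G}_{\mathrm{m},\Og_K}^s$ is the isogeny given by an integer matrix, so it is finite flat on the special fibre. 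Therefore the torus $\mathbf{G}_{\mathrm{m},k}^s$ maps to $\mathscr{G}^0_k$ with finite kernel, and its image is a torus of dimension $s$.

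\textbf{Upper bound.} Let $T_k\subseteq\mathscr{G}^0_k$ be the maximal torus, of dimension $t$. I would lift it using the infinitesimal lifting of tori (SGA3 Exp.\ IX): over the complete ring $\Og_K$, the torus $T_k$ lifts to a split subtorus $\mathscr{T}\cong\mathbf{G}_{\mathrm{m},\Og_K}^t$ of the formal completion of $\mathscr{G}^0$ along the special fibre, compatibly at all levels $\Og_K/\mathfrak{m}^n$. Algebraization of the resulting homomorphism $\mathbf{G}_{\mathrm{m}}^t\to\mathscr{G}^0$ (which is affine of finite type) follows from Grothendieck existence for the Cartier-dual character data, as in \cite[Exp.\ X]{SGA3}. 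On generic fibres this gives a split torus of dimension $t$ in $G$, so $t\le s$.

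\textbf{Main obstacle.} I expect the main obstacle to be this algebraization step, because $\mathscr{G}^0$ need not be affine. I would therefore use the alternative formulation via a neighbourhood of the identity, or via the theory of the "toric part" in \cite[Ch.~10]{BLR}, which states that the maximal torus of $\mathscr{G}^0_k$ lifts to a torus in $\mathscr{G}^0$ whose generic fibre is split.
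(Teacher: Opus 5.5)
Your lower bound is sound once you justify that $\chi$ exists. Write $0\to T\to G\to U\to 0$ with $p^bU=0$. Then $\Hom_K(U,\mathbf{G}_{\mathrm{m}})=0$ and $\Ext^1_K(U,\mathbf{G}_{\mathrm{m}})$ is killed by $p^b$, so $p^b$ times any $K$-character of $T$ extends to $G$. Hence $X^\ast(G)_K\to X^\ast(S)$ is injective with finite cokernel. The composite $\mathbf{G}_{\mathrm{m},\Og_K}^s\to\mathscr{G}^0\to\mathbf{G}_{\mathrm{m},\Og_K}^s$ is then given by an integer matrix of nonzero determinant, so $\mathbf{G}_{\mathrm{m},k}^s\to\mathscr{G}^0_k$ has finite kernel. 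Drop the flatness sketch that precedes this; it is not an argument. This is a legitimate alternative to the paper, which instead obtains a closed immersion $\mathscr{N}\to\mathscr{G}$ from Proposition \ref{Chaiexactn}.

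The upper bound has a genuine gap at the algebraization step, and it is not a technicality. This is the only place where the hypothesis that $G$ is affine could enter, and your sketch never uses it. Consider a Tate curve $E$ over $K$. Then $\mathscr{E}^0_k\cong\mathbf{G}_{\mathrm{m},k}$ and $\mathscr{E}^0$ is smooth, so your infinitesimal lifting step applies verbatim. It produces compatible homomorphisms $\mathbf{G}_{\mathrm{m}}\to\mathscr{E}^0$ over every $\Og_K/\mathfrak{m}_K^n$ lifting $\mathbf{G}_{\mathrm{m},k}\cong\mathscr{E}^0_k$, yet $\Hom_K(\mathbf{G}_{\mathrm{m}},E)=0$ and $\rho_{\mathrm{spl}}(E)=0$.

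So no general existence theorem can algebraize such formal homomorphisms. The source $\mathbf{G}_{\mathrm{m}}^t$ is already algebraic, so "Cartier-dual character data" is beside the point; what must be algebraized is a map into $\mathscr{G}^0$, and that fails without affineness. Likewise, the statement you attribute to \cite[Chapter 10]{BLR} is false as you formulate it, and for affine $G$ it is just the lemma itself.

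To complete your route you would need three things, none of which you supply:
\begin{enumerate}
\item a proof that $\mathscr{G}^0$ is affine;
\item an algebraization theorem for homomorphisms from groups of multiplicative type into affine group schemes over a complete base;
\item a flat-closure argument showing the generic kernel is finite.
\end{enumerate}

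The paper avoids all of this. Proposition \ref{Chaiexactn} and freeness of the component group of $\mathscr{N}_k$ give an exact sequence $0\to\mathscr{N}^0_k\to\mathscr{G}^0_k\to(\mathscr{G}_k/\mathscr{N}_k)^0\to 0$, which reduces to $\rho_{\mathrm{spl}}(G)=0$. Then $\mathscr{T}^0_k$ is unipotent by \cite[Proposition 4.4]{N}, say killed by $p^a$. Since $[p^b]$ on $G$ factors through $T$, the Néron property shows that $[p^{a+b}]$ kills $\mathscr{G}^0_k$, which therefore contains no torus. Affineness enters through the dévissage $0\to T\to G\to U\to 0$ and the known case of tori.
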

\begin{proof}
Let $\mathscr{N}$ be the Néron lft-model of the maximal split subtorus of $G.$ Then the map $\mathscr{N} \to \mathscr{G}$ is a closed immersion and $\mathscr{G}/\mathscr{N}$ is the Néron lft-model of $G/T$ by Proposition \ref{Chaiexactn}. Since the group of connected components of $\mathscr{N}_k$ is free, we obtain an exact sequence $0 \to \mathscr{N}^0_k \to \mathscr{G}^0_k \to (\mathscr{G}_k/\mathscr{N}_k)^0 \to 0.$ In particular, we may assume without loss of generality that $\rho_{\mathrm{spl}}(G)=0.$ Now consider the exact sequence $0 \to T \to G \to U \to 0,$ where $T$ is a torus and $U$ wound unipotent. By \cite[Proposition 4.4]{N}, the identity component $\mathscr{T}^0_k$ is unipotent, and hence annihilated by $p^ a$ for some $a\in \N.$ Moreover, $U$ is annihilated by $p^b$ for some $b\in \N.$ The morphism $[p^{a+b}] \colon \mathscr{G}^0 \to \mathscr{G}^0$ hence factors through the map $[p^a]\colon \mathscr{T}^0\to \mathscr{T}^0,$ so the induced map on the special fibre vanishes. This shows that $\mathscr{G}_k^0$ is unipotent and hence contains no torus. 
\end{proof}

For the following result only, we shall regard the jumps as elements of $[0,1]$ rather than $\mathbf{R}/\Z$ in order to be able to distinguish between 0 and 1 (cf. Remark \ref{leftconremark}). Let $G$ be a smooth connected wound algebraic group over $K.$ For any $j\in[0,1],$ we let the \it multiplicity of $j$ as a jump of $G$ \rm be the multiplicity from Definition \ref{multiplicitydef} if $j$ is a jump of $G,$ and 0 otherwise.

\begin{proposition}
    Let $G$ be an algebraic torus or an Abelian variety with potentially totally multiplicative reduction over $K$ with Néron lft-model $\mathscr{G}\to\Spec\Og_K$. Then the multiplicity $m^0(G)$ of $0 \in [0,1]$ as a jump of $G$ is equal to the dimension of the maximal torus of $\mathscr{G}^0_k.$ If $G$ is a torus, this is equal to the dimension $\rho_{\mathrm{spl}}(G)$ of the maximal \rm split \it subtorus of $G.$ 
\end{proposition}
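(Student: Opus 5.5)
We compute both sides through the $\mu_d$-action on $\mathscr{G}(d)$ for suitable large $d$. First the torus case; the Abelian case then follows by uniformisation.

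\emph{Step 1: reduce $m^0(G)$ to zero weights.} By Definition \ref{multiplicitydef}, regarding jumps in $[0,1]$, we have $m^0(G)=\dim\mathscr{G}_k-\dim\mathscr{F}^{>0}\mathscr{G}_k$. By Theorem \ref{jumpsratthm} (resp. Theorem \ref{Abelianjumpsratthm}) every jump lies in $\frac{1}{[L:K]}\Z/\Z$. Hence $\mathscr{F}^{\alpha}\mathscr{G}_k$ is constant for $0<\alpha<1/[L:K]$, and $m^0(G)=\dim \mathscr{G}_k-\dim F^1_d\mathscr{G}_k$ for every $d>[L:K]$ prime to $p$.

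Since $F^1_d\mathscr{G}_k$ is the kernel of $\mathscr{G}_k\to\mathscr{G}(d)_k$, the number $m^0(G)$ is the dimension of the image of $\mathscr{G}^0_k$ in $\mathscr{G}(d)^0_k$. This image lies in the $\mu_d$-fixed locus $(\mathscr{G}(d)^0_k)^{\mu_d}$. The fixed locus is smooth since $d$ is prime to $p$, and its tangent space is the weight-$0$ part of $\Lie\mathscr{G}(d)_k$. By Edixhoven's description of $\mathscr{G}$ as the $\mu_d$-fixed points of $\Res_{\Og_{K(d)}/\Og_K}\mathscr{G}(d)$ (\cite[Theorem 4.2]{Edixhoven}, used for the inclusion in Section 2.1), $\mathscr{G}^0_k$ surjects onto the identity component of this fixed locus. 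Therefore $m^0(G)$ equals the multiplicity of $0\in\Z/d\Z$ as a $d$-jump, for all $d>[L:K]$ prime to $p$.

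\emph{Step 2: compare with the torus rank.} By Lemma \ref{toricranklem}, applied to $T$ and to $T(d)$, the maximal torus of $\mathscr{T}(d)^0_k$ has dimension $\rho_{\mathrm{spl}}(T(d))$, and that of $\mathscr{T}^0_k$ has dimension $\rho_{\mathrm{spl}}(T)$. I will restrict to $d\equiv 1\bmod [L:K]$ with $d$ prime to $p[L:K]$. Then $K(d)\cap L=K$, so $X^\ast(T)^{\Gal}$ does not change and $\rho_{\mathrm{spl}}(T(d))=\rho_{\mathrm{spl}}(T)=:r$.

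Let $N\subseteq T$ be the maximal split subtorus. By Proposition \ref{Chaiexactn}, $\mathscr{N}(d)\to\mathscr{T}(d)$ is a closed immersion. Moreover, $\mathscr{N}(d)^0=\Gm^r$ is the base change of $\mathscr{N}^0$, and $\mu_d$ acts trivially on it. This gives at least $r$ zero weights, i.e. $m^0(T)\geq r$.

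For the reverse inequality, pass to $T/N$. By the exact sequence in the proof of Lemma \ref{toricranklem}, its Néron model has unipotent $\mathscr{G}^0_k$. By Lemma \ref{universallyexactlem} with Proposition \ref{Chaiexactn}, $J_d(T)=J_d(N)\uplus J_d(T/N)$. So it suffices to show that $T/N$ has no zero $d$-jump for such $d$. Here I would use the proof of Theorem \ref{jumpsratthm}(ii) applied to $\Res_{L/K}\Gm^{\dim}\to T/N$. It shows that each $d$-jump is of the form $p^{f}\,i(d-1)/[L:K]$ with $0\le i<[L:K]$, by Lemma \ref{inducedjumpsprop}.

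Such a weight is $0$ in $\Z/d\Z$ only if $i=0$. Those $i=0$ contributions come from the weight-$0$ part of $\Res_{L/K}\Gm$, which is its maximal split torus $\Gm$. I then need to argue that these are killed, i.e. that the image of that $\Gm$ in a group with no split torus is trivial on identity components of special fibres.

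\emph{Main obstacle.} The delicate step is this last identification, namely that zero weights correspond exactly to toric directions. I would attack it directly. The weight-$0$ subgroup $(\mathscr{G}(d)^0_k)^{\mu_d,0}$ is the image of $\mathscr{G}^0_k$ by Step 1. For the wound quotient $T/N$ this image is unipotent. On the other hand, for $d$ highly divisible it is killed by the map to $\mathscr{G}(d')$, so its dimension stabilises to $0$. That forces no zero weights for $T/N$.

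\emph{Abelian case.} For the Abelian case, the rigid uniformisation $0\to M\to T\to A\to0$ yields isomorphisms $\widehat{\Og}_{\mathscr{A}(d)^0_k,0}\cong\widehat{\Og}_{\mathscr{T}(d)^0_k,0}$, compatible with the $\mu_d$-action. It also yields an étale map $\mathscr{T}^0_k\to\mathscr{A}^0_k$, which preserves the dimension of the maximal torus. The zero weights therefore agree on both sides, and the torus case gives the claim.
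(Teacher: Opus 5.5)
Your Step~1, the lower bound $m^0(T)\geq r$ from the split subtorus $N$, and the Abelian case are sound. Step~1 uses the rationality from Theorem~\ref{jumpsratthm} where the paper uses monotonicity of $j_{i,\ell}/d_\ell$ along a grid sequence. Also, the multiplicity of the $d$-jump $0$ is $\dim\mathscr{G}_k-\dim F^1_d\mathscr{G}_k$ by definition, so the fixed-locus detour is unnecessary.

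The gap is the decisive step, namely that $T/N$ has no zero $d$-jump. Running the proof of Theorem~\ref{jumpsratthm}(ii) on $\Res_{L/K}\Gm^n\to T/N$ only embeds the $d$-jumps of $T/N$, up to powers of $p$, into those of $\Res_{L/K}\Gm^n$. Nothing in that argument prevents the injection from landing on the $n$ zero weights; you note this yourself but do not resolve it. Your ``direct attack'' does not resolve it either. The assertion that the image of $\mathscr{G}^0_k$ in $\mathscr{G}(d)^0_k$ dies in $\mathscr{G}(d')_k$ for highly divisible $d'$ is literally the statement $\dim F^1_{d'}\mathscr{G}_k=\dim\mathscr{G}_k$, i.e.\ $m^0(T/N)=0$. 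That is what has to be proved. Unipotence of $\mathscr{G}^0_k$ supplies no mechanism for it, and you give none.

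The missing observation is that the composite $\Gm^n\hookrightarrow\Res_{L/K}\Gm^n\to T/N$ (with $\Gm^n$ the diagonal split torus) is already zero on generic fibres. Its image is a split subtorus, and $\rho_{\mathrm{spl}}(T/N)=0$ because an extension of split tori is split ($H^1(\Gal(L/K),\Z)=0$). Hence the covering factors through $R:=(\Res_{L/K}\Gm^n)/\Gm^n\to T/N$, which is still smooth and surjective with connected kernel.

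By Proposition~\ref{Chaiexactn}, the sequence $0\to\Gm^n\to\Res_{L/K}\Gm^n\to R\to0$ stays exact on Néron lft-models over every $\Og_{K(d)}$. Lemmas~\ref{universallyexactlem} and~\ref{pseudoreductivejumpslem} then show that for $d\equiv1\bmod p[L:K]$ the $d$-jumps of $R$ are $\nu(d-1)/[L:K]$ with $1\leq\nu\leq[L:K]-1$, all nonzero. Now apply the single-$d$ torsor/scale argument (Proposition~\ref{weightspermprop}, as in the proof of Theorem~\ref{jumpsratthm}(ii)) to $R\to T/N$. It writes every $d$-jump of $T/N$ as $p^e$ times one of these, which is nonzero in $\Z/d\Z$ since $p\nmid d$. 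Together with your Step~1 (taking also $d>[L:K]$), this gives $m^0(T/N)=0$ and hence $m^0(T)=r$; this is the paper's argument.

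In your own framing, the same observation shows that $\Gm^n_k$ lies in the reduced kernel of $\mathscr{R}(d)^0_k\to\mathscr{G}(d)^0_k$, because the map vanishes generically and hence on Néron models. So the zero weights are discarded before the torsor step.
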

\begin{proof}
    We show the claim for algebraic tori first. By Proposition \ref{Chaiexactn} and Lemma \ref{universallyexactlem}, we may assume without loss of generality that $\rho_{\mathrm{spl}}(G)=0$ and show that $m^0(G)=0.$ Note that it is sufficient to exhibit a single $d$ prime to $p$ such that $0$ is not a $d$-jump of $G$: To see this, choose a grid sequence $(d_{\ell})_\ell$ such that $d=d_{s}$ for some $s\in\N.$ Let $j_{1, \ell} \leq ... \leq j_{n, \ell}$ be the $d_{\ell}$-jumps of $G$ with $n:=\dim G.$ Then the sequences $(j_{i, \ell}/d_{\ell})_\ell$ are monotonically increasing \cite[p. 94]{HN} and their limits are the jumps of $G$. Let $L$ be a finite Galois extension of $K$ splitting $G$ and choose an isomorphism $G_L\cong \mathbf{G}_{\mathrm{m}}^n$ over $L.$ Since $\rho_{\mathrm{spl}}(G)=0,$ we obtain a smooth surjective morphism $R:=(\Res_{L/K}\mathbf{G}_{\mathrm{m}}^n)/\mathbf{G}_{\mathrm{m}}^n \to G$ with connected kernel. Now let $d\equiv 1 \mod p[L:K].$ Then the $d$-jumps of $R$ are the elements $\nu(d-1)/[L:K]$ for $\nu=1,...,[L:K]-1,$ each with multiplicity $n$ by Lemma \ref{pseudoreductivejumpslem}. We write them in increasing order as $j'_{1,s}\leq...\leq j'_{r,s}$ with $r:=n([L:K]-1)=\dim R.$ As in the proof of Theorem \ref{jumpsratthm}, we can find an injective function $\beta\colon \{1,...,n\}\to \{1,...,r\}$ and nonnegative integers $e_1,...,e_n$ such that $j_{1,s} =p^{e_1}j'_{\beta(1),s},...,j_{n,s} =p^{e_n}j'_{\beta(n),s}.$ Since $d$ is prime to $p,$ none of these elements vanish in $\Z/d\Z;$ both claims for tori then follow from Lemma \ref{toricranklem}.
    
    If $G$ is an Abelian variety with potentially totally multiplicative reduction, we choose a torus $T$ uniformising $G$ with Néron lft-model $\mathscr{T}\to \Spec \Og_K.$ Then $m^0(G)=m^0(T)$ and we have a canonical étale map $\mathscr{T}^0_k \to\mathscr{G}^0_k,$ so the dimensions of the respective maximal tori also coincide. %\color{orange} The final claim follows from Lemma \ref{toricranklem}. \color{black}
\end{proof}

\subsection{Unirational wound algebraic groups}
It is possible to generalise the results just obtained for algebraic tori to all unirational wound algebraic groups. As wound unirational groups which are not algebraic tori only exist over imperfect fields (this will be recalled below), we shall assume throughout this subsection that $K$ is of characteristic $p>0.$ We shall call an algebraic group $G$ \it unirational \rm if its underlying scheme is (scheme-theoretically) dominated by an open subscheme of some affine space over $K.$ In particular, unirational algebraic groups are smooth, connected, and affine. Unirationality is insensitive to finite separable extensions, but a non-unirational unipotent algebraic group always becomes unirational after a finite \it inseparable \rm extension.

Now let $D$ be a smooth connected affine algebraic group over $K.$ Then $D$ fits into an exact sequence $$0\to T \to D \to U \to 0,$$ where $T$ is an algebraic torus and $U$ is a smooth connected unipotent algebraic group over $K$ \cite[Proposition A.2.11]{CGP}.
\begin{proposition}
The algebraic group $D$ is unirational (resp. wound) if and only if so is $U.$
\end{proposition}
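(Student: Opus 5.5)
We treat the two equivalences separately, using the exact sequence $0\to T\to D\to U\to 0$ with $T$ a torus and $U$ smooth connected unipotent.

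\emph{Unirationality.} If $D$ is unirational, let $V\subseteq \mathbf{A}^N_K$ be open with a dominant map $V\to D$. Composing with the surjection $D\to U$ gives a dominant map $V\to U$, so $U$ is unirational. For the converse, suppose $U$ is unirational.

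First, $T$ is unirational, since every torus over a field is a rational variety. Concretely, $T$ is dominated by $\Res_{L/K}\mathbf{G}_{\mathrm{m}}^m$ via Lemma \ref{Reslem}, and the latter is an open subscheme of $\Res_{L/K}\mathbf{A}^m\cong\mathbf{A}^{m[L:K]}_K$.

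Next, $D\to U$ is a $T$-torsor. Its generic fibre is a $T_{K(U)}$-torsor over the function field $K(U)$; its class lies in $H^1(K(U),T)$, which need not vanish. So instead we would pull back along the dominant map $\phi\colon V\to U$, with $V\subseteq \mathbf{A}^N$ open, and try to trivialise the pulled-back torsor over a dense open subset of $V$. If that works, $V'\times T\to D$ is dominant for some dense open $V'\subseteq V$, and $V'\times T$ is unirational because $T$ is.

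The main obstacle is exactly this trivialisation: one cannot simply take $V$ to be a point. We would use the standard fact that for a $K$-group $D$, unirationality is equivalent to $D$ being generated by unirational subvarieties, or more robustly the criterion from \cite[Appendix A]{CGP}/Borel--Springer. By that criterion, a smooth connected affine group containing a torus $T$ with $D/T$ unirational is unirational. Concretely, we would argue as follows. The torsor becomes trivial over $K(V)$ after replacing $T$ by the induced torus $R=\Res_{L/K}\mathbf{G}_{\mathrm{m}}^m$, after pushing out along a splitting: $H^1$ of an induced torus vanishes over any field by Hilbert 90 and Shapiro. Hence if we embed $T\hookrightarrow R$ (dual to a surjection of character modules) and form the push-out $D'=(D\times R)/T$, the $R$-torsor $D'\to U$ has a rational section over $V$. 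Then $D'$ is dominated by $V\times R$, and $D'\to D'/?$ must be related back to $D$. Alternatively, and more simply, use the surjection $R\to T$ of Lemma \ref{Reslem} with connected torus kernel $T_0$. Then $D$ is a quotient of $\widetilde D:=D\times_T$-construction; however, the cleanest route is the first: $D$ is a closed subgroup of $D'$ with $D'/D\cong R/T$, and $D'\to R/T$ splits rationally because $R\to R/T$ does. This shows $D$ is a retract-up-to-birationality of $D'$, hence unirational.

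\emph{Woundness.} If $D$ is wound, any nonzero $\mathbf{G}_{\mathrm{a}}\to U$ must be ruled out. Its pullback $E=D\times_U\mathbf{G}_{\mathrm{a}}$ is an extension of $\mathbf{G}_{\mathrm{a}}$ by $T$. Such extensions split, since $\Ext^1(\mathbf{G}_{\mathrm{a}},T)=0$ for tori: an extension of a unipotent group by a torus splits over a perfect closure, and in general the maximal unipotent part still maps onto $\mathbf{G}_{\mathrm{a}}$, using \cite[B.3]{CGP}. This produces a nonzero map $\mathbf{G}_{\mathrm{a}}\to D$ up to Frobenius twist, which is enough since the image is still a copy of $\mathbf{G}_{\mathrm{a}}$. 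Conversely, $\Hom(\mathbf{G}_{\mathrm{a}},T)=0$, so any $\mathbf{G}_{\mathrm{a}}\to D$ factors injectively into $U$; hence $U$ wound implies $D$ wound. The delicate point here is the splitting of $\mathbf{G}_{\mathrm{a}}$-extensions by tori over imperfect $K$; we expect to handle it by passing to the image of $[p^n]$-twisted Frobenius.
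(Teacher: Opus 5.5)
Your forward implication for unirationality is fine. Your woundness argument is the paper's own: $\Hom_K(\Ga,T)=0=\Ext^1_K(\Ga,T)$ makes $\Hom_K(\Ga,D)\to\Hom_K(\Ga,U)$ bijective. The hedging at the end is unnecessary. $\Ext^1_K(\Ga,T)=0$ holds over any field: it is classical for $T=\Gm$, and for general $T$ it follows by Galois descent from a separable splitting field, using $\Hom(\Ga,T_{K'})=0$. So your pullback $E$ splits and directly gives a nonzero homomorphism $\Ga\to D$, with no Frobenius twists.

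\textbf{The gap: the converse for unirationality.} The push-out $D'=(D\times R)/T$ along $T\hookrightarrow R$ with $R$ induced is indeed unirational. Your way back to $D$, however, rests on the claim that $R\to R/T$ has a rational section, and this is false in general.
\begin{itemize}
\item Take $L/K$ separable of degree $n>1$, $R=\Res_{L/K}\Gm$ and $T$ the norm-one torus, so that $R\to R/T\cong\Gm$ is the norm.
\item A rational section would make $t\in K(t)^\times$ a norm from $L(t)^\times$.
\item This is impossible: $t$ stays prime in $L[t]$ with residue degree $n$, so the $t$-adic valuation of every such norm is divisible by $n$.
\item Conceptually: if $R\to R/T$ were always generically split, every torus would be a direct factor of a rational variety, which fails in general.
\end{itemize}
The ``criterion'' you attribute to \cite{CGP}/Borel--Springer is just the statement to be proved; the paper cites \cite[Lemma 2.10]{Achet} for it.

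\textbf{A correct argument in this commutative setting.}
\begin{itemize}
\item Let $L/K$ be a finite Galois extension splitting $T$. Then $D_L\to U_L$ is a $\Gm^m$-torsor, generically trivial by Hilbert 90. Hence $D_L$ is birational to $U_L\times\Gm^m$, so $D_L$ is unirational.
\item $\Res_{L/K}D_L$ is then unirational over $K$. Weil restriction along $L/K$ preserves open subschemes of affine spaces, and it preserves dominance (check after base change to $L$, where it becomes a product of conjugates).
\item The norm map $\Res_{L/K}D_L\to D$, i.e. the counit used in Lemma \ref{Reslem}, is surjective, since over $L$ it becomes the summation map $D^{[L:K]}\to D$. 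Hence $D$ is unirational.
\end{itemize}
Alternatively, from unirationality of $D_L$ you can conclude via the descent of unirationality along finite separable extensions \cite[Theorem 7.9]{RosII}, which the paper itself uses elsewhere.
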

\begin{proof}
The first claim is \cite[Lemma 2.10]{Achet}. Because $T$ is a torus, we have $\Hom_K(\Ga, T) = 0 = \Ext^1_K(\Ga, T),$ so the map $\Hom_K(\Ga, D) \to \Hom_K(\Ga, U)$ is an isomorphism.
\end{proof}

It is well-known that algebraic tori can be described in terms of Galois representations on finitely generated free Abelian groups. A very similar description exists \it up to relative perfection \rm for unirational wound unipotent algebraic groups, on which large parts of what follows will rely. To make this precise, we shall recall some background from \cite{OSII}. 

First observe that, since $\Og_K$ is excellent and $k$ is algebraically closed, we have $[K:K^p]=p$ by \cite[Lemma 2.1]{OvII}. A morphism $Y\to X$ of $\F_p$-schemes is is \it relatively perfect \rm if the relative Frobenius $Y\to Y^{(p)}$ over $X$ is an isomorphism \cite[Definition 1.1]{Kato}; this is the case, for example, for weakly étale morphisms \cite[Tag 0F6W]{Stacks}. We shall denote by $K_{\mathrm{RP}}$ the site of relatively perfect $K$-schemes endowed with the étale topology (see \cite[Section 2]{OSII} for more details and further references). For a scheme $X$ over $K,$ we let $X^{(1/p^n)}$ be the Weil restriction of $X$ along the $n$-fold Frobenius on $K;$ if $X$ is a group scheme over $K,$ then so is $X^{(1/p^n)}.$ Each scheme $Y\to \Spec K$ admits a \it relative perfection \rm $Y^{\mathrm{RP}};$ this operation is right adjoint to the forgetful functor \cite[Lemma 1.5]{Kato}. More precisely, for any $K$-scheme $Y,$ we have a canonical map $Y^{(1/p)} \to Y,$ which is the composition of the relative Frobenius $Y^{(1/p)} \to Y^{(1/p)(p)}$ with the canonical map $Y^{(1/p)(p)} \to Y.$ We have $Y^{\mathrm{RP}} = \varprojlim Y^{(1/p^n)}$ \cite[\S 1]{Kato}; again, this is a group scheme if $Y$ is (relatively perfect algebraic groups were studied in detail in \cite[Section 8]{BS}). 

Moreover, we denote by $\nu_n(1)_K^{\mathrm{RP}}$ the dlog-part of the Hodge-Witt sheaf $W_n\Omega^1_{K/\mathbf{F}_p}$ and put $\nu_{\infty}(1)_K^{\mathrm{RP}}:=\varinjlim \nu_n(1)_K^{\mathrm{RP}}.$ We have $\nu_n(1)_K^{\mathrm{RP}} = \mathbf{G}_{\mathrm{m}}^{\mathrm{RP}}\otimes_{\Z}\Z/p^n\Z$ \cite[Proof of Proposition 6.4]{Kato}. A result of Suzuki shows that, if $G$ is a smooth wound unipotent algebraic group over $K,$ the sheaf $\sHom_{K_{\mathrm{RP}}}(G^{\mathrm{RP}}, \nu_{\infty}(1)_K^{\mathrm{RP}})$ is representable by the relative perfection of a smooth (not necessarily connected) wound unipotent algebraic group $H$ over $K$ \cite[Proposition 3.3]{SuzIII}; we shall call this relative perfection the \it dual \rm of $G^{\mathrm{RP}}.$ Finally, denoting by $\R\sHom_{K_{\mathrm{RP}}}(-,-)$ the internal Hom in the unbounded derived category of sheaves of Abelian groups on $K_{\mathrm{RP}},$ Suzuki shows that $\R\sHom_{K_{\mathrm{RP}}}(G^{\mathrm{RP}}, \nu_{\infty}(1)_K^{\mathrm{RP}}) = \sHom_{K_{\mathrm{RP}}}(G^{\mathrm{RP}}, \nu_{\infty}(1)_K^{\mathrm{RP}})$ and the canonical morphism 
$$G^{\mathrm{RP}} \to \R \sHom_{K_\mathrm{RP}}(H^{\mathrm{RP}}, \nu_{\infty}(1)_K^{\mathrm{RP}})$$ is an isomorphism \cite[Propositions 3.3, 3.4 and 3.5]{SuzIII}. The description of unirational wound unipotent groups is then the following statement: 
\begin{proposition}\rm (\cite[Theorem 10.5]{OSII}) \it \label{classprop}
A wound unipotent algebraic group $G$ over $K$ is unirational if and only if $\sHom_{K_\mathrm{RP}}(G^{\mathrm{RP}}, \nu_{\infty}(1)_K^{\mathrm{RP}})$ is representable by a ($p$-primary) \rm étale \it group scheme $H$ over $K.$ Under this duality, the constant étale $K$-group scheme $\Z/p^n\Z$ corresponds to the relative perfection of $\mathbf{G}_{\mathrm{m}}^{(1/p^n)}/\Gm.$
\end{proposition}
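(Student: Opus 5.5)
The statement is cited from \cite[Theorem 10.5]{OSII}. I would not reprove it from scratch here; I would take it as imported and only sketch why it holds, organised around Suzuki's duality recalled above.

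First I would treat the building blocks. For the second claim I would show that $\sHom_{K_{\mathrm{RP}}}((\mathbf{G}_{\mathrm{m}}^{(1/p^n)}/\Gm)^{\mathrm{RP}}, \nu_\infty(1)_K^{\mathrm{RP}})$ is the constant group $\Z/p^n\Z$. Applying $\R\sHom(-,\nu_\infty(1)^{\mathrm{RP}}_K)$ to $0\to\Gm\to\mathbf{G}_{\mathrm{m}}^{(1/p^n)}\to \mathbf{G}_{\mathrm{m}}^{(1/p^n)}/\Gm\to0$, I would use $\mathbf{G}_{\mathrm{m}}^{(1/p^n),\mathrm{RP}}\cong \mathbf{G}_{\mathrm{m}}^{\mathrm{RP}}$ via the $p^n$-power map. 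The quotient's relative perfection is then $\mathbf{G}_{\mathrm{m}}^{\mathrm{RP}}\otimes\Z/p^n = \nu_n(1)^{\mathrm{RP}}_K$, using the cited identity from \cite{Kato}. Its homomorphisms into $\nu_\infty(1)$ are $\Z/p^n$, generated by the inclusion $\nu_n\subseteq\nu_\infty$. This gives the second assertion. Étale twisted forms follow by Galois descent, since everything is compatible with finite separable base change.

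For the ``if'' direction, suppose the dual $H$ is étale. I would choose a finite separable $L/K$ with $H_L$ constant and a surjection $\bigoplus \Z/p^{n_i}\to H_L$. Dualising via Suzuki's biduality turns this into an injection of $G^{\mathrm{RP}}_L$ into a product of $(\mathbf{G}_{\mathrm{m}}^{(1/p^{n_i})}/\Gm)^{\mathrm{RP}}$. Better, I would use the induced/Weil-restricted version over $K$: a surjection $\Res_{L/K}$ of constant groups onto $H$. This yields $G$, up to relative perfection, as a subgroup of a unirational group. To get a quotient I would instead use a presentation $0\to H'\to \bigoplus\Res\Z/p^{n_i}\to H\to0$. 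Since $\R\sHom$ is concentrated in degree $0$ for wound unipotent groups, dualising gives a surjection from a quotient of $\Res_{L/K}$ of $\mathbf{G}_{\mathrm{m}}^{(1/p^n)}/\Gm$ (relatively perfected) onto $G^{\mathrm{RP}}$. These quotients are unirational, being dominated by Weil restrictions of $\Gm$. I would then descend unirationality from relative perfections to $G$, using that $G^{\mathrm{RP}}\to G$ factors through a dominant map from some $G'^{(1/p^m)}$.

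For the ``only if'' direction, a unirational wound $G$ is a quotient of some $\Res_{K'/K}\Gm$-type group with $K'/K$ purely inseparable. I would argue, following Achet-type results, that it is dominated by a product of $\mathbf{G}_{\mathrm{m}}^{(1/p^n)}/\Gm$ after separable extension. Dualising, $H$ then embeds into an étale group, so it is étale.

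The main obstacle is this last reduction: showing that every unirational wound unipotent group is a quotient of such standard groups, compatibly with relative perfection. I would try to import it directly from \cite{OSII} and \cite{Achet}.
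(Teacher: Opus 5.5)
Importing the result is exactly what the paper does: it gives no argument of its own and simply quotes \cite[Theorem 10.5]{OSII}. As a justification, your proposal therefore matches the paper. The sketch you attach, however, should not be offered as a proof.

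\textbf{The ``if'' half has the arrows reversed.} The functor $\sHom_{K_{\mathrm{RP}}}(-,\nu_\infty(1)_K^{\mathrm{RP}})$ is contravariant and left exact. So a surjection onto $H$, whether $\bigoplus\Z/p^{n_i}\to H_L$ or your presentation $0\to H'\to\bigoplus\Res\,\Z/p^{n_i}\to H\to 0$, only exhibits $G^{\mathrm{RP}}$ as a \emph{subobject} of the dual of the middle term. It never yields a surjection onto $G^{\mathrm{RP}}$.

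To obtain a quotient you need $H$ as a subobject, for instance via the unit $H\hookrightarrow\Res_{L/K}H_L$. You then also need $\sExt^1_{K_{\mathrm{RP}}}(Q,\nu_\infty(1)_K^{\mathrm{RP}})=0$ for the étale cokernel $Q$. That vanishing comes from $p$-divisibility of $\nu_\infty(1)_K^{\mathrm{RP}}$, not from the degree-$0$ concentration of $\R\sHom_{K_{\mathrm{RP}}}(G^{\mathrm{RP}},\nu_\infty(1)_K^{\mathrm{RP}})$ that you invoke.

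There is a simpler dual route. Once $H_L$ is constant, biduality gives an \emph{isomorphism} $G_L^{\mathrm{RP}}\cong\bigoplus(\mathbf{G}_{\mathrm{m}}^{(1/p^{n_i})}/\Gm)^{\mathrm{RP}}$, not merely an injection. The trace map $\Res_{L/K}G_L\to G$ has smooth connected kernel, so it stays surjective after relative perfection by Lemma \ref{RPcohomlem}. This is precisely the construction in Proposition \ref{unipcoveringprop}.

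Your building-block claim $\sHom_{K_{\mathrm{RP}}}(\nu_n(1)_K^{\mathrm{RP}},\nu_\infty(1)_K^{\mathrm{RP}})=\Z/p^n\Z$ is true but not formal. It rests on $\sEnd_{K_{\mathrm{RP}}}(\mathbf{G}_{\mathrm{m}}^{\mathrm{RP}})=\Z$ and $\sExt^1_{K_{\mathrm{RP}}}(\mathbf{G}_{\mathrm{m}}^{\mathrm{RP}},\mathbf{G}_{\mathrm{m}}^{\mathrm{RP}})=0$.

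\textbf{The ``only if'' half is not an argument.} The structural input you want is that a unirational wound $G$ is, after a separable extension, a quotient of a sum of groups $\mathbf{G}_{\mathrm{m}}^{(1/p^n)}/\Gm$. In this paper that fact is \emph{deduced from} Proposition \ref{classprop} (see Proposition \ref{unipcoveringprop}). Using it here is circular unless you have an independent source.

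Even granting a surjective homomorphism $P\to G$ where $P^{\mathrm{RP}}$ has étale dual, you cannot simply dualise. If the kernel $N$ is not smooth, then $P^{\mathrm{RP}}\to G^{\mathrm{RP}}$ has cokernel $\R^1\rho_\ast N$, which is nonzero in general. Already $[p]\colon\Gm\to\Gm$ is surjective, yet its cokernel on $K_{\mathrm{RP}}$ is $\R^1\rho_\ast\boldsymbol{\mu}_p\cong\nu_1(1)_K^{\mathrm{RP}}\neq 0$. So the claim that $H$ embeds into an étale group needs a separate argument.

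In short, citing \cite{OSII} is legitimate and agrees with the paper, but the sketch in its current form is not a substitute for that citation.
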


This means in particular that, up to relative perfection, any unirational wound unipotent algebraic group becomes isomorphic to an algebraic group of the form 
$$G = \mathbf{G}_{\mathrm{m}}^{(1/p^{n_1})}/\Gm \oplus ... \oplus \mathbf{G}_{\mathrm{m}}^{(1/p^{n_r})}/\Gm$$ for some $r\in \N_0$ over some finite separable extension of $K.$

\begin{definition} \label{Galoissplitdef}
A unirational wound unipotent algebraic group $G$ is \rm Galois-split \it if the étale $K$-group scheme representing $\sHom_{K_\mathrm{RP}}(G^{\mathrm{RP}}, \nu_{\infty}(1)_K^{\mathrm{RP}})$ is constant (i. e. if $G^{\mathrm{RP}}$ is of the form described just above this definition).
\end{definition}

Ultimately, the goal of the present section is to show that the jumps of every unirational wound algebraic group are rational numbers. The proof will follow the same line of reasoning as that for algebraic tori; we shall therefore only indicate the necessary changes. The first main step towards this goal will be the following: 

\begin{proposition}\label{cohomologyvanishingprop}
Let $G$ be a unirational wound algebraic group over $K.$ Then $H^i(K,G)=0$ for $i\geq 1.$ 
\end{proposition}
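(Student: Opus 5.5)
We reduce the statement to the two building blocks $T$ and $U$ in the exact sequence $0\to T\to G\to U\to 0$, with $T$ a torus and $U$ a unirational wound unipotent group (unirational and wound by the preceding proposition). Since $K$ is a complete discretely valued field with algebraically closed residue field, $K$ has cohomological dimension $\le 1$. Hence $H^i(K,-)$ vanishes for $i\ge 2$ on all smooth commutative algebraic groups, and we only need to treat $H^1$. For the torus, $H^1(K,T)=0$ by \cite[Lemma 4.3]{Chai} (already used in Lemma \ref{flatlem}). The long exact sequence $H^1(K,T)\to H^1(K,G)\to H^1(K,U)$ then reduces the claim to $H^1(K,U)=0$.

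For $U$ we pass to the relatively perfect site. Relative perfection does not change Galois cohomology: $U^{\mathrm{RP}}=\varprojlim U^{(1/p^n)}$ and the transition maps induce isomorphisms on $L$-points for separable extensions $L/K$, because $U^{(1/p)}(L)=U(L^{1/p}\otimes_K K)\ldots$. More simply, $U^{(1/p^n)}(L)=U(L)$ via the Frobenius identification. So $H^1(K,U)=H^1(K_{\mathrm{RP}},U^{\mathrm{RP}})$, using that étale cohomology of smooth groups can be computed on $K_{\mathrm{RP}}$ as in \cite{OSII}. Now apply Proposition \ref{classprop}. After a finite Galois extension $K'/K$, $U^{\mathrm{RP}}$ becomes a direct sum of copies of $(\mathbf{G}_{\mathrm{m}}^{(1/p^{n})}/\Gm)^{\mathrm{RP}}$.

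My plan is to avoid descending and work directly with duality. Write $U^{\mathrm{RP}}=\R\sHom(H,\nu_\infty(1)^{\mathrm{RP}}_K)$ with $H$ finite étale. Choose a finite Galois $K'/K$ trivialising $H$, and embed $H$ into an induced module $H\hookrightarrow \Res_{K'/K}H_{K'}$, which is a finite sum of $\Res_{K'/K}\Z/p^n$. Dualising gives a surjection from $\Res_{K'/K}$ of a sum of groups $\mathbf{G}_{\mathrm{m}}^{(1/p^n)}/\Gm$ onto $U^{\mathrm{RP}}$. The kernel of this surjection is the dual of the finite étale cokernel, hence again a unirational wound group up to relative perfection. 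This suggests an induction, but it only gives $H^1$ vanishing modulo $H^2$ of the kernel, and that is harmless since $H^2=0$ by cohomological dimension.

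The building blocks are then handled as follows. By Shapiro's lemma, $H^1(K,\Res_{K'/K}V)=H^1(K',V)$. For $V=\mathbf{G}_{\mathrm{m}}^{(1/p^n)}/\Gm$, the sequence $0\to\Gm\to\mathbf{G}_{\mathrm{m}}^{(1/p^n)}\to V\to0$ gives
\[H^1(K',\mathbf{G}_{\mathrm{m}}^{(1/p^n)})\to H^1(K',V)\to H^2(K',\Gm).\]
The left term is $H^1(K'^{1/p^n},\Gm)=0$ by Hilbert 90. The right term is $\Br(K')$, which vanishes since $K'$ is complete with algebraically closed residue field (a $C_1$-type situation, by Lang). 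Hence $H^1(K',V)=0$, and therefore $H^1(K,U)=0$ via the surjection and $H^2(\ker)=0$.

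The main obstacle is justifying the comparison between Galois cohomology of $U$ and cohomology of $U^{\mathrm{RP}}$ on $K_{\mathrm{RP}}$, and showing that the kernel of the dualised surjection is an honest smooth group, so that cohomological dimension $\le1$ applies to it. If this is delicate, I would instead bypass duality for $U$. I would use only that $U$ is a quotient of a product of groups $\Res_{K'/K}(\mathbf{G}_{\mathrm{m}}^{(1/p^n)}/\Gm)$ by a smooth group (unirationality via \cite{Achet}). From there, the same $H^1$ computation together with $H^2=0$ finishes the argument.
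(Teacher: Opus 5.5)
Your main line is correct and is, in dual form, the paper's own argument. Dualising the unit $H\hookrightarrow \Res_{K'/K}H_{K'}$ gives, up to relative perfection, the map $\Res_{K'/K}U_{K'}\to U$ built as in Lemma \ref{Reslem}; the paper calls its kernel $G'$. Your computation for $\mathbf{G}_{\mathrm{m}}^{(1/p^n)}/\mathbf{G}_{\mathrm{m}}$ via Hilbert 90 and $\Br K'=0$ is the paper's Galois-split step.

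Your handling of the kernel is slightly more economical than the paper's. The paper proves $G'$ is unirational via \cite{RosII}. You only use that the kernel's group of $K^{\mathrm{sep}}$-points is killed by the exponent of $Q$, so its $H^2$ vanishes because $\mathrm{cd}_p(K)\le 1$ (as $\mathrm{char}\,K=p$).

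For the same reason, the ``main obstacle'' you name is not one, since only Galois modules of $K^{\mathrm{sep}}$-points enter:
\begin{itemize}
\item $U^{\mathrm{RP}}(L)=U(L)$ for every separable algebraic $L/K$, by the adjunction defining relative perfection.
\item An exact sequence of sheaves on $K_{\mathrm{RP}}$ stays exact on $K^{\mathrm{sep}}$-points.
\item The dualised map is surjective because $\sExt^1_{K_{\mathrm{RP}}}(Q,\nu_\infty(1)_K^{\mathrm{RP}})=0$, as $\nu_\infty(1)_K^{\mathrm{RP}}$ is $p$-divisible. Alternatively, it is simply the trace map.
\end{itemize}
So the kernel never needs to be an honest smooth group. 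Drop the fallback, which is unsupported as stated. Unirationality does not by itself give such a presentation with smooth kernel, and with a non-smooth kernel the map need not be surjective on $K^{\mathrm{sep}}$-points.

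The one step that does not hold as justified is your opening claim. You say $\mathrm{cd}(K)\le 1$ forces $H^i(K,A)=0$ for $i\ge 2$ and all smooth commutative $A$, but cohomological dimension only controls torsion Galois modules. For instance, $H^2(K,\mathbf{G}_{\mathrm{m}})=\Br K$ is not governed by it: in characteristic $p$, $\mathrm{cd}_p\le 1$ holds for every field, while $p$-torsion in Brauer groups is common. This is exactly why you later need Lang's theorem anyway.

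For $U$ the claim is fine, since $U(K^{\mathrm{sep}})$ is $p$-primary torsion. For the torus part of $G$, however, you must show $H^2(K,T)=0$ separately. Take the surjection $\Res_{L/K}\mathbf{G}_{\mathrm{m}}^m\to T$ with toric kernel $S$ from Lemma \ref{Reslem}. Then $H^2(K,\Res_{L/K}\mathbf{G}_{\mathrm{m}}^m)=\Br(L)^m=0$, because $L$ is again complete with algebraically closed residue field, hence $C_1$. Also $H^3(K,S)=0$, because $\mathrm{scd}(K)\le 2$. Alternatively, quote \cite{Chai} for tori in all degrees, as the paper does. With this repair, the exact sequence $H^i(K,T)\to H^i(K,G)\to H^i(K,U)$ gives the statement for every $i\ge 1$.
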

\begin{proof}
For the moment, let $G$ be a \it Galois-split \rm unirational wound \it unipotent \rm algebraic group. We shall establish the claim for such $G$ first. Using Proposition \ref{classprop}, we obtain an isomorphism 
$$(\mathbf{G}_{\mathrm{m}}^{(1/p^{n_1})}/\Gm \oplus...\oplus \mathbf{G}_{\mathrm{m}}^{(1/p^{n_r})}/\Gm)^{\mathrm{RP}} \to G^{\mathrm{RP}}.$$ 
for suitable $n_1,..., n_r\in \N_0.$ Hence, by \cite[Proposition 8.9]{BS}, it suffices to show that we have $H^i(K,\mathbf{G}_{\mathrm{m}}^{(1/p^{\nu})}/\Gm) = 0$ for all $\nu\geq 0$ and $i\geq 1.$ Because $\Og_K$ is a $C_1$-field, the strict cohomological dimension of $\Gal(K\sep/K)$ is $\leq 2,$ so by considering the exact sequence $0 \to \Gm \to \mathbf{G}_{\mathrm{m}}^{(1/p^{\nu})} \to \mathbf{G}_{\mathrm{m}}^{(1/p^{\nu})}/\Gm \to 0,$ we deduce the claim for $i\geq2$ and observe that $H^1(K, \mathbf{G}_{\mathrm{m}}^{(1/p^{\nu})}/\Gm)$ is a subgroup of $\Br K,$ which vanishes as well because $K$ is a $C_1$-field.

Now let $G$ be any unirational wound unipotent algebraic group over $K.$ The rest of the proof is inspired by that of \cite[Lemma 4.3]{Chai}. Again by Proposition \ref{classprop}, there exists a finite Galois extension $K\subseteq L$ such that $G_L:=G\times_KL$ is Galois-split. Exactly as in the proof of Lemma \ref{Reslem}, we construct an exact sequence
$$0 \to G' \to \Res_{L/K} G_L \to G \to 0$$ such that $G'$ is smooth, connected, and (necessarily) wound unipotent. Moreover, $$H^ i(K, \Res_{L/K}G_L) = H^ i(L,G_L) =0$$ for $i\geq 1$ as we have just seen. We deduce that $H^i(K,G)=0$ for $i\geq 2.$ Now observe that $G'_L$ is isomorphic to $G_L^{[L:K]-1}$ and hence unirational; in particular, so is $G'$ \cite[Theorem 7.9]{RosII}. Hence $H^2(K,G')=0$ and we deduce that $H^1(K,G)$ vanishes as well. In general, $G$ fits into an exact sequence $0 \to T \to G \to U \to 0$ with $T$ a torus and $U$ wound unipotent. Since the Proposition is known for tori \cite[Lemma 4.3]{Chai}, the long exact cohomology sequence shows the claim for $G.$ 
\end{proof}

\begin{remark} The reader may contrast Proposition \ref{cohomologyvanishingprop} with a result of Rosengarten \cite[Theorem 1.6]{Ros}, according to which the first étale cohomology group of a non-split smooth connected unipotent group over a field which is finitely generated over $\F_p$ is \it infinite. \rm \end{remark}

\begin{proposition} \label{unipcoveringprop}
Let $G$ be a unirational wound unipotent algebraic group over $K$ which becomes Galois-split over the finite Galois extension $K\subseteq L.$ Then there exist nonnegative integers $n_1,..., n_r$ and a faithfully flat homomorphism $$\Res_{L/K} \mathbf{G}_{\mathrm{m}}^{(1/p^{n_1})} \oplus ... \oplus \Res_{L/K} \mathbf{G}_{\mathrm{m}}^{(1/p^{n_r})} \to G$$ whose kernel $\Delta$ satisfies $H^1(F, \Delta_F)=0$ for all finite separable extensions $K\subseteq F.$
\end{proposition}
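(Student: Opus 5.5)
We will proceed exactly as in Lemma \ref{Reslem}, but now working on $K_{\mathrm{RP}}$ and using the duality of Proposition \ref{classprop} to produce the map.

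\textbf{Step 1: the Galois-split case over $L$.} Since $G_L$ is Galois-split, Proposition \ref{classprop} and the remark following it give nonnegative integers $n_1,\dots,n_r$ and an isomorphism
$$\Big(\bigoplus_{i=1}^r \mathbf{G}_{\mathrm{m}}^{(1/p^{n_i})}/\Gm\Big)^{\mathrm{RP}} \cong G_L^{\mathrm{RP}}.$$
We want an honest homomorphism of algebraic groups, not just of relative perfections. The relative perfection of a smooth group is the limit of the iterates $X^{(1/p^n)}$. A map from a relatively perfect source therefore factors through some finite stage. Using also that $(-)^{(1/p^n)}$ commutes with taking the quotient by $\Gm$ up to further Frobenius twists, we obtain a homomorphism
$$\bigoplus_i \mathbf{G}_{\mathrm{m}}^{(1/p^{n_i'})} \to G_L$$
after enlarging the exponents to some $n_i'\geq n_i$. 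This map is surjective, since it is so after relative perfection and the map $X^{\mathrm{RP}}\to X$ is faithfully flat for smooth $X$. Both sides are smooth, so it is faithfully flat.

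The main technical point is to make this descent from RP-isomorphisms to algebraic morphisms precise. The first thing to try is \cite[Proposition 8.9]{BS} together with the adjunction \cite[Lemma 1.5]{Kato}, choosing $n_i'$ large enough that the composite $\mathbf{G}_{\mathrm{m}}^{(1/p^{n_i'})}\to \mathbf{G}_{\mathrm{m}}^{(1/p^{n_i})}\to G_L^{(1/p^{N})}$ can be followed by the canonical map $G_L^{(1/p^N)}\to G_L$.

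\textbf{Step 2: descent to $K$.} Weil-restrict the map from Step 1 along $L/K$ and compose with the trace map $f_!f^\ast G=\Res_{L/K}G_L\to G$ from the proof of Lemma \ref{Reslem}. This gives a homomorphism
$$\Res_{L/K}\bigoplus_i\mathbf{G}_{\mathrm{m}}^{(1/p^{n_i})} \to G$$
(renaming $n_i'$ as $n_i$). Faithful flatness can be checked after the base change to $L$. There the trace map is the summation map $G_L^{[L:K]}\to G_L$, and the first map is the surjection from Step 1 composed with identifications, so the composite is faithfully flat.

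\textbf{Step 3: vanishing of $H^1(F,\Delta_F)$.} Let $F/K$ be finite separable. The source $R$ of our map satisfies $H^i(F,R_F)=0$ for $i\geq 1$. Indeed $R_F$ is a Weil restriction along $L\otimes_KF$, so by Shapiro its cohomology is the cohomology of $\mathbf{G}_{\mathrm{m}}^{(1/p^{n})}$ over finite separable extensions. That cohomology reduces to that of $\Gm$ over a finite purely inseparable extension, which is again the fraction field of a complete DVR with algebraically closed residue field (hence $C_1$ by Lang), where Hilbert 90 and $\Br=0$ apply.

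The long exact sequence for $0\to\Delta_F\to R_F\to G_F\to 0$ then gives
$$H^1(F,\Delta_F)=\operatorname{coker}\big(R(F)\to G(F)\big).$$
So it suffices to show that $R(F)\to G(F)$ is surjective. For this we use $H^2(F,\Delta)$-free arguments: by Proposition \ref{cohomologyvanishingprop}, $H^i(F,G_F)=0$ for $i\geq1$, since $G_F$ is unirational and wound.

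If this is not enough, we analyse $\Delta$ directly. After RP it is built from $\Gm$-type pieces, namely the kernel of the summation map and the $\Gm$'s killed by the quotients $\mathbf{G}_{\mathrm{m}}^{(1/p^n)}\to\mathbf{G}_{\mathrm{m}}^{(1/p^n)}/\Gm$. One would then show that $\Delta$ is an extension of a unirational group (for instance of the form $G_L^{[L:K]-1}$, as in the proof of Proposition \ref{cohomologyvanishingprop}) by a group of multiplicative type with vanishing $H^1$.

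This identification of $\Delta$ (or a direct proof that $G(F)$ is generated by traces of images of $\mathbf{G}_{\mathrm{m}}^{(1/p^n)}(L\otimes F)$) is where I expect the real obstacle. The fallback is to handle it via the extension structure just described, using $\Br(F)=0$ and Proposition \ref{cohomologyvanishingprop} on the unirational piece.
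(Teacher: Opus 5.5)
Your Steps 1 and 2 match the paper's construction. As you build it, your map factors as
$R\xrightarrow{a}\Res_{L/K}\big(\bigoplus_i\mathbf{G}_{\mathrm{m}}^{(1/p^{m_i})}/\Gm\big)^{(1/p^N)}\xrightarrow{b}\Res_{L/K}G_L\xrightarrow{c}G$ with $n_i=m_i+N$. Here $b$ is the Weil restriction of a homomorphism that becomes an isomorphism only \emph{after} relative perfection, and $c$ is the trace map.

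The gap is Step 3, which is the actual content of the proposition. Your reduction $H^1(F,\Delta_F)=\mathrm{coker}(R(F)\to G(F))$ is correct, but you never prove that $R(F)\to G(F)$ is surjective. The vanishing of $H^i(F,G_F)$ for $i\geq 1$ says nothing about that; what matters is $H^1$ of the \emph{kernels} of $a$, $b$, $c$. You also explicitly leave the structure of $\Delta$ open.

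Your fallback description of $\Delta$ is incomplete, and partly wrong. It omits the essential piece $\Delta':=\ker b$. This group is typically non-smooth with $(\Delta')^{\RP}=0$, so it is invisible to any analysis ``after RP''. Yet groups with trivial relative perfection can have nonzero fppf $H^1$: for example $\boldsymbol{\mu}_p^{\RP}=0$ while $H^1(K,\boldsymbol{\mu}_p)=K^\times/K^{\times p}\neq 0$. Separately, $\ker a=\Res_{L/K}\mathbf{G}_{\mathrm{m}}^{(1/p^N),\oplus r}$ is not of multiplicative type for $N\geq 1$. Its $H^1$ nevertheless vanishes by the Shapiro/Hilbert 90 computation you already gave.

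The missing argument is to use the filtration $\ker a\subseteq \ker(b\circ a)\subseteq \ker(c\circ b\circ a)=\Delta$. Since $a$ and $b\circ a$ are fppf surjective, its successive quotients are $\ker a$, $\Delta'$ and $G':=\ker c$. You then show $H^1(F,-)=0$ for each piece and every finite separable $F/K$; equivalently, in your formulation, that each of $a$, $b$, $c$ is surjective on $F$-points.
\begin{itemize}
\item For $\ker a$, use Shapiro and Hilbert 90.
\item For $\Delta'$, use \cite[Proposition 8.9]{BS}. Since $b$ is a homomorphism of smooth groups inducing an isomorphism on relative perfections, and $F$ is relatively perfect over $K$, $b$ is bijective on $F$-points and induces an isomorphism on $H^1(F,-)$. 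The long exact sequence then gives $H^1(F,\Delta'_F)=0$.
\item For $G'$, note $G'_L\cong G_L^{[L:K]-1}$. Hence $G'$ is unirational \cite[Theorem 7.9]{RosII} and wound unipotent, and Proposition~\ref{cohomologyvanishingprop} gives $H^1(F,G'_F)=0$.
\end{itemize}
You gesture at this last piece. Without the middle step, which turns an RP-isomorphism into an honest statement about $F$-points and $H^1$, the proof does not go through.
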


\begin{proof}
%Choose $K\subseteq L$ such that $G$ becomes Galois-split over $L.$ 
By \cite[Tag 01ZC]{Stacks}, Proposition \ref{classprop}, and the universal property of relative perfection, there exist $n'_1,..., n'_r$ as in the Proposition, some $N\geq 0,$ and an homomorphism 
$$(\mathbf{G}_{\mathrm{m}}^{(1/p^{n'_1})}/\Gm \oplus ...\oplus \mathbf{G}_{\mathrm{m}}^{(1/p^{n'_r})}/\Gm)^{(1/p^N)} \to G_L$$ over $L$
which becomes an isomorphism after relative perfection. Taking the Weil restriction along $K\subseteq L,$ we obtain an homomorphism 
$$(\Res_{L/K} \mathbf{G}_{\mathrm{m}}^{(1/p^{n_1})} \oplus ... \oplus \Res_{L/K} \mathbf{G}_{\mathrm{m}}^{(1/p^{n_r})})/\Res_{L/K}\mathbf{G}_{\mathrm{m}}^{(1/p^N), \oplus r} \to \Res_{L/K}G_L$$ (with $n_j:=n_j'+N$) which becomes an isomorphism after relative perfection. In particular, this morphism is faithfully flat and its kernel $\Delta'$ satisfies $H^1(F, \Delta_F')=0$ for all finite separable extensions $K\subseteq F$ by \cite[Proposition 8.9]{BS}. Now we consider the canonical morphism $\Res_{L/K}G_L\to G$ constructed in the proof of Proposition \ref{cohomologyvanishingprop}, where we have also seen that its kernel $G'$ is unirational and wound unipotent, so satisfies $H^1(F, G_F')=0$ by Proposition \ref{cohomologyvanishingprop}. Hence we obtain a morphism as in the proposition whose kernel $\Delta$ admits a filtration with successive quotients $G',$ $\Delta',$ and $\Res_{L/K}\mathbf{G}_{\mathrm{m}}^{(1/p^N), \oplus r}.$ The desired cohomological property is known in each case, so the claim follows. 
\end{proof}

\begin{theorem} \label{jumpsratthmGEN}
Let $G$ be a unirational wound algebraic group over $K.$ Let $j\in \mathbf{R}/\Z$ be a jump of $G.$ Then $j$ is a torsion element of $\mathbf{R}/\Z$ (i. e. $j$ is rational). 
\end{theorem}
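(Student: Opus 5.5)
We follow the proof of Theorem \ref{jumpsratthm}: we construct a faithfully flat homomorphism from a group whose jumps we already know to be rational onto $G$, and we transport the jumps along it via scales and a uniform bound on the thickness.

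\textbf{Step 1: a covering group with known jumps.} Write $0\to T\to G\to U\to 0$ with $T$ a torus and $U$ wound, unirational and unipotent. Choose a finite Galois extension $K\subseteq L$ which splits $T$ and over which $U$ becomes Galois-split. By Lemma \ref{Reslem} there is a smooth surjection $\Res_{L/K}\mathbf{G}_{\mathrm{m}}^{m}\to T$ with connected kernel. By Proposition \ref{unipcoveringprop} there is a faithfully flat map $P:=\bigoplus_i \Res_{L/K}\mathbf{G}_{\mathrm{m}}^{(1/p^{n_i})}\to U$ whose kernel $\Delta$ satisfies $H^1(F,\Delta_F)=0$ for all finite separable $F/K$.

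Since $H^1(K(d),T)=0$, the map $P$ lifts to $G$ after pulling back the extension. Concretely, let $\widetilde G:=G\times_U P$. Then $\widetilde G$ is an extension of $P$ by $T$. It is unirational and wound, since $P$ is (this uses the proposition on $D$ and $U$). Combining with the torus covering, we get a faithfully flat homomorphism
$$\xi\colon R:=\Res_{L/K}\mathbf{G}_{\mathrm{m}}^{m}\oplus P'\to G,$$
where $P'$ is $P$ together with a section lifting $P\to \widetilde G$, or, if no section exists, $\widetilde G$ itself.

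The jumps of $R$ are torsion. Indeed, $\Res_{L/K}\mathbf{G}_{\mathrm{m}}^{(1/p^{n})}=\Res_{L^{1/p^n}/K}\mathbf{G}_{\mathrm{m}}$ is of the form covered by Lemma \ref{pseudoreductivejumpslem}, which allows inseparable extensions, so its jumps are $[L^{1/p^n}:K]$-torsion. Moreover the jumps are additive on direct sums.

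To get a genuine cover by such a sum rather than by an extension, I would use $\Ext$-vanishing. Pull the extension $\widetilde G$ back along the covering $\Res_{L/K}\mathbf{G}_{\mathrm{m}}^{(1/p^n)}\to \cdots$ and use Proposition \ref{Chaiexactn} together with Lemma \ref{universallyexactlem}: since the kernels of the relevant maps are Weil restrictions of $\mathbf{G}_{\mathrm{m}}$, Néron models of $0\to T\to\widetilde G\to P\to 0$ stay exact after base change to every $K(d)$ (using $H^1(K(d),T)=0$ as in Chai). This gives $J(\widetilde G)=J(T)\uplus J(P)$, so every jump of $\widetilde G$ is torsion.

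\textbf{Step 2: transfer along $\widetilde G\to G$.} The map $\widetilde G\to G$ is faithfully flat with kernel $\Delta$, and $H^1(K(d),\Delta)=0$. Hence $\widetilde G(K(d))\to G(K(d))$ is surjective, and, as in Lemma \ref{flatlem}, the maps $\widetilde{\mathscr G}(d)^0_k\to \mathscr G(d)^0_k$ are faithfully flat. Now we run the argument of Theorem \ref{jumpsratthm}(ii) verbatim: Lemma \ref{torsorlem}, Proposition \ref{weightspermprop} and the pigeonhole principle. Each jump of $G$ then has the form $p^{f}j'$ with $j'$ a jump of $\widetilde G$, which is torsion, provided we have a uniform bound $\Theta(g(d)^0_k)\le B$.

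\textbf{Main obstacle: the thickness bound.} Proposition \ref{constantprop} relied on the compactification $\mathscr R^0\subseteq\Res_{\Og_L/\Og_K}\mathbf{A}^m\subseteq\mathbf P^N$. For $P$ this still works: the Néron model of $\Res_{L'/K}\mathbf{G}_{\mathrm{m}}$ with $L'$ inseparable has identity component $\Res_{\Og_{L'}/\Og_K}\mathbf{G}_{\mathrm{m}}$ (CGP, A.5.9 applies to finite flat $\Og_{L'}$), and this embeds in a projective space. So I would first reduce to the case where the source is a product of such Weil restrictions. Then I would apply Lemma \ref{Thetaboundlem} to the composite $R\to\widetilde G\to G$ and Proposition \ref{Thetaboundprop}, using the fact that the degree of the closure of $\ker\xi_\eta$ is invariant under the separable extensions $K(d)/K$. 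The delicate points are the following:
\begin{enumerate}[(i)]
\item flatness of $\mathscr R(d)^0\to\mathscr G(d)^0$, which is needed for Proposition \ref{openprop}; this comes from the cohomology vanishing in Proposition \ref{cohomologyvanishingprop} applied to the kernel;
\item connectedness of the kernel of $\xi$, which I would try to avoid, since Proposition \ref{thicknessmultprop} only uses $\Delta_\psi=(\ker\psi)^0$ and the degree bound dominates every component.
\end{enumerate}
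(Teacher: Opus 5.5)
\textbf{There is a genuine gap.} Your overall plan is the right one: cover $G$, transfer weights via Lemma \ref{torsorlem} and scales, and bound the thickness uniformly in $d$. But the two points where the unirational case differs from the torus case are not settled.

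First, your $\widetilde G=G\times_U P$ is an extension of $P$ by the torus $T$ itself. You justify exactness of $0\to\mathscr T(d)\to\widetilde{\mathscr G}(d)\to\mathscr P(d)\to 0$ by saying the kernels are Weil restrictions of $\mathbf{G}_{\mathrm{m}}$ and that $H^1(K(d),T)=0$. Neither reason applies.
\begin{itemize}
\item The kernel is $T$, which need not be induced, so the criterion $R^1j^{\mathrm{sm}}_\ast T=0$ of Proposition \ref{Chaiexactn} is not available.
\item $H^1(K(d),T)=0$ only gives surjectivity on $K(d)$-points. For the same reason it lifts points, not the homomorphism $P\to U$.
\item For non-induced kernels this exactness does fail in general, as Example \ref{Example01}(2) shows.
\end{itemize}
So $J(\widetilde G)=J(T)\uplus J(P)$ is not established.

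Second, and more seriously, Proposition \ref{Thetaboundprop} needs an open embedding of the identity component of the source's Néron lft-model into a projective space, compatible with base change to $\Og_{K(d)}$. You have such an embedding only if the source is a product of Weil restrictions of $\mathbf{G}_{\mathrm{m}}$, i.e.\ only if $\widetilde G\to P$ has a homomorphic section. In your fallback case (``$\widetilde G$ itself'') you have no compactification at all. The phrase ``I would use Ext-vanishing'' is never turned into an argument.

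The paper closes both points differently.
\begin{itemize}
\item It pulls back along $\Res_{L/K}G_L\to G$. Then $\widetilde G$ is an extension of $\Res_{L/K}(\mathbf{G}_{\mathrm{m}}^{(1/p^{n_1})}\oplus\cdots)$ by the \emph{induced} torus $\Res_{L/K}T_L$. Proposition \ref{Chaiexactn} now gives exactness for every $d$, and Lemmas \ref{pseudoreductivejumpslem} and \ref{universallyexactlem} give rationality of $J(\widetilde G)$.
\item For the compactification it needs only a \emph{scheme-theoretic} section. The scheme $V$ underlying the quotient is open in an affine space, so $H^1(V,\Res_{L/K}T_L)=H^1(V_L,T_L)=0$. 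The Néron mapping property extends the section to the integral level.
\item As a result, $\widetilde{\mathscr G}^0$ is, as a scheme, a product of $\Res_{\Og_L/\Og_K}\mathbf{G}_{\mathrm{m}}^{\dim T}$ and $\Res_{\Og_L/\Og_K}(\mathbf{G}_{\mathrm{m}}^{(1/p^{n_1})}\oplus\cdots)$. It is therefore open in a projective space over $\Og_K$, compatibly with base change to $\Og_{K(d)}$, and Section \ref{boundingthicknesspar} goes through.
\end{itemize}

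Alternatively, your Ext idea can be completed, and it would split $\widetilde G$ outright.
\begin{itemize}
\item By adjunction, $\Ext^1_K(\Res_{L/K}X,T)=\Ext^1_L(X,T_L)$.
\item $\Ext^1_L(\mathbf{G}_{\mathrm{m}}^{(1/p^n)},\mathbf{G}_{\mathrm{m}})=0$. This follows from the sequence $0\to M_{n-1}\to\mathbf{G}_{\mathrm{m}}^{(1/p^n)}\to\mathbf{G}_{\mathrm{m}}\to 0$, Lemma \ref{ExtlemIII}(ii) applied over $L$ (note $[L:L^p]=p$), and $\Ext^1_L(\mathbf{G}_{\mathrm{m}},\mathbf{G}_{\mathrm{m}})=0$.
\item Hence $\widetilde G\cong T\oplus P$, and you can cover $G$ directly by $\Res_{L/K}\mathbf{G}_{\mathrm{m}}^{\dim T}\oplus P$, a product of Weil restrictions of $\mathbf{G}_{\mathrm{m}}$. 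The torus proof then runs verbatim, using Lemma \ref{pseudoreductivejumpslem} for the source.
\end{itemize}
One of these two arguments is exactly the missing step; as written, your proposal contains neither.
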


\begin{proof}
The strategy is the same as for algebraic tori; we shall therefore only indicate what needs to be changed. 
The group $G$ fits into an exact sequence $0 \to T \to G \to U \to 0,$ where $T$ is a torus and $U$ a smooth connected unirational wound unipotent group. Let $K\subseteq L$ be a finite Galois extension of $K$ which splits $T$ and such that $U$ is Galois-split over $L.$
Choose an homomorphism $$\Res_{L/K} \mathbf{G}_{\mathrm{m}}^{(1/p^{n_1})} \oplus ... \oplus \Res_{L/K} \mathbf{G}_{\mathrm{m}}^{(1/p^{n_r})} \to U$$ as in Proposition \ref{unipcoveringprop}. Applying \cite[Corollary A.5.4(3)]{CGP} we obtain a commutative diagram 

$$\begin{tikzcd}
0 \arrow[r] &\Res_{L/K} T_L \arrow[r]\arrow[swap]{d}{\Id} &\widetilde{G} \arrow[r]\arrow[d] & \Res_{L/K} \Big(\mathbf{G}_{\mathrm{m}}^{(1/p^{n_1})} \oplus ... \oplus \mathbf{G}_{\mathrm{m}}^{(1/p^{n_r})}\Big) \arrow[r]\arrow[d] & 0 \\ 
0 \arrow[r] &\Res_{L/K} T_L \arrow[r]\arrow[d] &\Res_{L/K} G_L \arrow[r] \arrow[d] &\Res_{L/K} U_L \arrow[r]\arrow[d] &0\\
0 \arrow[r] & T \arrow[r] & G \arrow[r] & U \arrow[r] & 0, 
\end{tikzcd}$$
with exact rows, where $\widetilde{G}$ is the obvious pullback. Denote by $\widetilde{\mathscr{G}}$ the Néron lft-model of $\widetilde{G}$ and choose an isomorphism $T_L \to \mathbf{G}_{\mathrm{m}}^{\dim T}$ over $L.$ The sequence of Néron lft-models induced by the top row is exact by Proposition \ref{Chaiexactn}. Since the group of connected components of the first non-zero Néron lft-model is free, we moreover obtain an exact sequence
$$0 \to \Res_{\Og_L/\Og_K}\mathbf{G}_{\mathrm{m}}^{\dim T} \to \widetilde{\mathscr{G}}^0 \to \Res_{\Og_L/\Og_K} \Big(\mathbf{G}_{\mathrm{m}}^{(1/p^{n_1})} \oplus ... \oplus \mathbf{G}_{\mathrm{m}}^{(1/p^{n_r})}\Big) \to 0$$ of identity components. We claim that the second-to-last map admits a \it scheme-theoretic \rm section generically; the Néron mapping property then induces such a section at the integral level. To see this, let $V$ denote the scheme underlying the last nontrivial object in the top row. Since Weil restriction commutes with open immersions, $V$ is an open subscheme of some affine space. Now we observe that $H^1(V, \Res_{L/K}T_L) = H^1(V_L, T_L).$ Since $V_L$ is also an open subscheme of some affine space and since $T_L$ is split, this group vanishes. We fix such a section from now on and obtain an open immersion
$$\widetilde{\mathscr{G}}^0 \to \mathbf{P}(\Og_L^{\vee, \dim T} \oplus \Og_{L^{(1/p^{n_1})}}^\vee \oplus ... \oplus \Og_{L^{(1/p^{n_r})}}^\vee \oplus \Og_K)=:\overline{\mathscr{G}}.$$ Now let $d$ be prime to $p.$ The section we chose above induces one over $\Og_{K(d)};$ we let $\overline{\mathscr{G}}(d)$ be the resulting compactification, which satisfies $\overline{\mathscr{G}}(d)_\eta = \overline{\mathscr{G}}_\eta \times_KK(d)$ canonically. Let $g(d)^0 \colon \widetilde{\mathscr{G}}(d)^0 \to \mathscr{G}(d)^0$ be the morphism of identity components of Néron lft-models over $\Og_{K(d)}$ induced by the map $\widetilde{G}\times_KK(d) \to G\times_KK(d).$ We let $\mathscr{X}(d)$ be the scheme-theoretic closure of $\ker g(d)^0_{\eta}$ inside $\overline{\mathscr{G}}(d).$ It follows from Proposition \ref{unipcoveringprop}, \cite[Lemma 4.3]{Chai} and the same argument as in the proof of Lemma \ref{flatlem} that $g(d)^0_k$ is faithfully flat. The arguments analogous to those from Section \ref{boundingthicknesspar} and Subsection \ref{torisubsec} then go through \it mutatis mutandis; \rm in particular, one obtains a bound $B$ on $\Theta(g(d)^0_k)$ independent of $d$ as in Proposition \ref{constantprop}. The proof of Theorem \ref{jumpsratthm} can now be taken without change (but using a grid sequence $(d_\ell)_\ell$ such that $d_\ell$ is prime to $[L:K]$ for all $\ell$) once we can show that the jumps of $\widetilde{G}$ are rational. This follows from Lemma \ref{pseudoreductivejumpslem} and Lemma \ref{universallyexactlem}.
\end{proof}

\begin{remark} Our proof that the jumps of a unirational wound algebraic group $G$ are rational is not constructive in the sense that it does not provide a formula for the jumps. Even if $G$ is an algebraic torus, no such formula is known except if $G$ admits an explicit description in terms of \it induced \rm tori. It would be very interesting to know how the jumps depend on the character lattice of an algebraic torus, or the finite $p$-primary Galois module dual to $G$ if $G$ is wound unipotent. \end{remark}

\section{Motivic zeta functions}
\subsection{Duality for unirational wound unipotent groups}
In this subsection, we shall recall a duality result for unirational wound unipotent groups over $K$ due to Suzuki \cite{SuzIII}\footnote{More precisely, we shall give a new proof of a duality theorem for unirational unipotent algebraic groups (resembling a similar result for tori) which is a special case of results proven in \it op cit. \rm We shall give precise references below and explain in more detail why we chose this particular shortcut to the results we need. Readers familiar with the duality theory set out in \cite{Suz, SuzII, SuzIII, SuzIV} can skip this paragraph and refer to it when necessary.}. In general, such duality results are useful for controlling the group of connected components of a Néron lft-model. We are ultimately interested in doing so for general unirational algebraic groups over $K,$ but satisfactory duality theories exist only for tori and unipotent groups separately. The main difficulty we need to overcome here is that these duality theorems have been proven in slightly different site-theoretic formulations, whereas we shall need a unified framework. 
We shall use methods introduced by Suzuki \cite{Suz, SuzII}; see \cite[§1.2]{Suz} for a general picture and \cite[Section 3]{OS} for a brief introduction.\footnote{The notation in \cite{Suz, SuzII} is slightly different from ours; what we shall call $\boldsymbol{\Gamma}(K,-)$ is called $\widetilde{\boldsymbol{\Gamma}}(K,-)$ in \it op. cit., \rm whereas $\boldsymbol{\Gamma}(K,-)$ is reserved for the corresponding \it étale \rm sheaf. Our notation is the same as that used in \cite{OS}. The two sheafifications, as well as their derived functors, coincide under the condition of \it P-acyclicity \rm \cite[Section 2.4]{Suz}, which is satisfied in particular if the argument is a finite flat group scheme, a lattice, or a smooth connected wound algebraic group \cite[Propositions 3.4.2 and 3.4.3]{Suz}.} Recall that $k$ is the (algebraically closed) residue field of $\Og_K.$ As in \cite[Definition 2.1.1]{SuzII}, we shall call a $k$-algebra \it rational \rm if it is a finite product of (inverse) perfections of finitely generated field extensions of $k.$ A $k$-algebra is \it ind-rational \rm if it is a filtered direct limit of rational $k$-algebras. We endow the category of ind-rational $k$-algebras with the pro-étale topology and call the resulting site $\Spec k^{\mathrm{indrat}}_{\proet}$ \cite{Suz}. The same category endowed with the étale topology will be called $\Spec k^{\mathrm{indrat}}_{\et}$ \cite{Suz}. The main reason why these sites are useful is that a (pro)algebraic group $G$ over $k$ can be recovered up to perfection from its functor of points on $\Spec k^{\mathrm{indrat}}_{\proet}$ (or $\Spec k^{\mathrm{indrat}}_{\et}$; cf. \cite[p. 799]{SuzII}). Again as in \cite[Section 2.3]{SuzII}, we associate, to any ind-rational $k$-algebra $k',$ a flat $\Og_K$-algebra $\mathbf{O}_K(k')$ and a $K$-algebra $\mathbf{K}(k')$ by putting
$$\mathbf{O}_K(k') := \Og_K \widehat{\otimes}_k k' := \varprojlim ( \Og_K/\mathfrak{m}_K^n \otimes_kk')$$ and $$\mathbf{K}(k') := \mathbf{O}_K(k')\otimes_{\Og_K}K.$$ 

We begin with the following simple observation: 

\begin{lemma}\label{relperflem}
For any ind-rational $k$-algebra $k',$ the $\Og_K$-algebra $\mathbf{O}_K(k')$ (resp. the $K$-algebra $\mathbf{K}(k')$) is relatively perfect. 
\end{lemma}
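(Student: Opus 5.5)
The plan is to reduce to the case of $\Og_K$ itself and then pass to limits; $K$ has characteristic $p$ here, so $\Og_K\cong k[\![t]\!]$. Recall that a morphism of $\F_p$-schemes is relatively perfect if and only if the relative Frobenius is an isomorphism. For rings $A\to B$ this means the map $B\otimes_{A,F}A\to B$, $b\otimes a\mapsto b^pa$, is bijective. I will prove the claim for $\mathbf{O}_K(k')$ first. The claim for $\mathbf{K}(k')=\mathbf{O}_K(k')\otimes_{\Og_K}K$ follows by base change, since relative perfectness is stable under base change and $K$ is relatively perfect over itself trivially.

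For $\mathbf{O}_K(k')$, fix a uniformiser $t$ and use $[K:K^p]=p$, so that $\Og_K$ is free over $\Og_K^p$ with basis $1,t,\dots,t^{p-1}$. Relative perfectness of an $\Og_K$-algebra $B$ is then equivalent to the following: every $b\in B$ can be written uniquely as $\sum_{i=0}^{p-1}t^ib_i^p$ with $b_i\in B$. Indeed, $B\otimes_{\Og_K,F}\Og_K$ is free over $B$ (via the first factor) with basis $1\otimes t^i$.

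Now take $B=\mathbf{O}_K(k')=\varprojlim_n (k[t]/t^n)\otimes_kk'$. Since $k'$ is ind-rational, it is perfect: it is a filtered colimit of finite products of perfect fields. Hence $B=k'[\![t]\!]$, the ring of formal power series with coefficients in $k'$. Here the inverse limit is computed levelwise, so $B$ is the set of sequences $(c_m)_{m\ge0}$ in $k'$.

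Existence and uniqueness of the decomposition are then explicit. Given $b=\sum_m c_mt^m$, group the exponents by their residue modulo $p$:
$$b=\sum_{i=0}^{p-1}t^i\sum_{q}c_{pq+i}t^{pq}=\sum_{i=0}^{p-1}t^i\Big(\sum_q c_{pq+i}^{1/p}t^q\Big)^p.$$
This uses that $k'$ is perfect, and that the $p$-th power on $k'[\![t]\!]$ is the ring map $\sum a_qt^q\mapsto\sum a_q^pt^{pq}$, which is injective because $k'$ is reduced. Uniqueness follows from reading off coefficients in each residue class of exponents modulo $p$.

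The main subtlety is justifying the identification $\mathbf{O}_K(k')=k'[\![t]\!]$ and the basis description for general ind-rational $k'$, which may be a non-noetherian filtered colimit. I expect this to be harmless, because the completed tensor product is defined as an inverse limit of the levels $k'[t]/t^n$, and the argument above is purely coefficientwise. Should a more conceptual route be preferred, an alternative is to note that $\Og_K/\mathfrak m^n\otimes_kk'$ is relatively perfect over $\Og_K/\mathfrak m^n$, since $k'$ is perfect over the perfect field $k$. One would then pass to the inverse limit, using that $\Og_K$ is finite over $\Og_K^p$ so that the Frobenius twist commutes with the limit.
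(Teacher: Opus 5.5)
Your proposal is correct and follows essentially the same route as the paper. Both reduce to $k[\![t]\!]\to k'[\![t]\!]$ and use the perfectness of $k$ and $k'$ to see that the relative Frobenius is the multiplication map $k[\![t]\!]\otimes_{k[\![t^p]\!]}k'[\![t^p]\!]\to k'[\![t]\!]$, which is bijective. Your coefficientwise decomposition $b=\sum_{i<p}t^ib_i^p$ spells out what the paper calls ``clear,'' and passing to $\mathbf{K}(k')$ by base change is fine.
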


\begin{proof}
It suffices to show the claim for the morphism $k[\![t]\!] \cong \Og_K \to \mathbf{O}_K(k') \cong k'[\![t]\!].$ Since $k$ and $k'$ are both perfect, this is equivalent to showing that the canonical map $k[\![t]\!] \otimes_{k[\![t^p]\!]} k'[\![t^p]\!] \to k'[\![t]\!] $ is an isomorphism, which is clear. 
\end{proof}

Given a sheaf $\mathscr{F}$ of Abelian groups on the big fppf-site of $K,$ we let $\boldsymbol{\Gamma}(K, \mathscr{F})$ be the pro-étale sheafification of the presheaf
$$k' \mapsto \Gamma(\mathbf{K}(k'), \mathscr{F})$$ \cite[Section 3]{Suz}. 
By Lemma \ref{relperflem}, the functor $\boldsymbol{\Gamma}(K,-)$ factors canonically as 
$$\boldsymbol{\Gamma}(K,-)  = \boldsymbol{\Gamma}_{\mathrm{RP}}(K, \rho_\ast -),$$ where $\rho_\ast \colon \mathrm{Ab}((\mathrm{Sch}/K)_{\fppf}) \to \mathrm{Ab}(K_{\mathrm{RP}})$ is the forgetful functor and $\boldsymbol{\Gamma}_{\mathrm{RP}}(K,-)$ is defined exactly as $\boldsymbol{\Gamma}(K,-),$ with $\mathscr{F}$ now ranging through sheaves on $K_{\mathrm{RP}}.$ 
Slightly more formally, we have premorphisms\footnote{See \cite[Definition 2.3]{SuzIV} for the notion of \it premorphism of sites. \rm A premorphism is similar to a morphism of sites, except that the pullback functor need not be exact. Hence some additional care is required; the derived functors of pushforward compose as expected by \cite[Proposition 2.6]{SuzIV}.} of sites
\begin{align*}
(\mathrm{Sch}/K)_{\fppf} \overset{\rho}{\to} K_{\mathrm{RP}} \overset{\pi}{\to} \Spec k^{\mathrm{indrat}}_{\et} \overset{P}{\gets} \Spec k^{\mathrm{indrat}}_{\proet},
\end{align*}
where $\pi$ is given by $k'\mapsto \mathbf{K}(k')$ and $P$ is the identity functor (in fact, $\rho$ and $P$ are morphisms of sites); here and in the following, we use the same notation as \cite[Tag 00UZ]{Stacks} for functors between (derived) categories of sheaves. In this notation, we have $\boldsymbol{\Gamma}_{\mathrm{RP}}(K,-) = P^\ast\pi_\ast -.$
 Note that, for all $i\geq 0,$ the functors $\R^ i \boldsymbol{\Gamma}(K, -)$ and $\R^ i \boldsymbol{\Gamma}_{\mathrm{RP}}(K, -)$ commute with filtered colimits (since $\Spec \mathbf{K}(k')$ is clearly quasi-compact and quasi-separated for any ind-rational $k$-algebra $k',$ this follows from \cite[Tags 0739 and 0H7B]{Stacks}).
We shall write $D(K_{\mathrm{RP}})$ and $D(k)$ for the (unbounded) derived categories of sheaves of Abelian groups on $K_{\mathrm{RP}}$ and $\Spec k^{\mathrm{indrat}}_{\proet},$ respectively, and $\R\sHom_{K_{\mathrm{RP}}}(-,-)$ and $\R\sHom_k(-,-)$ for the internal Hom in those categories.

\begin{lemma} \label{RPcohomlem}
\begin{enumerate}[(i)]
\item For a \rm smooth \it group scheme $X$ over $K,$ we have $\R\rho_\ast X = X^{\mathrm{RP}};$ in particular,
$$\R\boldsymbol{\Gamma}(K,X) = \R\boldsymbol{\Gamma}_{\mathrm{RP}}(K, X^{\mathrm{RP}}).$$ The same is true if $X$ is a filtered colimit of smooth $K$-group schemes.
\item Moreover, for $A,B\in D(K_{\mathrm{RP}}),$ there is a canonical morphism 
$$\R\boldsymbol{\Gamma}_{\mathrm{RP}}(K, \R\sHom_{K_{\mathrm{RP}}}(A,B)) \to \R\sHom_k(\R\boldsymbol{\Gamma}_{\mathrm{RP}}(K,A),\R\boldsymbol{\Gamma}_{\mathrm{RP}}(K,B)).$$
\end{enumerate}
\end{lemma}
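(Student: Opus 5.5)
For (i), the plan is to compute $\R\rho_\ast X$ by identifying $\R^i\rho_\ast X$ with the sheafification on $K_{\mathrm{RP}}$ of $Y\mapsto H^i_{\fppf}(Y,X)$, with $Y$ running through relatively perfect $K$-schemes. For degree zero, the adjunction between relative perfection and the forgetful functor \cite[Lemma 1.5]{Kato} gives $X(Y)=X^{\mathrm{RP}}(Y)$ for relatively perfect $Y$. Hence $\rho_\ast X$ is represented by $X^{\mathrm{RP}}$.

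For $i\geq1$, smoothness of $X$ lets us use Grothendieck's comparison $H^i_{\fppf}(Y,X)=H^i_{\et}(Y,X)$. Étale cohomology classes die étale-locally on $Y$. Since étale morphisms are relatively perfect, étale covers of $Y$ are covers in $K_{\mathrm{RP}}$, so the sheafification of $Y\mapsto H^i_{\et}(Y,X)$ vanishes. This gives $\R\rho_\ast X=X^{\mathrm{RP}}$. Composing derived pushforwards along the premorphisms then yields $\R\boldsymbol{\Gamma}=\R\boldsymbol{\Gamma}_{\mathrm{RP}}\circ\R\rho_\ast$, using \cite[Proposition 2.6]{SuzIV} for the composition of derived pushforwards. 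Here one must check that $P^\ast$ is exact, which holds because $P$ is a morphism of sites.

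For a filtered colimit $X=\varinjlim X_\lambda$, the plan is to use that $\R^i\rho_\ast$ commutes with filtered colimits. This is because relatively perfect affine $K$-schemes are quasi-compact and quasi-separated, so \cite[Tag 0739]{Stacks} applies. It also uses that relative perfection, being an inverse limit of Weil restrictions along Frobenius, commutes with filtered colimits of group schemes, at least on affine test objects. I expect this to be the main obstacle: relative perfection is a limit, so commuting it with colimits must be checked carefully on qcqs objects, and it may be cleaner to define $X^{\mathrm{RP}}$ there as the colimit.

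For (ii), the map is a formal consequence of the lax monoidal structure of derived pushforward. We have the evaluation $\R\sHom(A,B)\otimes^{\mathbf{L}}A\to B$, and the canonical map $\R\pi_\ast(-)\otimes^{\mathbf{L}}\R\pi_\ast(-)\to\R\pi_\ast(-\otimes^{\mathbf{L}}-)$. Pullback $P^\ast$ is monoidal and exact. Combining these gives $\R\boldsymbol{\Gamma}_{\mathrm{RP}}(K,\R\sHom(A,B))\otimes^{\mathbf{L}}\R\boldsymbol{\Gamma}_{\mathrm{RP}}(K,A)\to\R\boldsymbol{\Gamma}_{\mathrm{RP}}(K,B)$, and the tensor–Hom adjunction in $D(k)$ yields the desired morphism. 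The only care needed is that $\pi$ is merely a premorphism, so the lax monoidal map should be built from the adjunction $\pi^{-1}\dashv\pi_\ast$ on presheaf level followed by sheafification, as in \cite{SuzIV}.
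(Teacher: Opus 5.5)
Your proposal is correct and is essentially the paper's argument. For (i), the paper factors $\rho_\ast$ as the forgetful functor $\epsilon_\ast$ from the big fppf site to the big étale site, followed by the exact restriction to $K_{\mathrm{RP}}$. Grothendieck's theorem \cite[Chapter III, Remark 3.11 (b)]{Milne} gives $\R^i\epsilon_\ast X=0$, and $\epsilon_\ast$ preserves injectives, so $\R^i\rho_\ast X=0$. This uses the same two inputs as your description of $\R^i\rho_\ast X$ as the sheafification of $Y\mapsto H^i_{\fppf}(Y,X)=H^i_{\et}(Y,X)$, and the paper reduces the filtered-colimit case to the smooth case just as you propose.

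For (ii), the paper likewise gives no new construction and invokes Suzuki's premorphism formalism (\cite[Proposition 2.4]{SuzIV} together with the proof of \cite[Proposition 3.3.8]{Suz}); your lax-monoidal plus tensor--Hom construction is the standard formal content of that. Your worry about relative perfection commuting with colimits is unnecessary: $X_\lambda^{\mathrm{RP}}$ and $X_\lambda$ have the same sections on relatively perfect test objects, so for the colimit $X^{\mathrm{RP}}$ simply means $\varinjlim X_\lambda^{\mathrm{RP}}$ on $K_{\mathrm{RP}}$.
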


\begin{proof}
For claim (i), it is sufficient to treat the case of smooth $X.$ The functor $\rho_\ast$ can be factored as $\mathrm{Ab}((\mathrm{Sch}/K)_{\fppf}) \overset{\epsilon_{\ast}}{\to} \mathrm{Ab}((\mathrm{Sch}/K)_{\et}) \overset{f_\ast}{\to} \mathrm{Ab}(K_{\mathrm{RP}}),$ where $\epsilon_\ast$ and $f_\ast$ are the forgetful functors. By a theorem of Grothendieck \cite[Chapter III, Remark 3.11 (b)]{Milne}, we have $\R^i \epsilon_\ast X = 0 $ for $i\geq 1,$ whereas $f_\ast$ is exact (cf. \cite[Proof of Proposition 8.9]{BS}). Since $\epsilon_\ast$ sends injective sheaves to injective sheaves, we obtain $\R^i\rho_\ast X = 0$ for $i\geq 1,$ while $\rho_\ast X = X^{\RP}$ by definition. For the second claim, using \cite[Proposition 2.4]{SuzIV}, the proof of \cite[Proposition 3.3.8]{Suz} can be taken without change.
\end{proof}

Finally, suppose $\mathscr{G} \to \Spec \Og_K$ is a smooth separated group scheme. Then the (inverse) perfection of $\varprojlim \Res_{(\Og_K/\mathfrak{m}_K^i)/k} (\mathscr{G}\times_{\Og_K} \Og_K/\mathfrak{m}_K^ i)$ is called the \it Greenberg transform \rm of $\mathscr{G}.$ This is an object of the pro-category of perfect algebraic groups over $k.$ If $A$ is a wound algebraic group over $K$ with Néron lft-model $\mathscr{A}\to \Spec \Og_K,$ then $\boldsymbol{\Gamma}(K,A)$ is represented by the Greenberg transform of the Néron lft-model of $A$ over $\Og_K.$ 
Indeed, let $k'$ be an ind-rational $k$-algebra. The group of $k'$-points of the Greenberg transform of $\mathscr{A}$ is equal to $\varprojlim_i\mathscr{A}{(k'\otimes_k\Og_K/\mathfrak{m}_K^i)}$, which coincides with $\mathscr{A}(\mathbf{O}_K(k'))$ by \cite[Proposition 3.1.3 (d)]{Suz} and hence with $A(\mathbf{K}(k'))$ by \cite[Proposition 3.1.3 (c)]{Suz}. Regarding $\boldsymbol{\Gamma}(K,A)$ as a scheme, one also obtains a canonical morphism $\boldsymbol{\Gamma}(K,A)\to \mathscr{A}_k$, which induces an isomorphism of groups of components by \cite[Proposition 3.4.2 (a)]{Suz}.

We shall now proceed along the same lines as in the case of tori \cite[Section 8]{Suz}. We first construct a \it trace map \rm (cf. \cite[Section 4]{SuzIII}) $$\R\boldsymbol{\Gamma}_{\mathrm{RP}}(K, \nu_{\infty}(1)_K^{\mathrm{RP}}) \to \Q_p/\Z_p$$ as follows: for any $n\in \N,$ $\R\mathbf{\Gamma}(K, \mathbf{G}_{\mathrm{m}}^{(1/p^n)}/\Gm)$ is concentrated in degree 0 by Proposition \ref{cohomologyvanishingprop}. Moreover, the sheaf $\mathbf{\Gamma}(K, \mathbf{G}_{\mathrm{m}}^{(1/p^n)}/\Gm)$ is represented by the Greenberg transform of the Néron model $\mathscr{G}_n$ of $\mathbf{G}_{\mathrm{m}}^{(1/p^n)}/\Gm.$ Since the group of connected components of $\mathscr{G}_{n,k}$ is equal to $\Z/p^n\Z,$ we obtain a canonical map 
$$\R\mathbf{\Gamma}(K, \mathbf{G}_{\mathrm{m}}^{(1/p^n)}/\Gm) = \mathbf{\Gamma}(K, \mathbf{G}_{\mathrm{m}}^{(1/p^n)}/\Gm) \to \Z/p^n\Z.$$ Since $\nu_{\infty}(1)_K^{\mathrm{RP}} = \varinjlim (\mathbf{G}_{\mathrm{m}}^{(1/p^n)}/\Gm)^{\mathrm{RP}}$ (e. g. by \cite[(2.2)]{OSII}), we obtain the trace map by taking colimits on both sides. Note that, if $G$ is a \it unirational \rm wound algebraic group over $K,$ then $\R\boldsymbol{\Gamma}(K,G) = \boldsymbol{\Gamma}(K,G)$ by Proposition \ref{cohomologyvanishingprop} together with the fact that the functors $\R^i(\pi\circ \rho)_\ast G$ are locally of finite presentation for $i\geq 1$ by \cite[Proposition 3.4.3]{Suz}. We shall write $D(K_{\mathrm{RP}})$ and $D(k)$ for the (unbounded) derived categories of sheaves of Abelian groups on $K_{\mathrm{RP}}$ and $\Spec k^{\mathrm{indrat}}_{\proet},$ respectively, and $\R\sHom_{K_{\mathrm{RP}}}(-,-)$ and $\R\sHom_k(-,-)$ for the internal Hom in those categories. 

Now let $G$ be any unirational wound unipotent algebraic group over $K$, and denote by $H$ the $p$-primary finite étale group scheme over $K$ representing $\sHom_{K_{\mathrm{RP}}}(G, \nu_{\infty}(1)_K^{\mathrm{RP}}).$ Let $\vartheta_G$ be the composition 
\begin{align*}
\R\boldsymbol{\Gamma}(K,H) \to &\R\boldsymbol{\Gamma}_{\mathrm{RP}}(K, \R\sHom_{K_{\mathrm{RP}}}(G^{\mathrm{RP}}, \nu_{\infty}(1)_K^{\mathrm{RP}}))\\
 \to &\R\sHom_k(\boldsymbol{\Gamma}_{\mathrm{RP}}(K,G^{\mathrm{RP}}), \boldsymbol{\Gamma}_{\mathrm{RP}}(K, \nu_{\infty}(1)_K^{\mathrm{RP}})) \\
 \to &\R\sHom_k(\boldsymbol{\Gamma}(K,G),  \Q_p/\Z_p) \\
= &\R\sHom_k(\boldsymbol{\Gamma}(K,G),  \Z)[1].
\end{align*}
The last equality uses that $G$ is annihilated by some power of $p.$ 

The map $\vartheta_G$ is compatible with Weil restriction in the following sense: Suppose $K\subseteq F$ is a finite separable extension. Suppose $G_F$ is a unirational wound unipotent algebraic group over $F$ such that $\sHom_{F_{\mathrm{RP}}}(G_F^{\mathrm{RP}}, \nu_{\infty}(1)_F^{\mathrm{RP}})$ is represented by the finite $p$-primary étale group scheme $H_F.$ Let $f\colon \Spec F \to \Spec K$ be the canonical map, which induces a morphism of sites $F_{\mathrm{RP}} \to K_{\mathrm{RP}}$ by base change. We have associated (exact) functors $f_\ast = f_!$ and $f^\ast = f^!$ between the corresponding topoi of sheaves of Abelian groups which have the expected adjunction properties. In particular, we have an adjunction map $f_!\nu_\infty(1)_F^{\mathrm{RP}} = f_!f^! \nu_\infty(1)_K^{\mathrm{RP}} \to \nu_\infty(1)_K^{\mathrm{RP}},$ which induces an isomorphism
$$\sHom_{K_{\mathrm{RP}}}(\Res_{F/K}G_F^{\RP}, \nu_\infty(1)_K^{\mathrm{RP}}) = f_\ast \sHom_{F_{\mathrm{RP}}}(G_F^{\RP}, \nu_\infty(1)_F^{\mathrm{RP}}) = \Res_{F/K}H_F.$$
Moreover, since $f^!$ preserves injectives (or using \cite[Chapter V, Proposition 1.13]{Milne}), we obtain a a canonical isomorphism 
$$f_\ast \R\sHom_{F_{\mathrm{RP}}}(G_F^{\mathrm{RP}}, \nu_\infty(1)_F^{\mathrm{RP}}) \to \R\sHom_{K_{\mathrm{RP}}}(\Res_{F/K}G_F^{\mathrm{RP}}, \nu_{\infty}(1)_K^{\mathrm{RP}}).$$ 
Using the canonical identification $\R\boldsymbol{\Gamma}(F,-) = \R\boldsymbol{\Gamma}(K, \Res_{F/K}-)$ together with the fact that the composition $\R\boldsymbol{\Gamma}_{\mathrm{RP}}(F, \nu_{\infty}(1)_F^{\mathrm{RP}}) \to \R\boldsymbol{\Gamma}_{\mathrm{RP}}(K, \nu_{\infty}(1)_K^{\mathrm{RP}}) \to \Q_p/\Z_p$ is equal to the trace map, one obtains a canonical identification of the maps $\vartheta_{G_F}$ and $\vartheta_{\Res_{F/K}G_F}.$

The following two results are partially special cases of results proven in \cite[Section 4]{SuzIII}. However, the functor used there to formulate the duality theorem is slightly different from the one used here and its definition is significantly more intricate. In particular, it is \it a priori \rm not the right derived functor of some left exact "global sections" functor, although it can be checked to coincide with the one used here for unirational unipotent groups. Were one to apply the results form \cite{SuzIII} directly, one would still have to check that the derived "global sections" functor applied to a unirational group is concentrated in degree 0 in order to recover the duality map above (the full force of which we shall need later). This would require a similar amount of work to verify, as well as assuming significantly more familiarity with the abstract duality formalism. The present approach, while applying only to unirational unipotent groups, is much more self-contained and has the advantage of relying (up to a few standard reductions) only on the duality isomorphism for $\Gm,$ which is completely explicit and has been worked out in great detail in \cite[Section 2.6]{SuzII}. 

\begin{proposition} \rm (Cf. \cite[Proposition 4.4]{SuzIII}) \it \label{dualityprop}
For any unirational wound unipotent algebraic group $G$ over $K,$ the map $\vartheta_G$ constructed above is an isomorphism. 
\end{proposition}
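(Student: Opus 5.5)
The plan is to reduce, by dévissage and Weil restriction, to the single group $G=\mathbf{G}_{\mathrm{m}}^{(1/p^n)}/\Gm$, and then to deduce that case from the explicit duality for $\Gm$.

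\textbf{Step 1: triangulated functoriality.} Both sides of $\vartheta_G$ are triangulated in $G$ for short exact sequences of unirational wound unipotent groups. On the target side this uses that $\R\boldsymbol{\Gamma}(K,-)=\boldsymbol{\Gamma}(K,-)$ for such groups (Proposition \ref{cohomologyvanishingprop}). On the source side it uses that $\sHom_{K_{\mathrm{RP}}}(-,\nu_\infty(1)_K^{\mathrm{RP}})$ is exact on relative perfections, since the derived $\R\sHom$ is concentrated in degree $0$ by Suzuki. The map $\vartheta$ is natural, so it gives a morphism of distinguished triangles, and the five lemma lets us pass between extensions.

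\textbf{Step 2: reduction to the Galois-split case.} Choose a finite Galois $L/K$ over which $G$ becomes Galois-split, and use the exact sequence $0\to G'\to\Res_{L/K}G_L\to G\to 0$ from the proof of Proposition \ref{cohomologyvanishingprop}. Here $G'$ is again unirational wound unipotent, and $G'_L\cong G_L^{[L:K]-1}$. By the compatibility of $\vartheta$ with Weil restriction, $\vartheta_{\Res_{L/K}G_L}=\vartheta_{G_L}$. The plan is then to induct on the order of the dual étale group $H$. I expect this to be the main obstacle, because $G'$ has a larger dual, so the induction must run differently. What I would try first is to use the exact sequence the other way, resolving $G$ by a complex built from Weil restrictions $\Res_{F/K}$ of Galois-split groups (iterating, as in a bar resolution). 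Alternatively, I would filter $H$ by the socle as a $\Gal(L/K)$-module: since $\Gal(L/K)$ has a normal $p$-Sylow acting unipotently, one can reduce to duals that are induced from a subfield, and these correspond to $\Res_{F/K}$ of Galois-split groups. If $\vartheta$ is an isomorphism for $\Res_{F/K}$ of the Galois-split pieces, Step 1 propagates the isomorphism along the resulting finite filtration of $G^{\mathrm{RP}}$. Isomorphy can be checked up to relative perfection, since all terms only see $G^{\mathrm{RP}}$ (Lemma \ref{RPcohomlem}).

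\textbf{Step 3: the basic case.} By Proposition \ref{classprop}, a Galois-split group is, up to relative perfection, a direct sum of groups $\mathbf{G}_{\mathrm{m}}^{(1/p^n)}/\Gm$, which are dual to $\Z/p^n\Z$. So it remains to treat $G=\mathbf{G}_{\mathrm{m}}^{(1/p^n)}/\Gm$ over any finite separable extension. Apply Step 1 to $0\to\Gm\to\mathbf{G}_{\mathrm{m}}^{(1/p^n)}\to G\to 0$. Note that $\mathbf{G}_{\mathrm{m}}^{(1/p^n)}=\Res_{K^{1/p^n}/K}\Gm$, and $K^{1/p^n}\cong K$ abstractly. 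The duality for $\Gm$ in \cite[Section 2.6]{SuzII} identifies $\R\boldsymbol{\Gamma}(K,\Gm)$ with its Serre-type dual through the valuation $\Z$-part and $\R\sHom_k(-,\Z)[1]$. Using this, the source $\R\boldsymbol{\Gamma}(K,\Z/p^n\Z)=\Z/p^n\Z$, concentrated in degree $0$ since $k$ is algebraically closed, is to be matched with $\R\sHom_k(\boldsymbol{\Gamma}(K,G),\Z)[1]$. We know that $\boldsymbol{\Gamma}(K,G)$ is the Greenberg transform of a Néron model with component group $\Z/p^n\Z$. Its identity component is connected unipotent, and by Breen–Serre duality it has $\R\sHom_k(-,\Z)$ in degree $2$ given by its Breen–Serre dual, so this part must be shown to cancel.

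The cancellation comes from the triangle of Greenberg transforms: the identity component of $\boldsymbol{\Gamma}(K,G)$ receives the corresponding pieces of $\boldsymbol{\Gamma}(K,\mathbf{G}_{\mathrm{m}}^{(1/p^n)})$ and $\boldsymbol{\Gamma}(K,\Gm)$ (these are units of $\Og_K$ and $\Og_K^{1/p^n}$), and the duality for $\Gm$ shows that the unipotent contributions of the two match. What is left is that the trace map $\boldsymbol{\Gamma}(K,G)\to\Z/p^n\Z$ induces the component-group isomorphism. This holds by its construction, so $\vartheta_G$ is an isomorphism in this case, and Steps 1 and 2 finish the proof.
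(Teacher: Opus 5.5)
\textbf{The gap is in Step 3.} Your overall architecture is the paper's: the basic case $\mathbf{G}_{\mathrm{m}}^{(1/p^n)}/\Gm$ comes from Suzuki's duality for $\Gm$ and the Kummer-type triangle, and the general case from d\'evissage and Galois descent. But Step 3 as written is false. The complex $\R\boldsymbol{\Gamma}(K,\Z/p^n\Z)$ is \emph{not} concentrated in degree $0$. Even with $k$ algebraically closed, Artin--Schreier--Witt theory gives $H^1(K,\Z/p\Z)\cong K/\{x^p-x : x\in K\}\neq 0$, so $\R^1\boldsymbol{\Gamma}(K,\Z/p^n\Z)$ is a nonzero ind-unipotent group. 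On the other side, $\boldsymbol{\Gamma}(K,G)$ is an extension of $\Z/p^n\Z$ by a nonzero connected pro-unipotent group. Its Breen--Serre dual is a nonzero term in degree $1$ of $\R\sHom_k(\boldsymbol{\Gamma}(K,G),\Z)[1]$. Nothing can ``cancel'' inside a single complex, so with your premises $\vartheta_G$ could not be an isomorphism. This degree-$1$ piece is exactly what $\vartheta_G$ must match with $\R^1\boldsymbol{\Gamma}(K,\Z/p^n\Z)$, and that is the real content of the duality.

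The fix is to compute nothing. As in the paper, take the source row from $0\to\Z\xrightarrow{p^n}\Z\to\Z/p^n\Z\to0$ and the target row from $0\to\Gm\to\mathbf{G}_{\mathrm{m}}^{(1/p^n)}\to G\to0$, using $\mathbf{G}_{\mathrm{m}}^{(1/p^n)}=\Res_{K^{1/p^n}/K}\Gm$. Both rows are distinguished triangles, and the two outer vertical maps are $\vartheta_{\Gm}$, which are isomorphisms by \cite[Theorem 2.6.1]{SuzII}. Hence $\vartheta_G$ is an isomorphism. This requires a version of your Step 1 that includes $\Gm$, namely commutativity of that diagram. The paper gets it by identifying $\vartheta_G$ with the map of \cite[Lemma 2.7.2]{SuzII} via $\R\rho_\ast\boldsymbol{\mu}_{p^n}=G^{\mathrm{RP}}[-1]$. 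Your Step 1 as stated only covers sequences of unipotent groups.

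\textbf{Step 2 needs a retract argument.} The socle-filtration route can be made to work, but the pieces are not in general ``induced from a subfield''. Let $P$ be the $p$-Sylow subgroup of $\Gal(L/K)$. The socle layers are semisimple $\F_p[\Gal(L/K)/P]$-modules. A simple module over a cyclic group of order prime to $p$ is usually not induced from any subgroup. What is true, and what you need, is that each such module is a direct summand of the permutation module $\F_p[\Gal(L/K)/P]$, by semisimplicity. Since $\vartheta$ is additive and natural, a retract of an isomorphism is an isomorphism. This retract argument is precisely the paper's descent step. First, the case where $\Gal(L/K)$ is a $p$-group is handled by d\'evissage along a filtration with quotients $\mathbf{G}_{\mathrm{m}}^{(1/p)}/\Gm$ (dual quotients $\F_p$). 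Then one passes to $K(\delta)=L^P$ and uses that $G\to\Res_{K(\delta)/K}G_{K(\delta)}\to G$ is multiplication by $\delta$, which is invertible on the $p$-primary group $G$. Thus $\vartheta_G$ is a direct summand of $\vartheta_{\Res_{K(\delta)/K}G_{K(\delta)}}=\vartheta_{G_{K(\delta)}}$. Your bar-resolution alternative is unnecessary, and an unbounded resolution would not directly support a five-lemma induction anyway.
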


\begin{proof}
We follow the strategy set out in the proof of \cite[Theorem 8.1]{Suz}, which consists of two steps: first one verifies that $\vartheta_G$ is an isomorphism in the Galois-split case, and then one uses Galois descent to deduce the general claim. In the Galois-split case, it suffices to check the claim for $G=\mathbf{G}_{\mathrm{m}}^{(1/p^n)}/\Gm$ for some $n.$ Consider the commutative diagram 

\scriptsize$$\begin{tikzcd}
\R\boldsymbol{\Gamma}(K, \Z) \arrow[r]\arrow[swap]{d}{\vartheta_{\Gm}} & \R\boldsymbol{\Gamma}(K, \Z) \arrow[r]\arrow[swap]{d}{\vartheta_{\Gm}} & \R\boldsymbol{\Gamma}(K, \Z/p^n\Z) \arrow[r]\arrow{d}{\vartheta_{G}} &\R\boldsymbol{\Gamma}(K, \Z)[1]\arrow{d}{\vartheta_{\Gm}[1]}\\
\R\sHom_k(\boldsymbol{\Gamma}(K, \Gm), \Z)\arrow[r] & \R\sHom_k(\boldsymbol{\Gamma}(K, \Gm), \Z)\arrow[r] & \R\sHom_k(\boldsymbol{\Gamma}(K, G),\Z)[1] \arrow[r]& \R\sHom_k(\boldsymbol{\Gamma}(K, \Gm),\Z)[1],
\end{tikzcd}$$\normalsize
where the maps $\vartheta_{\Gm}$ are those from \cite[Section 2.6]{SuzII}. The map $\vartheta_{\Gm}$ is an isomorphism by \cite[Theorem 2.6.1]{SuzII}; hence so is $\vartheta_G$ since the rows are distinguished triangles (in fact, our map $\vartheta_{G}$ is the same as that considered in \cite[Lemma 2.7.2]{SuzII} modulo the canonical isomorphism $\R\rho_{\ast} \boldsymbol{\mu}_{p^n} = (\mathbf{G}_{\mathrm{m}}^{(1/p^n)}/\Gm)^{\mathrm{RP}}[-1]$).

We deduce the general case as in \cite[Proof of Lemma 2.7.3]{SuzII}. More precisely, if $\Gal(L/K)$ is a $p$-group, then $G$ (resp. $H$) admits a filtration with successive quotients $\mathbf{G}_{\mathrm{m}}^{(1/p)}/\Gm$ (resp. $\F_p$), so we obtain the result in this case by dévissage. Hence we may now assume that $L=K(\delta)$ for some $\delta$ prime to $p.$ 
Then the composition of the canonical maps $G\to \Res_{L/K}G_L \to G$ is multiplication by $\delta.$ In particular, $G$ is canonically a direct summand of $\Res_{L/K} G_L.$ The maps $\vartheta_{G_L}$ and $\vartheta_{\Res_{L/K}G_L}$ can be canonically identified, and the induced map coming from the direct summand $G$ is precisely $\vartheta_G.$ Hence the claim follows. 
\end{proof}

\begin{corollary}\label{dualitycor}  \rm (Cf. \cite[Proposition 4.7 (2)]{SuzIII}) \it 
Let $G$ be a unirational wound unipotent algebraic group over $K.$ Let $\mathscr{G}$ be the Néron model of $G$ over $\Og_K$ and let $\Phi_G$ be the group of connected components of $\mathscr{G}_k.$ Moreover, let $H$ be the finite ($p$-primary) étale $K$-group scheme representing the sheaf $\sHom_{K_{\mathrm{RP}}}(G^{\mathrm{RP}}, \nu_{\infty}(1)_K^{\mathrm{RP}}).$ Then there are canonical isomorphisms
$$\Gamma(K,H) = \Hom_{\Z}(\Phi_G, \Q_p/\Z_p) \text{\hspace{.3 in} and \hspace{.3 in}} H^1(K,H) = \Ext^1_k(\boldsymbol{\Gamma}(K,G), \Q_p/\Z_p).$$
\end{corollary}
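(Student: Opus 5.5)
The plan is to take cohomology of the duality isomorphism $\vartheta_G$ of Proposition \ref{dualityprop} in degrees $0$ and $1$. Since $H$ is finite étale, $\R\boldsymbol{\Gamma}(K,H)$ has cohomology sheaves $\R^i\boldsymbol{\Gamma}(K,H)$. Evaluated at $k$ itself, the term $\R^i\boldsymbol{\Gamma}(K,H)(k)$ should be $H^i(K,H)$, because $\mathbf{K}(k)=K$ and the pro-étale sheafification does not change values on the algebraically closed field $k$.

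On the other side we have $\R\sHom_k(\boldsymbol{\Gamma}(K,G),\Q_p/\Z_p)$. Here $\boldsymbol{\Gamma}(K,G)$ is represented by the Greenberg transform of $\mathscr{G}$. This sits in an exact sequence $0\to\boldsymbol{\Gamma}(K,G)^0\to\boldsymbol{\Gamma}(K,G)\to\Phi_G\to 0$, where the identity component is a connected (pro-)perfect unipotent group. The last map is an isomorphism on $\pi_0$ by \cite[Proposition 3.4.2 (a)]{Suz}.

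\textbf{Degree 0.} Applying $\sHom_k(-,\Q_p/\Z_p)$ to this sequence, $\sHom_k(\boldsymbol{\Gamma}(K,G)^0,\Q_p/\Z_p)=0$, since there are no nonzero homomorphisms from a connected group to an étale (discrete) group. Hence
$$\mathcal{H}^0\R\sHom_k(\boldsymbol{\Gamma}(K,G),\Q_p/\Z_p)=\sHom_k(\Phi_G,\Q_p/\Z_p).$$
Taking $k$-points and comparing with $\Gamma(K,H)$ via $\mathcal{H}^0(\vartheta_G)$ gives the first isomorphism. This step also uses that $\Phi_G$ is finite: $G$ is $p^N$-torsion, so $\Phi_G$ is finitely generated and torsion, and the target is just $\Hom_\Z(\Phi_G,\Q_p/\Z_p)$.

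\textbf{Degree 1.} Taking $\mathcal{H}^1$ of $\vartheta_G$ and evaluating at $k$ identifies $H^1(K,H)$ with the global sections of $\sExt^1_k(\boldsymbol{\Gamma}(K,G),\Q_p/\Z_p)$. To pass from this to $\Ext^1_k$ I would use the local-to-global spectral sequence
$$H^a(k,\sExt^b)\Rightarrow\Ext^{a+b}_k.$$
Its higher cohomology over $\Spec k^{\mathrm{indrat}}_{\proet}$ vanishes on $k$-points, since $k$ is algebraically closed and the global sections functor at $k$ is exact (cf. \cite{SuzII}). So $\Ext^1_k=\Gamma(k,\sExt^1_k)$, which is the second isomorphism. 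Equivalently, one can apply $\R\Gamma(k,-)$ directly to $\vartheta_G$ and use $\R\Gamma(k,\R\sHom_k)=\R\Hom_k$ together with $\R\Gamma(k,\R\boldsymbol{\Gamma}(K,H))=\R\Gamma(K,H)$. The latter holds because $\Gamma(k,-)\circ\pi_\ast$ is global sections on $K$, and the premorphism formalism of \cite[Proposition 2.6]{SuzIV} makes the derived functors compose.

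\textbf{Main obstacle.} The step I expect to be hardest is this last identification: justifying that $\R\Gamma(k,-)$ composed with $\R\boldsymbol{\Gamma}(K,-)$ recovers étale cohomology of $K$ with coefficients in $H$, given that pro-étale sheafification and P-acyclicity of finite étale $H$ (\cite[Propositions 3.4.2 and 3.4.3]{Suz}) are involved. I would cite those results to identify $\R\boldsymbol{\Gamma}(K,H)$ with the étale version, whose values at $k$ are $\R\Gamma(K,H)$.
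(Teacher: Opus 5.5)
Your proposal is correct and follows the paper's proof. The first isomorphism comes from evaluating $\mathcal{H}^0(\vartheta_G)$ at $k$ and using that $\boldsymbol{\Gamma}(K,G)\to\Phi_G$ has connected kernel, and the second from applying $\R\Gamma(k,-)$ to $\vartheta_G$; your spectral-sequence detour merely unpacks that last step, since $\Gamma(k,-)$ is exact on $\Spec k^{\mathrm{indrat}}_{\proet}$ for algebraically closed $k$ (every covering of $\Spec k$ has a section).
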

\begin{proof}
Proposition \ref{dualityprop} provides an isomorphism $\boldsymbol{\Gamma}(K,H)(k) = \Hom_k(\boldsymbol{\Gamma}(K,G), \Q_p/\Z_p).$ Because $k$ is algebraically closed, we have $\boldsymbol{\Gamma}(K,H)(k)=\Gamma(K,H)$ (pro-étale sheafification does not change the value on algebraically closed fields). Moreover, $\boldsymbol{\Gamma}(K, G)$ is represented by the Greenberg transform of $\mathscr{G},$ and the canonical map $\boldsymbol{\Gamma}(K, G) \to \mathscr{G}_k$ is surjective (as a map of sheaves) with connected kernel. This provides a canonical isomorphism 
$$ \Hom_{\Z}(\Phi_G, \Q_p/\Z_p) = \Hom_k (\boldsymbol{\Gamma}(K,G), \Q_p/\Z_p),$$ which shows the first claim. The second assertion follows by applying $\R\Gamma(k,-)$ to $\vartheta_G.$ 
\end{proof}
\subsection{Rationality of motivic zeta functions}
We shall now describe an application of Theorems \ref{jumpsratthm}, \ref{Abelianjumpsratthm}, and \ref{jumpsratthmGEN} to the theory of motivic zeta functions. Let $\Og_K$ be a complete discrete valuation ring with field of fractions $K$ and algebraically closed residue field $k.$ Recall that $K_0(\mathrm{Var}_k)$ is the Grothendieck ring of varieties over $k$ \cite{NS}. Moreover, let $G$ be a wound algebraic group over $K.$ We would like to make a a prediction about general recurring patterns in the behaviour of the Néron lft-model of $G$ under tame base change. Following Halle-Nicaise, we shall define a \it motivic zeta function \rm of $G$ which is an element $Z_G(x)\in K_0(\mathrm{Var}_k)[\![x]\!]$ that encodes such information. In the case where $G$ is a \it unirational \rm wound algebraic group (e. g. an algebraic torus) or an Abelian varietiy with potentially totally multiplicative reduction, we shall show that $Z_G(x)$ is a \it rational \rm function. 

\begin{definition}\label{ordZctamedef}\rm(cf. \cite{HNII}) \it 
Let $G$ be a smooth connected wound algebraic group over $K$ with Néron lft-model $\mathscr{G}$ over $\Og_K.$ For each $d$ prime to $p,$ we shall denote by $\mathscr{G}(d)$ the Néron lft-model of $G(d):=G\times_KK(d)$ (as before).   
\begin{enumerate}[(i)]
\item The \rm order function \it is the map $\mathrm{ord}_G \colon \{d\in \N\colon p\nmid d\} \to \N_0$
given by
$$\ord_G(d):=\ell_{\Og_K(d)}(\mathrm{coker}(\Lie \mathscr{G} \otimes_{\Og_K}\Og_{K(d)} \to \Lie\mathscr{G}(d))).$$
\item Moreover, we define the \rm motivic zeta function \it of $G$ as
$$Z_G(x):=\sum_{p\nmid d} [\mathscr{G}(d)^{\mathrm{qc}}_k]\mathbf{L}^{\mathrm{ord}_G(d)}x^d \in K_0(\mathrm{Var}_k)[\![x]\!];$$ here, $\mathscr{G}(d)^{\mathrm{qc}}_k$ denotes the maximal quasi-compact open subgroup scheme of $\mathscr{G}(d)_k,$ $[X]$ denotes the class of a smooth algebraic variety $X\to \Spec k$ in $K_0(\mathrm{Var}_k),$ and $\mathbf{L}:=[\mathbf{A}^1_k].$
\item Finally, we define the \rm tame base change conductor \it $c_{\mathrm{tame}}(G)$ of $G$ as the sum (in $\Z$) of the jumps in Edixhoven's filtration $\mathscr{F}^\bullet\mathscr{G}_k,$ each weighted with its multiplicity.
\end{enumerate}
\end{definition}
The crucial property of the order function is the following (cf. \cite[Proposition 8.2.2.2]{HN} and Remark \ref{leftconremark}): 
\begin{proposition}\label{jumpsrecurrenceprop}
Let $D$ be a smooth connected wound algebraic group over $K.$ Let $j_{n_1} < ... < j_{n_r}$ be the \rm pairwise distinct \it jumps of $D$ with respective multiplicities $m_1,...,m_r.$ Assume that the jumps of $D$ are \rm rational \it and let $e(D)$ be the smallest natural number such that $e(D) j_{n_i}\in \Z$ for all $i.$ Then there exists a natural number $N$ such that for all $d\geq N$ prime to $p,$ and for all $q\in \mathbf{N}$ such that $d + qe(D)$ is prime to $p,$ we have
$$\ord_D(d+qe(D))=\ord_D(d) + qe(D)c_{\mathrm{tame}}(D).$$
\end{proposition}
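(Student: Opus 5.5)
By Proposition \ref{jumpspropV}(ii), $\ord_D(d)=\sum_{i=1}^n j_{d,i}$, where $j_{d,1}\le\dots\le j_{d,n}$ are the $d$-jumps of $D$ read as integers in $\{0,\dots,d-1\}$. The plan is to determine these integers exactly for all large $d$ in terms of the jumps $j_{n_1}<\dots<j_{n_r}\in[0,1)$. Write $j_{n_i}=a_i/e$ with $e:=e(D)$ and $a_i\in\{0,\dots,e-1\}$. The first goal is the claim that, for $d$ large and prime to $p$, the multiset of $d$-jumps consists of the numbers $\lceil d\,a_i/e\rceil$, or some other fixed rounding of $d a_i/e$ depending only on $d\bmod e$, each taken with multiplicity $m_i$.

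To prove this claim I would use Edixhoven's description. The $d$-jumps are exactly the $i$ with $\dim F^i_d\mathscr{G}_k>\dim F^{i+1}_d\mathscr{G}_k$, and $F^i_d=\mathscr{F}^{i/d}$. The function $\phi_D(\alpha)=\dim\mathscr{F}^{\alpha}\mathscr{G}_k$ from Remark \ref{leftconremark} is a step function that is constant on each open interval between consecutive jumps. Hence $i$ is a $d$-jump with multiplicity $m_l$ exactly when the interval $[i/d,(i+1)/d]$ meets the jump $j_{n_l}$ in the appropriate way. I need this to be true only for $d$ large enough that distinct jumps are separated by more than $1/d$, and I can take $N$ so large that this holds, which fixes $N$. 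The key difficulty is the unknown behaviour of $\phi_D$ exactly at a rational jump point, i.e. whether $\phi_D$ is left- or right-continuous there. This matters when $d a_l/e\in\Z$. I would avoid deciding it globally. For each $l$, the value $\phi_D(j_{n_l})$ is a single fixed integer lying between the two limiting values. So the $d$-jumps near $j_{n_l}$ are either all at $\lceil d a_l/e\rceil-1$, all at $\lceil d a_l/e\rceil$, or split between the two in multiplicities that are fixed and independent of $d$. In every case the $d$-jumps near $j_{n_l}$ have the form $d a_l/e+c_l(d\bmod e)+(\text{a fractional correction depending only on } d\bmod e)$.

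With this in hand, replacing $d$ by $d+qe$ changes neither the residue $d\bmod e$ nor, when $d a_l/e\in\Z$, the integrality of $d a_l/e$. Each $d$-jump near $j_{n_l}$ therefore shifts by exactly $q e\cdot a_l/e=q a_l$. Summing over all $l$ with multiplicities gives
\[\ord_D(d+qe)-\ord_D(d)=q\sum_l m_l a_l=qe\sum_l m_l j_{n_l}=qe\,c_{\mathrm{tame}}(D).\]
Here the sum of jumps is read in $\mathbf{Q}$; the paper writes ``in $\Z$'', but since only $e\,c_{\mathrm{tame}}$ appears, the identity holds as stated.

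The main obstacle I expect is the bookkeeping at the point $0$ and the possible jump at $1$. I handle it by treating jumps as elements of $[0,1]$ and including $1$ only through the convention $\mathscr{F}^{>1}=0$. A jump at $1$ contributes $d$-jumps at $d-1$, and $d-1$ shifts by $qe=qe\cdot 1$, which is consistent with the formula.
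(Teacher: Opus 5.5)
Your proposal is correct and follows essentially the same route as the paper. The paper also writes $\ord_D(d)$ as the sum of the $d$-jumps via Proposition \ref{jumpspropV}(ii) and takes $N$ to be the inverse of the minimal gap between distinct jumps. It handles the case $dj_{n_i}\in\Z$ through the $d$-independent dimensions $f^+_i,f^0_i,f^-_i$ of $\mathscr{F}^{<j_{n_i}},\mathscr{F}^{j_{n_i}},\mathscr{F}^{>j_{n_i}}$, which gives $\ord_D(d)=\sum_i m_i\lfloor dj_{n_i}\rfloor-\sum_{dj_{n_i}\in\Z}(f^+_i-f^0_i)$; its behaviour under $d\mapsto d+qe(D)$ is exactly your shift computation. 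The only slip is in your opening claim: when $dj_{n_i}\notin\Z$ the $d$-jump sits at $\lfloor dj_{n_i}\rfloor=\lceil dj_{n_i}\rceil-1$, not at $\lceil dj_{n_i}\rceil$, and your later case analysis already handles this correctly.
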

\begin{proof}
By Proposition \ref{jumpspropV}, $\mathrm{ord}_D(d)$ is equal to the sum of the $d$-jumps of $D$ (each counted with multiplicity). We put
$$N:=\Big(\min_{1\leq \ell \leq r-1}|j_{n_{\ell+1}}-j_{n_\ell}|\Big)^{-1}.$$
Now choose some $d>N$ prime to $p$ and let $\mathscr{D}(d)$ be the Néron lft-model of $D(d):=D\times_KK(d)$ over $\Og_{K(d)}$ (with $\mathscr{D}:=\mathscr{D}(1)$). Then, for each $l\in \{0,...,d-1\},$ at most one jump of $D$ is contained in $[\frac{l}{d}, \frac{l+1}{d}].$ Given some $i=1,...,r,$ we must distinguish two cases: 
\begin{enumerate}
\item we have $dj_{n_i}\not\in \Z.$ Then we have $\mathscr{F}^{<j_{n_i}}\mathscr{D}_k = F^{\lfloor dj_{n_i}\rfloor}_d\mathscr{D}_k$ and $\mathscr{F}^{>j_{n_i}}\mathscr{D}_k = F^{\lfloor dj_{n_i}\rfloor+1}_d\mathscr{G}_k;$ in particular, $\lfloor dj_{n_i}\rfloor$ is a $d$-jump of $G$ with multiplicity $m_i,$ or
\item $dj_{n_i}\in \Z.$ In this case, we put $f^+_i := \dim \mathscr{F}^{<j_{n_i}}\mathscr{D}_k,$ $f^0_i:= \dim \mathscr{F}^{j_{n_i}}\mathscr{D}_k,$ and $f^-_i := \dim\mathscr{F}^{>j_{n_i}}\mathscr{D}_k.$ Then $dj_{n_i}-1$ is a $d$-jump of multiplicity $f^+_i-f^0_i,$ and $dj_{n_i}$ is a $d$-jump of multiplicity $f^0_i-f^-_i.$
\end{enumerate}
In particular, we obtain 
\begin{align*}\mathrm{ord}_D(d) &= \sum_{\scriptstyle i=1 \atop\scriptstyle dj_{n_i}\not\in \Z}^r m_i \lfloor dj_{n_i}\rfloor +  \sum_{\scriptstyle i=1\atop \scriptstyle dj_{n_i} \in \Z}^r \Big((f^+_i-f ^ 0_i) (dj_{n_i}-1) + (f^0_i - f^-_i)dj_{n_i}\Big)\\
&=\sum_{i=1}^r m_i \lfloor dj_{n_i}\rfloor - \sum_{\scriptstyle i=1\atop\scriptstyle dj_{n_i} \in \Z}^r (f^+_i - f^0_i).
\end{align*}
Now observe that for $q\in \N$ as in the proposition and $i\in \{1,...,r\},$ we have $dj_{n_i}\in \Z$ if and only if $(d+qe(D))j_{n_i}\in \Z.$ In particular, we have
\begin{align*}
\mathrm{ord}_G(d+qe(D)) - \mathrm{ord}_D(d) &= \sum_{i=1}^r m_i \lfloor (d + qe(D))j_{n_i}\rfloor  - \sum_{i=1}^r m_i \lfloor dj_{n_i}\rfloor\\
&= qe(D)\sum_{i=1}^r m_i j_{n_i} \\&= qe(D)c_{\mathrm{tame}}(D).
\end{align*}
\end{proof}

\begin{remark}\label{rmk:anysequence}As a perhaps unexpected consequence of the previous proof (and with the same notation), one sees that for $d$ coprime to $p$ and large enough, the multiset $J_d(D)$ can be read off from the dimension function $\phi_D$ of Remark \ref{leftconremark}: more precisely, setting \[(s_i,s_i')=\begin{cases}(0,m_i) & \text{if } dj_{n_i}\notin\mathbf{Z} \\ (f^+_i-f^0_i,f^0_i-f^-_i) &\text{if } dj_{n_i}\in\mathbf{Z} 
\end{cases},\] we proved that \[J_d(D)=\{(\lfloor dj_{n_1}\rfloor-1;s_1),(\lfloor dj_{n_1}\rfloor;s_1'),\ldots,(\lfloor dj_{n_r}\rfloor-1;s_r),(\lfloor dj_{n_r}\rfloor;s_r')\}\] for $d> N$. 

It is also easy to see that the elements of the previous multiset are listed in strictly increasing order: notice first that the condition $j_{n_{i+1}}-j_{n_i}>1/d$ implies that $\lfloor dj_{n_i}\rfloor<\lfloor dj_{n_{i+1}}\rfloor$. Moreover, by definition $s_{i+1}$ is positive only if $j_{i+1}/d$ is an integer; if so, the inequality $\lfloor dj_{n_i}\rfloor\geq \lfloor dj_{n_{i+1}}\rfloor-1$ would contradict the choice of $d$ (one can also observe that, independently of whether $j_{i+1}/d$ is an integer, the inequality $ j_{n_i} < j_{n_{i+1}}$ implies that $\lfloor dj_{n_i}\rfloor<\lfloor dj_{n_{i+1}}\rfloor-1$ if $d>2N$).
%abbreviate $\lfloor dj_{n_i}\rfloor$ to $a_i$; the condition $j_{i+1}-j_i<1/d$ implies that $a_i<a_{i+1}$. Moreover, as shown in the previous proof the inequality $s_i>0$ only occurs if $a_{i+1}=j_{i+1}/d$; if so, an inequality $a_i\geq a_{i+1}-1$ would imply $dj_i\geq a_i\geq a_{i+1}-1=dj_{i+1}-1,$ contradicting the choice of $d$ (one can also observe that $a_i<a_{i+1}$. 
As a consequence, we obtain the following generalisation of Proposition \ref{jumpslimitprop}: for any sequence $(d_\ell)_\ell$ of integers coprime to $p$ tending to infinity, \[j_i(D)=\lim\limits_{\ell\to\infty}\frac{j_{i,\ell}(D)}{d_\ell}.\]
\end{remark}

In order to establish the rationality of $Z_G(x),$ we shall moreover need information about $[\mathscr{G}(d)_k^{\mathrm{qc}}]\in K_0(\mathrm{Var}_k),$ which includes information about both the number of irreducible components of $\mathscr{G}(d)_k^{\mathrm{qc}},$ as well as the scheme structure on $\mathscr{G}(d)_k^0.$ If $G$ is unirational over $K,$ it turns out that these invariants satisfy a similar recurrence relation. 

\begin{lemma}\label{Phireslem}
Let $K\subseteq F$ be a finite separable extension and let $G_F$ be a smooth connected wound algebraic group over $F$ with Néron lft-model $\mathscr{G}_F$ over $\Og_F.$ Let $\Phi$ be the group of connected components of $\mathscr{G}_{F,k}.$ Then the group of connected components of the special fibre of the Néron lft-model $\Res_{\Og_F/\Og_K} \mathscr{G}_F$ of $\Res_{F/K}G_F$ is canonically isomorphic to $\Phi.$ 
\end{lemma}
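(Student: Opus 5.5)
The plan is to compare the special fibre of $\Res_{\Og_F/\Og_K}\mathscr{G}_F$ with that of $\mathscr{G}_F$ via the description of Weil restriction along the finite flat extension $\Og_K\subseteq\Og_F.$

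First, $\Res_{\Og_F/\Og_K}\mathscr{G}_F$ is indeed the Néron lft-model of $\Res_{F/K}G_F.$ This follows from adjunction: for smooth $S\to\Spec\Og_K,$ the base change $S\times_{\Og_K}\Og_F$ is smooth over $\Og_F,$ and $\Res_{F/K}G_F$ is wound because $G_F$ is. Next, the special fibre of a Weil restriction is computed by base change:
$$(\Res_{\Og_F/\Og_K}\mathscr{G}_F)_k=\Res_{(\Og_F\otimes_{\Og_K}k)/k}\big(\mathscr{G}_F\times_{\Og_F}(\Og_F\otimes_{\Og_K}k)\big).$$
Since $k$ is the residue field of both rings and $\Og_F$ is a complete discrete valuation ring (or a finite product of such, but here $F$ is a field), we have $A:=\Og_F\otimes_{\Og_K}k=\Og_F/\mathfrak{m}_F^{e}$ with $e$ the ramification index. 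This is an Artinian local $k$-algebra with residue field $k.$ Hence the special fibre is the Greenberg-type group $\mathscr{H}:=\Res_{A/k}(\mathscr{G}_F\times_{\Og_F}A).$

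I then use the reduction map $\mathscr{H}\to\Res_{k/k}(\mathscr{G}_{F,k})=\mathscr{G}_{F,k},$ induced by $A\to A/\mathfrak{m}_A=k.$ By the standard structure of Weil restriction along nilpotent thickenings of smooth schemes (\cite[Chapter 7.6]{BLR}, or the filtration used in Greenberg's theory), this map is surjective and smooth. Its kernel has a filtration indexed by the powers $\mathfrak{m}_A^i,$ whose successive quotients are vector groups $\Lie(\mathscr{G}_{F,k})\otimes_k\mathfrak{m}_A^i/\mathfrak{m}_A^{i+1}.$ Concretely, smoothness of $\mathscr{G}_F$ gives surjectivity on $R$-points for the square-zero steps $R\otimes A/\mathfrak{m}^{i+1}\to R\otimes A/\mathfrak{m}^i$, and the fibres are torsors under those vector groups. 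Consequently the kernel is a successive extension of affine spaces, hence connected (in fact unipotent and geometrically connected).

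Therefore $\mathscr{H}\to\mathscr{G}_{F,k}$ is a faithfully flat homomorphism with connected kernel. It maps $\mathscr{H}^0$ onto $\mathscr{G}^0_{F,k}$ and induces an isomorphism $\mathscr{H}/\mathscr{H}^0\to\mathscr{G}_{F,k}/\mathscr{G}^0_{F,k}=\Phi.$ Injectivity on component groups holds because the kernel is connected, so it lies in $\mathscr{H}^0$; surjectivity follows from surjectivity on $k$-points, with $k$ algebraically closed. Canonicity is clear, since every map used is functorial.

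The main obstacle is the careful justification that the reduction map is surjective with connected kernel in the lft (not necessarily quasi-compact) setting. I would handle this by working Zariski-locally on $\mathscr{G}_F$ with an étale chart to affine space, where the Weil restriction is explicitly an affine bundle, and then gluing.
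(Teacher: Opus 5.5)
Your proof is correct, but it takes a different route from the paper's. The paper base-changes the component sequence $0\to\mathscr{G}_F^0\to\mathscr{G}_F\to\iota_\ast\Phi\to 0$ to $A=\Og_F\otimes_{\Og_K}k$ and applies $\Res_{A/k}$ termwise. The Weil restriction of the identity component has connected fibres by \cite[Proposition A.5.9]{CGP}. The Weil restriction of the constant group $\Phi$ over $A$ is again the constant group $\Phi$, because $\Spec A\to\Spec k$ is a universal homeomorphism; this is where total ramification enters.

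You instead use the reduction map $\Res_{A/k}(\mathscr{G}_F\times_{\Og_F}A)\to\mathscr{G}_{F,k}$. Filtering by the truncations $A/\mathfrak{m}_A^{i}$, you show it is smooth and surjective with split unipotent, hence connected, kernel. Total ramification enters through $A$ being local with residue field $k$. Your route is more self-contained: it essentially reproves the part of the cited CGP result that is needed. It also shows that the special fibre is an extension of $\mathscr{G}_{F,k}$ by a split unipotent group. The paper's route is shorter by citation and isolates the component group directly.

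The lft obstacle you anticipate does not actually arise. A smooth morphism is formally smooth with respect to affine first-order thickenings, and $\Spec(R\otimes_kA/\mathfrak{m}_A^{i})\subseteq\Spec(R\otimes_kA/\mathfrak{m}_A^{i+1})$ is such a thickening for every $k$-algebra $R$. Hence each truncation map is surjective as a map of functors, with kernel the vector group $\Lie(\mathscr{G}_{F,k})\otimes_k\mathfrak{m}_A^i/\mathfrak{m}_A^{i+1}$. No étale charts or gluing are needed.
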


\begin{proof}
Let $\iota\colon \Spec k \to \Spec \Og_F$ be the canonical closed immersion. We have an exact sequence $0\to \mathscr{G}_F^0 \to \mathscr{G}_F\to \iota_\ast \Phi \to 0$ of smooth group schemes over $\Og_F.$ Note that $(\iota_\ast \Phi) \times_{\Og_F} (\Og_F\otimes_{\Og_K} k)$ is represented by the constant group scheme $\Phi$ over $\Og_F\otimes_{\Og_K} k.$ Taking Weil restrictions along the inclusion $k\to \Og_F\otimes_{\Og_K} k$ and using \cite[Proposition A.5.9]{CGP}, all we must verify is that $\Res_{(\Og_F\otimes_{\Og_K} k) / k} \Phi$ is represented by the constant group scheme $\Phi.$ But this follows because the inclusion induces a universal homeomorphism on spectra since the extension $K\subseteq F$ is totally ramified. 
\end{proof}

\begin{proposition}\label{K0recurrenceprop}
Let $G$ be a wound unirational algebraic group over $K,$ which fits into an exact sequence $0 \to T \to G \to U \to 0$ with a torus $T$ and $U$ wound unipotent. Let $K\subseteq L$ be a finite Galois extension such that $T$ becomes split and $U$ becomes Galois-split over $L.$ For each $d$ prime to $p,$ let $\mathscr{G}(d)$ be the Néron lft-model of $G(d):=G\times_KK(d)$ over $\Og_{K(d)}.$ Then, if $d$ is prime to $p$ and $q\in \N$ is such that $d+q[L:K]$ is also prime to $p,$ we have
$$[\mathscr{G}(d)_k^{\mathrm{qc}}] = [\mathscr{G}(d+q[L:K])_k^{\mathrm{qc}}]$$ in $K_0(\mathrm{Var}_k).$
\end{proposition}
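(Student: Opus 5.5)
The plan is to separate the class $[\mathscr{G}(d)^{\mathrm{qc}}_k]$ into two pieces of data. The first is the identity component $\mathscr{G}(d)^0_k$. The second is the finite group $\Phi(d)_{\mathrm{tors}}$, where $\Phi(d)$ is the component group. Since $k$ is algebraically closed, every connected component of $\mathscr{G}(d)^{\mathrm{qc}}_k$ is a translate of $\mathscr{G}(d)^0_k$ by a $k$-point, and is therefore isomorphic to it as a variety. Hence
$$[\mathscr{G}(d)^{\mathrm{qc}}_k] = |\Phi(d)_{\mathrm{tors}}|\cdot[\mathscr{G}(d)^0_k].$$
It therefore suffices to prove two things, with $d':=d+q[L:K]$: first, $|\Phi(d)_{\mathrm{tors}}| = |\Phi(d')_{\mathrm{tors}}|$; second, $[\mathscr{G}(d)^0_k]=[\mathscr{G}(d')^0_k]$.

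\textbf{Identity component.} Over $k$, the group $\mathscr{G}(d)^0_k$ is an extension of a smooth connected unipotent group by a torus. Its torus part has dimension $\rho_{\mathrm{spl}}(G(d))$ by Lemma \ref{toricranklem}. Over an algebraically closed field, a smooth connected unipotent group is isomorphic as a variety to affine space. A torus $\mathbf{G}_{\mathrm{m}}^t$ is special, so the extension is Zariski-locally trivial. This gives
$$[\mathscr{G}(d)^0_k] = (\mathbf{L}-1)^{t}\,\mathbf{L}^{\dim G - t}, \qquad t=\rho_{\mathrm{spl}}(G(d)).$$
Next I claim $\rho_{\mathrm{spl}}(G(d))=\rho_{\mathrm{spl}}(T(d))$, because $U$ is unipotent. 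Moreover $\rho_{\mathrm{spl}}(T(d))$ is the rank of $X^\ast(T)^{\Gal(L(d)/K(d))}$. The action factors through $\Gal(LK(d)/K(d))$, which is the image of $\Gal(L/K)$. This image depends only on $L\cap K(d)$, i.e.\ on $\gcd(d,e)$ for the tame part $e$ of the ramification of $L/K$. Since $e\mid[L:K]$, we get $\gcd(d,e)=\gcd(d',e)$, and the first piece is handled.

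\textbf{Component groups.} I expect this to be the main obstacle. The intended tool is duality. For the torus, I would use Suzuki/Xarles duality: $\Phi_{T(d)}$ is dual to $H^1$ and $\Gamma$ of the character lattice $X^\ast(T)$ over $K(d)$. Concretely, $(\Phi_{T(d)})_{\mathrm{tors}}\cong H^1(I_{K(d)},X^\ast(T))^\vee$, and the free rank of $\Phi_{T(d)}$ is the split rank. For $U$, I would use Corollary \ref{dualitycor}: $\Phi_{U(d)}$ is dual to $\Gamma(K(d),H)$, where $H$ is the finite étale dual of $U$.

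In both cases the resulting invariants depend only on the Galois module restricted to $\Gal(\overline{K}/K(d))$. That action factors through $\Gal(LK(d)/K(d))$, which, as above, depends only on $\gcd(d,e)$. Since Galois cohomology of a fixed module over isomorphic finite quotient groups with the same action gives the same groups, $\Phi_{T(d)}\cong\Phi_{T(d')}$ and $\Phi_{U(d)}\cong\Phi_{U(d')}$.

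The difficulty is the extension. The sequence $0\to\mathscr{T}(d)\to\mathscr{G}(d)\to\mathscr{U}(d)$ need not be right exact. I would handle this with the unified duality of Lemma \ref{RPcohomlem}. Since $\R\boldsymbol{\Gamma}(K(d),-)$ is concentrated in degree $0$ on $T$, $G$ and $U$ (Proposition \ref{cohomologyvanishingprop}), applying $\boldsymbol{\Gamma}(K(d),-)$ gives a short exact sequence of Greenberg transforms. Taking $\pi_0$ then yields an exact sequence
$$\pi_1(\boldsymbol{\Gamma}U)\to\Phi_{T(d)}\to\Phi_{G(d)}\to\Phi_{U(d)}\to 0.$$
The $\pi_1$ term is again dual to Galois data through $\Ext^1$, as in Corollary \ref{dualitycor}. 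The connecting map is given by a Yoneda pairing with the extension class of $G$ in $\Ext^1(U,T)$, restricted to $K(d)$.

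All terms and maps are built functorially from the restriction to $\Gal(\overline{K}/K(d))$ of Galois data split over $L$. Identifying $\Gal(LK(d)/K(d))$ for $d$ and $d'$ therefore identifies the whole sequence. This gives $|\Phi(d)_{\mathrm{tors}}|=|\Phi(d')_{\mathrm{tors}}|$, provided one also checks that the torsion subgroup is determined. It is: the torsion subgroup is the kernel of the map to $\Phi\otimes\Q$, and the free rank equals the split rank.

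If this turns out to be too delicate, the fallback is to base change to $LK(d)$. There $G$ becomes a pullback of induced groups, and one can compute directly using Lemma \ref{Phireslem}. One would then descend via a norm argument, as in the proof of Proposition \ref{dualityprop}.
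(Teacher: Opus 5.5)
Your reduction $[\mathscr{G}(d)^{\mathrm{qc}}_k]=|\Phi(d)_{\mathrm{tors}}|\cdot[\mathscr{G}(d)^0_k]$ is fine and matches the paper. So is your treatment of the identity component, via $\rho_{\mathrm{spl}}(T(d))$ and $L\cap K(d)=K(\gcd(d,\delta))$, where $\delta$ is the degree of the maximal tame subextension of $L/K$.

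The gap is in the component-group step. You claim that every term and map of $\pi_1(\boldsymbol{\Gamma}(K(d),U))\to\Phi_{T(d)}\to\Phi_{G(d)}\to\Phi_{U(d)}\to 0$ is determined by the finite group $\Gal(LK(d)/K(d))$ acting on $X^\ast(T)$ and $H$. This holds for $\Phi_{T(d)}$ and $\Phi_{U(d)}$, since they only involve $H^0,H^1$ of the lattice and $H^0$ of $H$, all of which are inflated from the finite quotient. It fails for the $\pi_1$-term. By Corollary \ref{dualitycor} that term is dual to $H^1(K(d),H)$. For a $p$-primary $H$ in characteristic $p$ this group is infinite (Artin--Schreier classes), is not inflated from $\Gal(LK(d)/K(d))$, and depends on the field $K(d)$ itself. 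The same is true of the extension class in $\Ext^1_{K(d)}(U,T)$. There is no morphism between $K(d)$ and $K(d+q[L:K])$, so nothing identifies these data, or the connecting maps, for the two values of $d$. Yet the torsion of $\Phi_{G(d)}$ is exactly what that connecting map controls. Your fallback is too vague to repair this.

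The missing idea is to compare both fields with a common subfield, so that you never have to identify $H^1(-,H)$.
\begin{itemize}
\item Put $d_0:=\gcd(d,\delta)=\gcd(d+q[L:K],\delta)$. It suffices to show $|\Phi_G(d)_{\mathrm{tors}}|=|\Phi_G(d_0)_{\mathrm{tors}}|$. After replacing $K$ by $K(d_0)$ you may assume $d$ is prime to $[L:K]$.
\item Apply $\R\Hom_k(-,\Z)$ to $0\to\boldsymbol{\Gamma}(K,T)\to\boldsymbol{\Gamma}(K,G)\to\boldsymbol{\Gamma}(K,U)\to 0$, over $K$ and over $K(d)$. This gives exact rows
\[\Hom_{\Z}(\Phi_T,\Z)\to\Ext^1_{\Z}(\Phi_U,\Z)\to\Ext^1_{\Z}(\Phi_G,\Z)\to\Ext^1_{\Z}(\Phi_T,\Z)\to\Ext^2_k(\boldsymbol{\Gamma}(K,U),\Z).\]
The canonical maps $\Res_{K(d)/K}G(d)\to G$ (and likewise for $T,U$), together with Lemma \ref{Phireslem}, give a morphism between the two rows.
\item Write $\Omega=\Gal(K^{\mathrm{sep}}/K)$ and $\Omega_d=\Gal(K^{\mathrm{sep}}/K(d))$. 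By duality the outer vertical maps become restrictions $H^i(\Omega,X^\ast(T))\to H^i(\Omega_d,X^\ast(T))$ for $i=0,1$, $H^0(\Omega,H)\to H^0(\Omega_d,H)$, and $H^1(\Omega,H)\to H^1(\Omega_d,H)$.
\item The first three kinds are isomorphisms, because $\Gal(L(d)/K(d))\to\Gal(L/K)$ is bijective.
\item For $H^1(\Omega,H)\to H^1(\Omega_d,H)$ you only need injectivity. It follows from inflation--restriction, since $H^1(\mu_d,H^{\Omega_d})=0$ for $d$ prime to $p$.
\item The five lemma then gives $\Ext^1_{\Z}(\Phi_G,\Z)\cong\Ext^1_{\Z}(\Phi_G(d),\Z)$, hence equal torsion orders.
\end{itemize}
This is exactly how the paper argues.
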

\begin{proof}
Let $\Gamma:=\Gal(L/K).$ Denote by $K\subseteq K(\delta) \subseteq L$ the maximal tamely ramified subextension of $K \subseteq L$ and by $\Gamma'=\Gal(L/K(\delta))$ the (unique) $p$-Sylow subgroup of $\Gamma.$ Then we can write $\Gamma=\Gamma'\rtimes \mu_{\delta};$ in particular, for each $m\mid \delta,$ there exists a unique subgroup $\Gamma_m\subseteq \Gamma$ of index $m.$ 

Let $\Phi_G(d)$ be the group of connected components of $\mathscr{G}(d)_k$ for $d$ prime to $p;$ we define $\Phi_T(d)$ and $\Phi_U(d)$ analogously. Moreover, put $d':=\mathrm{gcd}(d,\delta).$ By \cite[Proposition 3.7]{HNII} one has an equality $[\mathscr{G}^{\mathrm{qc}}_k]=\#(\Phi_G)_{\mathrm{tors}} [\mathscr{G}^0_k]$ in $K_0(\mathrm{Var}_k)$, which reduces us to showing that $[\mathscr{G}(d)^{\mathrm{0}}_k] = [\mathscr{G}(d+q[L:K])^{\mathrm{0}}_k]$ and $\#\Phi_G(d)_{\mathrm{tors}} = \#\Phi_G(d+q[L:K])_{\mathrm{tors}}$ for all $d,q$ as in the Proposition. Let $g=\dim G$ and let $t_d$ be the dimension of the maximal split subtorus of $G\times_KK(d).$ Then $$[\mathscr{G}(d)^{0}_k] = (\mathbf{L}-1)^{t_d} \mathbf{L}^{g-t_d}$$ for all $d$ by Lemma \ref{toricranklem} and \cite[Lemma 3.1]{N}. Let $X^\ast(T)$ be the character module of $T$ and let $H$ be the $p$-primary étale group scheme dual to $U,$ viewed as a continuous representation of $\Omega:=\Gal(K\sep/K).$ By \cite[Proposition 4.4]{N}, $t_d$ only depends on the induced representation of $\Gamma_{d'}$ on $X^\ast(T).$ Since $d'=(d+q[L:K])',$ the claimed recurrence for $[\mathscr{G}(d)_k^0]$ follows. 

In order to analyse the groups of connected components, consider the distinguished triangle
\begin{align*}\R\Hom_k(\boldsymbol{\Gamma}(K,U), \Z) \to \R\Hom_k(\boldsymbol{\Gamma}(K,G), \Z) &\to \R\Hom_k(\boldsymbol{\Gamma}(K,T), \Z)\\&\to \R\Hom_k(\boldsymbol{\Gamma}(K,U), \Z)[1],\end{align*} 
 which induces a commutative diagram
\scriptsize$$\begin{tikzcd}
\Hom_{\Z}(\Phi_T, \Z) \arrow[r]\arrow[d] & \Ext^1_{\Z}(\Phi_U, \Z) \arrow[r]\arrow[d] & \Ext^1_{\Z}(\Phi_G, \Z) \arrow[r]\arrow[d] & \Ext^1_{\Z}(\Phi_T, \Z)\arrow[r]\arrow[d] & \Ext^2_k(\boldsymbol{\Gamma}(K, U), \Z)\arrow[d]\\
\Hom_{\Z}(\Phi_T(d), \Z) \arrow[r] & \Ext^1_{\Z}(\Phi_U(d), \Z) \arrow[r]& \Ext^1_{\Z}(\Phi_G(d), \Z) \arrow[r] & \Ext^1_{\Z}(\Phi_T(d), \Z) \arrow[r] & \Ext^2_k(\boldsymbol{\Gamma}(K(d), U(d)), \Z)
\end{tikzcd}$$\normalsize
with exact rows. The vertical arrows are induced by the canonical maps $\Res_{K(d)/K} G(d) \to G$ (and similarly for $T$ and $U$). Here we use Lemma \ref{Phireslem} and that the kernel of the canonical map $\boldsymbol{\Gamma}(K, G) \to \Phi_G$ is connected (and similarly for $T$ and $U;$ cf. the argument after the proof of \cite[Theorem 8.1]{Suz}).
As above, it suffices to show that $\#\Phi_G(d)_{\mathrm{tors}} = \# \Phi_G(d')_{\mathrm{tors}}$ for all $d$ prime to $p.$ We may replace $K$ by $K(d')$ and hence suppose that $d$ is moreover prime to $[L:K].$ We shall prove that the middle vertical arrow is then an isomorphism. We put $\Omega_d:=\Gal(K\sep/K(d)).$ By duality (\cite[Theorem 8.1]{Suz} and Corollary \ref{dualitycor} above), we can instead consider the canonical maps
$$H^ i (\Omega, X^\ast(T)) \to H^ i (\Omega_d, X^\ast(T)) \hspace{.2 in} \text{and} \hspace{.2 in} H^ i (\Omega, H) \to H^ i (\Omega_d, H).$$ We shall prove that the maps on the left are isomorphisms for $i=0,1,$ as well as that the map on the right is an isomorphism for $i=0$ and injective for $i=1.$ 

The claim for $i=0$ follows immediately from the fact that the subgroups of invariants of $X^\ast(T)$ and $H$ only depend on the induced representation of $\Gal(L/K)$ and that the induced map $\Gal(L(d)/K(d))\to \Gamma$ is an isomorphism because $d$ is prime to $[L:K].$ The homomorphism on the left is an isomorphism for $i=1$ for the same reason because $H^1(\Gal(L/K), X^\ast(T)) \to H^1(\Omega, X^\ast(T))$ is an isomorphism \cite[Proof of Lemma 3.4]{N}. For the map on the right and $i=1,$ we consider the (continuous) inflation-restriction sequence
$$H^1(\mu_d, H^{\Omega_d}) \to H^1(\Omega, H) \to H^1(\Omega_d,H).$$
The first term vanishes because the orders of $\mu_d$ and $H^{\Omega_d}$ are coprime. This implies that the right-most vertical arrow in the diagram is injective. We have already seen that the first, second, and fourth arrows (from the left) are isomorphisms, so the middle one is an isomorphism as well by the lemma of five homomorphisms. 
\end{proof}

Now we have assembled all the tools necessary to deduce the \it rationality \rm of $Z_G(x)$ if $G$ is a unirational wound algebraic group or a potentially totally degenerate Abelian variety:

\begin{theorem}\label{Zetaratthm}
Let $G$ be a unirational wound algebraic group over $K$ or an Abelian variety with potentially totally multiplicative reduction. Then $Z_G(x)$ is an element of the subring 
$$K_0(\mathrm{Var}_k)\Big[x, \frac{1}{1-\mathbf{L}^ a x^b} \colon (a,b)\in \N_0\times \mathbf{N}, \frac{a}{b} = c_{\mathrm{tame}}(G)\Big]\subseteq K_0(\mathrm{Var}_k)[\![x]\!].$$
In particular, $Z_G(x)$ is a rational function. 
\end{theorem}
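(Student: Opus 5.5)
The plan is to split the sum defining $Z_G(x)$ into finitely many arithmetic progressions in $d$, on each of which both factors of the coefficient $[\mathscr{G}(d)^{\mathrm{qc}}_k]\mathbf{L}^{\mathrm{ord}_G(d)}$ behave periodically. Then each progression contributes a geometric series in $\mathbf{L}^{ec_{\mathrm{tame}}(G)}x^e$ with $e$ a suitable period.

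\textbf{Choice of the period.} By Theorem \ref{jumpsratthmGEN} (resp.\ Theorem \ref{Abelianjumpsratthm}) the jumps of $G$ are rational, so $e(G)$ from Proposition \ref{jumpsrecurrenceprop} is defined. Since $e(G)j\in\Z$ for every jump, $e(G)c_{\mathrm{tame}}(G)\in\Z$. In the unirational case choose $L$ as in Proposition \ref{K0recurrenceprop}. In the Abelian case replace $G$ by a uniformising torus $T$, using two facts:
\begin{itemize}
\item $\mathscr{G}(d)^0_k$ and $\mathscr{T}(d)^0_k$ have isomorphic completed local rings, as in the proof of Theorem \ref{Abelianjumpsratthm}, so $\mathrm{ord}$ agrees;
\item $[\mathscr{G}(d)^{\mathrm{qc}}_k]=\#\Phi_{G(d),\mathrm{tors}}\cdot[\mathscr{G}(d)^0_k]$, and $\mathscr{G}(d)^0_k$ is a torus of fixed dimension, so its class is $(\mathbf{L}-1)^g$.
\end{itemize}
The component group counts are periodic in $d$ by the analogous Galois-cohomological argument for the lattice $M$ and the torus $T$ (cf.\ \cite[Section 6]{HN}). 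I would treat this case only briefly, citing Halle--Nicaise's analysis of component groups under tame base change.

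Set $P:=p\,e(G)[L:K]$, with $p$ included so that the residue of $d$ mod $P$ decides whether $p\mid d$. By Proposition \ref{K0recurrenceprop}, $[\mathscr{G}(d)^{\mathrm{qc}}_k]$ depends only on $d$ mod $[L:K]$ among $d$ prime to $p$, hence only on $d$ mod $P$. By Proposition \ref{jumpsrecurrenceprop}, there is $N$ such that for $d\ge N$ prime to $p$ and $q\in\N$,
$$\mathrm{ord}_G(d+qP)=\mathrm{ord}_G(d)+qP\,c_{\mathrm{tame}}(G),$$
since $d+qP$ is again prime to $p$.

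\textbf{Splitting the sum.} Write $Z_G(x)$ as a polynomial (the terms with $d<N$) plus $\sum_{r}\sum_{q\ge0}$ over representatives $r$ of the residues prime to $p$ mod $P$, with $N\le r<N+P$. Each inner sum equals
$$[\mathscr{G}(r)^{\mathrm{qc}}_k]\mathbf{L}^{\mathrm{ord}_G(r)}x^r\sum_{q\ge0}\big(\mathbf{L}^{a}x^{b}\big)^q=\frac{[\mathscr{G}(r)^{\mathrm{qc}}_k]\mathbf{L}^{\mathrm{ord}_G(r)}x^r}{1-\mathbf{L}^ax^b},$$
where $b=P$ and $a=P\,c_{\mathrm{tame}}(G)\in\N_0$. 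Then $a/b=c_{\mathrm{tame}}(G)$, and $a\ge0$ because the jumps lie in $[0,1]$. This places $Z_G(x)$ in the stated subring.

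\textbf{Main obstacle.} The delicate point is the bookkeeping in Proposition \ref{jumpsrecurrenceprop}. The jumps there must be viewed as elements of $[0,1]$, not $\mathbf{R}/\Z$; in particular a possible jump at $1$ must be handled, since left-continuity is not assumed (Remark \ref{leftconremark}). One must also check that $c_{\mathrm{tame}}$ is defined with these representatives, so that $a$ is a nonnegative integer. The Abelian case additionally requires the periodicity of component groups. If that is not available directly, I would deduce it from the exact sequence $0\to M\to T\to G\to 0$ together with $H^1$-periodicity of Galois cohomology of the lattice $M$, mimicking the proof of Proposition \ref{K0recurrenceprop}.
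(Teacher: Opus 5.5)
Your treatment of the unirational case is essentially the paper's. The paper uses the period $e'(G)=\mathrm{lcm}(p,e(G),[L:K])$ rather than a product, which makes no difference: on each residue class, Propositions \ref{jumpsrecurrenceprop} and \ref{K0recurrenceprop} turn the sum into a single geometric series.

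The Abelian case, however, rests on a false claim: the orders $\#\Phi_G(d)$ of the component groups are \emph{not} periodic in $d$. Here $\mathscr{G}(d)$ is of finite type, and under base change to $K(d)$ the valuations of the periods in the lattice $M$ are multiplied by $d$. Already for a Tate curve $E_q$ with $v_K(q)=n$ one has $\Phi_E(d)\cong\Z/dn\Z$, so $\#\Phi_E(d)=dn$ grows linearly. In general Halle--Nicaise prove
$\#\Phi_G(\alpha+\lambda e'(G))=\bigl(\tfrac{\alpha}{\alpha'}+\tfrac{e'(G)}{\alpha'}\lambda\bigr)^{t(G\times_KK(\alpha'))}\#\Phi_G(\alpha')$ \cite[Theorem 5.7]{HNComp}. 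Here $\alpha'=\gcd(\alpha,\delta)$, and $t(-)$ is the dimension of the maximal torus of the identity component of the special fibre. This is a polynomial in $\lambda$ of degree $t$, which is typically positive. No argument via $H^1$ of $M$ can give periodicity, because the relevant quantity is governed by the valuation pairing on $M$, which scales with $d$.

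Consequently your inner sums are not geometric series, and your closed formula with a single factor $(1-\mathbf{L}^ax^b)^{-1}$ is wrong in this case. For the split Tate curve one gets $(\mathbf{L}-1)n\sum_\lambda(\alpha+\lambda P)x^{\alpha+\lambda P}$, which has a double pole. The missing step is exactly the paper's. Write each progression as $[\mathscr{G}(\alpha)^0_k]\mathbf{L}^{\ord_G(\alpha)}x^\alpha\sum_\lambda Q_\alpha(\lambda)y^\lambda$, with $y=\mathbf{L}^{e'(G)c_{\mathrm{tame}}(G)}x^{e'(G)}$ and $Q_\alpha$ the polynomial above. Then use that $\sum_\lambda Q(\lambda)y^\lambda\in\Z[y,(1-y)^{-1}]$ for every polynomial $Q$ \cite[Lemma 6.2]{HNComp}. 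The theorem still holds because the subring in the statement contains all powers of $(1-\mathbf{L}^ax^b)^{-1}$.

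There is a smaller error in the same place: $\mathscr{G}(d)^0_k$ is a $g$-dimensional torus only if $G\times_KK(d)$ already has totally multiplicative reduction. It is never a torus for tame $d$ if $G$ needs a wild extension to acquire such reduction. What you actually need, and what holds, is that $[\mathscr{G}(d)^0_k]=[\mathscr{T}(d)^0_k]$ via the canonical étale map, and that the latter is periodic in $d$ by the torus case of Proposition \ref{K0recurrenceprop}.
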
 
\begin{proof}
(Cf. \cite[Proof of Theorem 8.3.1.2]{HN}) We first treat the case of unirational $G.$ Choose $N \gg 0$ as in Proposition \ref{jumpsrecurrenceprop}. It suffices to show that the series $\sum_{p\nmid d, d> N} [\mathscr{G}(d)^{\mathrm{qc}}_k]\mathbf{L}^{\mathrm{ord}(d)} x^d$ is contained in the subring from the Theorem. Suppose $G$ fits into an exact sequence $0\to T\to G\to U \to 0$ with $T$ a torus and $U$ unipotent. Let $K\subseteq L$ be a finite Galois extension of $K$ such that $U$ and $T$ are (Galois-)split over $L,$ and let $e'(G):=\mathrm{lcm}(p, e(G), [L:K]),$ where $e(G)$ is the smallest positive integer such that $e(G)j\in \Z$ for all jumps $j$ of $G$ (which exists by Theorem \ref{jumpsratthmGEN}). For each $\alpha\in \{N+1,..., N+e'(G)\}$ prime to $p,$ we define an auxiliary series
$$Z^{(\alpha)}_G(x):= \sum_{\scriptstyle{d\equiv \alpha\!\!\!\!\mod e'(G)} \atop\scriptstyle{d> N}} [\mathscr{G}(d)^{\mathrm{qc}}_k]\mathbf{L}^{\mathrm{ord}_G(d)} x^d;$$
it is then sufficient to show that each $Z^{(\alpha)}_G(x)$ is contained in the subring. However, by Propositions \ref{jumpsrecurrenceprop} and \ref{K0recurrenceprop}, we have
\begin{align*}
Z_G^{(\alpha)}(x) &= \sum_{\lambda=0} ^\infty [\mathscr{G}(\alpha + \lambda e'(G))^{\mathrm{qc}}_k]\mathbf{L}^{\mathrm{ord}_G(\alpha + \lambda e'(G))} x^{\alpha + \lambda e'(G)}\\
&=[\mathscr{G}(\alpha)^{\mathrm{qc}}_k]\mathbf{L}^{\ord_G(\alpha)}x^{\alpha}\sum_{\lambda=0} ^\infty \mathbf{L}^{\lambda e'(G) c_{\mathrm{tame}}(G)} x^{\lambda e'(G)} \\
&=[\mathscr{G}(\alpha)^{\mathrm{qc}}_k]\mathbf{L}^{\ord_G(\alpha)}x^{\alpha}\ \cdot \frac{1}{1-\mathbf{L}^{e'(G) c_{\mathrm{tame}}(G)}x^{e'(G)}}.
\end{align*}
Now let $G$ be an Abelian variety with potentially totally multiplicative reduction. Let $T$ be an algebraic torus over $K$ uniformising $G$ \cite[Section 3.3.7]{HN}. For each $d$ prime to $p,$ let $\mathscr{G}(d)$ and $\mathscr{T}(d)$ be the Néron lft-models of $G\times_KK(d)$ and $T\times_KK(d)$ over $\Og_{K(d)},$ respectively. As in the proof of \cite[Proposition 6.2.3.2]{HN}, we obtain a canonical étale morphism $\mathscr{T}(d)^0_k \to \mathscr{G}(d)^0_k$ for all such $d;$ hence $[\mathscr{T}(d)^0_k]=[\mathscr{G}(d)^0_k]$ in $K_0(\mathrm{Var}_k)$ and, moreover, $\ord_G(d) = \ord_T(d)$ for all $d$ prime to $p$ (as well as $c_{\mathrm{tame}}(G) = c_{\mathrm{tame}}(T)$). Let $K\subseteq L$ be a finite separable extension of $K$ splitting $T$ and let $K\subseteq K(\delta)\subseteq L$ be the maximal tamely ramified subextension of $K\subseteq L.$ By Theorem \ref{Abelianjumpsratthm}, we can define $e'(G)$ exactly as in the unirational case. Once again, it suffices to show that, for all $\alpha\in \{N+1,..., N+e'(G)\}$ prime to $p,$ the auxiliary series $Z_G^{(\alpha)}(x)$ is contained in the subring. Let $\Phi_G(d)$ denote the group of connected components of $\mathscr{G}(d)_k.$ As in the unirational case, we compute
$$Z^{(\alpha)}_G(x) = [\mathscr{G}(\alpha)^0_k]\mathbf{L}^{\ord_G(\alpha)} x^{\alpha} \sum_{\lambda=0}^\infty \#\Phi_G(\alpha+\lambda e'(G)) \mathbf{L}^{\lambda e'(G) c_{\mathrm{tame}}(G)}x^{\lambda e'(G)}.$$ Substituting $y:=\mathbf{L}^{e'(G)c_{\mathrm{tame}}(G)}x^{e'(G)},$ it suffices to show that the series $\sum_{\lambda=0}^{\infty} \#\Phi_G(\alpha+\lambda e'(G)) y^{\lambda} \in \Z[\![y]\!]$ is contained in $\Z[y, (1-y)^{-1}],$ which works (essentially) as in the proof of \cite[Theorem 6.5]{HNComp}. Indeed, put $d':=\mathrm{gcd}(d,\delta)$ for $d$ prime to $p.$ For any $\alpha\in \{N+1,..., N+e'(G)\}$ prime to $p,$ we have $$\#\Phi_G(\alpha + \lambda e'(G)) = \Big(\frac{\alpha}{\alpha'} + \frac{e'(G)}{\alpha'} \lambda\Big)^{t(G\times_KK(\alpha'))} \#\Phi_{G}(\alpha'),$$ where $t(G\times_KK(d))$ denotes the dimension of the maximal torus of $\mathscr{G}(d)^0_k$ \cite[Theorem 5.7]{HNComp}. The series $\sum_\lambda (\frac{\alpha}{\alpha'} + \frac{e'(G)} {\alpha'} \lambda)^{t(G\times_KK(\alpha'))} y^\lambda$ is contained in $\Z[y, (1-y)^{-1}]$ by \cite[Lemma 6.2]{HNComp}, so the claim follows. 
\end{proof}

\subsection{Additivity of the tame base change conductor}
The tame base change conductor remains a mysterious invariant about which very little is known. For a smooth connected wound algebraic group $G$ over $K$, we have $c_{\mathrm{tame}}(G)=0$ if and only if the identity component of the Néron lft-model of $G$ commutes with tame base change. For an algebraic torus $T,$ Halle-Nicaise have shown the remarkable formula $c_{\mathrm{tame}}(T) = \frac{1}{2}u(T),$ where $u(T)$ is the unipotent rank of the special fibre of the identity component of the Néron lft-model of $T$ \cite[Proposition 7.1.1.3]{HN}. Note that this formula implies that the tame base change conductor is isogeny invariant for algebraic tori (which is not the case generally for the jumps). The proof relies on deep results due to Chai-Yu-de Shalit relating Chai's base change conductor of $T$ to the Artin conductor of its rational character module $X^{\ast}(T)\otimes_{\Z} \Q.$ The same formula as for tori can be deduced for potentially totally degenerate Abelian varieties using rigid uniformisation \cite[Proposition 6.2.3.2]{HN}. For an elliptic curve $E$ over $K,$ the tame base change conductor only depends on the Kodaira symbol of the reduction of $E$ and the explicit value is known in each case \cite[Table 6.1 on p. 100]{HN}.

In this paragraph, we shall show that $c_{\mathrm{tame}}(-)$ satisfies an analogue of Chai's conjecture:

\begin{theorem}\label{ctameadditivethm}
Let $0\to G\to B \to A \to 0$ be an exact sequence of smooth connected wound algebraic groups over $K$ with $G$ \rm unirational. \it Then, for each $d$ prime to $p,$ we have $\ord_B(d) = \ord_G(d) + \ord_A(d).$ In particular, 
$$c_{\mathrm{tame}}(B) = c_{\mathrm{tame}}(G) + c_{\mathrm{tame}}(A).$$
\end{theorem}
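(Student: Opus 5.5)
The plan is to reduce the whole statement to exactness of the sequence of Néron lft-models, and then read off the Lie algebras. Fix $d$ prime to $p$. Base change to $K(d)$ preserves exactness and woundness, and $G\times_KK(d)$ is still unirational, since unirationality is insensitive to finite separable extensions. So it suffices to treat $d=1$ together with a statement valid over every such base.

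\textbf{Step 1: exactness of Néron lft-models.} We want
$$0\to \mathscr{G}\to\mathscr{B}\to\mathscr{A}\to 0$$
to be exact as a sequence of $\Og_K$-group schemes. By Proposition \ref{Chaiexactn} it suffices to show $R^1j^{\mathrm{sm}}_\ast G=0$. The stalks of this sheaf at points of the special fibre are $H^1$ of $G$ over fraction fields of strictly henselian local rings of smooth $\Og_K$-schemes. Chai's argument for $\Res_{F/K}\Gm^s$ reduces these to Galois cohomology of $G$ over henselian discretely valued fields of the same type as $K$ (strictly henselian, with suitable residue field). For these fields we invoke Proposition \ref{cohomologyvanishingprop}: $H^1(K,G)=0$.

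\textbf{Main obstacle.} The difficulty in Step 1 is that Proposition \ref{cohomologyvanishingprop} is proven only for complete $K$ with algebraically closed residue field, whereas the stalk computation involves more general fields. I would handle this by dévissage on $0\to T\to G\to U\to 0$. The torus part is covered by Chai's result. For the unipotent part, the covering of Proposition \ref{unipcoveringprop} expresses $U$ as a quotient of $\Res_{L/K}\mathbf{G}_{\mathrm{m}}^{(1/p^{n_i})}$'s by a kernel with vanishing $H^1$ over all finite separable extensions. The Weil restrictions $\Res_{L/K}\mathbf{G}_{\mathrm{m}}^{(1/p^n)}=\Res_{L^{(1/p^n)}/K}\Gm$ have $R^1j^{\mathrm{sm}}_\ast=0$ again by Chai. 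This reduces the required vanishing to the cohomology of the kernel, which is of the same type and can be handled inductively. Alternatively, the smoothness of $\mathscr{B}\to\mathscr{A}$ could be checked directly from surjectivity on $\Og_{K'}$-points for all unramified extensions $K'$, using $H^1(K',G)=0$ and Hensel's lemma, as in Lemma \ref{flatlem}.

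\textbf{Step 2: passing to Lie algebras.} Once the sequence of Néron lft-models is exact over $\Og_{K(d)}$ for all $d$, we obtain exact sequences of free modules
$$0\to\Lie\mathscr{G}(d)\to\Lie\mathscr{B}(d)\to\Lie\mathscr{A}(d)\to0.$$
These are exact because $\mathscr{G}(d)\to\mathscr{B}(d)$ is a closed immersion, $\mathscr{B}(d)\to\mathscr{A}(d)$ is smooth, and both are compatible with base change from $K$. The snake lemma applied to the morphism from the base-changed sequence over $\Og_K$ to the sequence over $\Og_{K(d)}$ gives a short exact sequence of cokernels. The leftmost vertical map is injective, so no kernel terms interfere. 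Additivity of length then yields $\ord_B(d)=\ord_G(d)+\ord_A(d)$.

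\textbf{Step 3: the tame base change conductors.} By Proposition \ref{jumpspropV}(ii), $\ord_D(d)$ is the sum of the $d$-jumps, and by Proposition \ref{jumpslimitprop}, $c_{\mathrm{tame}}(D)=\lim_\ell \ord_D(d_\ell)/d_\ell$ along a grid sequence. Alternatively, Lemma \ref{universallyexactlem} gives $J(B)=J(G)\uplus J(A)$ directly. Summing the jumps with multiplicity then gives $c_{\mathrm{tame}}(B)=c_{\mathrm{tame}}(G)+c_{\mathrm{tame}}(A)$.
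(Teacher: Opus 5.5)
Your Step 1 is false, and the rest of the argument depends on it. For unirational $G$ the sequence $0\to\mathscr{G}\to\mathscr{B}\to\mathscr{A}\to0$ of Néron lft-models is in general \emph{not} exact, even when $G$ is a torus.

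\textbf{The counterexample.} Example \ref{Example01}(2) gives one. Take the sequence $0\to(\Res_{F_2/K}\Gm)/\Gm\to T_1\to T_2\to0$, with $K$ replaced by a suitable $K(d)$, which is again a field of the type you allow. The induced map of completed local rings is $b_1\mapsto a_1^2$, so $\mathscr{T}_1\to\mathscr{T}_2$ is not smooth. Correspondingly, $J(T_1)=\{1/4,3/4\}$, whereas $J\bigl((\Res_{F_2/K}\Gm)/\Gm\bigr)\uplus J(T_2)=\{1/2,1/2\}$. So the consequence $J(B)=J(G)\uplus J(A)$ in your Step 3 is false too, and by Proposition \ref{Chaiexactn} this forces $R^1j^{\mathrm{sm}}_\ast\bigl((\Res_{F_2/K}\Gm)/\Gm\bigr)\neq0$.

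\textbf{Why your repairs fail.} The dévissage cannot fix this. Chai's vanishing of $R^1j^{\mathrm{sm}}_\ast$ holds for induced tori (more generally invertible ones), not for arbitrary tori. Moreover, the relevant local cohomology involves $H^1$ over rings such as the fraction field of the strict henselisation of $\mathbf{A}^1_{\Og_K}$ at the generic point of its special fibre. That field has imperfect residue field, so Proposition \ref{cohomologyvanishingprop} says nothing there.

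Your fallback via points also fails. Since $k$ is algebraically closed, there are no non-trivial unramified extensions. Surjectivity of $B(K)\to A(K)$ therefore only gives faithful flatness of $\mathscr{B}\to\mathscr{A}$, which is exactly Lemma \ref{flatlem}, and never smoothness. The failure of smoothness is what the thickness $\Theta$ was introduced to control; compare Example \ref{Example01}(1), where $\Theta(g(d)^0_k)=p^s$.

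\textbf{The missing idea.} You do not need exactness, only the vanishing of an Euler characteristic.
\begin{enumerate}
\item What $H^1(K,G)=0=H^1(K(d),G)$ from Proposition \ref{cohomologyvanishingprop} really gives is surjectivity on $K$-points (and on $K(d)$-points). As in the second argument of the proof of \cite[Theorem 2.11]{OS}, this yields $\chi_{\mathrm{points}}=0$ for the complex $\mathscr{G}^\bullet\colon 0\to\mathscr{G}^0\to\mathscr{B}^0\to\mathscr{A}^0\to0$ and for its analogue $\mathscr{G}(d)^\bullet$ over $\Og_{K(d)}$.
\item The Overkamp--Suzuki theorem \cite[Theorem 2.7]{OS} then gives $\chi(\Lie\mathscr{G}^\bullet)=0=\chi(\Lie\mathscr{G}(d)^\bullet)$. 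The individual cohomology groups of these Lie complexes need not vanish; only the alternating sum does.
\item Apply $\chi$ to the term-wise exact sequence $0\to\Lie\mathscr{G}^\bullet\otimes_{\Og_K}\Og_{K(d)}\to\Lie\mathscr{G}(d)^\bullet\to Q^\bullet\to0$ to get $\chi(Q^\bullet)=0$.
\item Since $Q^\bullet$ consists of torsion modules, $\chi(Q^\bullet)$ is the alternating sum of the lengths of its terms, namely $\pm\bigl(\ord_G(d)-\ord_B(d)+\ord_A(d)\bigr)$. This gives the additivity of $\ord$.
\end{enumerate}
Your passage to $c_{\mathrm{tame}}$ via $\lim_\ell \ord(d_\ell)/d_\ell$ in Step 3 is then fine.
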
 
\begin{proof}
For $d$ prime to $p,$ let $\mathscr{G}(d),$ $\mathscr{B}(d),$ and $\mathscr{A}(d)$ be the Néron lft-models of $G\times_KK(d),$ $B\times_KK(d),$ and $A\times_KK(d),$ respectively. Put $\mathscr{G}:=\mathscr{G}(1)$ (and similarly for $\mathscr{B}$ and $\mathscr{A}$).  Denote by $\mathscr{G}^\bullet$ the complex $0 \to \mathscr{G}^0 \to \mathscr{B}^0 \to \mathscr{A}^0 \to 0$ (with $\mathscr{G}^0$ in degree 1) and define $\mathscr{G}(d)^\bullet$ analogously. Denote by $\chi_{\mathrm{points}}(\mathscr{G}^\bullet)$ the quantity from \cite[Definition 2.6]{OS}. Finally, let $\Lie \mathscr{G}^\bullet$ be the complex $0 \to \Lie\mathscr{G} \to \Lie\mathscr{B} \to \Lie\mathscr{A} \to 0$ of $\Og_K$-modules and define $\Lie \mathscr{G}(d)^\bullet$ similarly. Now let $d$ be prime to $p.$ Then we have a term-wise exact sequence of complexes
\begin{align}0 \to \Lie\mathscr{G}^\bullet \otimes_{\Og_K}\Og_{K(d)} \to \Lie \mathscr{G}(d)^\bullet \to Q^\bullet \to 0,\label{Complex}\end{align} where $Q^\bullet$ is the obvious cokernel. For a bounded complex $A^\bullet$ of finitely generated $\Og_K$-modules which is generically exact, we let $\chi(A^\bullet)$ be the cohomological Euler characteristic of $A^\bullet$ \cite{OS}, i. e. the quantity $\sum_{i\in\Z} (-1)^i \ell_{\Og_K}(H^i(A^\bullet)).$ It follows exactly as in the second argument in the proof of \cite[Theorem 2.11]{OS} that $\chi_{\mathrm{points}}(\mathscr{G}^\bullet) = 0 = \chi_{\mathrm{points}}(\mathscr{G}(d)^\bullet)$ (replacing the reference to \cite[Lemma 4.3]{Chai} by a reference to Proposition \ref{cohomologyvanishingprop} above) and hence $\chi(\Lie\mathscr{G}^\bullet \otimes_{\Og_K}\Og_{K(d)}) = 0 = \chi(\Lie \mathscr{G}(d)^\bullet)$ by \cite[Theorem 2.7]{OS}. Using the exact sequence (\ref{Complex}), we deduce that $\chi(Q^\bullet) = 0.$ If $B^\bullet$ is a bounded complex of finitely generated \it torsion \rm $\Og_K$-modules, one easily shows by induction that 
$$\chi(B^\bullet) = \sum_{i\in \Z} (-1)^i\ell_{\Og_K}(B^i).$$
In particular, we deduce from the exact sequence (\ref{Complex}) that
$$0 = \chi(Q^\bullet) = -\ord_G(d)+\ord_B(d)-\ord_A(d),$$ using the definition of the order function. If $(d_\ell)_{\ell}$ is a grid sequence, we have $c_{\mathrm{tame}}(G) = \lim_{\ell\to \infty}\frac{1}{d_{\ell}}\ord_G(d_\ell)$ (and similarly for $B$ and $A$) by Proposition \ref{jumpspropV} (ii), so the claim follows. 
\end{proof}

\begin{remark}\label{ctamerem} There is some reason to believe that the formula $c_{\mathrm{tame}}(G) = \frac{1}{2}u(G)$ might hold for all unirational algebraic groups $G,$ but this is far from clear. By the additivity property just observed together with \cite[Proposition 7.1.1.3]{HN}, it would be enough to establish this claim for unirational \it unipotent \rm wound connected $G,$ in which case it reduces to $c_{\mathrm{tame}}(G) = \frac{1}{2}\dim G.$ This can be explicitly checked for the algebraic groups $\mathbf{G}_{\mathrm{m}}^{(1/p^n)}/\Gm$ for $n\in \N,$ but beyond that, very little is known. Halle-Nicaise's proof in the case of tori cannot be generalised directly because the definition of Chai's base change conductor \it would not make sense \rm for unipotent $G.$ \end{remark}

\subsection{A question of Oesterlé}
In this subsection, we let $K$ be any field of characteristic $p>0$ such that $[K:K^p]=p.$ The description of unirational wound unipotent algebraic groups over $K$ from Proposition \ref{classprop} applies without change under these more general assumptions on $K.$ In the special case where $K$ is the function field of a smooth proper geometrically integral curve $C$ over a finite field, Oesterlé \cite[Notes sur le chapitre VI on p. 80]{Oes} asked the following questions: Suppose $G$ is a smooth connected wound unipotent algebraic group over $K$ such that $G(K)$ is infinite.
\begin{enumerate}[(i)]
\item Does $G$ contain a \it unirational \rm (necessarily wound unipotent) subgroup?
\item Does $G$ contain a subgroup of the form $T^{(1/p)}/T$ for some algebraic torus $T$ over $K$? 
\end{enumerate}
Question (i) was answered affirmatively by Suzuki and the first-named author in \cite[p. 3]{OSII} (it had already been observed by Oesterlé that an affirmative answer would follow from \cite[Conjecture 1.1 (2)]{OSII}), and independently by Rosengarten (unpublished) using different methods. It will turn out that the answer to question (ii) is affirmative only \it up to finite separable extension of $K$ and relative perfection; \rm without allowing finite separable extensions, one obtains a slightly weaker resolution property after relative perfection. We shall need a version of Cartier duality for relative perfections of algebraic tori, for which some technical preparation is necessary. Mainly, we need to understand how certain Ext groups change upon relative perfection.

Now we define
$$N_j := \ker(\mathbf{G}_{\mathrm{a}}^{(1/p^{j+1})} \to \Ga) \hspace{.3 in} \text{and} \hspace{.3 in} M_j:= \ker(\mathbf{G}_{\mathrm{m}}^{(1/p^{j+1})} \to \Gm)$$ for $j\in \N.$ The group $N_0$ has already been studied in \cite[Section 9]{OSII}.

\begin{lemma}\label{ExtlemIII} The following claims hold:
\begin{enumerate}[(i)]
\item The groups $M_0$ and $N_0$ are reduced and annihilated by $p.$ In particular, $\Hom_K(M_j, \Gm) =0= \Hom_K(N_j, \Gm)$ for all $j.$ 
\item We have $\Ext^1_K(M_j, \Gm)=0=\Ext^1_K(N_j, \Gm)$ for all $j.$
\end{enumerate}
\end{lemma}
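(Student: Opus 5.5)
We treat $M_j$ and $N_j$ in parallel. Set $K':=K^{1/p}=K(t^{1/p})$, where $t$ is a $p$-basis of $K$ (this uses $[K:K^p]=p$). Then $\mathbf{G}_{\mathrm{m}}^{(1/p)}=\Res_{K'/K}\Gm$, and $\mathbf{G}_{\mathrm{a}}^{(1/p)}$ is the analogous Weil restriction. In the coordinates $x=\sum_{i=0}^{p-1}a_i t^{i/p}$, the canonical map to $\Gm$ (resp. $\Ga$) is $x\mapsto x^p=\sum_i t^i a_i^p$.

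\textbf{Step 1: (i) for $j=0$.} This gives explicit equations:
$$M_0=\Spec K[a_0,\dots,a_{p-1}]/\big(\textstyle\sum_i t^ia_i^p-1\big),\qquad N_0=\Spec K[a_0,\dots,a_{p-1}]/\big(\textstyle\sum_i t^ia_i^p\big).$$
\begin{enumerate}[(a)]
\item To show these are reduced, I will show the defining polynomial $f$ is irreducible in the UFD $K[a]$, so that the quotient is a domain. The polynomial $f$ is not a $p$-th power, since $1,t,\dots,t^{p-1}$ are $p$-independent. Any factorisation of $f$ could be tested after the purely inseparable base change to $K'$. There $f$ becomes $(\sum t^{i/p}a_i-\epsilon)^p$, with $\epsilon\in\{0,1\}$. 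A proper factor of $f$ would therefore be of the form $(\sum t^{i/p}a_i-\epsilon)^e$ with $0<e<p$. Such a factor does not have coefficients in $K$, which rules out any proper factorisation.
\item To show annihilation by $p$: the $p$-th power map on $\Res_{K'/K}\Gm$ factors as $\Res_{K'/K}\Gm\to\Gm\hookrightarrow\Res_{K'/K}\Gm$, and the first map kills $M_0$. The additive case is identical.
\item For $\Hom$: a homomorphism from $M_0$ (or $N_0$) to $\Gm$ lands in $\boldsymbol{\mu}_p$ by (b). Since $M_0$ is reduced by (a), it lands in $(\boldsymbol{\mu}_p)_{\mathrm{red}}=0$.
\end{enumerate}

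\textbf{Step 2: $\Hom$ for general $j$.} I will use the filtration of $M_j$ coming from factoring the map $\mathbf{G}_{\mathrm{m}}^{(1/p^{j+1})}\to\Gm$ through $\mathbf{G}_{\mathrm{m}}^{(1/p^{j})}$. Its successive pieces are $M_{j-1}$ and a Weil restriction of the twisted $M_0$ along a purely inseparable extension. This gives an exact sequence
$$0\to (M_0\text{-type})\to M_j\to M_{j-1}\to 0.$$
Left exactness of $\Hom(-,\Gm)$, together with the case $j=0$ and induction, gives $\Hom(M_j,\Gm)=0$. For the Weil-restricted piece I use that $\Hom(\Res_{F/K}X,\Gm)$ injects into homomorphisms $X\to\Gm$ over $F$. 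Here $X$ is again of the type in Step 1, since $F$ still has $p$-degree $p$.

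\textbf{Step 3: (ii).} I will apply $\Ext_K(-,\Gm)$ to $0\to M_j\to\mathbf{G}_{\mathrm{m}}^{(1/p^{j+1})}\to\Gm\to0$. Since $\Ext^1(\Gm,\Gm)=0$, it suffices to show two things:
\begin{enumerate}[(1)]
\item $\Ext^1_K(\mathbf{G}_{\mathrm{m}}^{(1/p^{j+1})},\Gm)=0$;
\item the map $\Ext^2_K(\Gm,\Gm)\to\Ext^2_K(\mathbf{G}_{\mathrm{m}}^{(1/p^{j+1})},\Gm)$ is injective.
\end{enumerate}
For $N_j$ the analogous sequence involves $\Ext^i(\Ga,\Gm)$, which vanishes for $i\le1$ in the relevant range.

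\textbf{Main obstacle.} I expect controlling these Ext groups of Weil restrictions along purely inseparable extensions to be the main obstacle, since $\Res$ is not exact there. If this becomes unwieldy, I would instead use the identification of $\Ext^1(U,\Gm)$ with primitive line bundles on $U$. I would then show $\Pic(M_j)=0$ and $\Pic(N_j)=0$ directly. For $j=0$ this holds because $M_0$ and $N_0$ are integral affine hypersurfaces given by the explicit $p$-polynomials above, whose coordinate rings are UFDs by a Nagata-type argument: inverting a suitable prime element should give a localised polynomial ring. I would then use the same dévissage as in Step 2 for general $j$.
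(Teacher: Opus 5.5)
Your Step~1 is fine. It is a variant of the paper's argument: both rule out a factorisation of $\sum t^ia_i^p$ by passing to $K(t^{1/p})$.

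\textbf{Step 2.} This rests on an unjustified claim. For purely inseparable $F/K$ there is no norm or coinduction adjunction, so nothing produces an injection $\Hom_K(\Res_{F/K}X,\Gm)\hookrightarrow\Hom_F(X,\Gm)$. You also cannot fall back on reducedness. The piece you produce is $\ker(\mathbf{G}_{\mathrm{m}}^{(1/p^{j+1})}\to\mathbf{G}_{\mathrm{m}}^{(1/p^{j})},\ x\mapsto x^p)=\boldsymbol{\mu}_p^{(1/p^{j+1})}$. For $j=1$ it is isomorphic as a scheme to $M_0\times_K N_0^{p-1}$, which is \emph{not} reduced, because $t$ becomes a $p$-th power in the function field of $M_0$. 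For instance, for $p=2$ the element $h=a_1b_0+(1+a_0)b_1$ satisfies $h^2=a_1^2(b_0^2+tb_1^2)=0$. The statement survives, but only through a finer d\'evissage, as in the paper. Apply the snake lemma to $0\to\Gm\to\mathbf{G}_{\mathrm{m}}^{(1/p^j)}\to\mathbf{G}_{\mathrm{m}}^{(1/p^j)}/\Gm\to0$ and its $(1/p)$-Weil restriction, and use \cite[Proposition 9.2]{OSII} for the unipotent quotient. This gives filtrations of $M_j$ and $N_j$ all of whose graded pieces are $M_0$ or $N_0$.

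\textbf{Step 3: the genuine gap is (ii).} This is the substance of the lemma, and Step~3 only lists two statements that would suffice without proving either. Neither is formal.
\begin{itemize}
\item Statement (1) is equivalent to $\Ext^1_K(\mathbf{G}_{\mathrm{m}}^{(1/p^{j+1})}/\Gm,\Gm)$ being generated by the class of $\mathbf{G}_{\mathrm{m}}^{(1/p^{j+1})}$ itself. That is a duality statement.
\item Statement (2) is an honest $\Ext^2$ question. $\Ext^2_K(\Gm,\Gm)$ contains $H^2(K,\Z)\cong H^1(K,\Q/\Z)$, and knowing that $\Gm\to\mathbf{G}_{\mathrm{m}}^{(1/p^{j+1})}\to\Gm$ is $[p^{j+1}]$ gives no injectivity.
\item For $N_j$, $\Ext^{\le1}$ vanishes for both $\Ga$ and $\mathbf{G}_{\mathrm{a}}^{(1/p^{j+1})}\cong\Ga^{p^{j+1}}$. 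Your sequence therefore merely identifies $\Ext^1_K(N_j,\Gm)$ with $\ker(\Ext^2_K(\Ga,\Gm)\to\Ext^2_K(\mathbf{G}_{\mathrm{a}}^{(1/p^{j+1})},\Gm))$. That kernel is the whole content; for $j=0$ it is \cite[Proposition 10.1]{OSII}.
\end{itemize}

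\textbf{The fallback also fails.}
\begin{itemize}
\item $\Og(N_0)\cong\Og(M_0)$ is not a UFD, and is not even normal. The element $t^{1/p}$ (e.g.\ $a_0/a_1$ for $p=2$) lies in its fraction field and is integral, but it is not in this graded ring, whose degree-zero part is $K$.
\item Even $\Pic(U)=0$ would not give $\Ext^1(U,\Gm)=0$. Extensions that are trivial as torsors come from symmetric $2$-cocycles $U\times_KU\to\Gm$. But $N_0\times_KN_0$ is non-reduced (for $p=2$, $(a_1b_0+a_0b_1)^2=0$), so it has non-constant units.
\end{itemize}

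\textbf{What is missing is the paper's argument for $M_0$.}
\begin{enumerate}
\item From $0\to\boldsymbol{\mu}_p\to M_0\to\mathbf{G}_{\mathrm{m}}^{(1/p)}/\Gm\to0$ and $\Hom_K(M_0,\Gm)=0$ you get an exact sequence $0\to\Hom_K(\boldsymbol{\mu}_p,\Gm)\to\Ext^1_K(\mathbf{G}_{\mathrm{m}}^{(1/p)}/\Gm,\Gm)\to\Ext^1_K(M_0,\Gm)\to\Ext^1_K(\boldsymbol{\mu}_p,\Gm)$.
\item The duality input $\Ext^1_K(\mathbf{G}_{\mathrm{m}}^{(1/p)}/\Gm,\Gm)\cong\Z/p\Z$ (via relative perfection and $\nu_\infty(1)_K^{\mathrm{RP}}$) makes the first map bijective.
\item Hence $\Ext^1_K(M_0,\Gm)$ embeds into $\Ext^1_K(\boldsymbol{\mu}_p,\Gm)$, whose classes die over a finite separable extension.
\item Since $\Hom(M_0,\Gm)=0$ over every separable extension, splittings are unique and descend. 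So $\Ext^1_K(M_0,\Gm)=0$.
\end{enumerate}
Combined with \cite[Proposition 10.1]{OSII} for $N_0$ and the filtration above, this gives (ii).
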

\begin{proof}
(i): We have $N_0 = \boldsymbol{\alpha}_p^{(1/p)}$ and $M_0 = \boldsymbol{\mu}_p^{(1/p)}.$ To see that those algebraic groups are reduced, we observe that $M_0\cong N_0$ \it as schemes; \rm hence both are described by the equation $F(x_0,...,x_{p-1}):=x_0^p + tx_1^p + ... + t^{p-1}x_{p-1}^p=0$ inside $\mathbf{A}_K^p,$ where $t$ is a $p$-basis of $K$ (i. e. an element not contained in $K^p$). The topological space underlying $N_0$ is homeomorphic to an affine space (hence irreducible); this can be seen by base change to $K(t^{1/p}).$ If $F$ were reducible, its factorisation into irreducible factors would therefore have to be of the form $F=\lambda P^n$ for some irreducible homogeneous polynomial $P\in K[x_0,...,x_{p-1}]$ and $\lambda\in K.$ Since $p$ is prime, either $n=1$ or $n=p.$ However, the latter case is impossible because then $P$ would have to be a \it linear \rm polynomial; hence $M_0(K\sep)\cong N_0(K\sep)$ would be infinite. Since $M_0$ and $N_0$ are $p$-torsion, any $K$-homomorphism $M_0\to \Gm$ (resp. $N_0 \to \Gm$) has to factor through $\boldsymbol{\mu}_p,$ and since $M_0$ and $N_0$ are reduced, the homomorphism vanishes. Now consider the exact sequence $0 \to \Gm \to \mathbf{G}_{\mathrm{m}}^{(1/p^j)} \to \mathbf{G}_{\mathrm{m}}^{(1/p^j)}/\Gm \to 0,$ the last non-trivial term of which is unipotent. Using the snake lemma, induction on $j,$ and \cite[Proposition 9.2]{OSII}, we find that $N_j$ and $M_j$ admit finite filtrations with successive quotients isomorphic to $N_0$ or $M_0,$ so the claim follows.\\
(ii) It suffices to show that $\Ext^1_K(M_0, \Gm)=0=\Ext^1_K(N_0, \Gm).$ The second equality is \cite[Proposition 10.1]{OSII}. For the first equality, note that we have an exact sequence 
$$0 \to \boldsymbol{\mu}_p \to M_0 \to \mathbf{G}_{\mathrm{m}}^{(1/p)}/\Gm \to 0,$$ which induces an exact sequence
$$0 \to \Hom_K(\boldsymbol{\mu}_p, \Gm) \to \Ext^1_K(\mathbf{G}_{\mathrm{m}}^{(1/p)}/\Gm, \Gm) \to \Ext^1_K(M_0, \Gm) \to \Ext^1_K(\boldsymbol{\mu}_p, \Gm).$$
Since we have 
\begin{align*}
\Ext^1_K(\mathbf{G}_{\mathrm{m}}^{(1/p)}/\Gm, \Gm) &= \Ext^1_{K_{\mathrm{RP}}}((\mathbf{G}_{\mathrm{m}}^{(1/p)}/\Gm)^{\mathrm{RP}}, \mathbf{G}_{\mathrm{m}}^{\mathrm{RP}}) \\
&= \Hom_{K_{\mathrm{RP}}}((\mathbf{G}_{\mathrm{m}}^{(1/p)}/\Gm)^{\mathrm{RP}}, \nu_{\infty}(1)_K^{\mathrm{RP}}) = \Z/p\Z
\end{align*}
(using \cite[(2.2) and Proposition 10.3]{OSII}), the second map in the exact sequence must be an isomorphism; hence $\Ext^1_K(M_0, \Gm) \subseteq \Ext^1_K(\boldsymbol{\mu}_p, \Gm).$ By Cartier duality, every element of $\Ext^1_K(\boldsymbol{\mu}_p, \Gm)$ becomes trivial after a finite separable extension of $K;$ therefore the same is true for elements of $\Ext^1_K(M_0, \Gm).$ However, because $\Hom_K(M_0, \Gm)=0$ by (i), this implies that $\Ext^1_K(M_0, \Gm)=0$ by Galois descent.
\end{proof}
Recall that $\rho_\ast \colon \mathrm{Ab}(\mathrm{Sch}/K) \to \mathrm{Ab}(K_{\RP})$ is induced by the forgetful functor.
\begin{proposition}
Let $T_1, T_2$ be algebraic tori over $K.$ Then the canonical map 
$$\rho_\ast \sHom_K(T_1, T_2) \to \sHom_{K_{\mathrm{RP}}}(T_1^{\mathrm{RP}}, T_2^{\mathrm{RP}})$$ is an isomorphism. In particular, the Cartier dual of $T_1$ (which is already contained in $K_{\mathrm{RP}}$ since it is étale over $K$) represents $\sHom_{K_{\mathrm{RP}}}(T_1^{\mathrm{RP}}, \mathbf{G}_{\mathrm{m}}^{\mathrm{RP}}).$ \label{RPCartierprop}
\end{proposition}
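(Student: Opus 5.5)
We plan to reduce to the case $T_1 = T_2 = \Gm$ and then use the results of Lemma \ref{ExtlemIII}. Both sides are étale sheaves on $K_{\RP}$ and commute with finite direct sums in each variable. Both sides also commute with finite étale base change: Weil restriction along finite separable extensions and relative perfection commute, and every object of $K_{\RP}$ is relatively perfect, hence so is its base change. Whether the map is an isomorphism can therefore be checked étale-locally, after base change to a finite Galois extension splitting $T_1$ and $T_2$. So we may assume $T_1=T_2=\Gm$. The claim is then that $\Z \to \sHom_{K_{\RP}}(\mathbf{G}_{\mathrm{m}}^{\RP}, \mathbf{G}_{\mathrm{m}}^{\RP})$ is an isomorphism, and by étale-local considerations it suffices to compute $\Hom_{K'_{\RP}}(\mathbf{G}_{\mathrm{m}}^{\RP}, \mathbf{G}_{\mathrm{m}}^{\RP})$ for finite separable $K'/K$ (and, more generally, over relatively perfect $K$-schemes). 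The second assertion then follows formally: the Cartier dual $X^\ast(T_1)$ is étale, so it coincides with $\rho_\ast$ of itself.

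To compute the Hom group, we use adjunction. Relative perfection is right adjoint to the forgetful functor, so
$$\Hom_{K_{\RP}}(\mathbf{G}_{\mathrm{m}}^{\RP},\mathbf{G}_{\mathrm{m}}^{\RP}) = \Hom_{K\text{-sheaves}}(\mathbf{G}_{\mathrm{m}}^{\RP}, \Gm) = \varinjlim_n \Hom_K(\mathbf{G}_{\mathrm{m}}^{(1/p^n)}, \Gm).$$
The second equality holds because $\mathbf{G}_{\mathrm{m}}^{\RP}=\varprojlim \mathbf{G}_{\mathrm{m}}^{(1/p^n)}$ is an affine limit and $\Gm$ is of finite presentation. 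We must therefore show that the transition maps $\Hom_K(\mathbf{G}_{\mathrm{m}}^{(1/p^{n})},\Gm)\to \Hom_K(\mathbf{G}_{\mathrm{m}}^{(1/p^{n+1})},\Gm)$ are isomorphisms. Equivalently, we must show that every character of $\mathbf{G}_{\mathrm{m}}^{(1/p^{n+1})}$ factors uniquely through $\mathbf{G}_{\mathrm{m}}^{(1/p^{n+1})}\to\mathbf{G}_{\mathrm{m}}^{(1/p^n)}$, together with the base case $\Hom_K(\Gm,\Gm)=\Z$.

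Both factorisation and uniqueness follow from the exact sequence $0\to M\to \mathbf{G}_{\mathrm{m}}^{(1/p^{n+1})}\to \mathbf{G}_{\mathrm{m}}^{(1/p^n)}\to 0$. Here $M$ is a Frobenius twist/Weil restriction of $M_0$; more simply, the iterated sequences with kernels $M_j$ let us compare directly with $\Gm$. We apply $\Hom_K(-,\Gm)$ to $0\to M_j\to \mathbf{G}_{\mathrm{m}}^{(1/p^{j+1})}\to\Gm\to0$. Lemma \ref{ExtlemIII}(i) gives $\Hom_K(M_j,\Gm)=0$, so $\Hom(\Gm,\Gm)\to \Hom(\mathbf{G}_{\mathrm{m}}^{(1/p^{j+1})},\Gm)$ is surjective. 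Injectivity is automatic because the map $\mathbf{G}_{\mathrm{m}}^{(1/p^{j+1})}\to\Gm$ is an epimorphism. Hence $\Hom_K(\mathbf{G}_{\mathrm{m}}^{(1/p^{j+1})},\Gm)=\Z$ for all $j$, compatibly with the transition maps, and the colimit is $\Z$. The same argument applies over any finite separable $K'$, since the hypothesis $[K':K'^p]=p$ is preserved. This yields the sheaf isomorphism, because both sheaves are then étale-locally constant with value $\Z$ and the map is the evident one.

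The main obstacle will be the sheaf-level (rather than $K$-point-level) statement. $\sHom_{K_{\RP}}$ evaluated on a general relatively perfect $K$-scheme $S$ requires the vanishing of $\Hom_S(M_{j,S},\Gm)$ over $S$, not only over fields. We will handle this by noting that a homomorphism $\mathbf{G}_{\mathrm{m}}^{(1/p^{j+1})}_S\to\Gm$ is a unit in the coordinate ring, i.e. a character. Characters of the (geometrically connected, smooth) group $\mathbf{G}_{\mathrm{m}}^{(1/p^{j+1})}$ form a locally constant sheaf, so it suffices to compute at points. At points, relative perfectness of $S$ over $K$ makes the residue fields satisfy the hypotheses needed for Lemma \ref{ExtlemIII}(i), using reducedness of $M_0$. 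Lemma \ref{ExtlemIII}(ii) will only be needed if we also want the higher $\sExt^1$ comparison.
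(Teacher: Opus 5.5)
Your reduction to split tori, the identification $\Hom(\mathbf{G}_{\mathrm{m}}^{\mathrm{RP}},\mathbf{G}_{\mathrm{m}}^{\mathrm{RP}})=\varinjlim_n\Hom(\mathbf{G}_{\mathrm{m}}^{(1/p^n)},\Gm)$ via adjunction and limits, and the use of $\Hom(M_j,\Gm)=0$ follow the paper's proof. You are also right that the real content is the sheaf-level statement. Values on finite separable extensions do not determine a sheaf on the big site $K_{\mathrm{RP}}$, so you need $\Hom_X(M_j\times_KX,\Gm)=0$ for \emph{every} affine relatively perfect $X$.

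Your way of handling this fails. The claim that residue fields of a relatively perfect $K$-scheme satisfy $[\kappa:\kappa^p]=p$ is false away from generic points. Let $A$ be the coordinate ring of $\mathbf{A}^{1,\mathrm{RP}}_K$. It is a filtered colimit of polynomial rings along injective maps, so $A^\times=K^\times$. Let $t$ be a $p$-basis of $K$. Then $x^p-t$ is a non-unit and lies in some maximal ideal $\mathfrak{m}$. In $\kappa=A/\mathfrak{m}$ we have $t\in\kappa^p$. Since every element of $A$ has the form $\sum_{i<p}t^ia_i^p$, the field $\kappa$ is perfect. Over this $\kappa$:
\begin{itemize}
\item $\mathbf{G}_{\mathrm{m}}^{(1/p)}\times_K\kappa\cong\Res_{(\kappa[v]/(v^p))/\kappa}\Gm\cong\Gm\times U$ with $U$ unipotent;
\item the map to $\Gm$ is $p$ times the projection to the first factor;
\item $M_0\times_K\kappa\cong\boldsymbol{\mu}_p\times U$ is non-reduced, with $\Hom_\kappa(M_0\times_K\kappa,\Gm)\supseteq\Z/p\Z$.
\end{itemize}
So the conclusion of Lemma \ref{ExtlemIII}(i) fails on such fibres, and a point-by-point check cannot succeed. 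Appealing to local constancy of the character sheaf does not help either. On reduced $K$-schemes it is false by the same example, since restriction from $K$ to $\kappa$ sends the generator to $p$ times a generator. On $K_{\mathrm{RP}}$ it is exactly what you are trying to prove.

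The missing idea is to use only generic points together with a density argument, as the paper does.
\begin{itemize}
\item For $X=\Spec R$ relatively perfect, $R$ is reduced. For each minimal prime $\mathfrak{p}$, $R_{\mathfrak{p}}$ is a field that is still relatively perfect over $K$, because relative perfectness survives localisation. Hence $[R_{\mathfrak{p}}:R_{\mathfrak{p}}^p]=p$ and Lemma \ref{ExtlemIII}(i) applies over $R_{\mathfrak{p}}$.
\item The map $\bigsqcup_{\mathfrak{p}}M_0\times_K\Spec R_{\mathfrak{p}}\to M_0\times_KX$ is scheme-theoretically dominant, and likewise for $N_0$. Indeed, $M_0\times_KX$ is reduced, being flat and relatively perfect over the reduced scheme $M_0$. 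Also $M_0\to\Spec K$ is flat of finite presentation, hence universally open, so the generic fibres are dense.
\item A homomorphism to $\Gm$ that is trivial on these fibres is therefore trivial. D\'evissage of $M_j$ into copies of $M_0$ and $N_0$ gives $\Hom_X(M_j\times_KX,\Gm)=0$.
\item Hence every character of $\mathbf{G}_{\mathrm{m}}^{(1/p^{j+1})}\times_KX$ factors through the fppf quotient $\Gm$, and the colimit is $\Hom_X(\Gm,\Gm)=\Z(X)$.
\end{itemize}
Working with the kernels $M_j$ this way also spares you from gluing pointwise integers into a locally constant function. That gluing would be delicate here, since $X$ is typically non-noetherian with infinitely many generic points.
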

\begin{proof}
Since both $T_i$ are split by an étale cover of $\Spec K,$ we may assume that $T_i$ are split for $i=1,2.$ Then the Proposition reduces to the claim that the canonical map $\rho_\ast\sEnd_K(\Gm) \to \sEnd_{K_{\mathrm{RP}}}(\mathbf{G}_{\mathrm{m}}^{\mathrm{RP}})$ is an isomorphism. Let $X$ be an affine relatively perfect scheme over $K.$ By the universal property of relative perfection and \cite[Tag 01ZC]{Stacks}, we have $$\Hom_X(\mathbf{G}_{\mathrm{m}}^{\mathrm{RP}}, \mathbf{G}_{\mathrm{m}}^{\mathrm{RP}}) = \Hom_X(\mathbf{G}_{\mathrm{m}}^{\mathrm{RP}}, \Gm ) = \varinjlim \Hom_X(\mathbf{G}_{\mathrm{m}}^{(1/p^n)}, \Gm).$$ All that remains to be shown is that $\Hom_X(M_n, \Gm)=0$ for all $n\in \N.$ We already know that $M_n$ is a repeated extension of $M_0$ and $N_0$ (cf. the proof of Lemma \ref{ExtlemIII} (ii)). Hence it suffices to show that $\Hom_X(N_0, \Gm) = 0 = \Hom_X(M_0, \Gm).$ Let $R:=\Gamma(X,\Og_X).$ The morphism 
$$\bigsqcup_{\mathfrak{p}\subseteq R} \Spec R_{\mathfrak{p}} \to X$$ (with $\mathfrak{p}\subseteq R$ running through \it minimal \rm prime ideals) is scheme-theoretically dominant\footnote{Some care must be taken here since this morphism need not be quasi-compact. A morphism $f\colon X\to Y$ of \it reduced \rm schemes is dominant if and only if it is scheme-theoretically dominant, and this happens if and only if the map $\Og_Y \to f_\ast\Og_X$ is injective.} because $X$ is reduced \cite[Tag 00EW]{Stacks}. Observe that $N_0\times_KX$ is flat and relatively perfect over the reduced scheme $N_0,$ and hence reduced. Since $N_0$ is flat and of finite presentation over $K$ (hence universally open), the morphism $\bigsqcup_{\mathfrak{p}} N_0\times_K R_{\mathfrak{p}} \to N_0\times_KX$ is scheme-theoretically dominant as well \cite[Tag 0H8F]{Stacks}. Since $R_{\mathfrak{p}}$ is a relatively perfect field extension of $K,$ we have reduced the claim to showing that $\Hom_K(N_0, \Gm) = 0 = \Hom_K(M_0, \Gm),$ which is Lemma \ref{ExtlemIII} (i). 
\end{proof}

\begin{proposition}
Let $G$ be a unirational wound unipotent algebraic group over $K.$ Then the following hold:
\begin{enumerate}[(i)]
\item There exists an infinitesimal group scheme $\boldsymbol{\mu}$ of multiplicative type over $K$ and an isomorphism $\R^1\rho_\ast\boldsymbol{\mu} \cong G^{\mathrm{RP}},$ and 
\item there exist algebraic tori $T_1$ and $T_2$ over $K$ together with an exact sequence 
$$0 \to T_1^{\mathrm{RP}} \to T_2^{\mathrm{RP}} \to G^{\mathrm{RP}} \to 0.$$
\end{enumerate}
\end{proposition}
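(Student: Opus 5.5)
Via Proposition \ref{classprop}, write $G^{\mathrm{RP}}$ as the relatively perfect dual of a finite $p$-primary étale group scheme $H$. Then realise $H$ as a cokernel (resp. kernel) of homomorphisms of lattices, and apply the Cartier duality of Proposition \ref{RPCartierprop} together with the explicit duality $\Z/p^n\Z \leftrightarrow (\mathbf{G}_{\mathrm{m}}^{(1/p^n)}/\Gm)^{\mathrm{RP}}$.

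For (i), let $H$ be the finite étale $p$-primary $K$-group scheme representing $\sHom_{K_{\mathrm{RP}}}(G^{\mathrm{RP}},\nu_\infty(1)_K^{\mathrm{RP}})$, and let $\boldsymbol{\mu}$ be its Cartier dual. This is a finite group scheme of multiplicative type and $p$-primary order, hence infinitesimal. I then compute $\R\rho_\ast\boldsymbol{\mu}$. In the split case $\boldsymbol{\mu}=\boldsymbol{\mu}_{p^n}$, the Kummer sequence $0\to\boldsymbol{\mu}_{p^n}\to\Gm\xrightarrow{p^n}\Gm\to0$ and Lemma \ref{RPcohomlem}(i) (applied to $\Gm$) give $\rho_\ast\boldsymbol{\mu}_{p^n}=0$ and $\R^1\rho_\ast\boldsymbol{\mu}_{p^n}=\mathrm{coker}(p^n\colon\mathbf{G}_{\mathrm{m}}^{\mathrm{RP}}\to\mathbf{G}_{\mathrm{m}}^{\mathrm{RP}})=(\mathbf{G}_{\mathrm{m}}^{(1/p^n)}/\Gm)^{\mathrm{RP}}$. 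This is the identification already used in the proof of Proposition \ref{dualityprop}. Hence, in general, $\R\rho_\ast\boldsymbol{\mu}=\R^1\rho_\ast\boldsymbol{\mu}[-1]$, and this sheaf is the relative perfection of a unirational wound unipotent group. To identify it with $G^{\mathrm{RP}}$, I construct a canonical map $\R^1\rho_\ast\boldsymbol{\mu}\to\R\sHom_{K_{\mathrm{RP}}}(H,\nu_\infty(1)_K^{\mathrm{RP}})$. It comes from the Cartier pairing $H\otimes\boldsymbol{\mu}\to\boldsymbol{\mu}_{p^\infty}$ followed by $\R^1\rho_\ast\boldsymbol{\mu}_{p^\infty}=\nu_\infty(1)_K^{\mathrm{RP}}$. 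The target is $G^{\mathrm{RP}}$ by Suzuki's biduality recalled before Proposition \ref{classprop}. The map is checked to be an isomorphism étale locally, i.e. after a finite separable extension making $H$ constant. There it reduces to the statement of Proposition \ref{classprop} that $\Z/p^n\Z$ corresponds to $(\mathbf{G}_{\mathrm{m}}^{(1/p^n)}/\Gm)^{\mathrm{RP}}$.

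For (ii), choose a finitely generated free Galois lattice $X_2$ with a surjection $X_2\to H^\vee$ of Galois modules, where $H^\vee$ is the character group of $\boldsymbol{\mu}$. For instance, take $X_2=\Z[\Gal(L/K)]^{s}$ with $L$ splitting $H$. Its kernel $X_1$ is again a lattice of the same rank. Dualising $0\to X_1\to X_2\to H^\vee\to0$ gives an exact sequence of multiplicative-type groups $0\to\boldsymbol{\mu}\to T_2\to T_1\to0$, where $T_i$ is the torus with character lattice $X_i$. Apply $\R\rho_\ast$. Lemma \ref{RPcohomlem}(i) kills $\R^1\rho_\ast T_i$ because tori are smooth, and $\rho_\ast\boldsymbol{\mu}=0$ by the computation above. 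We obtain the exact sequence $0\to T_2^{\mathrm{RP}}\to T_1^{\mathrm{RP}}\to\R^1\rho_\ast\boldsymbol{\mu}\to0$, and (i) identifies the last term with $G^{\mathrm{RP}}$; renaming the tori gives (ii).

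I expect the main obstacle to be the identification in (i) of $\R^1\rho_\ast\boldsymbol{\mu}$ with $G^{\mathrm{RP}}$ \emph{canonically}, so that étale descent applies. The difficulty is making the pairing-induced map well defined on $K_{\mathrm{RP}}$ and compatible with the explicit generator of Proposition \ref{classprop}. A fallback avoids the pairing. Since Proposition \ref{classprop} classifies $G^{\mathrm{RP}}$ by $H$, it is enough to show that $\R^1\rho_\ast\boldsymbol{\mu}$ is a unirational wound unipotent group, up to relative perfection, with dual $H$. One computes $\sHom_{K_{\mathrm{RP}}}(\R^1\rho_\ast\boldsymbol{\mu},\nu_\infty(1)_K^{\mathrm{RP}})$ from the torus resolution in (ii), using Proposition \ref{RPCartierprop} and the vanishing in Lemma \ref{ExtlemIII}. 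The result should be $\mathrm{coker}(X_2\otimes\Q_p/\Z_p\to X_1\otimes\Q_p/\Z_p)$ modulo divisible parts, which is exactly $H$.
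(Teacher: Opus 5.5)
Your argument is correct. Part (ii) is the paper's argument: the paper resolves $\boldsymbol{\mu}$ by Bégueri's standard resolution with $T_1=\Res_{H/K}\Gm$, i.e.\ via the permutation lattice on the points of $H$, where you use $\Z[\Gal(L/K)]^s$; this difference is immaterial. For (i), however, you take a genuinely different route.

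\textbf{The paper's route for (i).} The paper deduces (i) from the torus resolution. It applies $\sHom_{K_{\RP}}(-,\mathbf{G}_{\mathrm{m}}^{\RP})$ to $0\to T_1^{\RP}\to T_2^{\RP}\to \R^1\rho_\ast\boldsymbol{\mu}\to 0$. It then uses three inputs:
\begin{enumerate}[(a)]
\item Cartier duality for relatively perfect tori (Proposition \ref{RPCartierprop});
\item the vanishing of $\Ext^1_{F_{\RP}}(\mathbf{G}_{\mathrm{m}}^{\RP},\mathbf{G}_{\mathrm{m}}^{\RP})$ (Lemma \ref{ExtlemIII} and \cite[Proposition 8.10]{BS});
\item the identification $\sExt^1_{K_{\RP}}(-,\mathbf{G}_{\mathrm{m}}^{\RP})=\sHom_{K_{\RP}}(-,\nu_\infty(1)_K^{\RP})$.
\end{enumerate}
From these it shows that the dual of $\R^1\rho_\ast\boldsymbol{\mu}$ is $\mathrm{coker}(X^\ast(T_2)\to X^\ast(T_1))\cong H$, and concludes via Proposition \ref{classprop}. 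This is exactly your fallback.

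\textbf{What your route changes.} Your main route needs none of these Ext computations, only Kummer theory and Suzuki's biduality $G^{\RP}\cong\R\sHom_{K_{\RP}}(H,\nu_\infty(1)_K^{\RP})$. It also produces the isomorphism directly rather than through ``same dual, hence isomorphic''. So you never need to know beforehand that $\R^1\rho_\ast\boldsymbol{\mu}$ is the relative perfection of a wound unipotent group. Your assertion of this before constructing the map is premature, but it follows a posteriori. The paper's route, in exchange, exhibits how Cartier duality for the tori $T_i$ matches Suzuki duality, which is the theme of that subsection.

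\textbf{The anticipated obstacle.} The obstacle you anticipate does not actually arise. Since $\rho$ is a morphism of sites and $\rho^\ast H=H$ ($H$ being étale), derived adjunction gives a canonical isomorphism
$$\R\rho_\ast\R\sHom_{\fppf}(H,\boldsymbol{\mu}_{p^\infty})\cong\R\sHom_{K_{\RP}}(H,\R\rho_\ast\boldsymbol{\mu}_{p^\infty}).$$
Moreover $\R\sHom_{\fppf}(H,\boldsymbol{\mu}_{p^\infty})=\boldsymbol{\mu}$, because $\boldsymbol{\mu}_{p^\infty}$ is $p$-divisible in the fppf topology, so the higher $\sExt$ vanish. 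Combined with $\R\rho_\ast\boldsymbol{\mu}_{p^\infty}=\nu_\infty(1)_K^{\RP}[-1]$, this is your map, and it is an isomorphism with no étale-local check and no compatibility to verify.

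If you do check étale locally, what is needed is the Kummer computation together with $\nu_\infty(1)_K^{\RP}[p^n]=\nu_n(1)_K^{\RP}$, which holds because $\mathbf{G}_{\mathrm{m}}^{\RP}$ has no $p$-torsion. The abstract correspondence in Proposition \ref{classprop} would not by itself show that your particular map is an isomorphism.
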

\begin{proof}
Let $H$ be the $p$-primary étale group scheme representing $\sHom_{K_{\mathrm{RP}}}(G^{\mathrm{RP}}, \nu_\infty(1)_K^{\mathrm{RP}})$ and let $\boldsymbol{\mu}$ be the Cartier dual of $H.$ Then $\boldsymbol{\mu}$ is of multiplicative type and infinitesimal by construction. Now consider Bégueri's \it résolution standard \rm $0 \to \boldsymbol{\mu} \to T_1 \to T_2 \to 0,$ where $T_1:=\Res_{H/K}\Gm$ \cite[Proposition 2.2.1]{Beg}. Since $\boldsymbol{\mu}$ is totally non-smooth, we have $\boldsymbol{\mu}^{\mathrm{RP}}=0$ by \cite[Proposition 8.4]{BS}; hence we obtain an exact sequence 
$$0 \to T_1^{\mathrm{RP}} \to T_2^{\mathrm{RP}} \to \R^1\rho_\ast\boldsymbol{\mu} \to 0$$
(surjectivity of the second-to-last map follows from Lemma \ref{RPcohomlem}). Proposition \ref{RPCartierprop} then shows that $\R^1\rho_\ast\boldsymbol{\mu}$ becomes isomorphic to a finite direct sum of $K$-group schemes of the form $(\mathbf{G}_{\mathrm{m}}^{(1/p^n)}/\Gm)^{\mathrm{RP}}$ over a finite separable extension of $K.$ In particular, $\sExt^1_{K_{\mathrm{RP}}}(\R^1\rho_\ast\boldsymbol{\mu}, \mathbf{G}_{\mathrm{m}}^{\mathrm{RP}}) = \sHom_{K_{\mathrm{RP}}}(\R^1\rho_\ast\boldsymbol{\mu},\nu_{\infty}(1)_K^{\mathrm{RP}})$ is étale over $K$ by Proposition \ref{classprop}. Applying the functor $\sHom_{K_{\mathrm{RP}}}(-,\mathbf{G}_{\mathrm{m}}^{\mathrm{RP}})$ then yields an exact sequence
$$0 \to \sHom_{K_{\mathrm{RP}}}(T_2^{\mathrm{RP}},\mathbf{G}_{\mathrm{m}}^{\mathrm{RP}}) \to \sHom_{K_{\mathrm{RP}}}(T_1^{\mathrm{RP}},\mathbf{G}_{\mathrm{m}}^{\mathrm{RP}}) \to \sExt^1_{K_{\mathrm{RP}}}(\R^1\rho_\ast\boldsymbol{\mu},\mathbf{G}_{\mathrm{m}}^{\mathrm{RP}}).$$
The last map is surjective because because 
$$\Ext^1_{F_{\mathrm{RP}}}(\mathbf{G}_{\mathrm{m}}^{\mathrm{RP}}, \mathbf{G}_{\mathrm{m}}^{\mathrm{RP}}) = \varinjlim \Ext^1_{F_{\mathrm{RP}}}(\mathbf{G}_{\mathrm{m}}^{(1/p^j)}, \Gm) = 0$$
for all separable algebraic extensions $F$ of $K$ by \cite[Proposition 8.10]{BS} as well as Lemma \ref{ExtlemIII} (ii). By Proposition \ref{classprop}, this implies that $G\cong \R^1 \rho_\ast \boldsymbol{\mu},$ which shows both claims.
\end{proof}

In particular, up to relative perfection, every unirational wound unipotent algebraic group admits a two-step resolution by algebraic tori. However, this does not in general guarantee the existence of subgroups of the form predicetd by Question (ii). Part (i) of the following result has been independently observed by Rosengarten (unpublished), using the structure theory of unipotent groups from \cite{Ros, RosII}.

\begin{proposition} 
Let $K$ be a field of characteristic $p>0$ such that $[K:K^p]=p.$ 
\begin{enumerate}[(i)]
\item There exists a unirational unipotent algebraic group $G$ over $K$ which does not contain an algebraic group of the form $T^{(1/p)}/T$ for any algebraic torus $T\not=0.$
\item Suppose that $\mathrm{char}\, K \geq 5$ and that $K$ contains an element $x$ such that the polynomial $t^{p-1}-x\in K[t]$ is irreducible (this is always satisfied if $K$ admits a discrete valuation). Then there exists a unirational unipotent algebraic group $G$ over $K$ such that $G^{\mathrm{RP}}$ does not contain a subgroup scheme of the form $T^{\mathrm{RP}}\otimes_{\Z}\F_p$ for any algebraic torus $T\not=0.$
\end{enumerate}
\end{proposition}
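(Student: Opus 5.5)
Both parts rest on the duality of Proposition \ref{classprop}. It turns questions about unirational subgroups of $G^{\mathrm{RP}}$ into questions about quotients of the finite $p$-primary étale Galois module $H$ dual to $G$.

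\textbf{Step 1: dictionary.} Compute the dual of $T^{\mathrm{RP}}\otimes_{\Z}\F_p = (T^{(1/p)}/T)^{\mathrm{RP}}$ for an algebraic torus $T$. Over a splitting field, $T^{(1/p)}/T$ becomes $(\mathbf{G}_{\mathrm{m}}^{(1/p)}/\Gm)^{\dim T}$, whose dual is $(\Z/p\Z)^{\dim T}$ by Proposition \ref{classprop}. By Proposition \ref{RPCartierprop}, the dual of $T^{\mathrm{RP}}\otimes\F_p$ is Galois-equivariantly $X^\ast(T)\otimes_{\Z}\F_p$ (possibly up to contragredient, which is harmless below since the relevant class of modules is closed under it).

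Now let $G'^{\mathrm{RP}}\subseteq G^{\mathrm{RP}}$ be a subgroup with unirational wound unipotent $G'$. The duality complexes $\R\sHom_{K_{\mathrm{RP}}}(-,\nu_\infty(1)^{\mathrm{RP}}_K)$ of both groups and of the quotient are concentrated in degree $0$ (the quotient is again of this type, using \cite[Proposition 3.3]{SuzIII}). Hence the injection dualises to a Galois-equivariant surjection $H\twoheadrightarrow H'$.

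Consequently, $G^{\mathrm{RP}}$ contains $T^{\mathrm{RP}}\otimes\F_p$ with $T\neq 0$ exactly when $H$ has a nonzero quotient isomorphic to $X^\ast(T)/p$. Such a quotient has $\F_p$-dimension $\dim T$.

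\textbf{Step 2: construction for (ii).} Since $\F_p^\times=\mu_{p-1}\subseteq K$ and $t^{p-1}-x$ is irreducible, $K(x^{1/(p-1)})/K$ is a cyclic Kummer extension of degree $p-1$. Its Kummer character $\chi$ identifies the Galois group with $\F_p^\times$.

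Take $H:=\F_p(\chi)$ and let $G$ be the corresponding unirational group from Proposition \ref{classprop}; it is a twisted form of $\mathbf{G}_{\mathrm{m}}^{(1/p)}/\Gm$. Since $H$ is one-dimensional, its only nonzero quotient is $H$ itself, which would force $\dim T=1$. A rank-one lattice has Galois acting through $\{\pm1\}$. But $\chi$ has order $p-1\geq 4$ because $p\geq5$, so $\F_p(\chi)\not\cong X^\ast(T)/p$, and (ii) follows from Step 1.

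\textbf{Step 3: (i) without relative perfection, for arbitrary $K$.} Now $T^{(1/p)}/T$ must be a genuine subgroup of $G$. I would first try a dimension count: any $T\neq0$ gives $\dim T^{(1/p)}/T=(p-1)\dim T\geq p-1$. So it suffices, at least for $p\geq3$, to produce a unirational wound unipotent group of dimension $<p-1$. Such groups should exist in the same relative-perfection class as $\mathbf{G}_{\mathrm{m}}^{(1/p)}/\Gm$, for instance suitable closed subgroups of it (such as a one-dimensional form cut out by a $p$-polynomial) which are unirational by Oesterlé-type parametrisations, or images of $\mathbf{G}_{\mathrm{m}}^{(1/p)}$ under a Frobenius-type map.

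For $p=2$, where the count fails, I would instead use a non-split $H$ (for example $\Z/4\Z$ with nontrivial action, or a nontrivially twisted module) and compare the actual subgroups rather than their relative perfections. The key tool is that a subgroup $T^{(1/p)}/T\hookrightarrow G$ of the same dimension as $G$ would have to be all of $G$, and then one checks that $G$ is not of that form.

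\textbf{Main obstacle.} I expect Step 3 to be the hardest. The dualisation in Step 1 only controls subgroups up to relative perfection, so in (i) one needs honest scheme-level information. This means exhibiting an explicit low-dimensional unirational group and verifying its unirationality and woundness, and treating $p=2$ separately.
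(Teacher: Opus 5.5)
Your argument for (ii) is essentially the paper's. You twist $\mathbf{G}_{\mathrm{m}}^{(1/p)}/\Gm$ by a character of order $p-1$, which does not factor through $\{\pm1\}$ once $p\geq 5$. Duality then forces any embedding of $T^{\mathrm{RP}}\otimes_{\Z}\F_p$ into $G^{\mathrm{RP}}$ to be an isomorphism with $\dim T=1$, which is impossible. Only this rank-one case of your Step 1 dictionary is actually needed.

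Part (i), however, is not proved, and the proposed route is blocked. The dimension count needs a nonzero unirational wound unipotent group of dimension $<p-1$, and you do not construct one. Your concrete candidate, a one-dimensional form of $\Ga$ cut out by a $p$-polynomial, is never unirational when $p\geq 3$:
\begin{itemize}
\item By Lüroth such a $U$ would be $\mathbf{P}^1_K\smallsetminus\{P\}$, with $P$ a purely inseparable closed point of degree $p^m\geq 3$; $P$ is not a $K$-point because $U$ is wound.
\item Translations by elements of $U(K)$ extend to elements of $\mathrm{PGL}_2(K)$ fixing the unique geometric point $\alpha$ over $P$.
\item Since $1,\alpha,\alpha^2$ are $K$-linearly independent, these elements are the identity. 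This contradicts $U(K)=\mathbf{P}^1(K)$ being infinite.
\end{itemize}
So for $p=3$, where the count demands dimension $1$, the strategy fails outright. Images of $\mathbf{G}_{\mathrm{m}}^{(1/p)}$ in unipotent groups are unirational quotients of $\mathbf{G}_{\mathrm{m}}^{(1/p)}/\Gm$, so for $p=3$ a one-dimensional one cannot be wound either. Frobenius twists do not change the dimension.

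For $p=2$ your alternative is only a sketch. Comparing dimensions excludes nothing once $\dim G>1$. Moreover, every rank-one lattice is trivial mod $2$, so the relative-perfection obstruction used in (ii) says nothing about actual subgroups.

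The missing idea is to go in the opposite direction: make the group thicker inside a fixed relative-perfection class. The paper takes $G:=(\mathbf{G}_{\mathrm{m}}^{(1/p)}/\Gm)^{(1/p)}$. It is unirational because unirationality depends only on $G^{\mathrm{RP}}$ (Proposition \ref{classprop}). It is an extension $0\to N_0^{p-1}\to G\to V\to 0$ with $V:=\mathbf{G}_{\mathrm{m}}^{(1/p)}/\Gm$.

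Any $T^{(1/p)}/T$ with $T\neq 0$ contains $V$ after a finite separable extension. The composite $V\hookrightarrow G\to V$ lies in $\mathrm{End}(V)$, which is a field. There are two cases:
\begin{itemize}
\item If the composite is nonzero, the extension splits, and $N_0^{p-1}$ becomes a quotient of the smooth group $G$, hence smooth. It is not smooth.
\item If the composite is zero, $V$ embeds into $N_0^{p-1}$. This is impossible since $N_0(K^{\mathrm{sep}})=0$ while $V(K^{\mathrm{sep}})$ is infinite.
\end{itemize}
This argument works uniformly in $p$, including $p=2$ and $p=3$.
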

\begin{proof}
(i) Put $G:=(\mathbf{G}_{\mathrm{m}}^{(1/p)}/\Gm)^{(1/p)}.$ This group is unirational because it has the same relative perfection as $\mathbf{G}_{\mathrm{m}}^{(1/p)}/\Gm,$ and unirationality only depends upon the relative perfection by Proposition \ref{classprop}. We have an exact sequence $0 \to N_0^{p-1} \to G \to \mathbf{G}_{\mathrm{m}}^{(1/p)}/\Gm \to 0$ by \cite[Proposition 9.2]{OSII}. Were $G$ to contain $T^{(1/p)}/T$ for a torus $T,$ it would have to contain $\mathbf{G}_{\mathrm{m}}^{(1/p)}/\Gm$ after a finite separable extension. Since the endomorphism ring of $\mathbf{G}_{\mathrm{m}}^{(1/p)}/\Gm$ is a field (again by Proposition \ref{classprop}), either the sequence splits (which is impossible since $N_0$ is not smooth), or the induced endomorphism of $\mathbf{G}_{\mathrm{m}}^{(1/p)}/\Gm$ vanishes. In the latter case, the closed immersion $\mathbf{G}_{\mathrm{m}}^{(1/p)}/\Gm \to G$ factors through $N_0^{p-1},$ which is impossible because $N_0(K\sep)=0.$ \\
(ii) Choose $x\in K$ as in (ii). Since $K$ contains a primitive $(p-1)$-st root of unity, we have $\Gal(K(x^{1/(p-1)})/K) = \F_p^\times.$ Let $H$ be the étale $K$-group scheme corresponding to a continuous character $\Gal(K\sep/K)\to \F_p^\times$ which does \it not \rm factor through $\Z^ \times = \{\pm 1\} \subseteq \F_p^{\times},$ and let $G$ be the corresponding Galois twist of $\mathbf{G}_{\mathrm{m}}^{(1/p)}/\Gm.$ 
 Both $G^{\mathrm{RP}}$ and $T^{\mathrm{RP}}\otimes_{\Z} \F_p$ become isomorphic to direct sums of $\mathbf{G}_{\mathrm{m}}^{\mathrm{RP}}\otimes_{\Z} \F_p$ over some finite separable extension of $K,$ so Proposition \ref{dualityprop} shows that any injective homomorphism $T^{\mathrm{RP}}\otimes_{\Z} \F_p \to G^{\mathrm{RP}}$ is an isomorphism, contradicting our choice of $G.$
\end{proof}

\section{Jumps of some algebraic groups in the case of imperfect residue field}

 %\begin{notation}Recall some notation from the introduction: $\Og_K$ denotes a strictly henselian discrete valuation ring with uniformizer $\pi$, residue field $k$ of characteristic exponent $p$, and field of fractions $K$. All field extensions of $K$ will be considered inside some fixed separable closure $K^{sep}$ of $K$. The ring of integers of a valued field $F$ will be denoted by $\Og_F$, its residue field by $k_F$ (with the usually exception of fields with a primed symbol). 
 
This section is devoted to the study of the jumps (regarded now as \textit{real numbers}) of some algebraic $K$-groups in the case when the residue field $k$ of $K$ is possibly {imperfect}. We let $\Og_K$ be a strictly Henselian discrete valuation ring with field of fractions $K$ and arbitrary residue field $k.$

In the first two subsections we prove the rationality of the jumps of $K$-tori $T$ which are direct factors of $K$-rational varieties (Definition \ref{def:Kratvar}). For these, we shall obtain an analogue of Theorem \ref{jumpsratthm} (i), with the degree of a splitting extension $L/K$ replaced by its ramification index; the inertia index instead controls the possible multiplicities.

The main obstacle one encounters is that, as the methods of the previous sections cannot be applied if $k\neq \bar k$, one cannot use an arbitrary smooth, surjective morphism $f$ with connected kernel from a product $P$ of induced tori to $T$ to study the jumps of $T$. Instead, we seek to apply Lemma \ref{universallyexactlem}, and in order to do so, one needs a short exact sequence \[0\to F\to P\to T\to 0\] of tori, with $P$ as before, whose base-change to $K(d)$ induces an exact sequence of Néron lft-models for sufficiently many integers $d$ coprime to $p$. Proposition \ref{Chaiexactn} shows that this happens if $F$ is an \textit{invertible torus}, and a theorem by Colliot-Thélène and Sansuc then implies that the so-called \textit{flasque resolution} of $T$ meets our needs, if $T$ is a direct factor of a $K$-rational variety.
One is thus reduced to the case of induced tori along some extension $M/K$ (which we treat first); a similar argument reduces this to the case when $M/K$ is Galois, which hinges on the computational Proposition \ref{prop:coker}. 

By a similar computation, in the last subsection we show that the jumps of the wound unipotent group $\boldsymbol{\nu}_1(r)_K$ are rational, and the denominators of the non-zero ones are equal to $p$.

\subsection{Jumps of induced tori}

Following \cite[Tag 09E4]{Stacks}, we call an extension of discrete valuation fields \textit{weakly unramified} if its ramification index is one.

In the following, all the extensions of $K$ will be regarded inside some fixed separable closure $K\sep$. 

\begin{proposition}\label{prop:coker}Let $L/K$ be a finite Galois extension of strictly henselian discretely valued fields; denote by $e$ and $f$ be the ramification and inertia indexes of $L/K$, respectively, and let $d$ be a positive integer such that $d\equiv 1 \bmod p[L:K].$ If $K(d)/K$ a totally ramified extension of $K$ of degree $d$, then the cokernel of the natural map \[\Og_{L}\otimes_{\Og_K}\Og_{K(d)}\to\Og_{L\otimes_KK(d)}\] is isomorphic to \[\bigoplus\limits_{\nu=0}^{e-1}\left(\Og_{K(d)}/\ffm_d^{\nu\frac{d-1}{e}}\right)^{\oplus f}\] as an $\Og_{K(d)}$-module. \end{proposition}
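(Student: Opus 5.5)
We compute the map explicitly, after first reducing to the case where $L/K$ is totally ramified.

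\textbf{Reduction to the totally ramified case.} Let $K'\subseteq L$ be the maximal weakly unramified subextension, so that $[K':K]=f$ and $L/K'$ has ramification index $e$. Since $\Og_K$ is strictly Henselian, $K'/K$ has trivial separable residue extension, and $\Og_{K'}$ is free over $\Og_K$ with $\ffm_K\Og_{K'}=\ffm_{K'}$. Hence $K'\otimes_KK(d)$ is a field $K'(d)$, totally ramified of degree $d$ over $K'$, with $\Og_{K'}\otimes_{\Og_K}\Og_{K(d)}=\Og_{K'(d)}$. (This holds because $\Og_{K'}\otimes\Og_{K(d)}$ is regular: it is generated over the DVR $\Og_{K'}$ by a root of an Eisenstein polynomial.) Replacing $K$ by $K'$ and $K(d)$ by $K'(d)$, we are reduced to the case where $L/K$ is totally ramified of degree $e$ and we must find the cokernel $\bigoplus_{\nu=0}^{e-1}\Og_{K'(d)}/\ffm^{\nu(d-1)/e}$. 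The factor $f$ then reappears when we restrict back from $\Og_{K'(d)}$ to $\Og_{K(d)}$. Here $\ffm_{K'(d)}=\ffm_d\Og_{K'(d)}$ and $\Og_{K'(d)}$ is free of rank $f$ over $\Og_{K(d)}$, so each summand $\Og_{K'(d)}/\ffm^{a}$ becomes $(\Og_{K(d)}/\ffm_d^{a})^{\oplus f}$.

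\textbf{The totally ramified case.} Write $K(d)=K(\varpi)$ with $\varpi^d=\pi_K$, and let $\pi_L$ be a uniformiser of $L$, so that $\Og_L=\Og_K[\pi_L]$ with basis $1,\pi_L,\dots,\pi_L^{e-1}$. Since $\gcd(d,e)=1$ (as $d\equiv1\bmod e$), the compositum $L(d):=L\otimes_KK(d)$ is a field, totally ramified of degree $de$ over $K$. Choose integers $a,b$ with $ad-be=1$; in fact $a=1$, $b=(d-1)/e$ works. Then
$$\Pi:=\pi_L\varpi^{-b}\cdot u$$
is a uniformiser of $L(d)$ for a suitable unit $u$: its valuation is $\frac{1}{e}-\frac{b}{d}=\frac{1}{de}$ in the normalisation where $v(\pi_K)=1$. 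Hence $\Og_{L(d)}$ is free over $\Og_{K(d)}$ with basis $1,\Pi,\dots,\Pi^{e-1}$. The image $\Og_L\otimes\Og_{K(d)}$ is spanned by the elements $\pi_L^\nu=u^{-\nu}\varpi^{\nu b}\Pi^\nu$. If we ignore the units, the inclusion is diagonal in these bases with entries $\varpi^{\nu b}$, giving cokernel $\bigoplus_\nu\Og_{K(d)}/\ffm_d^{\nu(d-1)/e}$, as desired.

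\textbf{Main obstacle.} The units $u^{\nu}$ have to be handled carefully: they are not elements of $\Og_{K(d)}$, so the change-of-basis matrix is not literally diagonal. I would avoid units altogether by working with the elementary-divisor (valuative) description instead. The submodule $M:=\Og_L\otimes\Og_{K(d)}$ equals $\bigoplus_\nu\Og_{K(d)}\pi_L^\nu$, and the elements $\pi_L^\nu$ have valuations $\nu/e$ that are pairwise distinct modulo $\frac{1}{d}\Z$, since $\gcd(d,e)=1$. It follows that $M$ is exactly the set of $x\in\Og_{L(d)}$ whose "component" in each residue class of valuations modulo $\frac1d\Z$ is divisible appropriately.

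Concretely, both modules decompose compatibly by filtration. One can also argue directly: the matrix expressing $\pi_L^\nu$ in the basis $\Pi^\mu$ is triangular modulo higher-order terms, with diagonal valuations $\nu b$. Triangularity holds because $\pi_L^\nu$ has valuation $\nu/e=(\nu+\nu b e\cdot\text{stuff})$… but it is cleanest to note that the lengths add up and then compare elementary divisors. The total length is $\sum_\nu \nu b$, computed from discriminants: $\mathrm{disc}(\Og_L\otimes\Og_{K(d)})$ versus $\mathrm{disc}(\Og_{L(d)})$, via the tame/wild discriminant formula for $L(d)/K(d)$. The individual elementary divisors are then pinned down by the valuation argument: an element of $M$ of minimal valuation in a class $\nu/e+\frac1d\Z$ lies in $\varpi^{\nu b}\Og_{L(d)}$ but no deeper.
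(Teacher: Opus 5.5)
\textbf{There is a genuine gap: the reduction to the totally ramified case fails.} You take ``the maximal weakly unramified subextension'' $K'\subseteq L$ and assert that $[K':K]=f$ and that $L/K'$ is totally ramified.

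When $k$ is perfect this step is vacuous, since a separably closed perfect field is algebraically closed and so $f=1$. The statement therefore only has content for imperfect $k$. But then $k_L/k$ is purely inseparable, and there is no analogue of the maximal unramified subextension.

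Weakly unramified subextensions are not stable under composita. In the $\Z/2\Z\times\Z/2\Z$ extension of Example~\ref{ctameexample}, all three quadratic subfields are weakly unramified, while $L$ has $e=2$.

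Worse, a weakly unramified subextension of degree $f$ need not exist at all. Let $K=k(\!(t)\!)$ with $k$ separably closed, $\lambda\in k\setminus k^p$ and $N\geq p$. Let $L/K$ be the cyclic Artin--Schreier--Witt extension of degree $p^2$ attached to $(t^{-1},\lambda t^{-N})\in W_2(K)$.
\begin{itemize}
\item Its only intermediate field $F=K(x_0)$, with $x_0^p-x_0=t^{-1}$, is totally ramified over $K$.
\item Since $\lambda t^{-N}$ dominates the Witt carry term, $L/F$ is weakly unramified with residue field $k(\lambda^{1/p})$.
\end{itemize}
Thus $e=f=p$, but the only weakly unramified subextension is $K$ itself, and $L/K$ is not totally ramified. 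Neither half of your argument then applies, since your computation uses $\Og_L=\Og_K[\pi_L]$.

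The paper avoids this by inducting along a tower $K\subsetneq F\subseteq L$ in which $F/K$ is tame or of degree $p$, hence totally ramified or weakly unramified. It proves a stronger, composable statement: there is an $\Og_{K(d)}$-basis $(e_{ij})$ of $\Og_{L(d)}$ such that the elements $\pi_{K(d)}^{i(d-1)/e}e_{ij}$ lie in $\Og_L$ and form an $\Og_K$-basis of it. This statement survives both kinds of step in either order, which is exactly what a one-shot splitting $K\subseteq K'\subseteq L$ cannot guarantee.

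\textbf{A direct repair.} Your totally ramified computation can in fact be run with no reduction at all.
\begin{itemize}
\item Choose $\beta_1,\dots,\beta_f\in\Og_L$ lifting a $k$-basis of $k_L$. Then the $\pi_L^i\beta_j$ form an $\Og_K$-basis of $\Og_L$.
\item The value group of $LK(d)$ contains $\frac{1}{e}\Z+\frac{1}{d}\Z=\frac{1}{de}\Z$, and its residue field contains $k_L$. A degree count shows that $L\otimes_KK(d)$ is a field, with ramification index $e$ over $K(d)$ and residue field $k_L$.
\item Hence $\Pi:=\varpi^{-(d-1)/e}\pi_L$ is a uniformiser, and the $\Pi^i\beta_j$ form an $\Og_{K(d)}$-basis of $\Og_{L(d)}$.
\item Finally $\pi_L^i\beta_j=\varpi^{i(d-1)/e}\Pi^i\beta_j$, which gives the cokernel at once, without using that $L/K$ is Galois.
\end{itemize}

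This also shows your unit $u$ is superfluous. The element $\varpi^{-b}\pi_L$ already has valuation $\frac{1}{de}$, so $u=1$ makes the inclusion literally diagonal. The elementary-divisor and discriminant discussion in your last paragraph is not a complete argument as written, and can simply be dropped.
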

\begin{proof}
Let $\pi_{K(d)} \in \Og_{K(d)}$ be a uniformiser. Denote the ramification and inertia indices of a finite extension $F/K$ by $e_{F/K}$ and $f_{F/K}$, respectively. We shall prove the following slightly stronger \\
\\
\rm Claim: \normalfont \it There exists an $\Og_{K(d)}$-basis $(e_{ij})_{ij}$ (with $0\leq i \leq e_{L/K}-1$ and $0\leq j \leq f_{L/K}-1)$ of $\Og_{L(d)}$ such that the elements $\pi_{K(d)}^{i (d-1)/e_{L/K}} e_{ij}$ are contained in $\Og_L$ and form an $\Og_K$-basis of $\Og_L.$\rm \\
\\
The Proposition then follows immediately from this claim. We shall assume $L\not=K$ and first treat the case where $K\subseteq L$ is weakly unramified or totally ramified (in which case it holds even without the Galois condition). In the first case, note that the canonical map $\Og_L\otimes_{\Og_K}\Og_{K(d)} \to \Og_{L(d)}$ is an isomorphism (since the extension $K\subseteq K(d)$ is totally ramified). Hence we can choose any $\Og_K$-basis of $\Og_L.$ In the latter case, choose a uniformiser $\pi_L$ of $\Og_L.$ A simple calculation involving valuations shows that the element $\pi_{L(d)}:=\pi_{K(d)}^{-(d-1)/e_{L/K}}\pi_L$ is a uniformiser of $\Og_{L(d)}.$ Since we have $\Og_L=\Og_K[\pi_L]$ and $\Og_{L(d)}=\Og_{K(d)}[\pi_{L(d)}],$ the claim follows in this case.

Now choose a subextension $K\subseteq F \subseteq L$ with $K\not=F$ such that $K\subseteq F$ is either totally ramified or weakly unramified. This is certainly possible if $K\subseteq L$ contains a non-trivial tamely ramified subextension. Otherwise, $\Gal(L/K)$ is a $p$-group and hence contains a subgroup of index $p,$ whose fixed field $F$ is then either weakly unramified or totally ramified.

By induction on the degree of the extension, we may choose, for a given uniformiser $\pi_{F(d)}\in \Og_{F(d)},$ an $\Og_{F(d)}$-basis $(e'_{ij})_{ij}$ of $\Og_{L(d)}$ (with $0\leq i \leq e_{L/F}-1$ and $0\leq j \leq f_{L/F}-1$) such that the elements $\pi_{F(d)}^{i(d-1)/{e_{L/F}}}e'_{ij}$ are contained in $\Og_{L}$ and form an $\Og_{F}$-basis of $\Og_L.$ Once again, we distinguish two cases:
\begin{enumerate}
\item The extension $K\subseteq F$ is weakly unramified.  Then we put $\pi_{F(d)} := \pi_{K(d)}.$ Any $\Og_K$-basis $(e''_\lambda)_\lambda$ of $\Og_F$ (with $0\leq \lambda \leq f_{F/K}-1$) is automatically also an $\Og_{K(d)}$-basis of $\Og_{F(d)}.$ In particular, $(e'_{ij}e''_\lambda)_{ij\lambda}$ is an $\Og_{K(d)}$-basis of $\Og_{L(d)}$ such that the elements $\pi_{K(d)}^{i(d-1)/{e_{L/K}}}e'_{ij}e''_{\lambda}$ form an $\Og_K$-basis of $\Og_L.$
\item The extension $K\subseteq F$ is totally ramified. Choose a uniformiser $\pi_F\in \Og_F$ and put $\pi_{F(d)}:=\pi_{K(d)}^{-(d-1)/e_{F/K}}\pi_F,$ which is a uniformiser of $\Og_{F(d)}.$ Choose a unit $\epsilon\in \Og_{F(d)}^\times$ such that $\pi_{F(d)}^{e_{F/K}} = \epsilon \pi_{K(d)}.$ We know that the elements $\pi_{K(d)}^{\mu(d-1)/{e_{F/K}}}\pi_{F(d)}^\mu$ (with $0\leq \mu\leq e_{F/K}-1$) are contained in $\Og_F$ and form an $\Og_K$-basis of $\Og_F,$ and that the elements $\pi_{F(d)}^\mu$ form an $\Og_{K(d)}$-basis of $\Og_{F(d)}.$ It follows that the elements
$$\pi_{K(d)}^{e_{L/F}\mu(d-1)/e_{L/K} + i(d-1)/e_{L/K}} \pi_{F(d)}^\mu\epsilon^{i(d-1)/e_{L/K}}e'_{ij}$$ are contained in $\Og_L$ and form an $\Og_K$-basis of $\Og_L.$ Since the elements $\pi_{F(d)}^\mu\epsilon^{i(d-1)/e_{L/K}}e'_{ij}$ clearly form an $\Og_{K(d)}$-basis of $\Og_{L(d)},$ the claim now follows from the fact that the map
\begin{align*}\{(i,\mu) \colon 0\leq i \leq e_{L/F}-1, 0 \leq \mu \leq e_{F/K}-1\} \to \{\lambda \colon 0 \leq \lambda \leq e_{L/K}-1\}.\end{align*}
given by $(i,\mu) \mapsto e_{L/F}\mu + i$ is bijective.%\qedhere
\end{enumerate}
\end{proof}

\begin{proposition}\label{thm:jumps}With the same notation as in Proposition \ref{prop:coker}, the $d$-jumps of the induced torus $\indtorus{L}{K}$ are \[0,\frac{d-1}{e},\ldots,(e-1)\frac{d-1}{e},\] each with multiplicity $f$; its jumps are $0,\dfrac{1}{e},\ldots,\dfrac{e-1}{e}$, each with multiplicity $f$.
\end{proposition}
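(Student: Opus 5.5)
The plan is to read off the $d$-jumps from Proposition \ref{jumpspropV}(ii) and then pass to the limit with Proposition \ref{jumpslimitprop}. Put $D:=\indtorus LK$, with Néron lft-model $\mathscr{D}$ over $\Og_K$. We want to identify the cokernel of $\Lie\mathscr{D}\otimes_{\Og_K}\Og_{K(d)}\to \Lie\mathscr{D}(d)$ as an $\Og_{K(d)}$-module. Note that $D\times_KK(d)=\Res_{L\otimes_KK(d)/K(d)}\Gm$. By \cite[Proposition A.5.9]{CGP}, the identity components are $\mathscr{D}^0=\Res_{\Og_L/\Og_K}\Gm$ and $\mathscr{D}(d)^0=\Res_{\Og_{L\otimes_KK(d)}/\Og_{K(d)}}\Gm$. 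Their Lie algebras are therefore $\Og_L$ and $\Og_{L\otimes_KK(d)}$, and the map between them is the natural map $\Og_L\otimes_{\Og_K}\Og_{K(d)}\to \Og_{L\otimes_KK(d)}$. Since $d\equiv 1\bmod [L:K]$ is prime to $[L:K]$ and $K(d)/K$ is totally ramified while $L/K$ has ramification prime to $d$ in the relevant sense, $L\otimes_KK(d)$ should be the field $L(d)$. Even if it is not, the statement of Proposition \ref{prop:coker} is formulated for $\Og_{L\otimes_KK(d)}$, so this causes no trouble.

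By Proposition \ref{prop:coker}, the cokernel is $\bigoplus_{\nu=0}^{e-1}(\Og_{K(d)}/\ffm_d^{\nu(d-1)/e})^{\oplus f}$. The module is a direct sum of $[L:K]=ef$ cyclic pieces, which matches $n=\dim D$. Proposition \ref{jumpspropV}(ii) then gives the cokernel as $\bigoplus_i\Og_{K(d)}/\pi^{j_{d,i}}$. Since the elementary divisors of a finitely generated torsion module over a DVR are unique, counting zero summands for $\nu=0$ included, the multiset of $d$-jumps is $\{\nu(d-1)/e\}$, each with multiplicity $f$.

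\textbf{Main obstacle.} The point to check is that Proposition \ref{jumpspropV} and the Greenberg-type description remain valid when $k$ is only separably closed, or imperfect, with $\Og_K$ strictly Henselian rather than complete. I would rely on the cited references \cite{HNII}, which work over strictly Henselian bases, and accept that the completeness hypothesis can be removed by passing to the completion. Néron lft-models and the Lie algebras are compatible with that passage, and so are ramification and inertia indices. The tame extensions $K(d)$ are also unaffected.

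For the jumps, choose a grid sequence $d_\ell$ with $d_\ell\equiv 1\bmod p[L:K]$; for example, $d_\ell=(1+p[L:K])^{\ell}$ works, since each $d_\ell$ divides the next and all are $\equiv 1$. By Proposition \ref{jumpslimitprop}, each jump is $\lim_\ell \nu(d_\ell-1)/(e d_\ell)=\nu/e$, taken with multiplicity $f$. This gives the claim.
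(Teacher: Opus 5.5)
Your proposal is correct and follows essentially the same route as the paper. You identify $\Lie\mathscr{D}\otimes_{\Og_K}\Og_{K(d)}\to\Lie\mathscr{D}(d)$ with the natural map $\Og_L\otimes_{\Og_K}\Og_{K(d)}\to\Og_{L\otimes_KK(d)}$ via the Weil-restriction description of the identity components, read off the $d$-jumps from Proposition \ref{prop:coker} together with Proposition \ref{jumpspropV}(ii), and pass to the limit along a grid sequence with $d_\ell\equiv 1 \bmod p[L:K]$ using Proposition \ref{jumpslimitprop}; the paper does the same, and it likewise applies these facts for an arbitrary residue field without further comment.
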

\begin{proof}
    Let $G=\indtorus{L}{K}$ and let $d$ be a positive integer greater than $e$ and such that $d\equiv1\mod p[L:K]$. By \cite[Chapter 10.1, Example 5]{BLR}, the identity component of Néron lft-model of $\mathbf{G}_{m,L}$ is just $\mathbf{G}_{m,\Og_L}$, and since Weil restriction is compatible with the formation of Néron lft-models \cite[Chapter 7.6, Proposition 6]{BLR} and the formation Lie algebras of a group scheme commutes with Weil restriction, we can identify $\Lie(\mathscr{G}\times_{\Og_K}\Og_{K(d)})$ with $\Og_L\otimes_{\Og_K}\Og_{K(d)}$ and $\Lie\mathscr{G}(d)$ with $\Og_{L\otimes_KK(d)}$ as $\Og_{K(d)}$-modules. Under these identifications, the map \[\Lie h(G,d)\colon\Lie(\mathscr{G}\times_{\Og_K}\Og_{K(d)})\to\Lie\mathscr{G}(d)\] corresponds to % the unique map $\Og_L\otimes_{\Og_K}\Og_{K(d)}\to\Og_{L\otimes_KK(d)}$ making the diagram of $\Og_{K(d)}$-modules 
    %\[\begin{tikzcd}         \Og_L\otimes_{\Og_K}\Og_{K(d)}\arrow{rr}\arrow{dr} && \Og_{L\otimes_KK(d)}\arrow[hook]{dl}\\ & L\otimes_KK(d) &
     %\end{tikzcd},\] 
    %where the diagonal arrows are the obvious ones, commute, i.e. 
    the one in the statement of Proposition \ref{prop:coker}. Therefore, the $d$-jumps of $G$ are $0, \frac{d-1}e,\ldots,\frac{(e-1)(d-1)}e$, each with multiplicity $f$.
    
    By Proposition \ref{jumpslimitprop} we can compute the jumps of $G$ using any grid sequence $(d_\ell)_\ell$ such that $d_\ell\equiv 1\mod p[L:K]$ for all $\ell;$ picking any such sequence, we see that
    \[\lim\limits_{\ell\to\infty}\frac{i(d_\ell-1)}{d_\ell e}=\frac ie\] is a jump of $G$ for $i=0,\ldots,e-1$ (each occurring with multiplicity $f$) as stated.
\end{proof}

\begin{example} \label{ctameexample}
    If $K\subseteq L$ is a finite Galois extension with ramification index $e$ and inertia index $f$, one obtains the formula $$c_{\mathrm{tame}}(\Res_{L/K}\Gm) = \frac{f(e-1)}{2}.$$ This shows in particular that Theorem \ref{ctameadditivethm} no longer holds if the residue field is imperfect. Indeed, suppose that $\Og_K$ is of equal characteristic 2 with imperfect residue field. It was shown in \cite[Subsection 4.4.1]{OS} that there exists a Galois extension $K\subseteq L$ such that $\Gal(L/K)\cong \Z/2\Z \times \Z/2\Z$ and such that for any quadratic subextension $K\subseteq F \subseteq L,$ the extension $K\subseteq F$ is weakly unramified and $F\subseteq L$ is totally ramified. By Galois theory, there are precisely 3 quadratic subextensions, which we shall call $K_1, K_2$ and $F.$ Put $T_i:=(\Res_{K_i/K}\Gm)/\Gm$ for $i=1,2$ and $T:=\Res_{L/K}\Gm/\Res_{F/K}\Gm.$ Using Lemma \ref{universallyexactlem} and Proposition \ref{Chaiexactn}, we find that $c_{\mathrm{tame}}(T_i)=0$ for $i=1,2,$ and $c_{\mathrm{tame}}(T)=1.$ However, as in Example \ref{Example01} (cf. also \cite[Subsection 4.4.1]{OS}), we have an exact sequence $0 \to T_1 \to T \to T_2 \to 0,$ in which $c_{\mathrm{tame}}(-)$ is not additive. This example also shows that Halle-Nicaise's formula $c_{\mathrm{tame}}(T)=\frac{1}{2}u(T)$ does not generalise to imperfect residue fields, and moreover, since $T$ is isogenous to $T_1\times_KT_2$, that neither does isogeny invariance. 
\end{example}

A multiset $\mathcal{M}$ with underlying set $I$ will be written as $\{(a;\mu(a))\}_{a\in I},$ where $\mu(a)$ is the multiplicity of the element $a$.

\begin{corollary}\label{thm:indtorusrationality}
    Let $M/K$ be a finite separable extension of strictly henselian discrete valuation rings with Galois closure $L/K$ and let $e=e_{L/K}$, $f=f_{L/K}.$ Letting $T$ be the induced torus $T=\indtorus{M}{K}$, one has \[J_d(T)\subseteq\left\{(0;f),\left(\frac{d-1}e;f\right)\ldots,\left((e-1)\cdot\frac{d-1}e;f\right)\right\}\] for any $d\equiv1\mod p[L:K]$, $d>e$ and \[J(T)\subseteq\left\{(0;f),\left(\frac1e;f\right),\ldots,\left(\frac{e-1}e;f\right) \right\}.\] 
In particular, the jumps of $T$ are rational numbers.
\end{corollary}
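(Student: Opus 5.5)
The idea is to realise $T=\indtorus MK$ as a direct factor of the induced torus $\indtorus LK$ after base change, and to compare $d$-jumps through an exact sequence of Néron lft-models to which Lemma \ref{universallyexactlem} applies. Concretely, $M\otimes_K L\cong L^{[M:K]}$ because $L$ contains every conjugate of $M$. There is a canonical closed immersion $\indtorus MK\to\indtorus{M\otimes_KL}{K}$ induced by $M\to M\otimes_KL$, $x\mapsto x\otimes 1$. Its target is $\Res_{L/K}\big(\Res_{M\otimes_K L/L}\Gm\big)\cong(\indtorus LK)^{[M:K]}$. I would rather use a cleaner version. Take $H=\Gal(L/M)$ and consider $\indtorus MK=\Res_{M/K}\Gm\subseteq \indtorus LK$ via $M\subseteq L$. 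The cokernel $Q:=\indtorus LK/\indtorus MK$ is a torus.

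The sequence $0\to \indtorus MK\to\indtorus LK\to Q\to0$ is exact on Néron lft-models over every $\Og_{K(d)}$ by Proposition \ref{Chaiexactn}, since the kernel is induced; this uses $R^1j_\ast^{\mathrm{sm}}$ of induced tori, which holds also for imperfect $k$ by the reference given there. Note that $\indtorus MK\times_K K(d)=\Res_{M\otimes_KK(d)/K(d)}\Gm$ is again induced, so the hypothesis holds for all $d$. Lemma \ref{universallyexactlem} then gives $J_d(T)\subseteq J_d(\indtorus LK)$ as sub-multisets. By Proposition \ref{thm:jumps}, the latter multiset is $\{(\nu(d-1)/e;f)\}_{0\le\nu\le e-1}$ for $d\equiv1\bmod p[L:K]$ and $d>e$. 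Choosing a grid sequence of such $d$, the same lemma gives $J(T)\subseteq J(\indtorus LK)=\{(\nu/e;f)\}$.

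The point that needs checking is the semisimplicity argument in Lemma \ref{universallyexactlem} when $k$ is not algebraically closed. Here $k$ is separably closed (strictly Henselian), $d$ is prime to $p$, and so $\mu_d(k)$ has order $d$ and $k[\mu_d]$ is still split semisimple. The description of $d$-jumps via the weights of $\mathfrak{m}/\mathfrak{m}^2$ in Proposition \ref{jumpspropV} comes from \cite{HNII}, where $k$ is only assumed separably closed. One could instead bypass weights and use Proposition \ref{jumpspropV}(ii) with cokernels of Lie algebras, using exactness of Lie algebras of the exact smooth sequence. Given the inclusion $J(T)\subseteq\{0,1/e,\dots,(e-1)/e\}$, rationality of the jumps is immediate.
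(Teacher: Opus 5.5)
Your proposal is correct and follows the paper's proof essentially step for step. The paper also embeds $T=\indtorus MK$ into $\indtorus LK$ (via the surjection $\Z[\Gamma]\to\Z[\Gamma/\Delta]$ of character modules), gets exactness of Néron lft-models over each $\Og_{K(d)}$ from Proposition \ref{Chaiexactn} because the subtorus is induced, and concludes with Lemma \ref{universallyexactlem} and Proposition \ref{thm:jumps}; your placement of $T$ as the subtorus with quotient $Q$ is the right one, and the paper's displayed sequence, which puts $T(d)$ in the last slot, is evidently a slip.
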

\begin{proof}
    Letting $\Gamma=\Gal(L/K)$ and $\Delta=\Gal(L/M),$ by \cite[Satz 0.4.3]{Brahm} the canonical surjection of $\Z[\Gamma]$-modules $\Z[\Gamma]\to\Z[\Gamma/\Delta]$ induces, for each $d$ as in the statment, a short exact sequence of tori \[0\to\indtorus {M(d)}{K(d)}\to\indtorus {L(d)}{K(d)}\to T(d)\to0.\] Notice also that $e_{M/K}=e_{M(d)/K(d)}$ and $f_{M/K}=f_{M(d)/K(d)}$.
    
     By Proposition \ref{Chaiexactn}, the sequence of Néron lft-models induced by the previous sequence remains exact; therefore, we can apply Proposition \ref{Chaiexactn}, whereby the statement follows from Lemma \ref{universallyexactlem} and the computation in Proposition \ref{thm:jumps}.
\end{proof}

In view of the computation in {Proposition \ref{thm:jumps}}, one might wonder whether the formation of identity components of Néron lft-models commutes not only with unramified extensions, but also with {weakly} unramified ones; if it were, the jumps of a $K$-torus $T$ would remain unchanged after such an extension $K'/K$. We now present two counterexamples to this statement: {one in which $K'/K$ is linearly disjoint from the splitting extension of $T$ and one in which $K'$ coincides with it.}

\begin{example}[A non-split torus $T$ with $c_{\mathrm{tame}}(T)=0$ and an extension $K'/K$ such that $c_{\mathrm{tame}}(T_{K'})\neq 0$]\label{ex:Neronwunbc1}
Let $K$ be a mixed-characteristic complete discretely valued field with imperfect residue field $k$. Assume that $K$ has $p$-th roots of unity and it has absolute ramification greater than one; such $K$ can always be obtained by taking suitable extensions of a Cohen ring. 
Let $\lambda\in\Og_K$ be an element such that $\bar\lambda\in k\setminus k^p$ and, fixing a separable closure $K\sep$ of $K$, consider $\alpha,\beta\in K\sep$ such that $\alpha^p=\lambda$, $\beta^p=\pi-\lambda$ for a uniformiser $\pi$ of $K$. 

The two extensions $F=K(\alpha)$ and $K'=K(\beta)$ are Galois over $K$, and their compositum $F'=K(\alpha,\beta)$ is totally ramified over $K'$: indeed, let $\nu_{F'}$ be the normalised valuation on $F'.$ Then one has \[(\alpha+\beta)^p\equiv\alpha^p+\beta^p=\pi \mod p,\] and the inequality $\nu_{F'}(p)>\nu_{F'}(\pi)$ implies that $p\nu_{F'}(\alpha+\beta)=\nu_{F'}(\pi)$. \color{black} This also implies that $F\cap K'=K$, and by Galoisness, that the two extensions are linearly disjoint \cite[Corollary 2]{Cohntensor}. In particular, $F\otimes_KK' = F'.$ 

The torus $T=\indtorus{F}{K}$ is not split, but $c_{\mathrm{tame}}(T)=0$; on the other hand, $T_{K'}=\indtorus{F'}{K'},$ and since $F'/K'$ is totally ramified of degree $p$, by the previous computation its jumps are $0,\tfrac 1p,\ldots,\tfrac{p-1}p$. %This shows in particular that the formation of identity components of Néron models of tori is not compatible with weakly unramified base-change, even along extensions which are linearly disjoint from the splitting extension.
\end{example}\label{ex:Neronwunbc2}
\begin{example} Let $F/K$ be a non-trivial weakly unramified extension of (say) strictly henselian fields of finite degree equal to the residue characteristic $p$ of $K$ with purely inseparable residue field extension $k\subseteq k_F$: for an example in mixed characteristic, let $k$ be any imperfect and separably closed field, $\Og_K$ be a Cohen ring with residue field $k$, $\lambda\in\Og_K$ such that $\bar\lambda\in k\setminus k^p$ and $P=X^p-\lambda$. In equal characteristic, let $\Og_K=\powser{k}{t}$, $\lambda\in k\setminus k^p$ and $P=X^p-tX+\lambda$. Letting $F=K[X]/\langle P\rangle$, in both cases $F/K$ is weakly unramified and $\Og_F=\Og_K[x]/\langle P\rangle$ by \cite[Proposition 15]{SerreLF}. 
 
Let $G=\indtorus{F}{K}$, so that $\mathscr{G}^{\mathrm{0}}=\indtorus{\Og_F}{\Og_K}$. By \cite[VI.A, Proposition 3.3]{SGA3}, \[(\mathscr{G}\times_{\Og_K}{\Og_F})^{\mathrm{0}}=\mathscr{G}^{\mathrm{0}}\times_{\Og_K}{\Og_F},\] which is not isomorphic to the identity component $\mathscr{G}_F^0$ of the Néron lft-model of $G\times_KF$, because the maximal torus of its special fibre $\indtorus{k_F}{k}$ has dimension 1, whereas the maximal torus of $\mathscr{G}^0_{F, k_F}$ has dimension $\geq 2.$ The last claim follows using Lemma \ref{toricranklem}, because $F\otimes_KF$ is the product of at least two finite separable extensions of $F$ as an $F$-algebra.     

\end{example}

\subsection{Jumps of $K$-tori which are direct factors of $K$-rational varieties}
\sectionmark{Jumps of $K$-tori which are direct factors...}
This section is devoted to the study of jumps of a particular class of $K$-tori, namely, those which are direct factors of $K$-rational varieties (Definition \ref{def:Kratvar}): Corollary \ref{jumpsformulas} describes the jumps of such a torus $T$ in terms of a flasque resolution of $T$ (a notion which will be recalled below). 
    Let us first review some definitions and results from \cite{CTSReq} and \cite{EndoMiyata}. 

Let $G$ be a finite group (which for us will be the Galois group of a Galois splitting extension of a torus). For any $G$-module $A$ and any integer $i$, denote by $\hat H^i(G,A)$ the $i$-th Tate cohomology group \cite[§VIII.1]{SerreLF}.
\begin{definition}[{\cite[§1]{CTSReq}}]A $\Z$-free $G$-module of finite rank (over $\Z$) is called
\begin{enumerate}
    \item \textit{a permutation module} if it admits a $\Z$-basis permuted by $G$;
    \item \textit{invertible} if it is a direct factor of a permutation module;
    \item \textit{flasque} if $\hat H^{-1}(H,A)=0$ for all subgroups $H$ of $G$.
\end{enumerate}
\end{definition}

\begin{definition}Let $A$ be a $\Z$-free $G$-module of finite rank.
    A flasque resolution of $A$ is a short exact sequence \[0\to A\to P\to F\to 0\] with $P$ a permutation $G$-module and $F$ a flasque $G$-module.
\end{definition}

Any $\Z$-free $G$-module of finite rank admits a flasque resolution:
\begin{lemma}[{\cite[Lemme 3,5]{CTSReq}, \cite[Lemma 1.1]{EndoMiyata}}]
    Any $\Z$-free $G$-module of finite rank $A$ admits a flasque resolution \[0\to A\to P\to F\to 0;\] moreover, the $G$-module $F$ is unique up to addition of permutation modules. 
\end{lemma}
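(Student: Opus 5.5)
We have to prove the Colliot-Thélène–Sansuc/Endo–Miyata lemma: every $\Z$-free $G$-module $A$ of finite rank has a flasque resolution $0\to A\to P\to F\to 0$, and $F$ is unique up to adding permutation modules. The plan is to first build a permutation module mapping onto the dual, dualise to get a resolution with invertible cokernel (which will be flasque), and then prove uniqueness by a pushout argument. Uniqueness is the more delicate half.

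\emph{Existence.} Let $A^\circ:=\Hom_\Z(A,\Z)$ be the dual lattice.
\begin{enumerate}
\item For each subgroup $H\subseteq G$, the group $(A^\circ)^H$ is finitely generated. Choose generators $a_{H,1},\dots,a_{H,r_H}$ and consider
$$P':=\bigoplus_{H}\Z[G/H]^{\oplus r_H}\to A^\circ,\qquad [gH]\mapsto g\,a_{H,i}.$$
\item This map is surjective already for $H=1$. By construction it is also surjective on $H$-invariants for every $H$, since $\Z[G/H]^H$ contains the class of $1\cdot H$, which maps to $a_{H,i}$.
\item Let $Q$ be the kernel of $P'\to A^\circ$. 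Surjectivity on $H$-invariants, together with $H^1(H,P')=0$ (Shapiro's lemma, $P'$ being a permutation module), gives $H^1(H,Q)=0$ for all $H$.
\item Dualise the lattice sequence $0\to Q\to P'\to A^\circ\to0$. It stays exact because all terms are $\Z$-free. This gives $0\to A\to P\to F\to0$ with $P=P'^\circ$ (permutation modules are self-dual) and $F=Q^\circ$.
\item For lattices $\hat H^{-1}(H,Q^\circ)\cong\hat H^{1}(H,Q)^\vee$ by Tate duality for $\Z$-lattices; equivalently $\hat H^{-1}(H,M^\circ)\cong \hat H^1(H,M)$ up to Pontryagin dual. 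This vanishes by step 3, so $F$ is flasque.
\end{enumerate}

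\emph{Uniqueness.} Suppose $0\to A\to P_i\to F_i\to0$ are two flasque resolutions, $i=1,2$. Form the pushout $E$ of $P_1\leftarrow A\to P_2$. This gives exact sequences
$$0\to P_1\to E\to F_2\to0\qquad\text{and}\qquad 0\to P_2\to E\to F_1\to0.$$
\begin{enumerate}
\item The key input is that $\Ext^1_G(F,P)=0$ for $F$ flasque and $P$ permutation.
\item To prove it, reduce to $P=\Z[G/H]$. Then $\Ext^1_G(F,\Z[G/H])\cong\Ext^1_H(F,\Z)$, using that induction equals coinduction for finite index.
\item Next, $\Ext^1_H(F,\Z)=H^1(H,F^\circ)$. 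Tate duality identifies $\hat H^1(H,F^\circ)$ with the dual of $\hat H^{-1}(H,F)$, which is $0$ by flasqueness.
\item Hence both sequences split, giving $E\cong P_1\oplus F_2\cong P_2\oplus F_1$. This is exactly the assertion that $F_1$ and $F_2$ agree up to adding permutation modules.
\end{enumerate}

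The main obstacle is the correct bookkeeping of the duality $\hat H^{-1}(H,M)\leftrightarrow \hat H^{1}(H,M^\circ)$ for lattices. It comes from the cup product pairing, or from $\hat H^i(H,M^\circ)\cong \hat H^{-i}(H,M)^\vee$, which follows from $\Ext$ with values in $\Z$ and the sequence $0\to\Z\to\Q\to\Q/\Z\to0$. I would cite Tate duality (Brown, Cartan–Eilenberg) rather than reprove it. With that identity in hand, both existence and uniqueness are formal.
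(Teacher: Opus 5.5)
Your proposal is correct. The paper gives no proof of its own and only cites Colliot-Thélène–Sansuc and Endo–Miyata; your argument is essentially the standard one from those sources. You build a coflasque resolution of $A^\circ$ from $\bigoplus_H \Z[G/H]^{\oplus r_H}$, dualise using $\hat H^{-1}(H,M^\circ)\cong \hat H^{1}(H,M)^\vee$, and get uniqueness from the pushout together with $\Ext^1_G(F,P)=0$, which yields $F_1\oplus P_2\cong F_2\oplus P_1$.
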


The previous notions can be also formulated for tori, by the duality between $K$-tori that split over a Galois extension $L/K$ with Galois group $G$ and $\Z$-free $G$-modules of finite rank. 
    It is easy to see that a torus $T$ is a permutation torus if and only if it is isomorphic to a finite product of induced tori; in particular, both the properties of being a permutation and an invertible torus are stable under base change along a finite separable extension of the base field.

\begin{definition}\label{def:Kratvar}A $K$-rational variety is a variety which is $K$-birational to a finite dimensional affine space over $K$; a $K$-torus $T$ is a direct factor of a $K$-rational variety if there exist a $K$-variety $Y$ such that $Y\times_KT$ is a $K$-rational variety.    
\end{definition}
The following result, due to Colliot-Thélène and Sansuc, provides conditions which are equivalent to the invertibility of the torus $F$; several more are listed in \cite[Proposition 7.4]{CTSTori}.
\begin{proposition}[{\cite[Proposition 7.4]{CTSTori}}]\label{prop:CTSconditions}Let $0\to F\to P\to T\to 0$ be a flasque resolution of a torus $T$ over a field $K$. The following conditions are equivalent:\begin{enumerate}
    \item $F$ is an invertible $K$-torus;
    \item $T$ is a direct factor of a $K$-rational variety;
    \item for any field extension $K\hookrightarrow K'$, $H^1(K',F_{K'})=0.$
\end{enumerate}    
\end{proposition}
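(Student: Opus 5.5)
Since this equivalence is due to Colliot-Thélène and Sansuc, I would reconstruct their argument rather than reprove it from scratch. I would go around the cycle $1\Rightarrow 3\Rightarrow 1$ and then treat $1\Leftrightarrow 2$. Throughout, fix a Galois splitting field $L/K$ with group $G$ and work with the character modules.

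For $1\Rightarrow 3$, I would first choose a torus $F'$ such that $F\times_K F'\cong P'$ is a permutation torus, i.e.\ a finite product of tori $\Res_{M/K}\Gm$. For any field extension $K'/K$, $P'_{K'}$ is again a product of induced tori $\Res_{M\otimes_KK'/K'}\Gm$. Shapiro's lemma together with Hilbert 90 then gives $H^1(K',P'_{K'})=0$. Since $H^1(K',F_{K'})$ is a direct summand of this group, it vanishes as well.

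For $3\Rightarrow 1$, I would apply condition 3 with $K'=K(T)$, the function field of $T$. The surjection $P\to T$ is an $F$-torsor, so its generic fibre is an $F_{K(T)}$-torsor over $K(T)$. By hypothesis this torsor is trivial, which gives a rational section $s\colon T\dashrightarrow P$ defined on some dense open $U\subseteq T$. Pulling back characters along $s$ gives a map $X^\ast(P)\to K[U_{K\sep}]^\times/K\sep{}^\times$. By Rosenlicht, the target sits in an exact sequence
$$0\to X^\ast(T)\to K[U_{K\sep}]^\times/K\sep{}^\times\to \Z[\text{components of }(T\setminus U)_{K\sep}],$$
whose last term is a permutation module. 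Comparing this sequence with $0\to X^\ast(F)\to X^\ast(P)\to X^\ast(T)\to 0$, the dual of the flasque resolution, should exhibit $X^\ast(F)$ as a direct factor of a permutation module. Flasqueness of $F$ is what makes the relevant $\Ext^1$ obstruction against permutation modules vanish.

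For $1\Rightarrow 2$, I would use the rational section above, which exists because $H^1(K(T),F)=0$ by the step $1\Rightarrow 3$. It shows that $P$ is $K$-birational to $T\times_K F$. Writing $F\times F'=P'$ as before, $T\times_K(F\times F')$ is then birational to $P\times P'$. This is rational because permutation tori are open subsets of affine spaces, so we may take $Y:=F\times F'$. For $2\Rightarrow 1$, I would invoke the Colliot-Thélène–Sansuc identification of the class of $X^\ast(F)$, up to adding permutation modules, with $\Pic$ of a smooth compactification of $T_{K\sep}$. This class is a stable birational invariant that is additive on products. For $Y\times T$ rational it is trivial, hence $[X^\ast(F)]$ is invertible.

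The main obstacle is $3\Rightarrow 1$ and $2\Rightarrow 1$: extracting a \emph{splitting} of the character sequence from a merely rational section, which is the unit/divisor bookkeeping on $U$. For $2\Rightarrow 1$ one additionally needs the Picard-group description of the flasque class. In the paper I would quote these steps from \cite{CTSReq, CTSTori} rather than redo them.
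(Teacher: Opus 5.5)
The paper gives no argument of its own for this statement; it simply quotes \cite[Proposition 7.4]{CTSTori}. Deferring the hard implications to Colliot-Th\'el\`ene--Sansuc is therefore consistent with the paper, and your $1\Rightarrow 3$ is correct. The steps you do sketch, however, contain concrete errors.

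\textbf{The step $1\Rightarrow 2$.} The variety $T\times_K(F\times F')$ is not birational to $P\times P'$: the dimensions differ by $\dim F$. What the rational section gives is $T\times_K F\sim P$, so take $Y:=F$; the complement $F'$ plays no role.

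\textbf{The step $3\Rightarrow 1$.} You wrote the dual sequence backwards. The dual of $0\to F\to P\to T\to 0$ is $0\to X^\ast(T)\to X^\ast(P)\to X^\ast(F)\to 0$. The decisive step is also left at ``should exhibit'', and the mechanism you name is the wrong one: flasqueness of $F$ plays no role. What works is the following.
\begin{itemize}
\item Shrink $U$ so that $T\setminus U$ has pure codimension one.
\item Put $B:=\mathcal{O}(U_{K^{\mathrm{sep}}})^\times/(K^{\mathrm{sep}})^\times$, and let $Q$ be the permutation module on the prime divisors of $T_{K^{\mathrm{sep}}}$ lying in the boundary.
\item Since $\Pic(T_{K^{\mathrm{sep}}})=0$, Rosenlicht gives an exact sequence $0\to X^\ast(T)\to B\to Q\to 0$; note the surjectivity on the right, which you omitted.
\item Pulling back along $s$ maps the character sequence to this one. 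It induces the identity on $X^\ast(T)$, because $\pi\circ s$ is the inclusion of $U$.
\item Hence the right-hand square is cartesian, and
\[0\to X^\ast(P)\to X^\ast(F)\oplus B\to Q\to 0\]
is exact.
\item This sequence splits. Indeed, $\Ext^1_\Gamma(Q,X^\ast(P))$ is a sum of groups $H^1(H,X^\ast(P))$, which vanish because $X^\ast(P)$ is a permutation module. Here $\Gamma$ is a finite Galois group through which all actions factor.
\item Thus $X^\ast(F)\oplus B\cong X^\ast(P)\oplus Q$, so $X^\ast(F)$ is invertible.
\end{itemize}

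\textbf{The step $2\Rightarrow 1$.} This is the more serious issue. Your argument needs a smooth compactification of the arbitrary variety $Y$, together with stable birational invariance and additivity of the class of $\Pic$ of smooth compactifications. Both rest on resolution of singularities, which is not available in characteristic $p$. Yet $2\Rightarrow 1$ is exactly the direction the paper uses, in Lemma \ref{prop:Neronflasqueex}. It applies it over discretely valued fields of possibly positive characteristic; Section 8 explicitly includes equal characteristic with imperfect residue field.

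A characteristic-free route is $2\Rightarrow 3\Rightarrow 1$.
\begin{itemize}
\item From the exact sequence $P(K')\to T(K')\to H^1(K',F_{K'})\to H^1(K',P_{K'})=0$, together with the theorem of \cite{CTSReq} that the image of $P(K')$ is the $R$-equivalence class of $1$, one gets $H^1(K',F_{K'})\cong T(K')/R$.
\item Suppose $T\times_KY$ is $K$-rational and $K'$ is infinite. Then the $K'$-points of some dense open $V\subseteq T_{K'}$ are pairwise $R$-equivalent. To see this, project the $K'$-points of an open subset of $T\times_KY$ isomorphic to an open of affine space, using that $Y(K')$ is dense.
\item A translation argument in the group $T(K')$ then gives $T(K')/R=0$.
\item Finite $K'$ are covered by Lang's theorem.
\item Finally, conclude with $3\Rightarrow 1$ as above.
\end{itemize}
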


{To put this into some context, recall that, by a theorem by Endo and Miyata {\cite[Theorem I]{EndoMiyata}}, all the Sylow subgroups of a finite group $G$ are cyclic if and only if any flasque $G$-module is invertible. Moreover, the Galois group of a finite Galois extension $L/K$ of discretely valued strictly henselian fields with residue field of positive characteristic $p$ is a semi-direct product of a cyclic group of order coprime to $p$ and a (normal) $p$-subgroup $H_p$ if $p>0$; therefore, if $H_p$ is cyclic, any torus split by $L/K$ satisfies the conditions of the previous proposition.}

\begin{lemma}\label{prop:Neronflasqueex}
    Let $T$ be a torus over a discretely valued field $K$. If $T$ is a direct factor of a $K$-rational variety, then the sequence of Néron lft-models \[0\to \mathscr{F}\to \mathscr{P}\to \mathscr{T}\to 0\] induced by any flasque resolution $0\to F\to P\to T\to 0$ of $T$ is exact.
\end{lemma}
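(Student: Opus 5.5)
The plan is to apply Proposition \ref{Chaiexactn} to the flasque resolution $0\to F\to P\to T\to 0$. That proposition says the sequence of Néron lft-models is exact as soon as $R^1j^{\mathrm{sm}}_\ast F=0$. By Proposition \ref{prop:CTSconditions}, the hypothesis that $T$ is a direct factor of a $K$-rational variety is equivalent to $F$ being an invertible torus. It is also equivalent to $H^1(K',F_{K'})=0$ for every field extension $K\subseteq K'$. So everything reduces to showing that $R^1j^{\mathrm{sm}}_\ast F$ vanishes for an invertible torus $F$.

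For this I would use the invertibility directly. By definition there is a torus $F'$ such that $F\times_K F'\cong P'$ is a permutation torus, i.e. a finite product of induced tori $\Res_{M_i/K}\Gm$. Since $R^1j^{\mathrm{sm}}_\ast$ commutes with finite products, $R^1j^{\mathrm{sm}}_\ast F$ is a direct summand of $R^1j^{\mathrm{sm}}_\ast P'$. The latter vanishes by the last assertion of Proposition \ref{Chaiexactn}, applied to each factor $\Res_{M_i/K}\Gm$. That assertion is stated for a single extension $F$ with $\Gm^s$, but a product over several extensions is handled the same way. Hence $R^1j^{\mathrm{sm}}_\ast F=0$, and Proposition \ref{Chaiexactn} gives the exactness of $0\to\mathscr{F}\to\mathscr{P}\to\mathscr{T}\to 0$.

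One should check that all three tori admit Néron lft-models. This holds because tori contain no copy of $\Ga$, so they are wound and \cite[Chapter 10.1, Theorem 2]{BLR} applies even over an imperfect residue field.

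The point I expect to need care is whether the vanishing in Proposition \ref{Chaiexactn} (via \cite[Remark 4.6]{Chai}) really holds when the residue field $k$ is only separably closed rather than algebraically closed, since this section allows arbitrary $k$. If it does not, I would instead argue stalkwise. The stalks of $R^1j^{\mathrm{sm}}_\ast F$ at points of smooth $\Og_K$-schemes are computed by $H^1$ of $F$ over fraction fields of strictly henselian local rings of smooth $\Og_K$-schemes, localized away from the special fibre. These groups vanish by condition (3) of Proposition \ref{prop:CTSconditions}, which applies because it covers arbitrary field extensions $K'$. That stalkwise vanishing is exactly the input Chai's argument needs.
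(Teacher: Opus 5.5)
Your argument is correct and is essentially the paper's proof. You reduce via Proposition \ref{Chaiexactn} to $R^1j^{\mathrm{sm}}_\ast F=0$, use Proposition \ref{prop:CTSconditions} to write $F\times_K F'$ as a permutation torus $P'$, and conclude because $R^1j^{\mathrm{sm}}_\ast F$ is a direct summand of $R^1j^{\mathrm{sm}}_\ast P'=0$.

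The stalkwise fallback is unnecessary, since the paper applies Proposition \ref{Chaiexactn} over arbitrary (strictly Henselian) residue fields as well. As you phrased it, it would also not quite work: the stalks are $H^1$ over $\Og^{\mathrm{sh}}_{S,\bar s}[1/\pi]$, which is not a field, so condition (3) of Proposition \ref{prop:CTSconditions} does not apply directly there. One would again need the permutation-summand trick, which reduces the question to Picard groups of localisations of regular local rings.
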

\begin{proof}Let $L/K$ be a Galois splitting extension of $T$.
    By Proposition \ref{Chaiexactn}, it suffices to show that $R^1j_\ast^{\mathrm{sm}}F=0$. As $T$ is a direct factor of a rational $K$-variety, $F$ is invertible by Proposition \ref{prop:CTSconditions}, so that there exists a torus $F'$ such that $P':=F\oplus F'$ is a finite product of induced tori.
    Therefore, the identity of $R^1j_*^{\mathrm{sm}}F$ factors through $R^1j^{\mathrm{sm}}_*P'$, which vanishes by Proposition \ref{Chaiexactn}. 
\end{proof}

\begin{corollary}\label{jumpsformulas}
\begin{enumerate}
    \item Let $T$ be a $K$-torus which is a direct factor of a $K$-rational variety and  $0\to F\to P\to T\to 0$ flasque resolution; then \[J_d(P)=J_d(F)\uplus J_d(T)\] for any $d$ coprime to $p$.
    \item If $d\equiv 1\mod e,$ where $e$ is the ramification index of the minimal splitting extension of $T$, then all the $d$-jumps of $T$ are of the form $a(d-1)/e,$ with $0\leq a\leq e-1.$ In particular, the jumps of $T$ are rational numbers of the form $a/e,$ with $a$ as before.
\end{enumerate}
\end{corollary}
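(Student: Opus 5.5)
For part (1), I would take the given flasque resolution $0\to F\to P\to T\to 0$ and base change it to $K(d)$ for an arbitrary $d$ coprime to $p$. Since $K(d)/K$ is a finite separable extension, $F_{K(d)}$ is still an invertible torus, $P_{K(d)}$ is still a permutation torus, and the base-changed sequence remains exact. The key observation is that $T_{K(d)}$ is again a direct factor of a $K(d)$-rational variety: if $Y\times_K T$ is $K$-rational, then $Y_{K(d)}\times_{K(d)}T_{K(d)}$ is $K(d)$-rational. Alternatively, one can check condition (3) of Proposition \ref{prop:CTSconditions} directly for $F_{K(d)}$, because every extension of $K(d)$ is also an extension of $K$. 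Lemma \ref{prop:Neronflasqueex}, applied over $K(d)$, then shows that the induced sequence of Néron lft-models over $\Og_{K(d)}$ is exact. Lemma \ref{universallyexactlem} (first assertion) now gives $J_d(P)=J_d(F)\uplus J_d(T)$.

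For part (2), I would write $P=\prod_i \indtorus{M_i}{K}$, where the $M_i$ are subfields of the splitting field, as one can do for any permutation torus. I would take the flasque resolution to be split by the minimal splitting extension $L$ of $T$. This is possible because the resolution is built from $\Gal(L/K)$-modules: choose a resolution of the character module as a $\Gal(L/K)$-module, which gives a flasque resolution in the above sense. Each $M_i$ is then contained in $L$, so the Galois closure $L_i$ of $M_i$ is contained in $L$. Hence $e_{L_i/K}$ divides $e$, and likewise $[L_i:K]$ divides $[L:K]$.

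By part (1), $J_d(T)\subseteq J_d(P)=\biguplus_i J_d(\indtorus{M_i}{K})$. Corollary \ref{thm:indtorusrationality} shows that the $d$-jumps of $\indtorus{M_i}{K}$ have the form $b(d-1)/e_i$ with $0\le b<e_i$, where $e_i=e_{L_i/K}$. Since $e_i\mid e$, each of these equals $a(d-1)/e$ with $a=b\,e/e_i\le e-1$.

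The main obstacle is the congruence hypothesis. Corollary \ref{thm:indtorusrationality} requires $d\equiv 1 \bmod p[L_i:K]$ and $d>e_i$, whereas the statement only assumes $d\equiv1\bmod e$. I would first check whether the proofs of Propositions \ref{prop:coker} and \ref{thm:jumps} really only use divisibility of $d-1$ by the ramification index. The construction $\pi_{L(d)}=\pi_{K(d)}^{-(d-1)/e}\pi_L$ and the basis induction in those proofs seem to need only $e\mid d-1$, together with $d$ prime to $p$ so that $K(d)$ is tame. If that is the case, the corollary extends directly to the present hypothesis. If not, I would restrict to the $d$ allowed there when computing the jumps.

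Finally, I would pass to the jumps by choosing a grid sequence $d_\ell\equiv1\bmod p[L:K]$. By the pigeonhole principle, the coefficients $a$ attached to the $i$-th jump can be taken constant along the sequence. Proposition \ref{jumpslimitprop} then gives $j_i=\lim_{\ell\to\infty} a(d_\ell-1)/(e\,d_\ell)=a/e$.
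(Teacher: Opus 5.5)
Your proposal is correct and follows the paper's own argument. For (1) you base change the flasque resolution and apply Lemma \ref{prop:Neronflasqueex} and Lemma \ref{universallyexactlem}; for (2) you use $J_d(T)\subseteq J_d(P)$ together with Corollary \ref{thm:indtorusrationality}. You also make explicit two points the paper leaves implicit: the resolution must be chosen split by the minimal splitting field, and the hypothesis $d\equiv 1 \bmod e$ does not match the condition $d\equiv 1\bmod p[L:K]$ of Corollary \ref{thm:indtorusrationality}. Your proposed check does succeed. The proof of Proposition \ref{prop:coker} only needs the exponents $(d-1)/e_{F/K}$ to be integers for the intermediate ramification indices, and $L\otimes_K K(d)$ to be a field. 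The latter holds because the inertia index is a power of $p$ (the residue field is separably closed), so $\gcd(d,ef)=1$.
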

\begin{proof}
Since the property of being an invertible/permutation torus is stable under base-change, for any $d$ coprime to $p$ \[0\to F(d)\to P(d)\to T(d)\to 0\] is a flasque resolution of $T(d)$ and $F(d)$ is invertible. By the previous lemma, the sequence \[0\to \mathscr{F}(d)\to \mathscr{P}(d)\to \mathscr{T}(d)\to 0\] is again exact, whereby one can apply Lemma \ref{universallyexactlem} and obtain the first statement.

    The second point follows directly from this and the computation of the jumps of an induced torus in Corollary \ref{thm:indtorusrationality}.
\end{proof}

\subsection{A unipotent example}
Finally, we shall give an example of a wound unipotent algebraic group with rational jumps in the case of an imperfect residue field. Suppose that $[K:K^p]=p^r$ for some positive integer $r$ and let $\pi_K$ be a uniformiser of $\Og_K.$ Since $\Og_K$ is excellent, this implies that $[k:k^p]=p^{r-1}$ \cite[Lemma 2.1]{OvII}. Choose elements $t_1,..., t_{r-1}$ which map to a $p$-basis of $k.$ Then $t_1,..., t_{r-1}, \pi_K$ form a $p$-basis of $\Og_K.$ The choice of $p$-basis yields an isomorphism $\mathbf{G}_{\mathrm{a}}^{(1/p)} \cong \mathbf{G}_{\mathrm{a}}^{p^r}.$ Now consider the unipotent algebraic group $\boldsymbol{\nu}_1(r)_K$ which is given by the equation
$$x_{0...0} - \sum _{0 \leq i_1,..., i_r \leq p-1} t_1^{i_1}\cdot ... \cdot t_{r-1}^{i_{r-1}} \pi_K^{i_r} x_{i_1...i_r}^p = 0$$ inside $\mathbf{G}_{\mathrm{a}}^{p^r}.$ See \cite[Section 8]{OSII} for more details on this algebraic group. In particular, we recall from \cite[Proposition 8.2]{OSII} that the smooth group scheme $\boldsymbol{\nu}_1(r)_{\Og_K}^\sim$ described by the same equation over $\Og_K$ is the Néron model of $\boldsymbol{\nu}_1(r)_{K}.$ Now let $d\equiv 1 \mod p$ and choose a uniformiser $\pi_{K(d)}$ (for simplicity) such that $\pi_{K(d)}^d=\pi_K.$ The algebraic group $\boldsymbol{\nu}_1(r)_{K(d)}$ is then given by the equation above with $\pi_K$ replaced by $\pi_{K(d)}.$ We have an isomorphism $\boldsymbol{\nu}_1(r)_{K}\times_KK(d)\to \boldsymbol{\nu}_1(r)_{K(d)}$ given by the change of variables $x_{i_1...i_r}\mapsto \pi_{K(d)}^{-i_r(d-1)/p}x_{i_1...i_r}.$
\begin{proposition}
Let $d\equiv 1 \mod p.$ Then we have an exact sequence
$$0\to \Lie \boldsymbol{\nu}_1(r)_{\Og_K}^\sim \otimes_{\Og_K}\Og_{K(d)} \to \Lie\boldsymbol{\nu}_1(r)_{\Og_K(d)}^\sim \to \bigoplus_{i=1}^{p-1}\Big(\Og_{K(d)}/\langle\pi_{K(d)}^{i\frac{d-1}{p}}\rangle\Big)^{\oplus p^{r-1}} \to 0.$$ In particular, the jumps of $\boldsymbol{\nu}_1(r)_K$ are $i/p$ for $1\leq i \leq p-1$ (each with multiplicity $p^{r-1}$) and $0$ (with multiplicity $p^{r-1}-1).$
\end{proposition}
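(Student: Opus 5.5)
The plan is to compute both Lie algebras explicitly as kernels of the linearised defining equation, and compare them using the change of variables $x_{i_1\dots i_r}\mapsto \pi_{K(d)}^{-i_r(d-1)/p}x_{i_1\dots i_r}$.

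First, the Lie algebra. Since $\boldsymbol{\nu}_1(r)^\sim_{\Og_K}$ is cut out in $\mathbf{G}_{\mathrm{a}}^{p^r}$ by one additive equation, its Lie algebra is the kernel of the differential at the origin. The $p$-th powers have vanishing differential, so the differential is just the projection $x\mapsto x_{0\dots0}$. Hence $\Lie \boldsymbol{\nu}_1(r)^\sim_{\Og_K}$ is the free $\Og_K$-module with basis the coordinate directions $\partial/\partial x_{i_1\dots i_r}$ for $(i_1,\dots,i_r)\neq(0,\dots,0)$, of rank $p^r-1$. The same holds over $\Og_{K(d)}$ with $\pi_{K(d)}$ in place of $\pi_K$. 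This uses \cite[Proposition 8.2]{OSII} for both Néron models; over $K(d)$ the residue field is unchanged and $t_1,\dots,t_{r-1},\pi_{K(d)}$ is again a $p$-basis.

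Next, identify the canonical map. The Néron mapping property produces a morphism $\boldsymbol{\nu}_1(r)^\sim_{\Og_K}\otimes\Og_{K(d)}\to\boldsymbol{\nu}_1(r)^\sim_{\Og_{K(d)}}$ extending the generic isomorphism. Inverting the given change of variables, this morphism is $x_{i_1\dots i_r}\mapsto \pi_{K(d)}^{i_r(d-1)/p}x_{i_1\dots i_r}$. To check it, note that $\pi_K^{i_r}=\pi_{K(d)}^{di_r}$ and $d i_r-(d-1)i_r=i_r$, so the defining equations correspond; the exponents are integral because $d\equiv1\bmod p$. This map is already defined integrally, and by uniqueness it is the Néron extension. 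Its differential is diagonal in the coordinate bases, with entry $\pi_{K(d)}^{i_r(d-1)/p}$ on the basis vector indexed by $(i_1,\dots,i_r)\neq0$. The map is injective, and its cokernel is $\bigoplus \Og_{K(d)}/\langle\pi_{K(d)}^{i_r(d-1)/p}\rangle$ over the nonzero indices. For each $i_r\in\{1,\dots,p-1\}$ there are $p^{r-1}$ indices. The indices with $i_r=0$ (there are $p^{r-1}-1$ of them) contribute zero summands. This gives the exact sequence.

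Finally, deduce the jumps. Proposition \ref{jumpspropV}(ii) holds for any residue field since it comes from \cite{HNII}; I will check this, and otherwise use the $\mu_d$-weight description, which is also visible directly from the diagonal map. It identifies the $d$-jumps: $i(d-1)/p$ with multiplicity $p^{r-1}$ for $1\le i\le p-1$, and $0$ with multiplicity $p^{r-1}-1$. Then apply Proposition \ref{jumpslimitprop} along a grid sequence with $d_\ell\equiv1\bmod p$, for instance $d_\ell=(1+p)^{\ell}$ or $d_\ell$ built as products of primes $\equiv 1 \bmod p$. The limits are $i/p$ and $0$ with the stated multiplicities. The main point to watch is confirming that the integral map really is the Néron extension, that is, that the target model is the Néron model over $\Og_{K(d)}$. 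This follows from \cite[Proposition 8.2]{OSII} applied over $K(d)$ with the $p$-basis above.
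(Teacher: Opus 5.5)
Your proposal is correct and follows essentially the same route as the paper. Both arguments take $\Lie \boldsymbol{\nu}_1(r)^\sim$ to be the free module on the coordinate directions indexed by $(i_1,\dots,i_r)\neq(0,\dots,0)$; the paper phrases this through $\Lie\mathbf{G}_{\mathrm{a}}^{(1/p)}=\Og_K$ with basis $t_1^{i_1}\cdots t_{r-1}^{i_{r-1}}\pi_K^{i_r}$. Both then use that $t_1,\dots,t_{r-1},\pi_{K(d)}$ is a $p$-basis of $\Og_{K(d)}$ to see that the canonical map is diagonal with entries $\pi_{K(d)}^{i_r(d-1)/p}$, and read off the jumps from Proposition \ref{jumpspropV}(ii) and Proposition \ref{jumpslimitprop} along a grid sequence with $d_\ell\equiv 1 \bmod p$. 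Your explicit check that the coordinate rescaling is defined integrally, and is therefore the Néron extension by uniqueness, fills in a step the paper leaves implicit.
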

\begin{proof}
Using the canonical identification $\Lie \mathbf{G}_{\mathrm{a}}^{(1/p)} = \Og_K$ (with $\Og_K$ acting via the Frobenius), the explicit description of the Néron model recalled above shows that the elements $t_1^{i_1}\cdot ... \cdot t_{r-1}^{i_{r-1}}\pi_K^{i_r}$ (with $0\leq i_1,..., i_r \leq p-1$ and $(i_1,..., i_r) \not=(0,...,0)$) form a basis of $\Lie \boldsymbol{\nu}_1(r)_{\Og_K}^\sim.$ Note that, since the extension $\Og_K\subseteq \Og_{K(d)}$ is totally ramified, the elements $t_1,..., t_{r-1}, \pi_{K(d)}$ form a $p$-basis of $\Og_{K(d)}.$ We thus obtain a basis $e_{i_1...i_r}$ of $\Lie \boldsymbol{\nu}_1(r)_{\Og_{K(d)}}^\sim$ indexed as above such that the elements $\pi_{K(d)}^{i_r(d-1)/p}e_{i_1...i_r}$ form a basis of the image of $\Lie \boldsymbol{\nu}_1(r)_{\Og_K}^\sim \otimes_{\Og_K}\Og_{K(d)} \to \Lie\boldsymbol{\nu}_1(r)_{\Og_{K(d)}}^\sim.$ This immediately implies the first claim; the second one follows using Proposition \ref{jumpslimitprop}.
\end{proof}
\begin{remark}
    The formula for the tame base change conductor suggested (conjecturally) in Remark \ref{ctamerem} for algebraically closed $k$ fails for $\boldsymbol{\nu}_1(r)_K$ if $r>1.$ However, from the explicit description of the Néron lft-model given above we can read off that $$c_{\mathrm{tame}}(\boldsymbol{\nu}_1(r)_K)=\frac{1}{2}\rho_{\mathrm{us}}(\boldsymbol{\nu}_1(r)^{\sim}_{\Og_K}\times_{\Og_K} k)$$ where $\rho_{\mathrm{us}}(-)$ denotes the dimension of the maximal \it split \rm unipotent closed subgroup of a smooth algebraic group over $k.$ An analogous formula holds for tori induced along a finite Galois extension $K\subseteq L$. Indeed, let $e$ and $f$ be the ramification index and the inertial degree of $K\subseteq L,$ respectively, and let $A:=\Og_L\otimes_{\Og_K} k.$ By Example \ref{ctameexample}, we must show that the maximal split unipotent closed subgroup of $\Res_{A/k}\Gm$ is of dimension $f(e-1).$ Choosing a sequence $0 = \mathfrak{m}_0 \subseteq ... \subseteq \mathfrak{m}_r \subseteq A$ of ideals such that $\mathfrak{m}_r$ is the maximal ideal and $\mathfrak{m}_i^2\subseteq \mathfrak{m}_ {i-1}$ for all $i=1,...,r,$ one shows easily by induction that the canonical map $\Res_{A/k}\Gm \to \Res_{(A/\mathfrak{m}_r)/k}\Gm$ is smooth, surjective, and has split unipotent kernel, whereas $\Res_{(A/\mathfrak{m}_r)/k}\Gm$ is wound. Here one uses the fact that, for any commutative ring $R$ with unity and any ideal $I\subseteq R$ such that $I^2=0,$ we have an exact sequence $0 \to I \to R^\times \to (R/I)^\times \to 0,$ where the map $I\to R^\times$ is given by $\eta\mapsto 1+\eta.$ The dimensions of source and target of the canonical map are $ef$ and $f,$ respectively, so the claim follows. It would be very interesting to know to what extent this formula can be generalised.
    \end{remark}

The preceding remark suggests that the split unipotent radical of the special fibre of a Néron lft-model controls the base change behaviour in some sense. A slightly weaker claim can be easily shown, which we include here since it may be of general interest: 

\begin{proposition}
    Let $\Og_K$ be an excellent strictly Henselian discrete valuation ring with field of fractions $K$ and residue field $\kappa.$ Let $G$ be a smooth connected wound algebraic group over $K$ with Néron lft-model $\mathscr{G}\to \Spec \Og_K.$ Then the following claims hold: 
    \begin{enumerate}[(i)]
        \item Let $\mathscr{G}'$ be a smooth separated model of $G$ over $\Og_K$ with connected fibres such that $\rho_{\mathrm{us}}(\mathscr{G}'_{\kappa})=0.$ Then the canonical map $\mathscr{G}'\to \mathscr{G}^0$ is an isomorphism.
        \item  Let $\Og_K\subseteq \Og_L$ be a flat extension of excellent strictly Henselian discrete valuation rings (with no restriction on the ramification index), and assume that the induced extensions $K\subseteq L$ (resp. $\kappa \subseteq \kappa_L$) of fields of fractions (resp. residue fields) are \rm separable \it (not necessarily finite). If $\rho_{\mathrm{us}}(\mathscr{G}^0_{\kappa})=0,$ then the identity component of the Néron lft-model of $G$ commutes with base change along $\Og_K\subseteq \Og_L.$
    \end{enumerate}
\end{proposition}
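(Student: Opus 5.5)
For (i), the canonical map $\phi\colon \mathscr{G}'\to\mathscr{G}$ given by the Néron mapping property lands in $\mathscr{G}^0$ because the fibres of $\mathscr{G}'$ are connected. It is an isomorphism on generic fibres. The plan is to show that $\phi_\kappa$ has trivial kernel and is surjective onto $\mathscr{G}^0_\kappa$. Both models are smooth and separated with connected fibres, so $\phi$ is then quasi-finite and birational. Since $\mathscr{G}^0$ is normal, Zariski's Main Theorem together with the fact that a bijective étale-on-fibres homomorphism is an isomorphism (the fibrewise isomorphism criterion, \cite[Tag 039E]{Stacks}-style) will finish the argument.

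To control the kernel, I would use the standard dilatation (Néron smoothening) description. Because $\mathscr{G}^0$ is the Néron lft-model, it is obtained from $\mathscr{G}'$ by a finite sequence of dilatations of $\mathscr{G}'$ along closed subgroups of special fibres \cite[Chapter 3]{BLR}. Equivalently, the kernel $N$ of $\phi_\kappa\colon\mathscr{G}'_\kappa\to\mathscr{G}^0_\kappa$ is a smooth connected subgroup of the kind appearing in Néron's blow-up construction. The key claim is that $N$ is a \emph{split} unipotent group. For a single dilatation along a smooth subgroup $H\subseteq \mathscr{G}'_\kappa$, the kernel of the map on special fibres from the dilatation is a vector group (isomorphic to $\Lie$ of the normal quotient twisted by the conormal data), and hence isomorphic to $\mathbf{G}_{\mathrm{a}}^s$. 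Composing such maps gives a kernel that is an iterated extension of vector groups, and that kernel is split unipotent.

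This reasoning only controls the kernel of the special fibre map of the \emph{dilatation}, whereas I need the kernel of $\phi_\kappa$ itself. For that I would instead argue directly with the image. Let $\mathscr{G}''$ be the schematic image-like model obtained as follows: the kernel $\ker\phi_\kappa$ is the special fibre of the closure-type object, and by \cite[Lemma 4.3]{LorLiu} and Proposition \ref{Chaiexactn}-type arguments the reduced identity component of $\ker\phi_\kappa$ is unipotent and split, because it lies in the congruence-type filtration of $\mathscr{G}'(\Og_K/\pi^n)$. Its $\kappa$-points inject into $\ker(\mathscr{G}'(\Og_K)\to\mathscr{G}(\Og_K))$, which is trivial, so the group is infinitesimal modulo its split part. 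The hypothesis $\rho_{\mathrm{us}}(\mathscr{G}'_\kappa)=0$ kills the split part, so $\ker\phi_\kappa$ is finite. A dimension count then makes $\phi_\kappa$ surjective onto the connected group $\mathscr{G}^0_\kappa$.

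Finally, to exclude a nontrivial infinitesimal kernel, I would use that $\phi$ induces $\mathscr{G}'(\Og_{K}^{\mathrm{sh}})=\mathscr{G}^0(\Og_K^{\mathrm{sh}})$ (bijective on integral points, since both are subgroups of $G(K)$ and $\mathscr{G}'$ is étale-locally a model through which all points lift by Hensel). Combined with excellence and the dilatation description, I expect this to force $\phi$ to be an isomorphism. This is the main obstacle. The cleanest route I would try first is \cite[Chapter 3.6]{BLR}: a smooth model whose integral points over all étale extensions surject onto those of $\mathscr{G}^0$ is isomorphic to it.

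For (ii), base change preserves smoothness, separatedness and connectedness of fibres. So $\mathscr{G}^0\times_{\Og_K}\Og_L$ is a smooth separated model of $G_L$ with connected fibres. Its special fibre is $\mathscr{G}^0_\kappa\times_\kappa\kappa_L$. I would check that $\rho_{\mathrm{us}}$ is zero after the separable extension $\kappa\subseteq\kappa_L$, using the fact that the maximal split unipotent subgroup is compatible with separable field extensions (woundness and the $\kappa$-split unipotent radical descend along separable extensions, \cite[Corollary B.3.5]{CGP}). Separability of $K\subseteq L$ ensures $G_L$ is wound, so it has a Néron lft-model. Applying (i) over $\Og_L$ then gives the claim.
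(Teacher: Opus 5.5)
Part (ii) is fine and follows the paper's argument. Part (i) has a genuine gap: none of your arguments for finiteness or triviality of $\ker\phi_\kappa$ is valid. The one correct ingredient you mention is set up backwards and then abandoned.

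By \cite[Corollary 2.3]{LLR}, it is $\mathscr{G}'$ that is obtained from $\mathscr{G}^0$ by dilatations. Concretely, $\phi$ factors as $\mathscr{G}'=\mathscr{G}_0\to\mathscr{G}_1\to\dots\to\mathscr{G}_r=\mathscr{G}^0$, where $\mathscr{G}_i$ is the dilatation of $\mathscr{G}_{i+1}$ in a smooth centre $C_{i+1}\subseteq\mathscr{G}_{i+1,\kappa}$. In your orientation the maps would point from $\mathscr{G}^0$ to $\mathscr{G}'$, and the vector groups would sit in $\mathscr{G}^0_\kappa$, where you have no hypothesis.

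In the correct orientation you need nothing about the kernel of the full composite. (Your claim that this kernel is an iterated extension of vector groups is also unjustified, since later kernels get intersected with the centres.) Take $r$ minimal and suppose $r\geq 1$.
\begin{enumerate}
\item As in the paper, the centres can be taken connected because $\mathscr{G}'_\kappa$ is. By induction from $\mathscr{G}^0_\kappa$, all $\mathscr{G}_{i,\kappa}$ are then connected.
\item If $C_1=\mathscr{G}_{1,\kappa}$, the first dilatation would be an isomorphism, contradicting minimality. So $C_1$ has positive codimension.
\item Then $\ker(\mathscr{G}'_\kappa\to\mathscr{G}_{1,\kappa})$ is, as a scheme, an affine space of positive dimension $\dim\mathscr{G}_{1,\kappa}-\dim C_1$. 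It therefore contains a copy of $\Ga$ by \cite[Theorem B.2.5]{CGP}, contradicting $\rho_{\mathrm{us}}(\mathscr{G}'_\kappa)=0$.
\end{enumerate}
This first-step argument is exactly how the paper uses the hypothesis.

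Your substitute arguments never use $\rho_{\mathrm{us}}(\mathscr{G}'_\kappa)=0$ effectively. They are refuted by the following example: let $\mathscr{G}'$ be the dilatation of $\mathbf{G}_{\mathrm{m},\Og_K}$ at the unit point of its special fibre. This is a smooth separated model of the wound group $\Gm$ with connected fibres, and $\mathscr{G}'_\kappa\cong\Ga$ with $\phi_\kappa$ trivial.
\begin{itemize}
\item Here $\ker\phi_\kappa(\kappa)=\kappa$, which does not inject into the trivial group $\ker(\mathscr{G}'(\Og_K)\to\mathscr{G}(\Og_K))$. Points of $\ker\phi_\kappa$ are reductions of points of $\mathscr{G}'(\Og_K)$ whose images merely \emph{reduce} to $1$.
\item Also $\mathscr{G}'(\Og_K)=1+\pi\Og_K\subsetneq\Og_K^\times=\mathscr{G}^0(\Og_K)$ for a uniformiser $\pi$. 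So the bijectivity on integral points you invoke at the end is false in general. Hensel's lemma lifts points of $\mathscr{G}'_\kappa$, but it does not extend $K$-points of $G$ to $\mathscr{G}'$.
\item Under the hypothesis, that bijectivity is essentially the statement to be proved, so appealing to it (or to \cite[Chapter 3.6]{BLR}) is circular.
\item The ``congruence filtration'' justification is unfounded as well. Those congruence kernels live in the Greenberg transform of $\mathscr{G}'$, not in $\mathscr{G}'_\kappa$.
\end{itemize}

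Finally, the ``infinitesimal kernel'' is not a separate obstacle. Once $\ker\phi_\kappa$ is finite, $\phi$ is quasi-finite, separated and birational onto the normal scheme $\mathscr{G}^0$. Zariski's Main Theorem then makes $\phi$ an open immersion, and surjectivity on special fibres gives an isomorphism.
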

\begin{proof}
    (i) By \cite[Corollary 2.3]{LLR}, the morphism $\mathscr{G}' \to \mathscr{G}^0$ is the composition $\mathscr{G}'= \mathscr{G}_0 \to \mathscr{G}_1 \to ... \to \mathscr{G}_r = \mathscr{G}^0$ of finitely many dilatations \cite[Chapter 3.2]{BLR} in smooth centres; moreover, the centres are easily seen to be connected because so is $\mathscr{G}'_{\kappa}.$ We may assume that $r$ is minimal with this property and would like to show that $r=0.$ Assume for the sake of a contradiction that $r\geq 1.$ If the centre of the dilatation $\mathscr{G}' \to \mathscr{G}_1$ were equal to $\mathscr{G}_{1,\kappa},$ then $\mathscr{G}'\to \mathscr{G}_1$ would be an isomorphism, contradicting the minimality of $r.$ Since $\mathscr{G}_{1, \kappa}$ is connected (which follows by induction from the fact that the centres are connected), this implies that the centre of $\mathscr{G}'\to \mathscr{G}_1$ has positive codimension. But then the construction of the dilatation immediately shows that the kernel of $\mathscr{G}'_{ \kappa}\to \mathscr{G}_{1, \kappa}$ is isomorphic (as a scheme) to a positive-dimensional affine space, so it contains a copy of $\Ga$ by \cite[Theorem B.2.5]{CGP}, contradicting our assumption on $\mathscr{G}'_{\kappa}.$\\
    (ii) The separability conditions ensure that both $G_L$ and $\mathscr{G}_{\kappa_L}$ are wound \cite[Theorem B.3.4]{CGP}; in particular, $G_L$ admits a Néron lft-model $\mathscr{G}_L$ over $\Og_L.$ Now (ii) follows by applying (i) to the canonical map $\mathscr{G}^0\times_{\Og_K}\Og_L \to \mathscr{G}_L^0.$  
\end{proof}
This proposition is, of course, well known if $\mathscr{G}'$ is a semiabelian scheme over $\Og_K,$ hence in particular if $\kappa$ is algebraically closed.

\noindent\textsc{Mathematisches Institut der Heinrich-Heine-Universität Düsseldorf, Universitätsstr. 1, 40225 Düsseldorf, Germany} \\
\it E-mail address: \rm \texttt{otto.overkamp@uni-duesseldorf.de}\\
\\
\textsc{Mathematisches Institut der Heinrich-Heine-Universität Düsseldorf, Universitätsstr. 1, 40225 Düsseldorf, Germany} \\
\it E-mail address: \rm \texttt{ismaele.vanni@hhu.de}\\
\\

  %  There is an obvious notion of multisubset; for a multiset $\mathcal{M}$, we denote by $\mathscr{P}(\mathcal{M})$ the set of its multisubsets.

    %The sum of a finite collection $\{\mathcal{M}_i\}_{i\in I}$ of multisets is the multiset \[\biguplus\limits_{i\in I}\mathcal{M}_i:=\left(\bigcup\limits_{i\in I}|\mathcal{M}_i|,\mu\right)\] with, for any $x$ in the support, $\mu(x)=\sum\limits_{i\in I}\mu_i(x)$, where $\mu_i\colon\bigcup\limits_{i\in I}|\mathcal{M}_i|\to\N$ extends by $0$ the multiplicity function of $\mathcal{M}_i$. Remark that the support of $\uplus_i\mathcal{M}_i$ is \textit{not} the disjoint union of the supports $|\mathcal{M}_i|$: so, some element $x$ in the sum of two multisets $\mathcal{M}_1$ and $\mathcal{M}_2$ might belong to both, and in this case its multiplicity in the multiset sum is the sum of the multiplicities.


\begin{thebibliography}{10}
%\bibitem{AS}
%Abbes, A., Saito, T. 
%\textit{Ramification of local fields with imperfect residue fields}. Amer. J. Math., Vol. 124, No. 5, pp. 879-920, 2002. 

\bibitem{Achet}
Achet, R.
\textit{Unirational algebraic groups}. Preprint; available at \url{https://hal.science/hal-02358528/document}.

\bibitem{Beg}
Bégueri, L.
\textit{Dualité sur un corps local à corps résiduel
algébriquement clos}. Mémoire de la Société mathématique de France, no. 4, 1980.
\bibitem{BourCommAlg}
N. Bourbaki, \textit{Algebra II: Chapters 4–7}. Springer, 1990. Translated from the French by P. M. Cohn and J. Howie.

\bibitem{Brahm}
Brahm, B. 
\textit{Néron-Modelle algebraischer Tori}. PhD thesis, Westf\"alische Wilhelms-Universit\"at M\"unster, 2003.

\bibitem{BS}
Bertapelle, A., Suzuki, T.
\textit{The relatively perfect Greenberg transform and cycle class maps}. Manuscripta Math. 175(1-2), pp. 365-407, 2024.

\bibitem{BLR}
Bosch, S., L\"utkebohmert, W., Raynaud, M.
\textit{Néron models}. Ergeb. Math. Grenzgeb., Springer-Verlag, Berlin, Heidelberg, 1990.

\bibitem{Chai}
Chai, C.-L.
\textit{Néron models for semiabelian varieties: congruence and change of base field}. Asian J. Math. 4(4), pp. 715–736, 2000.

\bibitem{CY}
Chai, C.-L., Yu, J.-K. (Appendix by E. de Shailt)
\textit{Congruences of Néron models and the Artin conductor}. Ann. of Math. 154, pp. 347-382, 2001.

\bibitem{Cohntensor}
Cohn, M. 
\textit{On the decomposition of a field as a tensor product.}
Glasgow Mathematical Journal, pp. 141–145, 1979

\bibitem{CTSReq}
Colliot-Thélène, J.-L., Sansuc, J.-J.
\textit{La R-équivalence sur les tores}. Ann. scient. Éc. Norm. Sup, pp. 175-230, 1977.

%\bibitem{CTS}
%Colliot-Thélène, J.-L., Sansuc, J.-J.
%\textit{Principal Homogeneous Spaces under Flasque Tori: Applications}. J. Algebra 106, pp. 148-205, 1987.

\bibitem{CTSTori} Colliot-Thélène, J.-L., Sansuc, J.-J. \textit{Principal homogeneous spaces under flasque tori: Applications}. J. Algebra 106, no. 1, 148–205, 1987.

\bibitem{CGP}
Conrad, B., Gabber, O., Prasad, G.
\textit{Pseudo-reductive groups}. 2nd edition, New Mathematical Monographs 26, Cambridge University Press, 2015.

\bibitem{DG}
Demazure, M., Gabriel, P.
\textit{Groupes algébriques}. Masson \& Cie, éditeur, Paris; North-Holland Publishing Company, Amsterdam, 1970.

\bibitem{SGA3}
Demazure, M., Grothendieck, A. (eds.) 
\textit{ Schémas en Groupes. Séminaire de Géométrie Algébrique du Bois Marie 1962–1964} (SGA 3). Augmented and corrected 2008–2011 re-edition of the original by P. Gille and P. Polo. \url{http://www.math.jussieu.fr/~polo/SGA3}. 
\bibitem{Edixhoven}
Edixhoven, B.
\textit{Néron models and tame ramification}. Compositio Math., tome 81, no 3, pp. 291-306, 1992. 

\bibitem{EndoMiyata}
Endo, S., Miyata, T.
\textit{On a classification of the function fields of algebraic tori}. 
Nagoya Math. J. Vol. 56, pp. 85-104, {1975}. With an erratum: \textit{Corrigenda On a classification of the function fields of algebraic tori}  Nagoya Math. J., Vol. {79}, pp. {187–190}, 1980.

\bibitem{EHN}
Eriksson, D., Halle, L. H., Nicaise, J.
\textit{A logarithmic interpretation of Edixhoven’s jumps for Jacobians}. 
Advances in Mathematics 279, 532–574, 2015

\bibitem{Fulton}
Fulton, W.
\textit{Intersection Theory}. 2nd edition, Springer-Verlag, 1998.
\bibitem{Hallgroups}
  Hall Jr., M.
  \textit{The Theory of Groups}. Macmillan, {1959}.

\bibitem{HNComp}
Halle, L.H., Nicaise, J. 
\textit{The Néron component series of an abelian variety.} Math. Ann. 348, pp. 749–778, 2010.

\bibitem{HNII}
Halle, L. H., Nicaise, J. 
\textit{Motivic zeta functions of abelian varieties, and the monodromy conjecture}. Adv. Math. 227, pp. 610–653, 2011. 

\bibitem{HN}
Halle, L. H., Nicaise, J. 
\textit{Néron models and Base Change}. Lecture Notes in Math. 2156, Springer-Verlag, 2016.

\bibitem{Jac}
Jacobson, N. 
\textit{Commutative restricted Lie algebras}. In \it Nathan Jacobson Collected Mathematical Papers. \rm Vol. 2, Contemporary Mathematicians. Birkhäuser, Boston, 1989.

\bibitem{Kato}
Kato, K.
\textit{Duality Theories for the $p$-Primary Etale Cohomology. I}. Algebraic and Topological Theories - to the memory of Dr. Takehiko MIYATA, pp. 127-148, 1985. 

\bibitem{KMW}
Maex, M., Kaya, E., Waeterschoot, A.
\textit{Jumps of Jacobians via orthogonal canonical forms}. Proc. Amer. Math. Soc. 153, pp. 947-961, 2025.

\bibitem{LorLiu} Liu, Q., Lorenzini, D. 
\textit{Special fibers of Néron models and wild ramification.} J. reine angew. Math, pp. 179-222, 2001.

\bibitem{LLR}
Liu, Q., Lorenzini, D., Raynaud, M.
\textit{Néron models, Lie algebras, and reduction of curves of genus one}. Invent. math. 157, pp. 455-518, 2004. 

\bibitem{Milne}
Milne, J. S.
\textit{Étale Cohomology}. Princeton Mathematical Series, Vol. 33, Princeton University Press, Princeton, 1980.
%\bibitem{Neukirch}
%Neukirch, J.         \textit{Algebraic Number Theory}.         Vol. 322. Springer Science and Business Media, 2013. Translated from the German by N. Schappacher.

\bibitem{N}
Nicaise, J.
\textit{Motivic invariants of algebraic tori}. Proc. Amer. Math. Soc., Vol. 139, Nr. 4, pp. 1163-1174, 2011. 

\bibitem{NS}
Nicaise J., Sebag J. 
\textit{The Grothendieck ring of varieties}. In: Cluckers R., Nicaise J., Sebag J., eds. Motivic Integration and Its Interactions with Model Theory and Non-Archimedean Geometry. London Mathematical Society Lecture Note Series. Cambridge University Press, pp. 145-188, 2011.

\bibitem{Oes}
Oesterlé, J.
\textit{Nombres de Tamagawa et groupes unipotents en charactéristique $p$}. Invent. Math. 78(1), pp. 13-88, 1984. 

\bibitem{OvI} Overkamp, O.
\textit{Jumps and Motivic Invariants for semiabelian Jacobians}. Int. Math. Res. Not., Vol. 2019, Issue 20, pp. 6437-6479, 2019. 

\bibitem{OvIII} Overkamp, O.
\textit{On Jacobians of geometrically reduced curves and their Néron models}. Trans. Amer. Math. Soc., Vol. 377, Nr. 8, pp. 5863-5903, 2024.

\bibitem{OvII} Overkamp, O.
\textit{Chai's conjecture for semiabelian Jacobians}. Compositio Math., Vol. 161, Issue 1, pp. 120 - 147, 2025.

\bibitem{OS} Overkamp, O., Suzuki, T. 
\textit{Chai's conjectures on base change conductors}. To appear in J. Algebraic Geometry. 

\bibitem{OSII} Overkamp, O., Suzuki, T. 
\textit{Existence of global Néron models beyond semi-abelian varieties}. Preprint, 2023. 

\bibitem{Pink} Pink, R.
\textit{Finite group schemes}. Preprint; available at \url{https://people.math.ethz.ch/~pinkri/ftp/FGS/CompleteNotes.pdf}

\bibitem{RayI}
Raynaud, M.
\textit{Passage au quotient par une relation d’équivalence plate}. In \it Proceedings of a conference
on Local Fields \rm (Driebergen, 1966), Springer-Verlag, pp. 79–85, 1967.  

\bibitem{Ros} 
Rosengarten, Z.
\textit{Permawound Unipotent Groups}. Transformation groups, 2024.

\bibitem{RosII} 
Rosengarten, Z.
\textit{Rigidity and unirational groups}. Preprint; available at \url{https://arxiv.org/pdf/2307.04649}.

\bibitem{SerreLF}
Serre, J.-P.
\textit{Local Fields}. Graduate Texts in Mathematics, Springer, 1979. Translated from the French by M. J. Greenberg.

\bibitem{Stacks}
Authors of the Stacks project.
\textit{Stacks project}. Columbia University.

\bibitem{Suz}
Suzuki, T.
\textit{Grothendieck's pairing on Néron component groups: Galois descent from the semistable case}. Kyoto J. Math., Vol. 60, No. 2, pp. 593-716, 2020.

\bibitem{SuzII}
Suzuki, T.
\textit{Duality for local fields and sheaves on the category of fields}. Kyoto J. Math., Vol. 62, No. 4, pp. 789–864, 2022.

\bibitem{SuzIII}
Suzuki, T.
\textit{Class field theory, Hasse principles and Picard-Brauer duality for two-dimensional local rings}. Algebraic Geometry 11(4), pp. 460-505, 2024.

\bibitem{SuzIV}
Suzuki, T.
\textit{An improvement of the duality formalism of the rational étale site}. In \it Algebraic number theory and related topics 2018, \rm RIMS Kôkyûroku Bessatsu, B86,
pp. 287–330; Res. Inst. Math. Sci. (RIMS), Kyoto, 2021.

\bibitem{Stix}
Stix, J. 
\textit{A course on finite flat group schemes and
p-divisible groups}. Preprint, 2009. Available at \url{https://www.uni-frankfurt.de/115677822/stix_finflat_grpschemes.pdf}.
\bibitem{Vannithesis}
Vanni, I.
\textit{Greenberg functors and jumps of tori}. PhD thesis, Scuola Dottorale Vito Volterra, 2025.
%\bibitem{ZarSam}
%Zariski, O., Samuel, P. \textit{Commutative Algebra}. Vol. I, Springer, 1958.
\end{thebibliography}
\end{document}